\documentclass[10pt]{article}

\usepackage[english]{babel}
\usepackage{epsfig}

\usepackage[T1]{fontenc}
\usepackage{lmodern}

\usepackage{stmaryrd}
\usepackage{amsmath,amsfonts,amssymb,mathrsfs,amsthm}
\usepackage[bb=stix]{mathalpha}
\usepackage{mathtools}
\let\voidset = \emptyset 
\let\capsymb = \cap
\let\cupsymb = \cup
\let\negsymb = \neg
\usepackage{mathabx}
\let\emptyset = \voidset 
\let\cap = \capsymb
\let\cup = \cupsymb
\let\neg = \negsymb
\usepackage{xcolor}
\usepackage{dsfont}
\usepackage{graphicx}
\usepackage{euscript}
\usepackage{makeidx}
\usepackage{verbatim}
\usepackage{graphics,graphicx}
\usepackage{tikz}
\usetikzlibrary{patterns,positioning,arrows.meta}
\usepackage{textcomp}
\usepackage{float}
\usepackage{url}
\usepackage[colorlinks=true]{hyperref}
\usepackage{multicol}
\usepackage{pgfplots}
\pgfplotsset{compat=1.18}

\newcommand{\CC}{\mathbb{C}}\newcommand{\MM}{\mathbb{M}}\newcommand{\NN}{\mathbb{N}}\newcommand{\QQ}{\mathbb{Q}}\newcommand{\RR}{\mathbb{R}}\renewcommand{\SS}{\mathbb{S}}\newcommand{\TT}{\mathbb{T}}\newcommand{\ZZ}{\mathbb{Z}}

\newcommand{\indicatrice}{\mathds{1}}
\newcommand{\Cc}{\mathcal{C}}\newcommand{\Ec}{\mathcal{E}}\newcommand{\Lc}{\mathcal{L}}\newcommand{\Mc}{\mathcal{M}}\newcommand{\Sc}{\mathcal{S}}\newcommand{\Uc}{\mathcal{U}}

\newcommand{\Ak}{\mathfrak{A}}\newcommand{\Ck}{\mathfrak{C}}\newcommand{\Fk}{\mathfrak{F}}

\newcommand{\Cscr}{\mathscr{C}}\newcommand{\Escr}{\mathscr{E}}

\newcommand{\Ae}{\EuScript{A}}\newcommand{\Be}{\EuScript{B}}\newcommand{\De}{\EuScript{D}}\newcommand{\He}{\EuScript{H}}\newcommand{\Ke}{\EuScript{K}}\newcommand{\Le}{\EuScript{L}}\renewcommand{\Re}{\EuScript{R}}\newcommand{\Se}{\EuScript{S}}\newcommand{\Ue}{\EuScript{U}}\newcommand{\Ve}{\EuScript{V}}

\newcommand{\smooth}{\mathscr{C}^\infty}
\newcommand{\test}{\mathscr{C}^{\infty}_{c}}

\newcommand{\barre}[1]{\overline{#1}}

\newcommand{\pscal}[2]{\left\langle#1, #2\right\rangle}

\def\XXint#1#2#3{{\setbox0=\hbox{$#1{#2#3}{\int}$ }
\vcenter{\hbox{$#2#3$ }}\kern-.6\wd0}}

\DeclarePairedDelimiter\norm{\|}{\|}
\DeclarePairedDelimiter\abs{|}{|}
\DeclarePairedDelimiter\p{(}{)}
\DeclarePairedDelimiter\classe{[}{]}
\DeclarePairedDelimiter\jap{\langle}{\rangle}
\DeclarePairedDelimiter\set{\{}{\}}

\newcommand{\eqcases}[2]{\left\{\begin{array}{#1}#2\end{array}\right.}

\renewcommand{\leq}{\leqslant}
\renewcommand{\geq}{\geqslant}

\newcommand{\garding}{Gårding}

\newtheorem{lemma}{Lemma}[section]
\newtheorem{theorem}[lemma]{Theorem}
\newtheorem{proposition}[lemma]{Proposition}
\newtheorem{corollary}[lemma]{Corollary}

\newtheorem{assumption-notation}[lemma]{Assumption and notations}

\theoremstyle{definition}

\newtheorem{definition}[lemma]{Definition}
\newtheorem{remark}[lemma]{Remark}

\DeclareMathOperator{\phg}{hom}
\DeclareMathOperator{\tr}{tr}

\DeclareMathOperator{\loc}{loc}
\DeclareMathOperator{\comp}{comp}

\DeclareMathOperator{\Imaginary}{Im}

\DeclareMathOperator{\supp}{supp}
\DeclareMathOperator{\Char}{Char}

\DeclareMathOperator{\dist}{dist}

\renewcommand{\d}{\mathrm{d}}

\DeclareMathOperator{\Op}{Op}
\DeclareMathOperator{\WaveFront}{WF}

\newcommand{\ie}{{i.e.,}~}
\newcommand{\apriori}{{a priori}}

\newcommand{\resp}{\text{resp.~}}

\renewcommand{\ast}{\convolution}
\renewcommand{\tilde}{\widetilde}

\renewcommand{\check}{\widecheck}

\newcommand{\tang}{\mathrm{T}}
\newcommand{\cotang}{\mathrm{T}^{\star}\!}
\newcommand{\cotanginf}{\barre{\mathrm{T}}\hspace{-7pt}\phantom{\mathrm{T}}^{\star}\!}

\newcommand{\cosphere}{\mathrm{S}^{\star}\!}
\newcommand{\Cobs}{\Ck_{\mathrm{obs}}}
\newcommand{\leqsim}{\lesssim}

\newcommand{\ds}{\displaystyle}
\newcommand{\lmbd}{\lambda} 
\newcommand{\sqrtsymbol}{q}
\newcommand{\SqrtOp}{\mathcal{Q}}
\newcommand{\timespacesymbol}{\mathbb{p}}
\newcommand{\rest}{\mathcal{R}}
\newcommand{\symplectic}{\varsigma}
\newcommand{\linear}{\mathscr{L}}
\renewcommand{\O}[1]{\mathcal{O}\p{#1}}
\renewcommand{\o}[1]{o\p{#1}}
\newcommand{\holeperp}[1]{\Theta_{\xi}^{#1}}
\newcommand{\holepara}[1]{\Theta_{\tau}^{#1}}

\newcommand{\sdm}{\nu}
\newcommand{\mdm}{\mu}
\newcommand{\ml}{\mdm^{\mathsf{ml}}}

\newcommand{\signe}{\flat}

\newcommand{\HUM}{\mathrm{HUM}}
\newcommand{\pseudo}{\Psi}
\newcommand{\SchroOp}{\mathcal{P}}
\newcommand{\HWOp}{\mathcal{L}}

\newcommand\trans{{}^{\mathsf{t}} \!}

\numberwithin{equation}{section}

\makeatletter
\def\keywords{
\vspace{1ex}
\noindent
\if@twocolumn
\small{\bf  Keywords}\/---$\!$    \else
\begin{center}\small\ {\bf Keywords}\end{center}\quotation\small
\fi}
\def\endkeywords{\vspace{0.6em}\par\if@twocolumn\else\endquotation\fi
\normalsize\rm}
\makeatother
\title{Uniform controllability for the wave equation with large potential}
\author{Arthur Yax\footnote{Laboratoire de Mathématiques d'Orsay, Université Paris-Saclay, CNRS, Bâtiment 307, 91405 Orsay Cedex France, email: arthur.yax@universite-paris-saclay.fr.}} 
\date{\today}

\begin{document}

\maketitle

\begin{abstract}
This paper investigates the dependence of the control cost for a wave equation with respect to perturbation by a time-independent potential $\lmbd V$ scaled by a large parameter $\lmbd$ on a compact Riemannian manifold.
We introduce the geometric control condition~\eqref{GCC+}, a variant of the geometric control condition of Bardos--Lebeau--Rauch--Taylor, tailored to accommodate the influence of the potential $V$.
We show that~\eqref{GCC+} is necessary and sufficient for the existence of a uniform \emph{observability cost} with respect to the large parameter $\lmbd$.
We provide geometric examples satisfying~\eqref{GCC+} and estimate the blow-up rate of the \emph{observability cost} in situations where it fails.
The proofs rely on semiclassical and second microlocal defect measures.
\end{abstract}

\begin{keywords}
\noindent
Wave equation, Half-wave equations, Observability, Controllability, Semiclassical and microlocal analysis.
\medskip

\noindent
\textbf{2010 Mathematics Subject Classification:}
47F05, 
35L05, 
93B07, 
93B05, 
35F05. 
\end{keywords}

\begingroup
\tableofcontents
\endgroup
\setcounter{tocdepth}{1}
\section{Introduction, notations and results}
\subsection{Background and motivation}
Let $(M, g)$ be a $d$-dimensional smooth, compact, connected Riemannian manifold without boundary, and denote by $\Delta_g$ the associated Laplace-Beltrami operator (see~\eqref{eq:Laplace-Beltrami} for the definition).
Given a continuous nonnegative function $b_\omega\in\Cscr^0\p{M, \RR_+}$, we study the \emph{controllability in time $T>0$ from $\omega$} of the wave equation with a smooth positive potential $\mathsf{V}\in\smooth\p{M, \RR_+^*}$:
\begin{equation}\label{eq:Control:Wave}
\eqcases{rcll}{\partial_t^2 v - \Delta_g v+  \mathsf{V}v &=& b_\omega f &\text{ in } (0, T)\times M,\\
\p{v, \partial_t v}_{|t = 0} &=& (v_0, v_1)&\text{ in }M.}
\end{equation}
\begin{definition}
Let $T > 0$ and $b_\omega\in\Cscr^0(M, \RR_+)$.
\begin{itemize}
\item We say that we have \emph{exact controllability in time $T>0$ from $\omega$} of~\eqref{eq:Control:Wave} if for any initial conditions $(v_0, v_1)\in H^1(M)\times L^2(M)$ and any target $(v_0^T, v_1^T)\in H^1(M)\times L^2(M)$, there is a \emph{control function} $f \in L^2((0, T)\times M)$ such that $v$, the unique solution $\Cscr^0((0, T), H^1( M))\cap\Cscr^1((0, T), L^2( M))$ of~\eqref{eq:Control:Wave}, satisfies $(v, \partial_t v)_{t = T} = (v_0^T, v_1^T)$.
\item We say that we have \emph{null-controllability in time $T>0$ from $\omega$} of~\eqref{eq:Control:Wave} if for any initial conditions $(v_0, v_1)\in H^1(M)\times L^2(M)$, there is a \emph{control function} $f \in L^2((0, T)\times M)$ such that $v$, the unique solution $\Cscr^0((0, T), H^1( M))\cap\Cscr^1((0, T), L^2( M))$ of~\eqref{eq:Control:Wave}, satisfies $(v, \partial_t v)_{t = T} = (0, 0)$.
\end{itemize}
\end{definition}
\begin{remark}
In this context, by linearity and time reversibility of the operator $\partial_t^2 - \Delta_g + \mathsf{V}$, the \emph{null-controllability} of~\eqref{eq:Control:Wave} and the \emph{exact controllability} of~\eqref{eq:Control:Wave} are equivalent.
\end{remark}
\paragraph{Observability Inequality.}
By classical functional analysis arguments (see~\cite{Dolecki77observation,lions1988controlabilite}), proving \emph{exact controllability} of~\eqref{eq:Control:Wave} is equivalent to establishing an observability inequality for the dual equation
\begin{equation}\label{eq:Wave}\tag{WE}
\eqcases{rcll}{
\partial_t^2u - \Delta_gu + \mathsf{V}u &=& 0 &\text{in}~ (0, T)\times M,\\
\p{u, \partial_t u}_{|t = 0} &=& (u_0, u_1)&\text{in}~M,
}
\end{equation}
see Corollary~\ref{Cor:HUM}.
We say that~\eqref{eq:Wave} is \emph{observable in time $T$ from $\omega$} if there exists an \emph{observability cost} $\Cobs(\mathsf{V}) \in (0, \infty)$ such that for all $(u_0, u_1)\in H^1(M)\times L^2(M)$ we have
\begin{equation}\label{eq:Obs}\tag{Obs}
\Escr_{\mathsf{V}}(u_0, u_1) := \frac{1}{2}\p{\norm{u_0}_{H^1_{\mathsf{V}}(M)}^2 + \norm{u_1}^2_{L^2(M)}}\leq \Cobs(\mathsf{V})\int_{0}^{T}\norm{b_\omega \partial_t u}_{L^2(M)}^2\d t,
\end{equation}
where $u$ is the $\Cscr^0((0, T), H^1( M))\cap\Cscr^1((0, T), L^2( M))$-solution of~\eqref{eq:Wave}, the norm $\norm{\cdot}_{H^1_{\mathsf{V}}(M)}$ is defined as
\begin{equation}\label{df:H^1_V}
\norm{w}_{H^1_{\mathsf{V}}(M)} := \sqrt{\int_{M}\abs{\nabla_g w(x)}^2_g + \mathsf{V}(x)\abs{w(x)}^2 \d x},\quad w \in H^1_{\mathsf{V}}(M):=H^1(M),
\end{equation}
$\d x$ represents the Riemannian volume element (in local charts, $\d x$ is given by $(\sqrt{\det g})\d\Lc_d$, where $\Lc_d$ is the Lebesgue measure on $\RR^d$), and $\nabla_g$ is the Riemannian gradient (see~\eqref{eq:Gradient-Beltrami} for a definition).
Note that, for $\mathsf{V}$ fixed, the norms $\norm{\cdot}_{H^1(M)} = \norm{\cdot}_{H^1_1(M)}$ and $\norm{\cdot}_{H^1_{\mathsf{V}}(M)}$ are equivalent since we assume $\mathsf{V} > 0$.
Note furthermore that we have the conservation of the energy
\begin{equation}\label{eq:WP}
\forall t\in (0, T), \quad \Escr_{\mathsf{V}}\p{u(t), \partial_t u(t)} = \Escr_{\mathsf{V}}\p{u_0, u_1}.
\end{equation}
Moreover, as a result of~\cite{Dolecki77observation} and~\cite{lions1988controlabilite} (see Proposition~\ref{pr:dual:HUM}), if~\eqref{eq:Wave} satisfies the \emph{observability inequality}~\eqref{eq:Obs} with \emph{observability cost} $\Cobs(\mathsf{V})\in (0, \infty)$, then, for all initial data $(v_0, v_1)\in H^1(M)\times L^2(M)$, there is a \emph{control function} $f_{(v_0, v_1)}\in L^2\p{\p{0, T}\times M}$ of~\eqref{eq:Control:Wave} such that
\begin{equation*}
\p{v, \partial_tv}_{|t=T} = \p{0, 0}
\quad
\text{and}
\quad
\norm{f_{(v_0, v_1)}}_{L^2\p{\p{0, T}\times M}}\leq \sqrt{\Cobs\p{\mathsf{V}}}\norm{\p{v_0, v_1}}_{H^{1}_{\mathsf{V}}(M)\times L^2(M)},
\end{equation*}
where $v$ is the $\Cscr^0\p{\p{0, T}, H^1\p{M}}\cap\Cscr^1\p{\p{0, T}, L^2\p{M}}$-solution of~\eqref{eq:Control:Wave} with potential $\mathsf{V} = \lmbd V$ and \emph{control function} $f^\lmbd_{(v_0, v_1)}$.
Hence, $\Cobs\p{\mathsf{V}}$ also estimates the cost of the control.

According to~\cite{Rauch1974Exponential, bardos1992sharp}, if a pair $(\omega , T)$ satisfies the \emph{geometric control condition}~\eqref{GCC} (roughly speaking: all geodesics of length longer than $T$ intersect $\omega := \set{b_\omega > 0}$, see Subsection~\ref{subsection:main:results} for more details), there exists such an \emph{observability cost} $\Cobs(\mathsf{V})\in(0, \infty)$.
Using side-wise energy estimates in one-dimensional case, Zuazua proved in~\cite{zuazua1993exact} that one can take 
\begin{equation}\label{eq:Zuazua}
\Cobs(\mathsf{V}) \leq C\exp\p*{{C\norm{\mathsf{V}}_{L^\infty((0, T)\times M)}^{\frac{1}{2}}}}
\end{equation}
for any time-dependent bounded potential $\mathsf{V}\in L^{\infty}((0, T)\times M)$ with $C > 0$ independent of the choice of $\mathsf{V}$.
Then, in a subset $M\subset \RR^d$ and under a stronger multiplier-type condition on $(\omega, T)$, Duyckaerts--Zhang--Zuazua proved in~\cite[equation (1.12) and Theorem 2.2]{DUYCKAERTS20081} that one can take
\begin{equation*}
\Cobs(\mathsf{V})\leq C\exp\p{C\norm{\mathsf{V}}^{2/3}_{L^\infty((0, T)\times M)}}
\end{equation*}
for any time-dependent bounded potential $\mathsf{V}\in L^{\infty}((0, T)\times M)$ with $C > 0$ independent of the choice of $\mathsf{V}$.

Dehman--Ervedoza proved in~\cite{dehman2014dependence} that, under the sole~\eqref{GCC}, the dependence in $\mathsf{V}$ of the \emph{observability cost} can be replaced by a dependence in $\norm{\mathsf{V}}_{L^\infty(M)}$.
Then, still under the sole~\eqref{GCC}, for manifolds with or without boundary and time-independent real-valued potential $\mathsf{V}\in L^\infty(M)$ satisfying $-\Delta_g + \mathsf{V} \geq \delta$ for some $\delta > 0$: Laurent--L{\'e}autaud proved in~\cite[Theorem 1.5]{laurent2016uniform} (using estimates of~\cite{laurent15quantitative} in a key fashion), that one can take an \emph{observability cost}
\begin{equation}\label{eq:Laurent:Leautaud}
\Cobs (\mathsf{V})\leq C \exp\p{C\norm{\mathsf{V}}_{L^\infty(M)}},
\end{equation}
where $C > 0$ is independent of the choice of $\mathsf{V}$.

In this paper, given a fixed smooth and time-independent positive potential $V\in\smooth\p{M, (0, 1]}$, we study the observability of~\eqref{eq:Control:Wave} with $\mathsf{V} = \lmbd V$ and investigate the dependence of the cost $\Cobs(\lmbd V)$ on $\lmbd$.
We introduce a geometric control condition~\eqref{GCC+}, depending on the normalized potential $V$, and we prove that~\eqref{GCC+} is necessary and sufficient for the existence of a uniform \emph{observability cost} for~\eqref{eq:Wave} with potential $\lmbd V$, uniformly in $\lmbd\in [1, \infty)$.
We also provide a lower bound for $\Cobs(\lmbd V)$ in situations where~\eqref{GCC+} fails.
\subsection{Main results}\label{subsection:main:results}
In this article, we consider the situation in which $\mathsf{V} = \lmbd V$, where
\begin{equation}\label{df:potential:fixed:V}
V\in\smooth\p{M, \RR_+^*}~\text{is a fixed smooth positive potential with}~\norm{V}_{L^\infty(M)} = 1
\end{equation}
and $\lmbd\geq 1$.
We study the dependence of the \emph{observability cost} in~\eqref{eq:Obs} as a function of $\lmbd$ only.
We introduce a geometric control condition on $(\omega, T)$, depending only on $V$, that characterizes when a uniform \emph{observability inequality} holds with respect to $\lmbd$.
\subsubsection{Uniform Controllability and~\eqref{GCC+}}
Let $(\Mc, \mathsf{g})$ be a smooth and connected $\mathsf{d}$-dimensional Riemannian manifold, which can be $(M, g)$ or $(\RR\times M, {\d t^2 + g})$.
We denote by $\tang \Mc$ the tangent bundle, $\cotang \Mc$ the cotangent bundle of $\Mc$, and define the unit cosphere bundle as
\begin{equation}\label{df:cosphere}
\cosphere M := \set{(x, \xi)\in\cotang M, ~ \abs{\xi}_{g} = 1}~\text{or}~\cosphere\p{\RR\times M} := \set{(t, x, \tau, \xi)\in\cotang \p{\RR\times M}, ~\tau^2 + \abs{\xi}_{g}^2 = 2}.
\end{equation}
We also use the notion of \emph{fiber radially compactified} cotangent bundle, denoted $\cotanginf \Mc$ --- see~\cite[Section E.1.3]{dyatlov2019mathematical} for details.
\begin{remark}
Using~\eqref{df:cosphere}, for all $(t, x, \tau, \xi)\in\cotang \p{\RR\times M}$, we have the equivalence
\begin{equation*}
(t, x, \tau, \xi)\in\cosphere\p{\RR\times M}~\text{and}~\tau^2 = \abs{\xi}_g^2
\iff
\tau^2 = 1~\text{and}~(x, \xi)\in\cosphere M.
\end{equation*}
Hence, $\set{(t,x, \tau, \xi)\in\cosphere\p{\RR\times M},~\tau^2 = \abs{\xi}_g^2} \simeq \set{(t, \tau)\in\cotang\hspace{1 pt}\RR, ~ \tau^2 = 1}\times \cosphere M$.
\end{remark}
\paragraph{Geometric control conditions.}
Let $\pi_\Mc\colon (z, \zeta)\in\cotang \Mc\mapsto z\in \Mc$ be the canonical projection and, given an Hamiltonian $k\in \smooth(\cotang\Mc)$ or $\smooth(\cosphere \Mc)$, we denote by $H_k$ the Hamiltonian vector field of $k$ --- see~\eqref{eq:Hamiltonian:Vector:Field} for a definition and, if $k$ is a symbol (defined in Section~\ref{section:geometric:pseudo:preliminaries}), see ~\cite[Proposition E.5]{dyatlov2019mathematical} for its extension smooth vector field on $\cotanginf \Mc$ tangent to $\partial\cotanginf \Mc$.
We also denote by $\varphi^k$ the Hamiltonian flow associated to $H_k$, \ie
\begin{equation}\label{df:Hamiltonian:Flow}
\frac{\d}{\d t}\varphi^k_t(\rho) = H_k\p{\varphi^k_t(\rho)}, \quad t\in\RR,~ \varphi^k_0(\rho) = \rho\in\cotang \Mc.
\end{equation}
Recall that for all $(z_0, \zeta_0)\in\cosphere\Mc$ and all $t\in\RR$ we have $\varphi^{\abs{\zeta}_{\mathsf{g}}}_t(z_0, \zeta_0)\in\cosphere \Mc$ (see Remark~\ref{rk:preserved:along:flow}) and that $\varphi^{\abs{\zeta}_{\mathsf{g}}}$ is called the \emph{homogeneous geodesic flow}.
To define geometric control conditions, we let
\begin{equation}\label{df:Sqrt:Sym}
p (x, \xi) := \abs{\xi}_g^2 + V(x)
\quad 
\text{and}
\quad
\sqrtsymbol(x, \xi) := \sqrt{\abs{\xi}_g^2 + V(x)}, \quad (x, \xi)\in\cotang M.
\end{equation}
Then, for a time $T > 0$ and an open subset $\omega$ of $M$, we say that:
\begin{itemize}
\item The pair $(\omega, T)$ satisfies the geometric control condition~\eqref{GCC}, if
\begin{equation}\tag{GCC}\label{GCC}
\forall \rho\in\cosphere M,~\exists t\in \p{0, T},\quad \pi_M\circ\varphi^{\abs{\xi}_g}_{t}(\rho)\in \omega.
\end{equation}
\item The pair $(\omega, T)$ satisfies the geometric control condition with respect to potential~\eqref{GCCV}, if
\begin{equation}\tag{GCC$\null_{V}$}\label{GCCV}
\forall \rho\in{\cotang M},~\exists t\in (0, T),\quad \pi_M\circ\varphi^{\sqrtsymbol}_{t}(\rho)\in \omega.
\end{equation}
\item The pair $(\omega, T)$ satisfies the geometric control condition with respect to potential~\eqref{GCC+}, if it satisfies~\eqref{GCC} and~\eqref{GCCV}.
In other words, if $(\omega, T)$ satisfies 
\begin{equation}\tag{GCC$\null_{V}^+$}\label{GCC+}
\forall \rho\in{\cotanginf M},~\exists t\in (0, T),\quad \pi_M\circ\varphi^{\sqrtsymbol}_{t}(\rho)\in \omega.
\end{equation}
\end{itemize}
As stated in the following lemma --- proved in Subsection~\ref{subsection:GCCV:GCC} ---~\eqref{GCCV} almost implies~\eqref{GCC}.
\begin{lemma}\label{lem:GCC+:implies:GCC}
If $(\omega, T)$ satisfies~\eqref{GCCV}, then $(\omega_\delta, T + \epsilon)$ satisfies~\eqref{GCC} for any $\epsilon, \delta > 0$ with $\omega_\delta := \set{x\in M,~ \dist_g(x, \omega) < \delta}$.
\end{lemma}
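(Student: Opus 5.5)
The plan is a scaling argument. Along the flow of $\sqrtsymbol$, high-frequency trajectories are reparametrised geodesics, up to an error that vanishes as $\abs{\xi}_g\to\infty$. We then pass to the limit using compactness of $\cosphere M$.

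Fix $\rho_0=(x_0,\xi_0)\in\cosphere M$ and, for $s\geq 1$, set $\rho_s:=(x_0,s\xi_0)\in\cotang M$. Hamilton's equations for $\sqrtsymbol$ read
\begin{equation*}
\dot x=\frac{\partial_\xi \abs{\xi}_g^2}{2\sqrtsymbol},\qquad \dot\xi=-\frac{\partial_x\abs{\xi}_g^2+\partial_xV}{2\sqrtsymbol}.
\end{equation*}
Since $\sqrtsymbol$ is conserved, $\sqrtsymbol\equiv\sqrt{s^2+V(x_0)}=:q_s$ along the trajectory of $\rho_s$. Now rescale the fibre variable and define $(y(t),\eta(t)):=(x(t),\xi(t)/s)$, where $(x(t),\xi(t))=\varphi^{\sqrtsymbol}_t(\rho_s)$. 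By homogeneity of $\abs{\xi}_g^2$, this pair satisfies
\begin{equation*}
\dot y=\frac{s}{q_s}\,\frac{\partial_\eta\abs{\eta}_g^2}{2},\qquad \dot\eta=-\frac{s}{q_s}\,\frac{\partial_y\abs{\eta}_g^2}{2}-\frac{\partial_yV}{2sq_s}.
\end{equation*}
Here $s/q_s\to1$ and the last term is $\O{s^{-2}}$, uniformly because $V$ is smooth on compact $M$. Energy conservation gives $\abs{\eta}_g^2=(q_s^2-V(y))/s^2\in[1-s^{-2},1+s^{-2}]$, so the trajectory stays in a compact subset of $\cotang M$ close to $\cosphere M$. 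A Gronwall argument in local charts, or equivalently continuous dependence of ODE solutions on parameters, then shows that $(y,\eta)$ converges to the geodesic flow $\varphi^{\abs{\xi}_g/1}$ of $\frac12\abs{\xi}_g^2$ on the unit cosphere. That flow coincides with $\varphi^{\abs{\xi}_g}_t$ on $\cosphere M$. The convergence is uniform for $t\in[0,T+\epsilon]$ and uniform in $\rho_0\in\cosphere M$, by compactness. In particular $\sup_{t\in[0,T]}\dist_g\bigl(\pi_M\varphi^{\sqrtsymbol}_t(\rho_s),\pi_M\varphi^{\abs{\xi}_g}_t(\rho_0)\bigr)\to0$ as $s\to\infty$.

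To conclude, choose $s$ large enough that this distance is below $\delta$ for every $\rho_0$. By \eqref{GCCV} applied to $\rho_s$, there is $t\in(0,T)$ with $\pi_M\varphi^{\sqrtsymbol}_t(\rho_s)\in\omega$. Hence $\pi_M\varphi^{\abs{\xi}_g}_t(\rho_0)\in\omega_\delta$ for some $t\in(0,T)\subset(0,T+\epsilon)$, which is exactly \eqref{GCC} for $(\omega_\delta,T+\epsilon)$. The slack $\epsilon$ is harmless here. It can also absorb a time reparametrisation factor $s/q_s$ if one prefers to compare with the geodesic flow at unit speed without rescaling time.

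The main obstacle is making the uniform ODE comparison rigorous on a manifold. The cleanest route works on the compactified bundle $\cotanginf M$. The rescaled vector field $\frac{s}{q_s}H_{\abs{\xi}_g^2/2}+\O{s^{-2}}\partial_\eta$, near $\abs{\eta}=1$, depends smoothly on the parameter $1/s$ and equals $H_{\abs{\xi}_g}$ on $\cosphere M$ at $1/s=0$. Continuous dependence of flows on parameters over compact time intervals and compact sets of initial data then gives the uniform convergence directly, with no chart-by-chart estimates.
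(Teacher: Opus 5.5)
Your proof is correct. It rests on the same mechanism as the paper's: rescale the fibre by a large parameter, treat $1/s$ as a parameter of a smooth family of vector fields, and use continuous dependence of flows to see that the rescaled $\sqrtsymbol$-trajectories converge to geodesics. The implementation differs in two respects.

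First, the paper works with the space--time symbol $\abs{\xi}_g^2+V(x)-\tau^2$ on $\cotang(\RR\times M)$. Because that symbol is quadratic, time must be rescaled too and the $t$-component tracked. You use $\sqrtsymbol$ on $\cotang M$ directly, and since it is asymptotically homogeneous of degree one, only the fibre needs rescaling.

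Second, and more substantively, the paper passes to the limit $R\to\infty$ in the hitting times supplied by \eqref{GCCV}. The limiting hit therefore only lies in $[0,T]\times\barre{\omega}$, and $\delta$ and $\epsilon$ are then spent to recover an open set and an open time interval. You instead freeze one large $s$ and transport the hitting time $t_s\in(0,T)$ itself. This yields \eqref{GCC} for $(\omega_\delta,T)$ --- slightly more than stated --- and never uses $\epsilon$. For the same reason, the uniformity in $\rho_0\in\cosphere M$ that you invoke is unnecessary: $s$ may depend on $\rho_0$.

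One cosmetic point: to apply continuous dependence cleanly, write the conjugated field without reference to the initial point. With $\varepsilon=1/s$ it is $X_\varepsilon=(\abs{\eta}_g^2+\varepsilon^2V(y))^{-1/2}\bigl(H_{\abs{\eta}_g^2/2}+\tfrac{\varepsilon^2}{2}H_V\bigr)$. This field is smooth on $\{\eta\neq0\}\times\RR$ and equals $H_{\abs{\eta}_g}$ at $\varepsilon=0$, so no compactified bundle is needed.
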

\paragraph{Uniform observability.}
The main result of this article is the equivalence between~\eqref{GCC+} and uniform observability~\eqref{eq:UO}.
The proof is given in Sections~\ref{section:CS} and~\ref{section:CN}.
In Section~\ref{section:CS} we prove that~\eqref{GCC+} is a sufficient condition to satisfy~\eqref{eq:UO} and in Section~\ref{section:CN} that~\eqref{GCC+} is a necessary condition to satisfy~\eqref{eq:UO}.
\begin{theorem}\label{Th:GCC:UO}
Assume $(M, g)$ is a smooth, compact, connected Riemannian manifold without boundary,
$T > 0$ is a time,
$V$ is a smooth potential satisfying~\eqref{df:potential:fixed:V},
$b_\omega$ is a continuous nonnegative function on $M$ and $\omega := \set{b_\omega > 0}$.
The following two assertions are equivalent.
\begin{enumerate}
\item The pair $(\omega, T)$ satisfies~\eqref{GCC+}.
\item There is an \emph{observability cost} $\Cobs \in(0, \infty)$ such that
\begin{equation}\label{eq:UO}\tag{UO}
\forall \lmbd\geq 1,~ \forall (u_0, u_1)\in H^1(M)\times L^2(M), \quad\Escr_{\lmbd V}{(u_0, u_1)}\leq \Cobs\int_0^T\norm{b_\omega \partial_tu_{\lmbd}(t)}^2_{L^2(M)}\d t,
\end{equation}
where $u_\lmbd$ is the unique $\Cscr^0(\RR, H^1( M))\cap\Cscr^1(\RR, L^2( M))$-solution of~\eqref{eq:Wave} with potential $\mathsf{V} = \lmbd V$ and initial data $(u_0, u_1)$.
\end{enumerate}
\end{theorem}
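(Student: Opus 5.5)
We argue both directions by contradiction, using semiclassical defect measures with the semiclassical parameter $h := \lmbd^{-1/2}$. With this choice the operator $\partial_t^2-\Delta_g+\lmbd V$ becomes $h^{-2}\p{h^2\partial_t^2 - h^2\Delta_g + V}$, whose semiclassical principal symbol is $-\tau^2+p(x,\xi)$. First we factor the equation into two half-wave equations $h\partial_t \mp i\SqrtOp_h$, where $\SqrtOp_h := \sqrt{-h^2\Delta_g+V}$. Since $V>0$, the operator $\SqrtOp_h$ is a semiclassical pseudodifferential operator with principal symbol $\sqrtsymbol$, elliptic on all of $\cotang M$. Rescaling time by $t\mapsto t/h$ is not needed: the characteristic set $\set{\tau=\pm\sqrtsymbol}$ is transported by $H_{\tau\mp\sqrtsymbol}$, so in the $t$-variable the projection onto $M$ follows exactly the flow $\varphi^{\sqrtsymbol}$. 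The energy $\Escr_{\lmbd V}$ is comparable, uniformly in $\lmbd$, to $\lmbd\norm{\SqrtOp_h u}^2$-type quantities, so normalising $\Escr_{\lmbd V}(u_0,u_1)=1$ amounts to normalising $\partial_t u$ in $L^2$.

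\emph{Sufficiency.} Assume \eqref{GCC+} holds but \eqref{eq:UO} fails. Then there are $\lmbd_n\geq1$ and data with $\Escr_{\lmbd_nV}=1$ and $\int_0^T\norm{b_\omega\partial_tu_n}^2\to0$.

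If $(\lmbd_n)$ is bounded, we extract a subsequence with $\lmbd_n\to\lmbd_\infty$. The pair $(\omega,T)$ satisfies \eqref{GCC}, since \eqref{GCC+} includes it. We then run the classical compactness–uniqueness argument of Bardos–Lebeau–Rauch–Taylor with continuous dependence on the potential. In the limit, the low-frequency part gives a solution of the equation with potential $\lmbd_\infty V$ that vanishes on $\omega$; such a solution is zero by unique continuation.

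If $\lmbd_n\to\infty$, set $h_n=\lmbd_n^{-1/2}$ and take a semiclassical defect measure $\mdm$ of $\partial_t u_n$ on $\cotanginf\p{\RR\times M}$. This measure has total mass comparable to $T$ on $(0,T)$ because of energy conservation. It is supported in $\set{\tau^2=p}$ at finite frequencies, and it is invariant along $H_{\tau^2-p}$, equivalently along $\varphi^{\sqrtsymbol}$ on each sheet. At fiber infinity, $\partial\cotanginf$, it is invariant along the homogeneous geodesic flow, because $V$ is negligible relative to $\abs{\xi}^2$ there. Observation gives $\mdm=0$ above $(0,T)\times\omega$. By \eqref{GCC+}, every trajectory in $\cotanginf M$ (finite points and points at infinity) meets $\omega$ within time $T$, so $\mdm=0$, which contradicts the mass normalisation.

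The delicate point is the intermediate regime, frequencies $\abs{\xi}\sim h^{-\alpha}$ with $0<\alpha<1$. There mass escapes to fiber infinity of the semiclassical phase space, but the classical $h$-independent measure does not see it. We handle it with the second microlocal (two-scale) measure at infinity, in the spirit of Fermanian-Kammerer and Miller. Near $\partial\cotanginf M$ the $\varphi^{\sqrtsymbol}$ flow converges to the geodesic flow, and invariance passes to the limit by a rescaling argument. This compatibility at infinity is exactly why the compactified condition \eqref{GCC+} appears, and we expect it to be the main technical obstacle.

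\emph{Necessity.} Suppose \eqref{GCC+} fails. If \eqref{GCC} fails, the standard Gaussian beams (Ralston) for fixed $\lmbd=1$ already contradict \eqref{eq:UO}. Otherwise there is $\rho_0\in\cotang M$ whose $\varphi^{\sqrtsymbol}$ trajectory stays outside $\omega$ on $[0,T]$. Since $\omega$ is open, the trajectory avoids $\omega$, and the closed set $\barre\omega$ is handled via the weight $b_\omega$ being continuous with $b_\omega=0$ off $\omega$. We build semiclassical coherent states (WKB/Gaussian beams for $h\partial_t-i\SqrtOp_h$) concentrated at $\rho_0$, with $h=\lmbd^{-1/2}\to0$. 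By Egorov's theorem up to time $T$, their defect measure is $\delta$ transported along the trajectory. Hence $\int_0^T\norm{b_\omega\partial_tu}^2\to\int_0^T b_\omega^2\p{\pi_M\varphi^{\sqrtsymbol}_t\rho_0}\d t=0$ while the energy equals $1$, contradicting \eqref{eq:UO}. The only care needed is that $\omega=\set{b_\omega>0}$, so the limit integral vanishes precisely when the trajectory avoids $\omega$.
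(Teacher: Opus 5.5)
Your proposal is correct and follows the same overall strategy as the paper. You argue by contradiction, split according to whether $(\lmbd_n)_n$ is bounded, and rescale with $h_n=\lmbd_n^{-1/2}$. A semiclassical measure transported by $\varphi^{\sqrtsymbol}$ is killed by \eqref{GCCV}, a measure at semiclassical fiber infinity transported by the geodesic flow is killed by \eqref{GCC}, and coherent states give necessity. The implementation differs in several places.

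For bounded $(\lmbd_n)_n$ the paper does not rerun compactness--uniqueness. It invokes observability at the fixed potential $\lmbd_\infty V$ and controls $u^n-u^n_\infty$ by an energy estimate with source $(\lmbd_\infty-\lmbd_n)Vu^n$. This is shorter and avoids the unique-continuation step that your route needs; your route still works via the finite-dimensional invisible space and positivity of $-\Delta_g+\lmbd_\infty V$, since $\partial_t u$ is what is observed.

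For $\lmbd_n\to\infty$ you use a single measure of $\partial_t u_n$ on the compactified space-time bundle. Its interior and boundary parts are the paper's $\sdm_{(\tau,\tau)}$ and $\mdm_{(\tau,\tau)}$ of Lemma~\ref{lem:hDt:SqrtOp:sdm:mdm}. Obtaining it from Calder\'on--Vaillancourt and the \garding{} inequality on symbols continuous up to the boundary is more direct than the double-limit construction of Theorem~\ref{Th:Microlocalization}.

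Propagating in space-time along $H_{\tau^2-p}$ is the alternative the paper itself mentions after \eqref{eq:time:times:space:condition}. It bypasses the half-wave splitting, the ghost cut-offs with Theorem~\ref{Th:hormander}, and the slice Theorem~\ref{Th:Slices}, because $\tau^2-p$ is a genuine symbol on $\cotang(\RR\times M)$ whereas $\tau\mp\sqrtsymbol$ is not. Invariance at infinity then comes directly from the commutator identity, with no rescaling of trajectories: for $a$ classical of order $-1$, the principal part at infinity of $\set{p-\tau^2,a}$ is $\set{\abs{\xi}_g^2-\tau^2,a}$, since $\set{V,a}$ has order $-2$.

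The price is that the lower bound on the mass of $\partial_t u_n$ comes from equipartition, not from energy conservation alone. Equipartition follows from localization on $\set{\tau^2=p}$, and on $\set{\tau^2=\abs{\xi}_g^2}$ at infinity. The paper instead reads the energy at a fixed time $t_0$ directly off $\sdm^\pm_{t_0}$ and $\mdm^\pm_{t_0}$.

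Using the exact root $\sqrt{-h^2\Delta_g+V}$, rather than the paper's approximate one, removes the source $hf^h$. It also removes the need to prove $\Imaginary\sdm_{(\signe,f)}=\Imaginary\mdm_{(\signe,f)}=0$, so necessity follows from Egorov and unitarity directly.

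Two slips should be fixed.
\begin{itemize}
\item The regime description is off. The mass missed by the semiclassical measure on $\cotang M$ is exactly that at original frequencies $\gg\lmbd^{1/2}$, i.e.\ all of $\abs{\xi}\to\infty$ in semiclassical variables; there is no special window $0<\alpha<1$. Frequencies $\ll\lmbd^{1/2}$ go to the zero section, where $\sqrtsymbol(x,0)=\sqrt{V(x)}>0$. The standard microlocal measure does see the high-frequency mass, only mixed with a rescaled $\sdm$ (Lemma~\ref{lem:ml:sdm:mdm}).
\item The sign is reversed. $h\partial_t-i\SqrtOp_h$ generates $e^{it\SqrtOp_h/h}$, which transports along $\varphi^{\sqrtsymbol}_{-t}$. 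Either flip the sign or start the beam at $\varphi^{\sqrtsymbol}_T(\rho_0)$; the paper sidesteps this by centering the time interval at $\rho_0$.
\end{itemize}
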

\paragraph{Uniform controllability.}
In turn, Theorem~\ref{Th:GCC:UO} implies the following uniform controllability statement: if~\eqref{GCC+} holds, then for all $\lmbd \geq 1$ and all initial data $(v_0, v_1)\in H^1(M)\times L^2(M)$ there exists a \emph{control function} $f^\lmbd_{(v_0, v_1)}\in L^2\p{(0, T)\times M}$ of~\eqref{eq:Control:Wave} such that
\begin{equation*}
\norm{f^\lmbd_{(v_0, v_1)}}_{L^2((0, T)\times \omega)} \leq \sqrt{\Cobs}\norm{\p{v_0, v_1}}_{H^1_{\mathsf{V}}(M)\times L^2(M)}
\quad
\text{and}
\quad
\p{v, \partial_t v}_{|t  = T} = (0, 0),
\end{equation*}
where $\Cobs$ is given by Theorem~\ref{Th:GCC:UO} and $v$ is the $\Cscr^0\p{\p{0, T}, H^1\p{M}}\cap\Cscr^1\p{\p{0, T}, L^2\p{M}}$-solution of~\eqref{eq:Control:Wave} with potential $\mathsf{V} = \lmbd V$ and \emph{control function} $f^\lmbd_{(v_0, v_1)}$.
The (classical) duality argument is recalled in Appendix~\ref{section:HUM}.
\subsubsection{Examples}
Here, we provide examples of non-trivial pairs that satisfy~\eqref{GCC+}, compare~\eqref{GCC} and~\eqref{GCCV}, and conclude by giving a blow-up rate for the \emph{optimal observability cost} (defined later on) in some examples where~\eqref{GCC} holds but~\eqref{GCCV} does not.
\paragraph{Examples of pairs satisfying~\eqref{GCC+}.}
According to the following Propositions
---
\ref{pr:1D:circle} (see Figure~\ref{fig:1D:circle} for an illustration),~\ref{pr:2D:torus} (see Figure~\ref{eg:2D:torus} for a concrete example) and~\ref{pr:2D:sphere} (see Figure~\ref{eg:2D:sphere} for an illustration), see Section~\ref{section:Some examples} for proofs
---
we provide examples of Riemannian manifolds $(M, g)$ and strictly positive bounded smooth potentials $V$ such that there are non-trivial pairs $(\omega, T)$ satisfying~\eqref{GCC+}.
\begin{proposition}\label{pr:1D:circle}
Let $M$ be the circle $\TT^1 = \RR/\ZZ$ equipped with the Euclidean metric and $V$ as~\eqref{df:potential:fixed:V}.
For an open set $\omega\subset \TT^1$, the following two assertions are equivalent:
\begin{enumerate}
\item There is $T>0$ such that~\ref{GCC+}$(\omega, T)$ is satisfied.
\item $\omega$ contains all critical points of $V$.
\end{enumerate}
\end{proposition}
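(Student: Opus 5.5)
On $\TT^1$ with $\xi\in\RR$, the Hamiltonian is $\sqrtsymbol(x,\xi)=\sqrt{\xi^2+V(x)}$, and the flow is
\begin{equation*}
\dot x = \frac{\xi}{\sqrtsymbol}, \qquad \dot\xi = -\frac{V'(x)}{2\sqrtsymbol}.
\end{equation*}
Since $\sqrtsymbol$ is conserved, the proof reduces to a phase-plane analysis on each energy level $\set{\xi^2+V(x)=E}$. On $\partial\cotanginf M$ (the points $\xi=\pm\infty$) the rescaled flow is the geodesic flow $\dot x=\pm1$. Any point there reaches every point of $\TT^1$, in particular $\omega$, within time $1$, provided $\omega\neq\emptyset$. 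Note that when $V$ has a critical point, the condition in 2 already forces $\omega\neq\emptyset$.

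\textbf{Proof of $1\Rightarrow2$ (necessity).} Let $x_0$ be a critical point of $V$. Then $\rho=(x_0,0)$ is a fixed point of $\varphi^{\sqrtsymbol}$, because $\dot x=0$ and $\dot\xi=-V'(x_0)/(2\sqrtsymbol)=0$. Applying the condition \eqref{GCC+} to this $\rho$ forces $x_0\in\omega$.

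\textbf{Proof of $2\Rightarrow1$ (sufficiency).} We show that every trajectory of $\varphi^{\sqrtsymbol}$ enters $\omega$ in a uniformly bounded time. The boundary points at infinity were handled above, so we argue by compactness on $\cotanginf M$. Fix $\rho\in\cotanginf M$; it suffices to find $t_\rho$ with $\pi_M\circ\varphi^{\sqrtsymbol}_{t_\rho}(\rho)\in\omega$. Since $\omega$ is open and the flow is continuous on $\cotanginf M$ (the extension of $H_{\sqrtsymbol}$ is smooth and tangent to the boundary), the same time $t_\rho$ works on a neighbourhood of $\rho$. Compactness of $\cotanginf M$ then yields a uniform $T$, and taking $T$ strictly larger than the maximum of the finitely many $t_\rho$ gives $t\in(0,T)$ after possibly translating.

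To produce $t_\rho$ for finite $\rho=(x_0,\xi_0)$ with $E=\sqrtsymbol(\rho)^2$, we distinguish three cases.
\begin{itemize}
\item If $\xi_0=0$ and $V'(x_0)=0$, then $x_0$ is critical and lies in $\omega$; we take any small $t$ by continuity of the flow and openness of $\omega$.
\item If the level set component through $\rho$ contains no critical point of $V$ on its $x$-projection, then $\xi$ never vanishes along it: at a turning point $\xi=0$ with $V'\neq0$ the trajectory reverses, and otherwise it circulates. In the circulating case the projection covers all of $\TT^1$. In the oscillating case the projection is an interval $[a,b]$ with $V(a)=V(b)=E$ and $V<E$ inside. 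Such an interval contains a minimum of $V$, which is a critical point of $V$ and hence lies in $\omega$, and the periodic orbit passes over it.
\item If the component through $\rho$ contains critical points (a separatrix or homoclinic/heteroclinic level), the trajectory tends as $t\to\pm\infty$ to an equilibrium $(x_c,0)$ with $x_c$ critical. Since $x_c\in\omega$ with $\omega$ open, the trajectory enters $\omega$ in finite time.
\end{itemize}
In every case the trajectory of $\rho$ enters $\omega$ at some finite positive time.

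The main obstacle is the uniformity near separatrices, where periods blow up. The compactness argument above handles it, since it only needs a finite time for each individual $\rho$, and openness plus continuity then propagate that time to a neighbourhood. One must take care that the hitting time is positive, which is why we use forward convergence in the separatrix case, and that $\cotanginf M$ is compact with a continuous extended flow; both are provided by the radial compactification cited from~\cite{dyatlov2019mathematical}.
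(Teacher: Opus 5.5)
Your proof is correct. The necessity direction is the same fixed-point argument as the paper's Lemma~\ref{lem:GCC:and:no:GCCV}, but for sufficiency you take a different route.

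The paper splits \eqref{GCC+} into two parts:
\begin{itemize}
\item \eqref{GCC}, which is immediate on $\TT^1$ for $T>1$;
\item \eqref{GCCV}, which it proves quantitatively in Lemma~\ref{lem:circle}. Using the rescaled formulation~\eqref{eq:rescaled:condition} and conservation of $p$, it bounds by hand the time a trajectory can spend in $\TT^1\setminus\omega$ (where $\abs{V'}$ is bounded below) by $C/(2\sqrt{p(\rho_0)})$. This yields the explicit time $T\geq 4\max\set{2(\min_{\TT^1\setminus\omega}\abs{V'})^{-1/2},1}$.
\end{itemize}

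You instead work directly on the compact space $\cotanginf M$. A qualitative phase-portrait argument gives a positive hitting time for each individual point, including the points at infinity where the extended flow is the geodesic flow. Continuity of the extended flow together with compactness then gives a uniform $T$. Your argument is softer and cleaner: the high-frequency regime and the blow-up of periods near separatrices are absorbed by compactness rather than by estimates, and it works for arbitrary smooth, non-Morse $V$. The paper's route buys an explicit control time in terms of $\min_{\TT^1\setminus\omega}\abs{V'}$, which compactness cannot provide.

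Two local repairs are needed in your case analysis.
\begin{itemize}
\item \textbf{The case split.} Your second and third cases should be distinguished by whether the connected component of $\set{\xi^2+V=E}$ through $\rho$ contains an \emph{equilibrium} $(x_c,0)$. Splitting on whether its $x$-projection contains a critical point of $V$ does not work. As written, your second case is empty, since you yourself use that the projection always contains a critical point. Your third case would then include periodic orbits, for which convergence to an equilibrium fails. Likewise, ``$\xi$ never vanishes'' should read ``the orbit contains no equilibrium''.
\item \textbf{The convergence claim.} This needs one line of justification. Along the orbit $\dot x^2=(E-V(x))/E$, so $x$ is monotone until it reaches a point $b$ with $V(b)=E$. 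If $V'(b)\neq0$, this takes finite time and $x$ turns back. If $V'(b)=0$, then $E-V(x)=O((b-x)^2)$, so $b$ is approached only as $t\to+\infty$.
\end{itemize}
Applying this at most twice classifies every non-stationary orbit into one of three types, which is all you need:
\begin{itemize}
\item circulating;
\item oscillating between two regular turning points, with a critical point strictly inside by Rolle;
\item converging forward to a critical point.
\end{itemize}
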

\begin{figure}
\begin{center}
\begin{tikzpicture}
\def\e{0.025}
\begin{axis}[
width=12cm, height=7cm,
axis lines=middle,
ylabel={$V(x)$},
xmin=0, xmax=1.05,
ymin=0, ymax=1.05,
xtick={0, 1},
ytick={0, 1},
domain=0:1.01,
samples=400,
every axis y label/.style={at={(ticklabel* cs:1.05)}},
clip=false
]
\node[above] at (axis cs:1.1,0) {$x\in\TT^1 \simeq [0, 1)$} ;
\node[below left] at (axis cs:0,0) {$0$} ;
\node[below] at (axis cs:0.220382,-0.02) {$x_2$} ;
\fill[gray!30] (axis cs:{0.220382-2*\e}, 0) -- (axis cs:{0.220382 - 2*\e},1) -- (axis cs:{0.220382 + 2*\e},1) -- (axis cs:{0.220382 + 2*\e},0) -- cycle;
\draw[dashed] (axis cs:0.220382, 0) -- (axis cs:0.220382,1) ;
\node[below] at (axis cs:0.769477,-0.02) {$x_4$} ;
\fill[gray!30] (axis cs:{0.769477-3*\e}, 0) -- (axis cs:{0.769477 - 3*\e},1) -- (axis cs:{0.769477 + 3*\e},1) -- (axis cs:{0.769477 + 3*\e},0) -- cycle;
\draw[dashed] (axis cs:0.769477, 0) -- (axis cs:0.769477,1) ;
\node[below] at (axis cs:0.0382971,-0.02) {$x_1$} ;
\fill[gray!30] (axis cs:{0.0382971-\e}, 0) -- (axis cs:{0.0382971 - \e},1) -- (axis cs:{0.0382971 + \e},1) -- (axis cs:{0.0382971 + \e},0) -- cycle;
\draw[dashed] (axis cs:0.0382971, 0) -- (axis cs:0.0382971,1) ;
\node[below] at (axis cs:0.471845,-0.02) {$x_3$} ;
\fill[gray!30] (axis cs:{0.471845-0.5*\e}, 0) -- (axis cs:{0.471845 - 0.5*\e},1) -- (axis cs:{0.471845 + 0.5*\e},1) -- (axis cs:{0.471845 + 0.5*\e},0) -- cycle;
\draw[dashed] (axis cs:0.471845, 0) -- (axis cs:0.471845,1) ;
\draw (axis cs:0, 0) -- (axis cs:1,0) ;
\addplot [thick] { (2.1 + sin(deg(2*pi*(x - 0.1))) + cos(deg(4*pi*(x + 0.5)))) / 3.759045};
\node[left] at (axis cs:0.769477, 0.5) {$\omega$};
\draw[dashed, gray!50] (axis cs:1, 0) -- (axis cs:1,1) ;
\end{axis}
\end{tikzpicture}
\end{center}
\caption{Illustration, in the one-dimensional case $M = \TT^1$ equipped with the Euclidean metric, of a pair $(\omega, V)$ such that there is a time $T > 0$ for which~\ref{GCC+}$(\omega, T)$ holds.
This case is already treated for Schrödinger quasimodes in~\cite{laurent23uniform1D} with a boundary and less regularity assumptions on $V$.}
\label{fig:1D:circle}
\end{figure}
\begin{proposition}\label{pr:2D:torus}
Let $M$ be the two-dimensional torus $\TT^2 = \RR^2/\ZZ^2\simeq [-\frac{1}{2}, \frac{1}{2})$ equipped with the Euclidean metric.
For a radius $r\in (0, \frac{1}{2})$, let $\Ve \in\smooth(\RR, (0, 1])$ be an increasing function on $[0, r^2]$ and 1 near $[r^2, 1]$.
Assume that $V\in\smooth\p{\TT^2}$ satisfies
\begin{equation*}
V(x) = \Ve(\abs{x}^2),\quad x\in\TT^2
\end{equation*}
and $\omega$ is an open subset of $\TT^2$ containing $\set{x\in\TT^2,~ \abs{x}^2 > r}\cup [0, r]\times \set{0}$.
Then, there is a time $T > 0$ satisfying~\ref{GCC+}$(\omega, T)$.
\end{proposition}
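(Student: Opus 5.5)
The plan is to reduce \ref{GCC+} to a qualitative statement and then check it by a conservation-law analysis of the flow of $\sqrtsymbol$. The reduction: it is enough to prove that for every $\rho\in\cotanginf \TT^2$ there is some $t>0$ with $\pi_M\circ\varphi^{\sqrtsymbol}_t(\rho)\in\omega$. Indeed, $H_\sqrtsymbol$ extends to a smooth vector field on the compact manifold $\cotanginf\TT^2$ (tangent to the boundary), so its flow is continuous. Hence each set $U_t:=\set{\rho,~\pi_M\circ\varphi^{\sqrtsymbol}_t(\rho)\in\omega}$ is open, and the open sets $U_t$, $t>0$, cover $\cotanginf\TT^2$. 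By compactness, finitely many $U_{t_1},\dots,U_{t_N}$ already cover it, and any $T>\max_j t_j$ works. So the proof splits into two cases: the fibre-infinity part ($\partial\cotanginf\TT^2$, which is \eqref{GCC}) and the finite part ($\cotang\TT^2$, which is \eqref{GCCV}). Throughout, I read the hypotheses as saying that the disc $D:=\set{\abs{x}\leq r}$ (or the region where $V<1$) lies strictly inside the fundamental square. I also use that $\Ve'>0$ on $(0,r^2)$, so that the only critical point of $V$ in $D$ is the origin; if the statement only gives monotonicity in the weak sense, this is the hypothesis I would need to make precise.

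\textbf{Fibre infinity.} On $\partial\cotanginf\TT^2$ the flow of $\sqrtsymbol$ is a reparametrization of the homogeneous geodesic flow, whose trajectories are straight lines in $\TT^2$. The lift of such a line to $\RR^2$ is unbounded, so it leaves the central copy of $D$. Since $D$ sits inside the fundamental domain, the projected line then lies in the region $\set{\abs{x}>r}$, which is contained in $\omega$. (A line through the origin is in any case already in $\omega$ at the instant it meets $[0,r]\times\set{0}$.)

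\textbf{Finite part.} Write Hamilton's equations for $\sqrtsymbol$:
\[
\dot x=\xi/\sqrtsymbol,\qquad \dot\xi=-\nabla V(x)/(2\sqrtsymbol).
\]
Both $\sqrtsymbol$ and $p$ are conserved. Inside $D$ the potential is radial, so the angular momentum $L=x_1\xi_2-x_2\xi_1$ is conserved as long as the trajectory stays in $D$. If the trajectory ever leaves $D$, it enters $\omega$, so I may assume it stays in $D$ for all $t>0$. There are three cases.
\begin{itemize}
\item If $L\neq0$, in polar coordinates $\dot\theta=L/(\abs{x}^2\sqrtsymbol)$. This has a fixed sign and $\abs{\dot\theta}\geq \abs{L}/(r^2\sqrtsymbol)$ with $\sqrtsymbol$ constant. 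So $\theta$ sweeps a full turn in finite time, and the trajectory crosses the ray $\theta=0$ inside $D$, i.e.\ it meets $[0,r]\times\set{0}\subset\omega$.
\item If $L=0$, the motion is along a line through the origin. If $\xi=0$ and $x$ is a critical point of $V$, then $x=0\in\omega$ and the trajectory is stationary there.
\item Otherwise $\nabla V$ points toward $0$, so the radial force is attractive. Monotonicity of $\abs{x}$ together with energy conservation then forces the particle to reach the origin in finite time (it cannot accumulate at a non-critical point), and the origin lies in $\omega$.
\end{itemize}

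The main obstacle I expect is this radial case $L=0$. Near the origin $\nabla V$ vanishes, so one has to rule out a trajectory that creeps toward $0$ without reaching it in finite positive time. The first approach I would try is to note that $0\in\omega$ with $\omega$ open: it then suffices to enter a small ball $B(0,\delta)\subset\omega$. Entering that ball follows from $\abs{\nabla V}\geq c_\delta>0$ on $\set{\delta\leq\abs{x}\leq r}$, which gives a uniform inward acceleration there and hence a finite entry time.
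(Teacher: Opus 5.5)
Your compactness reduction on $\cotanginf\TT^2$, the fibre-infinity argument and the case $L\neq0$ are fine. The radial case, however, has a genuine gap, and it cannot be closed. You assume that the origin is the only critical point of $V$ in $D$ (second bullet) and that $\abs{\nabla V}\geq c_\delta>0$ on $\set{\delta\leq\abs{x}\leq r}$ (last paragraph). You present this as a matter of reading ``increasing'' strictly, but it is not. The hypothesis $\Ve\equiv1$ near $[r^2,1]$ forces $\Ve'(r^2)=0$, and, reading ``near'' literally, $\Ve'\equiv0$ on a left neighbourhood of $r^2$. Hence $\nabla V(x)=2\Ve'(\abs{x}^2)\,x$ vanishes on the circle $\set{\abs{x}=r}$, and in fact on a whole annulus $\set{r_0<\abs{x}\leq r}\subset D$. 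Radial trajectories with $\xi\neq0$ are harmless there, since they are straight lines. But for $\rho=(x_0,0)$ with $x_0$ in this set, $H_{\sqrtsymbol}(\rho)=0$, so $\rho$ is a fixed point of $\varphi^{\sqrtsymbol}$ with $L=0$ and $x_0\neq0$. Take for $\omega$ the set $\set{\abs{x}>r}$ together with an open $\eta$-neighbourhood of $[0,r]\times\set{0}$, with $\eta<r$. Then $x_0=(0,r)\notin\omega$, so \eqref{GCCV} fails for every $T$; this is exactly Lemma~\ref{lem:GCC:and:no:GCCV}. The proposition is therefore false as stated, and no argument can handle this case.

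What is missing is a hypothesis placing the critical set of $V$ inside $\omega$. For instance, take $\Ve'>0$ on $(0,r_0^2)$, $\Ve\equiv1$ on $[r_0^2,\infty)$ and $\omega\supset\set{\abs{x}\geq r_0}$, hence $\omega\supset\set{\abs{x}>r_0-\eta'}$ for some $\eta'>0$. You are also right to read $\set{\abs{x}^2>r}$ as $\set{\abs{x}>r}$: read literally, for $r>1/4$ and $\omega$ as above, the closed lines $x_2=c$ with $\eta<c\leq\sqrt{r-1/4}$ avoid $\omega$, so even \eqref{GCC} fails.

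The paper's proof tacitly makes the same assumption. It applies Proposition~\ref{pr:revolution:surface} on an annulus $\tilde R_1\leq\abs{x}\leq\tilde R_2$, which requires $\min\abs{\Ve'}>0$ there, with $\omega\supset\set{\dist(x,\Uc)<\epsilon}\supset\set{\abs{x}>\tilde R_2-\epsilon}$. So the flat zone lies in $\omega$ by construction.

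Under the corrected hypothesis, and with $D$ replaced by $\set{\abs{x}\leq r_0-\eta'}$, your proof goes through and follows a genuinely different route.
\begin{itemize}
\item The paper checks \eqref{GCC} directly and obtains \eqref{GCCV} from the quantitative bound $t_1<T/(2\sqrt{p(\rho_0)})$ of \eqref{eq:rescaled:condition}. It uses a low-energy compactness argument and two high-energy cases: large $\xi_\theta$ forces fast rotation into the sector, and large $\xi_s$ forces a fast exit from the annulus.
\item Your reduction is legitimate because $\sqrtsymbol$ is a classical symbol of order one, so $H_{\sqrtsymbol}$ extends smoothly to $\cotanginf\TT^2$ tangent to the boundary. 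It treats both conditions at once and makes everything qualitative; angular-momentum conservation then handles all energies uniformly.
\end{itemize}
Your route is shorter, but it gives no explicit $T$.
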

\begin{figure}
\begin{center}
\begin{tikzpicture}
    \def\w{2.5}
    \def\R{2.3}
    \def\r{0.15}
    \def\a{10}
    \draw[fill = gray!30] (-\w, -\w) -- (\w, -\w) -- (\w, \w) -- (-\w, \w) -- cycle;
    \draw[fill=white] (0,0) circle (\R);
    \draw[fill=gray!30, thick] (0,0) circle (\r);
    \draw[thick]
    (0,0) -- ({\R*cos(-\a)}, {\R*sin(-\a)});
    \draw[thick]
    (0,0) -- ({\R*cos(\a)}, {\R*sin(\a)});
    \fill[gray!30] (0,0) -- ({\w*cos(-\a)}, {\w*sin(-\a)}) arc[start angle=-\a, end angle=\a, radius=\w] -- cycle;
    \fill[gray!30] (0,0) circle (\r);
    \draw[dashed] (0,0) -- (\w, 0);
    \draw[->] ({\R *4/5}, 0) arc[start angle=0, end angle=\a, radius={\R*4/5}];
    \draw ({\R * 4/5}, 0) node[above right] {$\varepsilon$};
    \draw (0, {\R/2}) node[left] {$\tilde{R}_2$};
    \draw (0, 0) -- (0, \R);
    \draw ({\r*cos(230)}, {\r*sin(230)}) node[below left] {$\tilde{R}_1$};
    \draw (0, 0) -- ({\r*cos(230)}, {\r*sin(230)});
    \draw ({\w/2}, -\w) node[above] {$\omega$};
\end{tikzpicture}
\end{center}
\caption{Illustration of Proposition~\ref{pr:2D:torus} on $M = \TT^2$, equipped with the Euclidean metric, of an open subset $\omega$, such that there is $T$ for which the pair $(\omega, T)$ satisfies~\eqref{GCC+}.
In this example, we set two radii $0 < \tilde{R}_1< \tilde{R}_2 < \frac{1}{2}$ and an angle $\varepsilon \in (0, \pi)$.}\label{eg:2D:torus}
\end{figure}
\begin{proposition}\label{pr:2D:sphere}
Let $M$ be the two-dimensional sphere $\SS^2$ equipped with its canonical metric induced by the Euclidean metric of $\RR^3$, $S$ and $N$ be its south and north pole, and $\Ve \in\smooth([0, \pi^2], (0, 1])$ be an increasing function.
Then, for all potential
\begin{equation*}
V\colon x\in\SS^2\mapsto \Ve(\dist_g(x, S)^2)
\end{equation*}
and all connected open subset of $\SS^2$ containing $\set{S, N}$, denoted $\omega$, there exists a time $T>0$ such that $(\omega, T)$ satisfies~\eqref{GCC+}.
\end{proposition}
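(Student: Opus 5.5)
The plan is to reduce~\eqref{GCC+} to a qualitative statement, using compactness of $\cotanginf \SS^2$. The Hamiltonian vector field $H_{\sqrtsymbol}$ extends to a smooth vector field on $\cotanginf \SS^2$ tangent to the boundary (\cite[Proposition E.5]{dyatlov2019mathematical}), and on the boundary it is the homogeneous geodesic flow. For each $\rho\in\cotanginf\SS^2$, suppose we find $t_\rho>0$ with $\pi\circ\varphi^{\sqrtsymbol}_{t_\rho}(\rho)\in\omega$. Since $\omega$ is open and the flow is continuous, the same then holds for all $\rho'$ in a neighbourhood $U_\rho$ of $\rho$. A finite subcover gives a uniform $T>\max t_{\rho_j}$, so~\eqref{GCC+} holds for $(\omega,T)$. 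It therefore suffices to show that \emph{every} trajectory of the compactified flow eventually enters $\omega$.

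\emph{Fibers at infinity.} Here the trajectories are unit-speed great circles $C$, which are $2\pi$-periodic. Since $\omega$ is connected and open, it contains a continuous path $\gamma$ from $S$ to $N$. If $C$ passes through $S$ or $N$, we are done. Otherwise $C$ splits $\SS^2$ into two hemispheres exchanged by the antipodal map, so $S$ and $N=-S$ lie in different components. Hence $\gamma$ crosses $C$, and $C\cap\omega\neq\emptyset$.

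\emph{Finite part.} Use geodesic polar coordinates $(r,\theta)$ centred at $S$, with metric $\d r^2+\sin^2 r\,\d\theta^2$. The $\sqrtsymbol$-flow is a reparametrization of the $p$-flow on level sets of $p$, with speed factor $1/(2\sqrtsymbol)$ bounded above and below on compact sets, so it is enough to work with
\begin{equation*}
p=\xi_r^2+\frac{\xi_\theta^2}{\sin^2 r}+\Ve(r^2).
\end{equation*}
By rotational symmetry, $L=\xi_\theta$ is conserved, as is $p$. I assume (and would need to use) that $\Ve'>0$ on $(0,\pi^2)$. Without this, a circle $\{r=r_0\}$ with $\Ve'(r_0^2)=0$ that avoids $\omega$ would carry equilibria $\xi=0$ of the flow that never enter $\omega$.

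\emph{Case $L=0$.} The motion stays on a meridian, i.e.\ a great circle through $S$ and $N$, and $\dot\xi_r=-2r\Ve'(r^2)<0$ away from the poles. Points at $S$ or $N$ already lie in $\omega$. Otherwise $\xi_r$ decreases at a rate bounded below as long as the trajectory stays outside fixed small balls around $S$ and $N$, and $\xi_r$ is bounded on the energy level. Hence the trajectory must reach such a ball in finite time. Choosing the balls inside $\omega$ gives the entry.

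\emph{Case $L\neq0$.} Here $\dot\theta=2L/\sin^2 r$ has a fixed sign and modulus at least $2\abs{L}$. Conservation of $p$ confines $r$ to an interval $[r_-,r_+]\subset(0,\pi)$. Lift to the strip $(0,\pi)\times\RR\ni(r,\theta)$, the universal cover of $\SS^2\setminus\set{S,N}$. A lift of $\gamma$, truncated to where it first reaches $r_-$ and $r_+$ (after which it only needs to reach the poles, which lie in $\omega$ anyway), together with its $2\pi k$-translates, cuts $[r_-,r_+]\times\RR$ into bounded-in-$\theta$ pieces. The trajectory's lift has $\theta(t)\to\pm\infty$ monotonically, so it must cross one of these translates within the time needed for $\theta$ to vary by about $4\pi$ plus the $\theta$-width of the lifted path. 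Projecting back, the trajectory meets $\gamma\subset\omega$.

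\textbf{Main obstacle.} I expect the main difficulty to be this planar separation argument for $L\neq0$, i.e.\ making the Jordan-type crossing rigorous in the strip when $\gamma$ may be wild. A workable route is to first replace $\gamma$ by a piecewise geodesic path inside $\omega$. The other delicate point is the hypothesis on $\Ve$: strict monotonicity is exactly what rules out trapped equilibria outside $\omega$.
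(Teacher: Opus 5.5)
Your proof is correct, but it takes a different route from the paper's.

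\textbf{The paper's route.} The paper deduces the statement from Proposition~\ref{pr:revolution:surface}. It removes polar caps $\Uc$ and works on the band $\Sc\simeq[0,L]\times\SS^1$, where $\min\abs{\Ve'}>0$. There it bounds the first entrance time of the $p$-flow into $\omega$ by $T/(2\sqrt{p(\rho_0)})$ through an energy split. Bounded energy is handled by compactness and the meridian dynamics. At high energy, either $\abs{\xi_\theta}\geq c\sqrt{p}$ and $\theta$ sweeps the sector $\set{\abs{\theta}<\epsilon}$ in time $\O{p^{-1/2}}$, or $\abs{\xi_s}\geq c\sqrt{p}$ and the orbit leaves the band in time $\O{p^{-1/2}}$. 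It then converts times with \eqref{eq:rescaled:condition} and checks \eqref{GCC} separately on great circles.

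\textbf{Your route.} You use compactness of $\cotanginf\SS^2$ and continuity of the extended flow to reduce \eqref{GCC+} to a qualitative entrance property. High-energy orbits are then controlled for free by their limiting geodesics at infinity. You also replace the meridian sector by an arbitrary path $\gamma\subset\omega$ from $S$ to $N$, combined with conservation of $L=\xi_\theta$ and a crossing argument.

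\textbf{What each buys.} Your approach is more general. The paper's proof is carried out only for $\omega$ of the special form \eqref{df:omega:surface:revolution}, i.e.\ caps plus a meridian sector, and hence for supersets of rotated copies of such sets. That misses, for instance, a thin neighbourhood of an arc spiralling from $S$ to $N$. Your argument covers every connected open $\omega\ni S,N$, as the statement asserts. In return, the paper's route gives the quantitative $p^{-1/2}$ scaling of entrance times, and a framework that also yields Proposition~\ref{pr:2D:torus}.

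\textbf{Two remarks on your steps.} Your caveat that $\Ve'>0$ on $(0,\pi^2)$ is needed is right. The paper uses the same hypothesis through $\min\abs{\Ve'}>0$ in \eqref{df:potential:of:revolution}, and Lemma~\ref{lem:GCC:and:no:GCCV} shows it cannot be dropped. The crossing step you flag as the main obstacle needs no polygonal approximation:
\begin{itemize}
\item take a sub-arc of $\gamma$ lying in $\set{r_-\leq r\leq r_+}$ that joins its two boundary circles, and lift it;
\item extend a piece of the lifted orbit vertically beyond the $\theta$-range of a suitable $2\pi\ZZ$-translate of that lifted sub-arc;
\item apply the classical fact that, in a rectangle, a continuous path joining two opposite sides meets every continuous path joining the other two sides.
\end{itemize}
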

\begin{figure}
\begin{center}
\begin{tikzpicture}[x=0.75pt,y=0.75pt,yscale=-0.6,xscale=0.6]
\draw  [draw opacity=0][fill=gray!50] (266.43,58) .. controls (381.72,64.98) and (248.72,71.48) .. (227.72,69.48) .. controls (-17.78,157.98) and (83.72,422.48) .. (228.72,442.48) .. controls (246.22,428.98) and (386.22,448.98) .. (266.36,456.54) .. controls (139.58,455.11) and (66.58,351.44) .. (67.12,257.31) .. controls (67.66,163.17) and (136.91,60.91) .. (266.43,58) -- cycle ;
\draw    (266.43,56.72) -- (266.43,30.06) ;
\draw [shift={(266.43,28.06)}, rotate = 90] [line width=0.75]    (10.93,-3.29) .. controls (6.95,-1.4) and (3.31,-0.3) .. (0,0) .. controls (3.31,0.3) and (6.95,1.4) .. (10.93,3.29)   ;
\draw    (465.09,257.31) -- (527.43,257.31) ;
\draw [shift={(529.43,257.31)}, rotate = 180] [line width=0.75]    (10.93,-3.29) .. controls (6.95,-1.4) and (3.31,-0.3) .. (0,0) .. controls (3.31,0.3) and (6.95,1.4) .. (10.93,3.29)   ;
\draw    (33.39,257.31) -- (67.12,257.31) ;
\draw  [dash pattern={on 4.5pt off 4.5pt}]  (67.12,257.31) -- (463.39,257.31) ;
\draw    (266.43,472.06) -- (266.43,456.62) ;
\draw  [dash pattern={on 4.5pt off 4.5pt}]  (266.43,456.62) -- (266.43,56.72) ;
\draw  [dash pattern={on 0.84pt off 2.51pt}]  (266.43,257.31) -- (359.36,332.96) ;
\draw  [dash pattern={on 4.5pt off 4.5pt}]  (407.54,116.2) -- (125.31,398.42) ;
\draw    (466.43,57.31) -- (407.54,116.2) ;
\draw    (125.31,398.42) -- (67.84,455.89) ;
\draw [shift={(66.43,457.31)}, rotate = 315] [line width=0.75]    (10.93,-3.29) .. controls (6.95,-1.4) and (3.31,-0.3) .. (0,0) .. controls (3.31,0.3) and (6.95,1.4) .. (10.93,3.29)   ;
\draw  [draw opacity=0] (284.36,470.4) .. controls (284.38,470.4) and (284.4,470.4) .. (284.43,470.4) .. controls (339.65,470.4) and (384.43,381.2) .. (384.43,271.17) .. controls (384.43,254.64) and (383.41,238.57) .. (381.51,223.21) -- (284.43,271.17) -- cycle ;
\draw[->]   (284.36,470.4) .. controls (284.38,470.4) and (284.4,470.4) .. (284.43,470.4) .. controls (339.65,470.4) and (384.43,381.2) .. (384.43,271.17) .. controls (384.43,254.64) and (383.41,238.57) .. (381.51,223.21) ;  
\draw  [dash pattern={on 0.84pt off 2.51pt}]  (266.43,455.64) -- (284.36,470.4) ;
\draw  [draw opacity=0] (296.42,256.93) .. controls (296.42,257.05) and (296.43,257.18) .. (296.43,257.31) .. controls (296.43,264.65) and (293.79,271.37) .. (289.41,276.58) -- (266.43,257.31) -- cycle ; \draw    (296.42,256.93) .. controls (296.42,257.05) and (296.43,257.18) .. (296.43,257.31) .. controls (296.43,264.65) and (293.79,271.37) .. (289.41,276.58) ;
\draw   (67.12,257.31) .. controls (67.12,147.23) and (156.35,58) .. (266.43,58) .. controls (376.5,58) and (465.73,147.23) .. (465.73,257.31) .. controls (465.73,367.38) and (376.5,456.62) .. (266.43,456.62) .. controls (156.35,456.62) and (67.12,367.38) .. (67.12,257.31) -- cycle ;
\draw  [draw opacity=0] (266.36,456.54) .. controls (266.38,456.54) and (266.4,456.54) .. (266.43,456.54) .. controls (321.65,456.54) and (366.43,367.34) .. (366.43,257.31) .. controls (366.43,148.25) and (322.44,59.66) .. (267.89,58.1) -- (266.43,257.31) -- cycle ; \draw   (266.36,456.54) .. controls (266.38,456.54) and (266.4,456.54) .. (266.43,456.54) .. controls (321.65,456.54) and (366.43,367.34) .. (366.43,257.31) .. controls (366.43,148.25) and (322.44,59.66) .. (267.89,58.1) ;  
\draw[->] (497.26,464.98) -- (497.26,34.81);
\draw    (491.9,58) -- (502.4,58) ;
\draw    (491.9,455.64) -- (502.4,455.64) ;
\draw  [dash pattern={on 0.84pt off 2.51pt}]  (363.63,208.28) -- (497.26,208.28) ;
\draw  [dash pattern={on 0.84pt off 2.51pt}]  (363.18,208.28) -- (381.11,223.04) ;
\draw[dash pattern={on 0.84pt off 2.51pt}] (266.43,129.52) -- (363.18,208.28);
\draw[thick] (266.43,257.31) -- (363.63,208.28);
\draw (385.83,325.23) node [anchor=north west][inner sep=0.75pt]    {$s$};
\draw (296.49,256.23) node [anchor=north west][inner sep=0.75pt]    {$\theta $};
\draw (162.67,87) node [anchor=north west][inner sep=0.75pt]    {$\omega $};
\draw (365.33,179.4) node [anchor=north west][inner sep=0.75pt]    {$( s, \theta )$};
\draw (503, 445.54) node [anchor=north west][inner sep=0.75pt]    {$\Ve(0) > 0$};
\draw (503, 48.9) node [anchor=north west][inner sep=0.75pt]    {$1=\Ve(\pi^2)$};
\draw (503, 198) node [anchor=north west][inner sep=0.75pt]    {$\Ve(s^2)$};
\draw (271,35.53) node [anchor=north west][inner sep=0.75pt]    {$N$};
\draw (250,459.53) node [anchor=north west][inner sep=0.75pt]    {$S$};
\end{tikzpicture}
\end{center}
\caption{Illustration of Proposition~\ref{pr:2D:sphere} on $M = \SS^2$, equipped with the metric induced by $\RR^3$, of an open subset $\omega$, such that there is $T$  for which the pair $(\omega, T)$ satisfies~\eqref{GCC+}.
}\label{eg:2D:sphere}
\end{figure}
\paragraph{Comparing~\eqref{GCC} and~\eqref{GCCV}.}
On the one hand, even if Lemma~\ref{lem:GCC+:implies:GCC} states (for any pair $(\omega, T)$ and any $\delta, \epsilon > 0$) that~\ref{GCCV}$(\omega, T)$ implies~\ref{GCC}$(\omega_\delta, T + \varepsilon)$, a pair can satisfy~\eqref{GCCV} but not~\eqref{GCC}, see Lemma~\ref{lem:GCCV-and-non-GCC} and Figure~\ref{fig:GCCV-and-non-GCC} for a concrete example.
On the other hand, with the following Lemma~\ref{lem:GCC:and:no:GCCV}, we now provide examples where~\eqref{GCC} holds and~\eqref{GCCV} does not.
The proofs are provided in Subsection~\ref{subsection:GCCV:GCC}.
\begin{lemma}\label{lem:GCCV-and-non-GCC}
Let $M$ be the two-dimensional torus $\TT^2 = \TT^1_{x_1}\times \TT^1_{x_2}\simeq [-\frac{1}{2}, \frac{1}{2})^2$ equipped with the Euclidean metric.
Assume $V$ satisfies~\eqref{df:potential:fixed:V} and that there is $c\in\TT^1$ such that
\begin{equation}\label{eq:3}
\forall x_2\in \TT^1, \quad \partial_{x_1}V (c, x_2) < 0,
\end{equation}
and let  $\omega := \set{(x_1, x_2)\in\TT^2,~x_1\neq c}$.
Then, for all $T > 0$, \ref{GCCV}$(\omega , T)$ holds but~\ref{GCC}$(\omega , T)$ does not.
\end{lemma}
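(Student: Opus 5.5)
The plan has two parts: the failure of \eqref{GCC}, which is immediate, and \eqref{GCCV}, which reduces to a one-dimensional monotonicity argument along the line $\set{x_1 = c}$.

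\textbf{Failure of \eqref{GCC}.} Take $\rho = ((c, 0), (0, 1))\in\cosphere\TT^2$. Its homogeneous geodesic is the vertical line $t\mapsto (c, t)$, which stays in $\set{x_1 = c} = M\setminus\omega$ for all times. Hence \ref{GCC}$(\omega, T)$ fails for every $T > 0$.

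\textbf{Validity of \eqref{GCCV}.} Fix $\rho = (x, \xi)\in\cotang\TT^2$ and write $(x(t), \xi(t)) = \varphi^{\sqrtsymbol}_t(\rho)$. Since $\sqrtsymbol$ is conserved along its own flow, $\sqrtsymbol(x(t),\xi(t)) = \sqrtsymbol(\rho) =: q_0 > 0$ (positive because $V > 0$). The Hamilton equations read
\begin{equation*}
\dot x = \frac{\xi}{q_0}, \qquad \dot \xi = -\frac{\nabla V(x)}{2q_0}.
\end{equation*}
We argue by contradiction: suppose $x(t)\notin\omega$ for all $t\in(0,T)$, that is, $x_1(t) = c$ on $(0, T)$. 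Then $\xi_1(t) = q_0\dot x_1(t) = 0$ on $(0,T)$, so $\dot\xi_1 = 0$ there. On the other hand,
\begin{equation*}
\dot\xi_1(t) = -\frac{\partial_{x_1}V(c, x_2(t))}{2q_0} > 0
\end{equation*}
by \eqref{eq:3}. This is a contradiction, so some $t\in(0,T)$ has $x_1(t)\neq c$, i.e.\ $\pi_M\circ\varphi^{\sqrtsymbol}_t(\rho)\in\omega$. The argument uses only the flow on an arbitrarily short interval, so it works for every $T>0$.

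\textbf{Main obstacle.} The main subtlety is that \eqref{GCCV} as stated ranges over all of $\cotang M$, including $\xi = 0$. The argument above covers that case too: the only inputs are $q_0 > 0$ and the strict sign in \eqref{eq:3}, neither of which depends on $\xi$. The points at fiber infinity, where the flow becomes the geodesic flow, are not required by \eqref{GCCV}; they are exactly where \eqref{GCC} fails, which is consistent with the lemma. One could also worry about uniformity of $t$ in $\rho$, but the definition of \eqref{GCCV} only asks for existence pointwise in $\rho$, so no uniformity is needed.
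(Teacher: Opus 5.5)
Your proposal is correct and follows essentially the same argument as the paper: a vertical geodesic inside $\set{x_1 = c}$ shows that \eqref{GCC} fails, and conservation of $\sqrtsymbol$ along its flow gives $\dot\xi_1 = -\partial_{x_1}V(c, x_2)/(2q_0) > 0$ on the line $\set{x_1 = c}$, so no $\varphi^{\sqrtsymbol}$-trajectory can stay on that line. Your contradiction formulation has a small advantage: it handles every initial point at once, whereas the paper writes out only the case $(x_1, \xi_1) = (c, 0)$, where $\xi_1$ becomes positive and hence $x_1 > c$ for short positive times.
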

\begin{figure}
\begin{center}
\tikzset{
pattern size/.store in=\mcSize, 
pattern size = 5pt,
pattern thickness/.store in=\mcThickness, 
pattern thickness = 0.3pt,
pattern radius/.store in=\mcRadius, 
pattern radius = 1pt}
\makeatletter
\pgfutil@ifundefined{pgf@pattern@name@_98xmizcpf}{
\pgfdeclarepatternformonly[\mcThickness,\mcSize]{_98xmizcpf}
{\pgfqpoint{0pt}{0pt}}
{\pgfpoint{\mcSize+\mcThickness}{\mcSize+\mcThickness}}
{\pgfpoint{\mcSize}{\mcSize}}
{
\pgfsetcolor{\tikz@pattern@color}
\pgfsetlinewidth{\mcThickness}
\pgfpathmoveto{\pgfqpoint{0pt}{0pt}}
\pgfpathlineto{\pgfpoint{\mcSize+\mcThickness}{\mcSize+\mcThickness}}
\pgfusepath{stroke}
}}
\makeatother
\begin{tikzpicture}
  \begin{axis}[
    width=8.5cm, height=7cm,
    axis lines=middle,
    xlabel={$x_1$},
    ylabel={$V(x_1)$},
    xmin=0, xmax=1.05,
    ymin=0, ymax=1.05,
    xtick={0, 1},
    ytick={0, 1},
    domain=0:1.05,
    samples=400,
    grid=both,
    grid style={dashed,gray!30},
    every axis y label/.style={at={(ticklabel* cs:1.05)}},
    clip=false
  ]
    \addplot [thick] { (1.1 + sin(deg(2*pi*(x - 0.1)))) / 2.1 };
    \draw[dashed] (axis cs:0.4,0) -- (axis cs:0.4,0.9766935791881683) ;
    \draw[dotted] (axis cs:0,0.047619047619047616) -- (axis cs:0.85,0.047619047619047616) ;
    \draw[dotted] (axis cs:0,0.2439117846226318) -- (axis cs:1,0.2439117846226318) ;
    \node[below] at (axis cs:0.4,-0.02) {$c$} ;
    \node[below] at (axis cs:-0.1, 0.1) {$\min V$} ;
    \node[below] at (axis cs:-0.1, 0.3) {$V(0)$} ;
    \node[below] at (axis cs:0,0) {$0$} ;
  \end{axis}
\end{tikzpicture}
\hspace{1cm}
\begin{tikzpicture}[x=0.75pt,y=0.75pt,yscale=-0.6,xscale=0.6]
\draw (161.67,3.17) -- (509.39,3.17) -- (509.39,350.83) -- (161.67,350.83) -- cycle ;
\draw [color={rgb, 255:red, 0; green, 0; blue, 0 }, draw opacity = 1]   (279.89,3.17) -- (279.89,350.83) ;
\draw[->] [line width=1.5] (279.89,336) -- (279.89,32);
\draw [shift={(279.89,336)}, rotate = 90] [color={rgb, 255:red, 0; green, 0; blue, 0 }  ][fill={rgb, 255:red, 0; green, 0; blue, 0 }  ] (0, 0) circle [x radius= 4.36, y radius= 4.36]   ;
\draw[->][line width=1.5] (279.89,336) .. controls (279.89,210) and (295,100) .. (320,32);
\draw[->] (450,330) -- (480,330);
\draw (480,330) node [anchor=west][inner sep=0.75pt] {$x_1$};
\draw[->] (450,330) -- (450,300);
\draw (450,300) node [anchor=south][inner sep=0.75pt] {$x_2$};
\draw (190, 320) node [anchor=north west][inner sep=0.75pt]    {$\pi_{\TT^2}(\eta_0)$};
\draw (512.9,324.4) node [anchor=north west][inner sep=0.75pt]    {$\TT^{2}$};
\draw (230,360) node [anchor=north west][inner sep=0.75pt]  [color={rgb, 255:red, 0; green, 0; blue, 0 }  ,opacity=1 ]  {$\TT^2\setminus \omega = \set{x_1 = c}$};
\draw (180, 10) node [anchor=north west][inner sep=0.75pt]    {$\varphi_{t}^{\abs{\xi}}(\eta_0)$};
\draw (330,10) node [anchor=north west][inner sep=0.75pt]    {$\varphi_{t}^{\sqrtsymbol}(\eta_0)$};
\end{tikzpicture}
\end{center}
\caption{
Illustration of Lemma~\ref{lem:GCCV-and-non-GCC} on $M = \TT^2$, equipped with the Euclidean metric, of a \emph{control open} subset $\omega = \set{x_1 \neq c}$ and a potential $V$ only depending on $x_1$, with $\partial_{x_1}V(c, x_2) < 0$, such that, for any time $T > 0$,~\ref{GCCV}$(\omega , T)$ holds but~\ref{GCC}$(\omega , T)$ does not.}
\label{fig:GCCV-and-non-GCC}
\end{figure}
\begin{lemma}\label{lem:GCC:and:no:GCCV}
Let $\omega$ be an open subset of $M$ and $V$ be as~\eqref{df:potential:fixed:V}.
Assume that there is $x_0\in M$, a critical point of $V$, such that $x_0\notin \omega$.
Then, for any time $T > 0$,~\ref{GCCV}$(\omega, T)$ does not hold.
\end{lemma}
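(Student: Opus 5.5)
The idea is to exhibit a single point of $\cotang M$ that is an equilibrium of the Hamiltonian flow $\varphi^{\sqrtsymbol}$ and lies above $x_0$. Since $x_0$ is a critical point of $V$, the natural candidate is the zero covector over $x_0$. We set $\rho_0 := (x_0, 0)\in\cotang M$.

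First we compute $H_{\sqrtsymbol}$ at $\rho_0$. The function $\sqrtsymbol = \sqrt{p}$ is smooth on all of $\cotang M$, because $p \geq \min V > 0$ by~\eqref{df:potential:fixed:V}, and $H_{\sqrtsymbol} = \frac{1}{2\sqrtsymbol} H_p$. In local coordinates,
\begin{equation*}
H_p = \partial_\xi p\cdot\partial_x - \partial_x p\cdot \partial_\xi,
\quad
\partial_\xi p(x, \xi) = 2 g^{-1}(x)\xi,
\quad
\partial_x p(x, \xi) = \partial_x\p{g^{-1}(x)\xi\cdot\xi} + \nabla V(x).
\end{equation*}
At $\xi = 0$ the terms coming from $\abs{\xi}_g^2$ vanish, since they are quadratic in $\xi$. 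At $x = x_0$ we also have $\d V(x_0) = 0$ by assumption. Hence $H_{\sqrtsymbol}(\rho_0) = 0$.

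By uniqueness of integral curves of the smooth vector field $H_{\sqrtsymbol}$, it follows that $\varphi^{\sqrtsymbol}_t(\rho_0) = \rho_0$ for all $t\in\RR$. Consequently $\pi_M\circ\varphi^{\sqrtsymbol}_t(\rho_0) = x_0\notin\omega$ for every $t\in(0,T)$. Thus $\rho_0$ is a point of $\cotang M$ violating the condition in~\eqref{GCCV}, and this holds for every $T>0$.

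There is no real obstacle here. The only point requiring care is to use that $V>0$, so that $\sqrtsymbol$ is smooth near the zero section and the flow $\varphi^{\sqrtsymbol}$ is well defined there. The computation is coordinate-free in the sense that the critical point condition $\d V(x_0)=0$ and $\xi=0$ together kill both components of $H_p$.
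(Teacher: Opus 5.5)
Your proof is correct and follows the same route as the paper: $(x_0,0)$ is a zero of $H_{\sqrtsymbol}$, hence a fixed point of $\varphi^{\sqrtsymbol}$, and its projection stays at $x_0\notin\omega$ for all times. Your choice of the zero covector is more accurate than the paper's wording, which claims $H_{\sqrtsymbol}(x_0,\xi_0)=0$ for every $\xi_0\in\mathrm{T}^\star_{x_0}M$; for $\xi_0\neq 0$ the $\partial_x$-component $\partial_\xi\sqrtsymbol(x_0,\xi_0)$ is nonzero, so only $\xi_0=0$ gives an equilibrium, and that single point is all the argument needs.
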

Consequently, for every set $\omega$ with $\barre{\omega}\neq M$, there exists a potential $V$ such that, for all $T>0$, condition~\ref{GCCV}$(\omega, T)$ does not hold.
For instance $(\TT^1, \mathrm{Eucl})$ with $\omega$ an interval such that $\barre{\omega}\subsetneq \TT^1$ or $(\TT^2, \mathrm{Eucl})$ with $\omega \supset \set{x_1 = \frac{1}{2}\text{ or }x_2 = \frac{1}{2}}$ such that $\barre{\omega}\subsetneq \TT^2$.
\paragraph{Blow-up estimates.}
In some situations where~\eqref{GCCV} does not hold, we are able to provide an explicit blow-up rate of the \emph{optimal observability cost} for~\eqref{eq:Obs} defined as
\begin{equation}\label{df:optimal:Cobs}
\Cobs^{\mathrm{opt}}(\lmbd, V, T) := 
\sup_{
\begin{subarray}{c}
(u_0, u_1)\in H^1(M)\times L^2(M)\\
(u_0, u_1)\neq (0, 0)
\end{subarray}
}
\p*{\frac{\Escr_{\lmbd V}(u_0, u_1)}{\int_0^T\norm{b_\omega \partial_t u_{\lmbd}(t)}_{L^2(M)}^2\d t}}
\in (0, \infty],
\end{equation}
where $u_{\lmbd}$ is the $\Cscr^0\p{\p{0, T}, H^1\p{M}}\cap\Cscr^1\p{\p{0, T}, L^2\p{M}}$-solution of~\eqref{eq:Wave} with potential $\mathsf{V} = \lmbd V$ and initial data $(u_0, u_1)\in H^1(M)\times L^2(M)$.
\begin{proposition}\label{pr:Blow:Up:Estimates}
Assume $(M, g)$ is a smooth, compact, connected Riemannian manifold without boundary and $b_\omega$ is a continuous nonnegative function on $M$ such that $\omega = \set{b_\omega > 0}$ has its closure $\barre{\omega}$ different from $M$.
There is a potential $V$ satisfying~\eqref{df:potential:fixed:V} such that 
\begin{equation}\label{eq:blow:up:estimate}
\exists \mathsf{C}_\star > 0,~\forall T > 0,~\forall \lmbd \geq 1, \quad
\Cobs^{\mathrm{opt}}(\lmbd,  V, T) \geq \mathsf{C}_\star T^{-1}e^{\mathsf{C}_\star \sqrt{\lmbd}},
\end{equation}
where $\Cobs^{\mathrm{opt}}$ is the \emph{optimal observability cost} defined by~\eqref{df:optimal:Cobs}.
\end{proposition}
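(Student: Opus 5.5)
We construct $V$ with a strict nondegenerate local minimum at a point $x_0 \notin \barre{\omega}$. We then use an Agmon-type localized stationary state (a quasimode concentrated near $x_0$ and exponentially small on $\omega$) to test the observability inequality.

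\textbf{Choice of the potential.} Since $\barre{\omega}\neq M$, pick $x_0\in M\setminus\barre{\omega}$ and $r>0$ with $B_g(x_0,4r)\cap\omega=\emptyset$ and $4r$ below the injectivity radius. Take $V\in\smooth(M,(0,1])$ with $\norm{V}_{L^\infty}=1$ and
\begin{itemize}
\item $V(x)=a+\dist_g(x,x_0)^2$ on $B_g(x_0,2r)$ for some small $a>0$,
\item $V\geq a+r^2$ outside $B_g(x_0,r)$.
\end{itemize}
Then $x_0$ is the unique minimum in $B_g(x_0,2r)$, and the well $\set{V\leq a+r^2/4}$ is a compact set well separated from $\omega$.

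\textbf{Quasimode construction.} Write $-\Delta_g+\lmbd V = \lmbd\p{-h^2\Delta_g+V}$ with $h=\lmbd^{-1/2}$. Let $\psi_\lmbd$ be the (normalized) ground state of the Dirichlet realization of $-\Delta_g+\lmbd V$ on $B_g(x_0,2r)$, with eigenvalue $E_\lmbd$. Cut it off by $\chi\in\test(B_g(x_0,2r))$ with $\chi=1$ on $B_g(x_0,3r/2)$, and set $w_\lmbd=\chi\psi_\lmbd$.

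Agmon estimates apply since $E_\lmbd/\lmbd\to a$, which lies strictly below $\inf_{\supp\nabla\chi}V$. They give $\norm{e^{c\sqrt{\lmbd}\,d_A}\psi_\lmbd}_{H^1_{\lmbd V}}\leq C\sqrt{\lmbd}$, where $d_A$ is the Agmon distance to $x_0$ at energy $a$. Consequently
\begin{equation*}
\norm{\p{-\Delta_g+\lmbd V-E_\lmbd}w_\lmbd}_{L^2(M)}\leq Ce^{-c\sqrt{\lmbd}}, \qquad \norm{w_\lmbd}_{L^2}\geq 1/2 .
\end{equation*}
Moreover $w_\lmbd$ vanishes on $\omega$.

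\textbf{Testing the inequality.} Take initial data $(u_0,u_1)=(0,w_\lmbd)$. Then $\Escr_{\lmbd V}(0,w_\lmbd)\geq 1/8$. Compare $u_\lmbd$ with the explicit function
\begin{equation*}
\tilde u(t)=\frac{\sin(\sqrt{E_\lmbd}\,t)}{\sqrt{E_\lmbd}}\,w_\lmbd ,
\end{equation*}
which satisfies the wave equation up to the source $f=-\frac{\sin(\sqrt{E_\lmbd}t)}{\sqrt{E_\lmbd}}\,r_\lmbd$, where $r_\lmbd=\p{-\Delta_g+\lmbd V-E_\lmbd}w_\lmbd$ is the quasimode remainder. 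Note $E_\lmbd\geq a\lmbd\geq a$.

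Duhamel's formula and energy conservation~\eqref{eq:WP} give
\begin{equation*}
\norm{\partial_t u_\lmbd(t)-\partial_t\tilde u(t)}_{L^2}\leq C t\, e^{-c\sqrt{\lmbd}} .
\end{equation*}
Since $b_\omega\partial_t\tilde u\equiv 0$, we obtain
\begin{equation*}
\int_0^T\norm{b_\omega\partial_t u_\lmbd}^2\d t\leq C\norm{b_\omega}_\infty^2\,T^3 e^{-2c\sqrt{\lmbd}} .
\end{equation*}
This yields $\Cobs^{\mathrm{opt}}\geq c'T^{-3}e^{2c\sqrt{\lmbd}}$. This is strong for small $T$, but must be reconciled with the stated $T^{-1}$ for large $T$.

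\textbf{Uniformity in $T$ (the main obstacle).} To get a bound uniform in large $T$, replace $w_\lmbd$ by a true eigenfunction. Let $\phi_\lmbd$ be an $L^2(M)$-normalized eigenfunction of $-\Delta_g+\lmbd V$ on $M$ with eigenvalue $\mu_\lmbd$ closest to $E_\lmbd$. Because $M\setminus B_g(x_0,r)$ is classically forbidden at energy $\approx a\lmbd$, the bottom of the spectrum of $-\Delta_g+\lmbd V$ is exponentially close to $E_\lmbd$. The ground state $\phi_\lmbd$ of $M$ then obeys the global Agmon estimate $\norm{\phi_\lmbd}_{L^2(\omega)}\leq Ce^{-c\sqrt{\lmbd}}$, because $V\geq a+r^2$ near $\omega$ while the energy is $\approx a\lmbd$.

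Taking the data $(0,\phi_\lmbd)$, the solution is $u_\lmbd=\frac{\sin(\sqrt{\mu_\lmbd}t)}{\sqrt{\mu_\lmbd}}\phi_\lmbd$ exactly. Then $\int_0^T\norm{b_\omega\partial_tu_\lmbd}^2\d t\leq CTe^{-2c\sqrt{\lmbd}}$, while $\Escr_{\lmbd V}=1/2$. This gives exactly~\eqref{eq:blow:up:estimate} with $\mathsf{C}_\star=\min(c,1/(2C))$.

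So the cleanest route is the ground state on $M$ together with a global Agmon estimate. The key technical step is to verify that Agmon estimate, using the Agmon metric $(V-\mu_\lmbd/\lmbd)_+g$ and the fact that the ground energy satisfies $\mu_\lmbd/\lmbd\to a=\min V$.
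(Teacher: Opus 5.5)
Your final route is correct and is essentially the paper's argument. The paper likewise takes an eigenfunction $\psi^h$ of $-h^2\Delta_g+V$ (with $h=\lambda^{-1/2}$) at energy $E_h\to V(x_0)$, where $x_0\notin\overline{\omega}$ is a global minimum of $V$, and quotes from Laurent--L\'eautaud both this eigenmode and the Agmon bound $\|\psi^h\|_{L^2(\omega)}\leq Ce^{-\delta/h}$. It then tests on the exact time-harmonic solution $e^{it\sqrt{E_h}/h}\psi^h$, where you use the data $(0,\phi_\lambda)$. The quasimode detour with its $T^{-3}$ bound is therefore unnecessary, and your explicit construction of $V$ usefully spells out the choice the paper leaves implicit (its Agmon theorem is even stated with the typo $\overline{\omega}\cap\{x_0\}\neq\emptyset$ in place of $=\emptyset$).
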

The proof of Proposition~\ref{pr:Blow:Up:Estimates} is given in Subsection~\ref{subsection:blow-up}.
Furthermore, in the one-dimensional setting, the factor $\sqrt{\lmbd}$ coincides with $\norm{\mathsf{V}}_{L^2((0, T)\times M)}^{1/2}$ appearing in~\eqref{eq:Zuazua}, as established in~\cite{zuazua1993exact}, thereby confirming its optimal character.
Please note that, in the context of Proposition~\ref{pr:Blow:Up:Estimates}, if we assume furthermore that $(\omega, T)$ satisfy~\eqref{GCC}, we have the upper bound
\begin{equation*}
\exists T_0, \mathsf{C}^\star> 0,~\forall T > T_0,~\forall \lmbd > 0, \quad
\Cobs^{\mathrm{opt}}(\lmbd,  V, T) \leq \mathsf{C}^\star e^{\mathsf{C}^\star \lmbd},
\end{equation*}
given by~\eqref{eq:Laurent:Leautaud} from~\cite[Theorem 1.5]{laurent2016uniform}.
Thus, in this context, we have
\begin{equation*}
\exists T_0, \mathsf{C}^\star, \mathsf{C}_\star> 0,~\forall T > T_0,~\forall \lmbd \geq 1, \quad
\mathsf{C}_\star T^{-1}e^{\mathsf{C}_\star \sqrt{\lmbd}} 
\leq \Cobs^{\mathrm{opt}}(\lmbd,  V, T)
\leq \mathsf{C}^\star e^{\mathsf{C}^\star {\lmbd}},
\end{equation*}
\subsection{Sketch of proofs and plan of the paper}
The main goal of this article is to prove Theorem~\ref{Th:GCC:UO}, 
\ie to characterize when the \emph{observability inequality}~\eqref{eq:Obs} holds with an \emph{observability cost} uniform in $\lmbd\geq 1$.
\begin{itemize}
    \item On the one hand, in Section~\ref{section:CS}, we prove by \emph{reductio ad absurdum} that~\eqref{GCC+} implies the uniform observability inequality~\eqref{eq:UO}.
    To this end, we assume the existence of a sequence of scale parameters $(\lmbd_n)_n$ for which~\eqref{eq:UO} fails.
    Then two scenarios arise: either the sequence $(\lmbd_n)_n$ remains bounded, in which case the classical~\eqref{GCC} leads to a contradiction; or $(\lmbd_n)_n$ is unbounded, and we reformulate the problem as a semiclassical one with a parameter $h := 1/\sqrt{\lmbd}$.
    Then, in this second case, following the strategy of Lebeau~\cite{lebeau1996equation}, we use the time-slice of semiclassical and microlocal defect measures (defined in Section~\ref{section:microlocalization}) to obtain a contradiction.
    \item On the other hand, in Section~\ref{section:CN}, we prove that~\eqref{eq:UO} implies~\eqref{GCC+}.
    To do this, we assume the existence of an Hamiltonian curve $\varGamma$ that does not enter the control region $\omega$.
    Then we construct a sequence of solutions of~\eqref{eq:Wave} which concentrate along $\varGamma$ (thus avoiding $\omega$) while having an energy bounded from below.
    This sequence of functions thus transgresses the~\eqref{eq:UO} inequality.
\end{itemize}

To carry out this strategy, Sections~\ref{section:geometric:pseudo:preliminaries},~\ref{section:microlocalization} and~\ref{section:slice:defect:measure} collect the geometric, semiclassical and microlocal tools (and their main properties) used in the proof.
\begin{itemize}
    \item In Section~\ref{section:geometric:pseudo:preliminaries}, we introduce the geometric notations and semiclassical pseudodifferential calculus used throughout the paper: we recall Hamiltonian dynamics on the cotangent space, define symbol classes and quantization procedures, and review the basic properties of microlocal and semiclassical defect measures underlying our high-frequency analysis.
    \item In Section~\ref{section:microlocalization}, we provide a procedure to microlocalize bounded $L^2$-sequences (indexed by $h$) on the cosphere at infinity in the semiclassical limit $h\to 0$, following the idea of the proof of~\cite[Theorem 1]{gerard1991defectmeasure} for standard microlocalization of sequences weakly converging toward $0$.
    This result, given in Theorem~\ref{Th:Microlocalization}, plays a key role in the study of high-frequency oscillations $\zeta\gg h^{-1}$ and will be essential to prove Theorem~\ref{Th:GCC:UO}.
    \item In Section~\ref{section:slice:defect:measure}, we introduce the half-wave equation~\eqref{eq:HWE}. Then, we give some properties of its solutions and state a theorem (Theorem~\ref{Th:Slices}, proved in Appendix~\ref{proof:Th:Slices}) inspired by~\cite[Proposition 4.4]{LecturePG}, which allows us to disintegrate (and thus describe) their associated semiclassical and second microlocal defect measures by their time-slices.
    In other words, in this particular case, it allows us to study measures on $\cotang \hspace{1 pt}(\RR\times M)$ as families of measures on $\cotang M$ indexed by a time parameter $t\in \RR$.
\end{itemize}

In Section~\ref{section:Some examples}, we give examples of tuples $(\omega, T, V)$ that satisfy~\eqref{GCCV}.
Firstly, we give examples that satisfy it on tori or spheres of dimension 1 and 2, proving Propositions~\ref{pr:1D:circle},~\ref{pr:2D:torus} and~\ref{pr:2D:sphere}.
We also provide examples satisfying~\eqref{GCC} but not~\eqref{GCCV}, proving Lemma~\ref{lem:GCC:and:no:GCCV}, and examples satisfying~\eqref{GCCV} but not~\eqref{GCC}, proving Lemma~\ref{lem:GCCV-and-non-GCC}.
We also provide a lower bound of $\Cobs^{\mathrm{opt}}(\lmbd V)$ --- defined in~\eqref{df:optimal:Cobs} --- proving Proposition~\ref{pr:Blow:Up:Estimates}, the proof relies on Agmon estimates.

The paper ends with two appendices.
In Appendix~\ref{section:HUM} we prove Proposition~\ref{Cor:HUM}, which gives the equivalence between~\eqref{eq:Obs} and the \emph{null-controllability of~\eqref{eq:Control:Wave} at time $T$ from $\omega$} of~\eqref{eq:Control:Wave}.
In Appendix~\ref{proof:Th:Slices}, we prove Theorem~\ref{Th:Slices}.

The dependencies between sections are summarized with the following graph.
\begin{center}
\begin{tikzpicture}[
    node distance=1.8cm and 2.8cm,
    every node/.style={
        draw,
        rectangle,
        rounded corners,
        align=center,
        minimum width=2.8cm,
        minimum height=0.9cm
    },
    arrow/.style={-Stealth, thick}
]
\node (sec2) {Section~\ref{section:geometric:pseudo:preliminaries}};
\node (sec3) [below=of sec2] {Section~\ref{section:microlocalization}};
\node (sec4) [right=of sec3] {Section~\ref{section:slice:defect:measure}};
\node (sec6) [below=of sec3] {Section~\ref{section:CN}};
\node (sec5) [right=of sec6] {Section~\ref{section:CS}};
\node (sec7) [left=of sec3] {Section~\ref{section:Some examples}};

\node (appB) [right=of sec2] {Appendix~\ref{proof:Th:Slices}};
\node (thm) [below=of sec6] {Theorem~\ref{Th:GCC:UO}};
\node (appA) [right=of thm] {Appendix~\ref{section:HUM}};
\node (uniform) [below=of thm] {Uniform Controllability};
\draw[arrow] (sec2) -- (sec3);
\draw[arrow] (sec2) -- (appB);
\draw[arrow] (sec2) -- (sec4);
\draw[arrow] (sec2) -- (sec5);
\draw[arrow] (sec2) -- (sec7);
\draw[arrow, bend right = 60] (sec2) to (sec6);
\draw[arrow] (sec3) -- (sec4);
\draw[arrow] (sec3) -- (sec5);
\draw[arrow] (sec3) -- (sec6);
\draw[arrow] (sec4) -- (sec5);
\draw[arrow] (sec4) -- (sec6);
\draw[arrow] (appB) -- (sec4);
\draw[arrow] (sec5) -- (thm);
\draw[arrow] (sec6) -- (thm);
\draw[arrow] (thm) -- (uniform);
\draw[arrow] (appA) -- (uniform);
\end{tikzpicture}
\end{center}

\medskip\noindent
\textbf{\emph{Acknowledgments.}} I am very grateful to Matthieu Léautaud for many discussions on the topics of this article, for thoughtful advice, and for a careful reading of a preliminary version of this article.
\section{Geometric and pseudodifferential calculus preliminaries}\label{section:geometric:pseudo:preliminaries}
To study the high-frequency regimes relevant to our problem, we collect here the geometric and microlocal notation used throughout the article.
We first recall the basic Riemannian structures and associated Hamiltonian dynamics on $\cotang\Mc$, where $(\Mc,\mathsf{g})$ is a smooth, connected Riemannian manifold of dimension $\mathsf{d}$, either $(M,g)$ or $(I\times M,\d t^2+g)$ with $I\subset\RR$ a nonempty open interval.
We then introduce the semiclassical pseudodifferential calculus (symbol classes, quantization procedures, and boundedness properties), and review microlocal and semiclassical defect measures, which are central to the proof of Theorem~\ref{Th:GCC:UO}.
\subsection{Geometric tools and classical definitions}\label{subsection:Geometric:Tools}
In this subsection, we recall the classical notation used for Riemannian geometry (following the example of~\cite[Chapter 2]{lafontaine2004riemannian}) on $(\Mc, \mathsf{g})$.

In local charts, the metric $\mathsf{g}$ on the tangent bundle is given by $\mathsf{G} = (\mathsf{g}_{ij})_{1\leq i, j\leq \mathsf{d}}$ and on the cotangent bundle it is given by $\mathsf{G}^{-1} = (\mathsf{g}^{ij})_{1\leq i, j\leq \mathsf{d}}$, see~\cite[Definition 2.1]{lafontaine2004riemannian}.
For all $z \in \Mc$, fixed until the end of this paragraph, let $\p{\frac{\partial}{\partial z_1}, \dots, \frac{\partial}{\partial z_{\mathsf{d}}}}$ be an orthonormal basis in a local chart around $z$.
Then, every $v\in\tang_z \Mc$ has a decomposition $v =  \sum_{j = 1}^{\mathsf{d}}v^j{\frac{\partial}{\partial z_j}}$, and
\begin{equation*}
\forall v, w\in\tang_z \Mc, \quad \pscal{v}{w}_{\mathsf{g}(z)} := \mathsf{g}(z)(v, w) = \sum_{1\leq i, j\leq \mathsf{d}} \mathsf{g}_{ij}(z)v^iw^j.
\end{equation*}
For every $v\in \tang_z\Mc$, we define $v^\ast\in\cotang_z \Mc$ uniquely by 
\begin{equation}\label{eq:rel:adjoint}
v^\ast(w) = \pscal{v}{w}_{\mathsf{g}(z)},\quad \forall w \in\tang_z \Mc.
\end{equation}
In local coordinates,~\eqref{eq:rel:adjoint} means $v^\ast = \mathsf{G}(z)v = ({\sum_{j}\mathsf{g}_{ij}(z)v^j})_{i}$.
Similarly, we define $\pscal{\cdot}{\cdot}_{\mathsf{g}(z)^\ast}$ by
\begin{equation*}
\forall \zeta, \zeta'\in \mathrm{T}^\star_z \Mc, \quad \pscal{\zeta}{\zeta'}_{\mathsf{g}(z)^\ast} := \sum_{1\leq i, j\leq \mathsf{d}} \mathsf{g}^{ij}(z)\zeta_i\zeta'_j.
\end{equation*}
For all $k\in\Cscr^1(\Mc)$, the Riemannian gradient of $k$ in $z$ is given by 
\begin{equation}\label{eq:Gradient-Beltrami}
\pscal{\nabla_gk}{X}_{\mathsf{g}} = \d k (X), \quad \forall X\in \tang \Mc,
\end{equation}
and, in local charts, by 
\begin{equation*}
\nabla_{\mathsf{g}} k (z) := \sum_{i = 1}^{\mathsf{d}}\p*{\sum_{j = 1}^\mathsf{d} \mathsf{g}^{ij}(z)\frac{\partial k}{\partial z_i}(z)}\frac{\partial}{\partial z_j}.
\end{equation*}
The associated Laplace-Beltrami operator is given by 
\begin{equation}\label{eq:Laplace-Beltrami}
-\Delta_{\mathsf{g}} = \frac{1}{\sqrt{\det \mathsf{g}}}\sum_{j, k}D_j\p{\mathsf{g}^{jk}\sqrt{\det \mathsf{g}}D_k}, \quad D_j = \frac{\partial_{z_j}}{i},
\end{equation}
and satisfies, for all functions $k_1, k_2 \in \Cscr^2_{c}(\Mc)$, 
\begin{equation*}
\pscal{\Delta_{\mathsf{g}} k_1}{k_2}_{L^2(\Mc)} = -\int_{\Mc} \langle\nabla_{\mathsf{g}} k_1(z), \nabla_{\mathsf{g}}k_2(z)\rangle_{\mathsf{g}(z)}\d z = \pscal{k_1}{\Delta_{\mathsf{g}} k_2}_{L^2(\Mc)}.
\end{equation*}
When there is no possible confusion, we will write for short $\pscal{\cdot}{\cdot}_{\mathsf{g}}$ for either $\pscal{\cdot}{\cdot}_{\mathsf{g}(z)}$ or $\pscal{\cdot}{\cdot}_{\mathsf{g}(z)^\ast}$.
For all $v\in\tang_z \Mc$ we finally define $\abs{v}_{\mathsf{g}} := \sqrt{\pscal{v}{v}_{\mathsf{g}}}$ and thus $\abs{v^\ast}_{\mathsf{g}} := \sqrt{\pscal{v^\ast}{v^\ast}_{\mathsf{g}}} = \sqrt{\pscal{v}{v}_{\mathsf{g}}} = \abs{v}_{\mathsf{g}}$.
\paragraph{Hamiltonian vector fields.}
Using the notation of~\cite{Hörmander1985AnalysisOfLinearOpVol3}, we recall the following definition and general remark.
For a smooth function $k\in\smooth(\cotang \Mc)$, we denote by $H_k\in \smooth\p{\cotang M, \tang\p{\cotang M}}$ its Hamiltonian vector field, which is the unique vector field on $\cotang \Mc$ satisfying
\begin{equation}\label{eq:Hamiltonian:Vector:Field}
\symplectic(X, H_k) = \d k(X), \quad \forall X\in \tang\p{\cotang \Mc}
\end{equation}
where $\symplectic$ is the symplectic 2-form on $\cotang\Mc$.
For all $k, a\in\smooth(\cotang \Mc)$,~\eqref{eq:Hamiltonian:Vector:Field} implies $H_ka = \set{k, a}$. Moreover, for all $\rho = (z, \zeta)\in\cotang \Mc$ and any coordinate vector field $\p{\frac{\partial}{\partial z_1}, \dots, \frac{\partial}{\partial z_{\mathsf{d}}}, \frac{\partial}{\partial \zeta_1}, \dots, \frac{\partial}{\partial \zeta_{\mathsf{d}}}}$ in local charts around $\rho$, we have
\begin{equation*}
H_k(\rho) = \sum_{j = 1}^{\mathsf{d}} \frac{\partial k}{\partial \zeta_j}(\rho)\frac{\partial }{\partial z_j} - \frac{\partial k}{\partial z_j}(\rho)\frac{\partial }{\partial \zeta_j}.
\end{equation*}
\begin{remark}\label{rk:preserved:along:flow}
A smooth function $k\in\smooth(\cotang \Mc)$ and its Hamiltonian flow $\varphi^k$ defined in~\eqref{df:Hamiltonian:Flow} satisfy $k\circ\varphi^k_t = k$ for all time $t\in\RR$.
In other words, $k$ is preserved along its Hamiltonian flow.
\end{remark}
\subsection{Notation for pseudodifferential calculus on Riemannian manifolds}\label{section:pseudo:calculus}
We will need a pseudodifferential calculus on $(\Mc, \mathsf{g})$ defined earlier.
Note that $M$ is compact, but $I\times M$ is not.
As in Subsection~\ref{subsection:Geometric:Tools}, we consider the general case of $(\Mc, \mathsf{g})$ a $\mathsf{d}$-dimensional smooth and connected Riemannian manifold, which can play the role of one or the other, and define symbols and pseudodifferential operators on a general Riemannian manifold as it is done in~\cite[Appendix E]{dyatlov2019mathematical}.
\paragraph{Symbols on Riemannian manifolds.}
Firstly, as~\cite[E.1.2]{dyatlov2019mathematical} or~\cite[Chapter 18]{Hörmander1985AnalysisOfLinearOpVol3}, given $k\geq 1$, we use the following three symbol classes.
\begin{enumerate}
\item For $m\in\RR$, the class $S^m(\cotang\Mc, \CC^{k\times k})$ is the set of matrix-valued symbols $(a^h)_{h\in (0, 1]}$ satisfying both $a^h \in\smooth\p{\cotang \Mc, \CC^{k\times k}}$ for all $h\in (0, 1]$, and for all $K$ relatively compact local chart of $\Mc$,
\begin{equation*}
\forall \alpha,\beta\in\NN_{0}^{\mathsf{d}},\quad \sup_{h\in (0, 1]}\sup_{\begin{subarray}{c} z \in K\\ \zeta \in \mathrm{T}^{\star}_{z}\Mc\end{subarray}}
\jap{\zeta}^{\abs{\beta} - m}_{\mathsf{g}}\abs{\partial_{z}^\alpha\partial_{\zeta}^\beta a^h (z, \zeta)}<\infty.
\end{equation*}
For brevity and a slight abuse of notation, when $h$ varies we will write $a^h$ instead of $(a^h)_h$.
\item For $m\in\RR$, we say that a symbol $a^h\in S^m(\cotang \Mc, \CC^{k\times k})$ is \emph{compactly supported in $\Mc$} if there is a compact subset $K_a\subset \Mc$ --- independent of the choice of $h$ --- such that $\pi_{\Mc}(\supp(a^h))\subset K_a$ for all $h\in (0, 1]$.
The set of symbols of order $m$ \emph{compactly supported} in $\Mc$ will be denoted by $S^{m}_{\comp}\p{\cotang \Mc, \CC^{k\times k}}$.
\item The class of homogeneous symbols of order $m$, $S^m_{\phg}(\cotang \Mc, \CC^{k\times k})$, is the set of $h$-independent symbols $a_{\phg}\in S^{m}_{\comp}(\cotang \Mc, \CC^{k\times k})$ that, furthermore, satisfy
\begin{equation*}
a_{\phg}(z, R\zeta) = R^m a_{\phg}(z, \zeta), \quad \forall R \geq 1,~ \forall (z,\zeta)\in\cotang \Mc~\text{such that}~\abs{\zeta}_{\mathsf{g}}\geq 1.
\end{equation*}
\end{enumerate}
In the case $k = 1$, we will write for short $S^m(\cotang\Mc)$ instead of $S^m(\cotang\Mc, \CC^{1\times 1})$, $S^m_{\comp}(\cotang\Mc, \CC^{1\times 1})$ instead of $S_{\comp}^m(\cotang\Mc, \CC^{1\times 1})$ and $S_{\phg}^m(\cotang\Mc)$ instead of $S_{\phg}^m(\cotang\Mc, \CC^{1\times 1})$.
\paragraph{Pseudodifferential operators on Riemannian manifolds.}
For $s\in\RR$, we define the semiclassical Sobolev space $H^s_{h, \comp}(\Mc)$ and its dual $H^{-s}_{h, \loc}(\Mc)$ as in~\cite[Definition E.20]{dyatlov2019mathematical}.
If $\Mc$ is compact, then $H^s_{h, \comp}(\Mc) = H^s_{h, \loc}(\Mc)$ and will be denoted by $H^s_{h}(\Mc)$.
We define the quantization on $\Mc$, denoted $\Op^{\Mc}_{h}$, as in~\cite[Appendix E]{dyatlov2019mathematical}, and, given $k\geq 1$, we use the following four pseudodifferential spaces.
\begin{enumerate}
\item We let $\pseudo_{h}^m(\Mc, \CC^{k\times k})$ be the space of semiclassical pseudodifferential operators of order $m$ on $\Mc$ with values in $\CC^{k\times k}$, see~\cite[Section 18.1]{Hörmander1985AnalysisOfLinearOpVol3} for details.
For all $A_{h}\in\pseudo_{h}^m(\Mc, \CC^{k\times k})$, a \emph{principal symbol of $A_{h}$} is a symbol $a^h\in S^m(\cotang\Mc, \CC^{k\times k})$ satisfying
\begin{equation*}
    A_{h} - \Op^{\Mc}_{h}\p{a^h} \in h\pseudo^{m-1}_{h}(\Mc).
\end{equation*}
For all $m\in\RR$, we define the surjective map
\begin{equation*}
\p{\sigma_{h}}_{|\pseudo^m_{h}(M)}\colon  \pseudo^m_{h}(\Mc, \CC^{k\times k}) \longrightarrow  S^m(\cotang \Mc, \CC^{k\times k})/hS^{m-1}(\cotang \Mc, \CC^{k\times k}).
\end{equation*}
By a slight abuse of notation, we use $\sigma_h(A_h)$ to denote a chosen representative in $S^m(\cotang \Mc, \CC^{k\times k})$, uniquely defined up to remainder terms in $hS^{m-1}(\cotang \Mc, \CC^{k\times k})$, or by~\cite[Proposition E.14]{dyatlov2019mathematical}.
\item  For $m\in\RR$, we say that $A_{h}\in\pseudo_{h}^m(\Mc, \CC^{k\times k})$ is \emph{compactly supported} if its \emph{Schwartz kernel} is compactly supported, \ie $A_{h}$ maps $\smooth(\Mc, \CC^{k\times k}) \to \mathcal{E}'(\Mc, \CC^{k\times k})$ (see \cite[Section A.7]{dyatlov2019mathematical}), and denote by $\pseudo^m_{h, \comp}(\Mc, \CC^{k\times k})$ the set of \emph{compactly supported} (in an $h$-independent compact of $\Mc$) semiclassical pseudodifferential operators of order $m$.
One can also say that $A_{h}\in\pseudo_{h, \comp}^{m}(\Mc, \CC^{k\times k})$ if and only if
\begin{equation*}
A_{h}\in\pseudo_{h}^{m}\p{\Mc, \CC^{k\times k}}
\quad\text{and}
\quad\exists \chi\in\test(\Mc), ~ \chi A_{h} \chi = A_{h} + \O{h^\infty}_{\pseudo^{-\infty}(\Mc, \CC^{k\times k})}.
\end{equation*}
As an example, for $m\in \RR$, if $a^h\in S^m_{\comp}(\cotang\Mc, \CC^{k\times k})$, then $\Op^{\Mc}_{h}(a^h)\in \pseudo_{h, \comp}^{m}(\Mc, \CC^{k\times k})$.
\item We let $\pseudo_{h}^{\comp}(\Mc, \CC^{k\times k})$ be the space of \emph{compactly supported} operators $A_{h}\in \pseudo^{-\infty}_{h, \comp}(\Mc, \CC^{k\times k})$ such that the semiclassical wavefront set $\WaveFront_{h}(A_{h})$ (see~\cite[Definitions E.27 and E.28]{dyatlov2019mathematical} or~\cite{zworski2022semiclassical}) is a compact subset of $\cotang \Mc$.

As an example, if $A_{h} = \Op^{\Mc}_{h}(a^h)$ with $a^h\in\bigcup_{m\in\RR}S^m\p{\cotang \Mc, \CC^{k\times k}}$ and there exists a compact subset $K_a\subset \cotang \Mc$ such that
\begin{equation*}
    \forall h \in (0, 1], \quad \supp\p{a^h}\subset K_a,
\end{equation*}
then $A_{h}\in \pseudo_{h}^{\comp}(\Mc, \CC^{k\times k})$.
\item We let $\pseudo_{h, \phg}^m(\Mc, \CC^{k\times k})$ be the space of homogeneous semiclassical pseudodifferential operators $A_{h} \in \pseudo_{h, \comp}^m(\Mc, \CC^{k\times k})$ in the following sense:
\begin{equation*}
\exists (a_{\phg}, b^h)\in S^{m}_{\phg}(\cotang\Mc, \CC^{k\times k})\times S^{m - 1}_{\comp}(\cotang \Mc, \CC^{k\times k}),\quad
\sigma_{h}(A_{h}) = a_{\phg} + b^h.
\end{equation*}
Furthermore, we define the \emph{homogeneous principal symbol} of operators on $\pseudo_{h,\phg}^m$ as
\begin{equation*}
\begin{array}{rrcl}
\sigma_{\phg}^m\colon &\pseudo_{h, \phg}^m\p{\Mc, \CC^{k\times k}}& \longrightarrow & S^{m}_{\phg}\p{\cotang \Mc, \CC^{k\times k}}/ S^{m-1}_{\comp}\p{\cotang \Mc, \CC^{k\times k}},\\
&A_{h}& \longmapsto & \sigma_{h}(A_{h})~\mathrm{mod}~S^{m-1}_{\comp}\p{\cotang \Mc, \CC^{k\times k}}.
\end{array}
\end{equation*}
By a slight abuse of notation, we use $\sigma_{\phg}^m(A_h)$ to denote a chosen representative in $S^{m}_{\phg}\p{\cotang \Mc, \CC^{k\times k}}$ (thus $h$-independent), uniquely defined up to remainder terms in $S^{m-1}_{\comp}\p{\cotang \Mc, \CC^{k\times k}}$.
\end{enumerate}
In the case $k = 1$, we will write for short $\pseudo_{h}^m(\Mc)$ instead of $\pseudo_{h}^m(\Mc, \CC^{1\times 1})$, $\pseudo_{h, \comp}^m(\Mc)$ instead of $\pseudo_{h, \comp}^m(\Mc, \CC^{1\times 1})$, $\pseudo_{h}^{\comp}(\Mc)$ instead of $\pseudo_{h}^{\comp}(\Mc, \CC^{1\times 1})$ and $\pseudo_{h, \phg}^m(\Mc)$ instead of $\pseudo_{h, \phg}^m(\Mc, \CC^{1\times 1})$ for any order $m\in\RR$.
\begin{remark}
For $m\in\RR$ and $A_{h}\in \pseudo^m_{h, \phg}$, one can compute $\sigma_{\phg}^m\p{A_{h}}$ as
\begin{equation*}
\sigma_{\phg}^m\p{A_{h}}\colon (z, \zeta)\mapsto \lim_{R\to +\infty}R^{-m}\sigma_{h}(A_{h})(z, R\zeta).
\end{equation*}
\end{remark}
\paragraph{Pseudodifferential calculus.}
In this section, we summarize the fundamental results of pseudodifferential calculus used in this article.
We first recall the following principal symbol statement from~\cite[Proposition E.17]{dyatlov2019mathematical}.
\begin{proposition}\label{pr:composition}
Given $k\geq 1$, for all $(m, m')\in\RR^2$, all $A_{h}\in \pseudo^m_{h, \comp}(\Mc, \CC^{k\times k})$ and all $B_{h}\in\pseudo^{m'}_{h}(\Mc, \CC^{k\times k})$ we have 
\begin{enumerate}
\item $A_{h}B_{h}\in \pseudo^{m+m'}_{h}(\Mc, \CC^{k\times k})$ with principal symbol $\sigma_{h}(A_{h}B_{h}) = \sigma_{h}(A_{h})\sigma_{h}(B_{h})$.
\item If $k = 1$, $\classe{A_{h}, B_{h}}: = A_{h}B_{h} - B_{h}A_{h}\in h\pseudo^{m+m'-1}_{h}(\Mc)$ with principal symbol 
\begin{equation*}
    \sigma_{h}\p{ih^{-1}\classe{A_{h} ,B_{h}}} = \set{\sigma_{h}(A_{h}), \sigma_{h}(B_{h})}.
\end{equation*}
\item $B_{h}^*\in\pseudo^{m'}_{h}(\Mc, \CC^{k\times k})$ with principal symbol $\sigma_{h}(B_{h}^*) = \sigma_{h}(B_{h})^*$.
\end{enumerate}
\end{proposition}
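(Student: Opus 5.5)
The statement to address is Proposition~\ref{pr:composition}, which collects the standard rules of the semiclassical calculus on a manifold: the principal symbol of a product, of a commutator, and of an adjoint. I would not build a new calculus. Instead I would reduce each item to the corresponding Euclidean statement through the chart-wise definition of $\Op^{\Mc}_h$ in~\cite[Appendix E]{dyatlov2019mathematical}.

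\textbf{Setting up the reduction.} Fix a locally finite atlas $(U_\ell,\kappa_\ell)$ with a subordinate partition of unity $\chi_\ell$, and cutoffs $\tilde\chi_\ell$ equal to $1$ near $\supp\chi_\ell$. Any $A_h\in\pseudo^m_h(\Mc,\CC^{k\times k})$ then splits as
\begin{equation*}
A_h=\sum_\ell \tilde\chi_\ell\,\kappa_\ell^*\Op_h(a_\ell)(\kappa_\ell^{-1})^*\,\chi_\ell+R_h,\qquad R_h=\O{h^\infty}_{\pseudo^{-\infty}}.
\end{equation*}
The compact support of $A_h$ matters here: only finitely many charts contribute, so all sums below are finite even when $\Mc=I\times M$ is noncompact, and the product $A_hB_h$ is well defined.

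\textbf{Item 1 (products).} Pairs of charts whose supports do not overlap give $\O{h^\infty}$ contributions by pseudolocality. On overlapping pairs I transport both factors to a single chart, using invariance under diffeomorphisms: the principal symbol transforms as a function on $\cotang\Mc$, with errors in $hS^{m-1}$. I then apply the Euclidean Moyal composition formula $a\#b=ab+\O{h}_{S^{m+m'-1}}$. For matrix symbols the order of the factors is kept throughout.

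\textbf{Item 2 (commutators).} For $k=1$ the $h^0$ terms cancel because scalar symbols commute. The next term of $a\#b-b\#a$ is $\frac{h}{i}\{a,b\}$, which gives the stated formula after multiplication by $ih^{-1}$. The one point needing care is that the $h^1$ term is coordinate invariant only modulo $h^2S^{m+m'-2}$. This holds because the Poisson bracket is symplectically invariant, and the subprincipal corrections coming from changes of coordinates enter both products symmetrically, so they cancel in the commutator.

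\textbf{Item 3 (adjoints).} The adjoint is taken with respect to $L^2(\Mc,\d z)$, so the density $\sqrt{\det\mathsf g}$ appears as a multiplication operator in each chart. It contributes only lower-order terms, and the Euclidean formula $\Op_h(b)^*=\Op_h(b^*)+h\Op_h(S^{m'-1})$ then gives the result.

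\textbf{Main obstacle.} The only real difficulty is bookkeeping in item 2, namely showing that the $\O{h}$ remainders of the two products agree to first order. I would handle this by quoting~\cite[Proposition E.17]{dyatlov2019mathematical} directly rather than redoing the invariance computation.
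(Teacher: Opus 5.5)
Your proposal is correct and follows essentially the paper's route: the paper gives no independent proof and simply recalls the statement from \cite[Proposition E.17]{dyatlov2019mathematical}, and your chart-wise reduction to the Euclidean composition and adjoint formulas is the standard argument behind that reference, which you end up quoting as well. The commutator bookkeeping you single out is in fact harmless: in a single chart the $h^0$ terms of $a_\ell\# b_\ell-b_\ell\# a_\ell$ cancel exactly for the full scalar local symbols, so their $\O{h}$ ambiguities only enter $ih^{-1}[A_h,B_h]$ at order $h$, and the symplectic invariance of the Poisson bracket then gives the coordinate-independent formula.
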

We now deduce a counterpart of Proposition~\ref{pr:composition} for homogeneous operators.
\begin{corollary}
Given $k\geq 1$, for all $(m, m')\in\RR^2$, all $A_{h}\in \pseudo^m_{h, \phg}(\Mc, \CC^{k\times k})$ and all $B_{h}\in\pseudo^{m'}_{h, \phg}(\Mc, \CC^{k\times k})$ we have 
\begin{enumerate}
\item $A_{h}B_{h} \in \pseudo_{h, \phg}^{m + m'}(\Mc, \CC^{k\times k})$ with \emph{homogeneous principal symbol} $\sigma_{\phg}^{m + m'}(A_{h}B_{h}) = \sigma_{\phg}^{m}(A_{h})\sigma_{\phg}^{m'}(B_{h})$.
\item If $k = 1$, $\classe{A_{h}, B_{h}} := A_{h}B_{h} - B_{h}A_{h}\in h\pseudo^{m+m'-1}_{h, \comp}(\Mc)$ with \emph{homogeneous principal symbol} 
\begin{equation*}
    \sigma_{\phg}^{m + m'-1}\p{ih^{-1}\classe{A_{h} ,B_{h}}} = \set{\sigma_{\phg}^{m}(A_{h}), \sigma_{\phg}^{m'}(B_{h})}.
\end{equation*}
\item $B_{h}^*\in\pseudo^{m'}_{h}(\Mc, \CC^{k\times k})$ with \emph{homogeneous principal symbol} $\sigma_{\phg}^{m'}(B_{h}^*) = \sigma_{\phg}^{m'}(B_{h})^*$.
\end{enumerate}
\end{corollary}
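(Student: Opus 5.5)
The plan is to reduce each item to the corresponding item of Proposition~\ref{pr:composition}, and then check that the homogeneous structure survives modulo $S^{\cdot-1}_{\comp}$. Write
\begin{equation*}
\sigma_h(A_h)=a_{\phg}+a_1^h
\quad\text{and}\quad
\sigma_h(B_h)=b_{\phg}+b_1^h,
\end{equation*}
with $a_{\phg}\in S^m_{\phg}$, $a_1^h\in S^{m-1}_{\comp}$, $b_{\phg}\in S^{m'}_{\phg}$ and $b_1^h\in S^{m'-1}_{\comp}$. Since $A_h,B_h\in\pseudo_{h,\comp}$, Proposition~\ref{pr:composition} applies and gives $A_hB_h\in\pseudo^{m+m'}_h$ with $\sigma_h(A_hB_h)=\sigma_h(A_h)\sigma_h(B_h)$ modulo $hS^{m+m'-1}$. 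Compact support of $A_hB_h$ follows from the cut-off characterization: pick $\chi$ with $\chi A_h\chi=A_h+\O{h^\infty}$ and $\chi B_h\chi=B_h+\O{h^\infty}$, then take $\tilde\chi=1$ on $\supp\chi$.

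\textbf{Product.} Expand
\begin{equation*}
\sigma_h(A_h)\sigma_h(B_h)=a_{\phg}b_{\phg}+\bigl(a_{\phg}b_1^h+a_1^hb_{\phg}+a_1^hb_1^h\bigr).
\end{equation*}
The product $a_{\phg}b_{\phg}$ is $h$-independent, compactly supported in $z$, and homogeneous of degree $m+m'$ for $\abs{\zeta}_{\mathsf g}\ge1$. The bracket lies in $S^{m+m'-1}_{\comp}$ by the usual symbol-class product rules. The remainder $hS^{m+m'-1}$ from Proposition~\ref{pr:composition} has to be absorbed into $S^{m+m'-1}_{\comp}$. To do this, I replace it by $\tilde\chi(z)\,h\,r^h$ using the compact support of the operator (multiply by $\tilde\chi$ on both sides, modulo $\O{h^\infty}$). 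Since $h\le1$, $hr^h$ is bounded in $S^{m+m'-1}$ uniformly in $h$.

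\textbf{Commutator.} By Proposition~\ref{pr:composition}(2), $ih^{-1}[A_h,B_h]\in\pseudo^{m+m'-1}_h$ with principal symbol
\begin{equation*}
\set{a_{\phg}+a_1^h,\,b_{\phg}+b_1^h}=\set{a_{\phg},b_{\phg}}+S^{m+m'-2}_{\comp},
\end{equation*}
because a Poisson bracket lowers the total order by one. The term $\set{a_{\phg},b_{\phg}}$ is homogeneous of degree $m+m'-1$, since $\partial_\zeta$ lowers the homogeneity degree by one and $\partial_z$ keeps it. The remaining term lies in $S^{m+m'-2}_{\comp}\subset S^{m+m'-2}_{\comp}$, which is exactly the quotient needed for $\sigma^{m+m'-1}_{\phg}$. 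Compact support is handled as in the product case.

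\textbf{Adjoint.} Proposition~\ref{pr:composition}(3) gives $\sigma_h(B_h^*)=b_{\phg}^*+(b_1^h)^*$ modulo $hS^{m'-1}$, and $b_{\phg}^*$ is homogeneous. The adjoint stays compactly supported because its Schwartz kernel is the transposed conjugate of that of $B_h$.

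I expect the only real obstacle to be bookkeeping. One must make sure that the $h$-remainders of the general calculus, which a priori lie only in $hS^{\cdot-1}$ without compact support, can be taken compactly supported in $z$; the cut-off argument above is how I would handle that. One must also check that the quotient in the definition of $\sigma^m_{\phg}$ is well defined, which holds because a symbol that is homogeneous for $\abs{\zeta}\ge1$ and lies in $S^{m-1}$ must vanish for $\abs{\zeta}\ge1$.
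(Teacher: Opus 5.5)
Your proposal is correct and takes the route the paper intends. The paper writes no separate proof and simply deduces the corollary from Proposition~\ref{pr:composition}, which is what you do by splitting each principal symbol into its homogeneous part plus an $S^{\cdot-1}_{\comp}$ remainder and checking that products, Poisson brackets and adjoints respect that splitting.

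One step is unnecessary: you do not need to absorb the $hS^{m+m'-1}$ remainder. Since $\sigma_h(A_hB_h)$ is only defined modulo that class, $\sigma_h(A_h)\sigma_h(B_h)$ is already an admissible representative of the required form.
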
\noindent
We recall boundedness statements from~\cite[Propositions E.22 and E.24]{dyatlov2019mathematical} and~\cite[Theorem 13.13]{zworski2022semiclassical}.
\begin{proposition}[Boundedness]\label{pr:boundedness:pseudo}
For all $m > 0$ and all $A_{h}\in \pseudo^m_{h}(\Mc)$, 
\begin{equation*}
\forall s\in\RR,~\forall \chi\in\test\p{M},~\exists M_{s, \chi} > 0,~ \forall h \in (0, 1],\quad \norm{\chi A_{h}\chi}_{\linear\p{H^{s}_{h}(\Mc), H^{s - m}_{h}(\Mc)}}\leq M_{s, \chi}.
\end{equation*}
\end{proposition}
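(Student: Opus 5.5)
The statement just above is Proposition~\ref{pr:boundedness:pseudo}, which the paper quotes from~\cite[Propositions E.22 and E.24]{dyatlov2019mathematical} and~\cite[Theorem 13.13]{zworski2022semiclassical}. My plan is to reduce it to the Euclidean Calderón--Vaillancourt theorem by working in local charts and conjugating by semiclassical Bessel potentials.

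\emph{Step 1: reduction to the Euclidean case.} Since $\chi$ is compactly supported, I will cover $\supp\chi$ by finitely many relatively compact charts $U_j$ and take a partition of unity $\sum_j \psi_j = 1$ near $\supp\chi$, with $\tilde\psi_j\in\test(U_j)$ equal to $1$ near $\supp\psi_j$. Then
\begin{equation*}
\chi A_h\chi = \sum_{j,k} \psi_j\chi A_h\chi\psi_k .
\end{equation*}
For pairs with $\supp\psi_j\cap\supp\psi_k=\emptyset$, pseudolocality (the kernel of $A_h$ is $\O{h^\infty}$ in $\smooth$ off the diagonal) makes the term smoothing and $\O{h^\infty}$ between any semiclassical Sobolev spaces. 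For the remaining pairs, both cutoffs can be placed in a single chart. By the definition of $\pseudo^m_h(\Mc)$ in~\cite[Appendix E]{dyatlov2019mathematical}, the operator there is, modulo $\O{h^\infty}_{\pseudo^{-\infty}}$, of the form $\tilde\psi\,\Op_h(a^h)\,\tilde\psi$ with $a^h\in S^m(\RR^{2\mathsf d})$ uniformly in $h$. The semiclassical Sobolev norms on $\Mc$ restricted to compact sets of a chart are uniformly equivalent to the Euclidean ones, so it suffices to bound $\Op_h(a^h)$ from $H^s_h(\RR^{\mathsf d})$ to $H^{s-m}_h(\RR^{\mathsf d})$ uniformly in $h$.

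\emph{Step 2: reduction to order $0$ and $s=0$.} Write $\Lambda^r := \Op_h(\jap{\xi}^r)$, which is an isometry $H^{t}_h\to H^{t-r}_h$. Then
\begin{equation*}
\norm{\Op_h(a^h)}_{H^s_h\to H^{s-m}_h} = \norm{\Lambda^{s-m}\Op_h(a^h)\Lambda^{-s}}_{L^2\to L^2}.
\end{equation*}
Since these are Fourier multipliers, the composition $\Lambda^{s-m}\Op_h(a^h)\Lambda^{-s}$ equals $\Op_h(b^h)$ with $b^h\in S^0$ uniformly in $h$, by the standard symbolic calculus (the Moyal expansion with remainder estimates uniform in $h$, as in~\cite{zworski2022semiclassical}).

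\emph{Step 3: $L^2$ boundedness.} Rescaling $x=\sqrt h\,y$ and $\xi=\sqrt h\,\eta$ turns $\Op_h(b^h)$ into $\Op_1(\tilde b^h)$, where $\tilde b^h(y,\eta)=b^h(\sqrt h y,\sqrt h\eta)$. All derivatives of $\tilde b^h$ are bounded uniformly for $h\le1$, since $S^0$ symbols have bounded derivatives. The Calderón--Vaillancourt theorem then gives $\norm{\Op_h(b^h)}_{L^2\to L^2}\le C\sum_{|\alpha|\le N}\sup|\partial^\alpha b^h|$, which is uniform in $h$. Summing the finitely many chart contributions yields the constant $M_{s,\chi}$.

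The main obstacle I expect is Step 2: keeping the composition remainders uniform in $h$ when $s$ is arbitrary and $a^h$ is only in $S^m$ rather than compactly supported in $\xi$. I would handle it by using the exact composition formula for Fourier multipliers, which only multiply in $\xi$ up to commutator terms, and controlling the remainder by finitely many seminorms.
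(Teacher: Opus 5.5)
Your proof is correct and is essentially the standard argument behind the results the paper cites without proof (Dyatlov--Zworski, Appendix~E, and Zworski's book): localize in finitely many charts, discard the off-diagonal pieces by pseudolocality, conjugate by the multipliers $\Op_h(\jap{\xi}^r)$ to reduce to an order-zero symbol through the $h$-uniform composition calculus, and conclude with Calder\'on--Vaillancourt after the $\sqrt h$ rescaling. The one detail to make explicit is that the partition of unity must be fine enough, with supports of diameter below a Lebesgue number of the chart cover, so that any two intersecting supports lie in a common chart.
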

\begin{proposition}[Calder{\'o}n-Vaillancourt]\label{pr:Calderon:Vaillancourt}
For all $A_{h}\in\pseudo_{h, \comp}^0(\Mc)$, there is $C_A > 0$ such that
\begin{equation*}
\forall h\in (0, 1], \quad \norm{A_{h}}_{\linear\p{L^2\p{\Mc}}}\leq \sup_{\cotang\Mc}\abs{\sigma_{h}\p{A_{h}}} + C_Ah^{\frac{1}{2}}.
\end{equation*}
\end{proposition}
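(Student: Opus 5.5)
The plan is to prove the $L^2$ bound through the quadratic form $\norm{A_h u}^2 = \pscal{A_h^*A_h u}{u}$. I would control the form with the sharp Gårding inequality, assuming it is available on $\Mc$ for compactly supported operators as in~\cite{zworski2022semiclassical} and~\cite[Appendix E]{dyatlov2019mathematical}. The $h^{1/2}$ loss then comes from taking a square root of an $\O{h}$ error.

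\textbf{Step 1: localization and notation.} Write $a^h := \sigma_h(A_h)\in S^0(\cotang\Mc)$ and $\mathsf{M} := \sup_{\cotang \Mc}\abs{a^h}$. Since $A_h$ is compactly supported, there is $\chi\in\test(\Mc)$ with $\chi A_h\chi = A_h + \O{h^\infty}_{\pseudo^{-\infty}}$. Choose $\tilde\chi\in\test(\Mc, [0,1])$ equal to $1$ near $\supp\chi$. Changing $a^h$ by an element of $hS^{-1}_{\comp}$ does not affect the statement, because such terms have $L^2$ norm $\O{h}$ by Proposition~\ref{pr:boundedness:pseudo}. We may therefore assume that $a^h$ is supported in $\pi_{\Mc}^{-1}(\set{\tilde\chi = 1})$.

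\textbf{Step 2: symbolic computation of $A_h^*A_h$.} By Proposition~\ref{pr:composition} (points 1 and 3), $A_h^*A_h\in\pseudo^0_{h,\comp}(\Mc)$ has principal symbol $\abs{a^h}^2$, so
\begin{equation*}
A_h^*A_h = \Op^{\Mc}_h\p{\abs{a^h}^2} + hR_h, \qquad R_h\in\pseudo^{-1}_{h,\comp}(\Mc).
\end{equation*}
By Proposition~\ref{pr:boundedness:pseudo}, $R_h$ is bounded on $L^2$ uniformly in $h$; the compact support removes the $\chi$ in that statement.

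\textbf{Step 3: sharp Gårding.} Set
\begin{equation*}
c^h := \mathsf{M}^2\tilde\chi^2 - \abs{a^h}^2 \in S^0_{\comp}(\cotang\Mc).
\end{equation*}
Then $c^h\geq 0$ everywhere, since $\tilde\chi = 1$ on the support of $a^h$. The sharp Gårding inequality gives $\pscal{\Op^{\Mc}_h(c^h)u}{u}\geq -Ch\norm{u}^2$. Also, $\Op^{\Mc}_h(\tilde\chi^2)$ equals multiplication by $\tilde\chi^2$ up to $\O{h}_{\linear(L^2)}$. Combining these with Step 2,
\begin{equation*}
\norm{A_hu}^2 \leq \mathsf{M}^2\norm{\tilde\chi u}^2 + Ch\norm{u}^2 \leq \p{\mathsf{M}^2 + Ch}\norm{u}^2,
\end{equation*}
and $\sqrt{\mathsf{M}^2+Ch}\leq \mathsf{M} + \sqrt{C}h^{1/2}$ yields the claim with $C_A = \sqrt{C}$.

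\textbf{Main obstacle.} The difficult step is the nonnegativity estimate of Step 3. It must hold with a loss of $\O{h}$ and not $\O{h^{1/2}}$, and it must be justified on a manifold with symbols in $S^0$ rather than compactly supported in $\zeta$.

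My first approach is to reduce to local charts with a partition of unity. There I would apply the Euclidean sharp Gårding inequality (for instance via the anti-Wick quantization, whose positivity is exact and which differs from the Weyl quantization by $\O{h}$ in $S^{-1}$). The chart-change and cutoff commutators are $\O{h}$ by Proposition~\ref{pr:composition}.

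A naive alternative would be to write $(\mathsf{M}+\varepsilon)^2-\abs{a^h}^2 = \abs{b}^2$ with $b$ smooth. This fails: the derivatives of $b$ blow up like powers of $\varepsilon^{-1}$, so the constants would not stay uniform.

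Even a weaker Gårding bound with loss $\O{h^{1/2}}$ would still suffice for $\norm{A_h}\leq \mathsf{M} + Ch^{1/4}$. That would not give the stated exponent, which is why the sharp form matters.
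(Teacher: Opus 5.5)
Your argument is correct and is the standard derivation behind the results the paper cites for this proposition (the paper gives no proof of its own, only \cite[Propositions E.22 and E.24]{dyatlov2019mathematical} and \cite[Theorem 13.13]{zworski2022semiclassical}): the sharp \garding~inequality applied to the nonnegative symbol $\mathsf{M}^2\tilde\chi^2-\abs{a^h}^2$ gives $\norm{A_hu}^2\leq(\mathsf{M}^2+Ch)\norm{u}^2$, and taking the square root produces the $h^{1/2}$. One minor correction to your aside on proving sharp \garding: for an $S^0$ symbol, the isotropic anti-Wick and Weyl quantizations differ by $\O{h}$ in $S^0$, not in $S^{-1}$, because the $\partial_x^2$ term gains no decay in $\zeta$; this is still enough for the $L^2$ estimate you need.
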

\paragraph{Microlocal and semiclassical defect measures.}
This paragraph introduces the concept of defect measures introduced by Tartar in~\cite{Tartar1990Hmeasures} and Gérard in~\cite{gerard1991defectmeasure}.
From~\cite[Theorem 1]{gerard1991defectmeasure}, we have the following definition and proposition.
\begin{definition}[Microlocal defect measure]\label{df:1:MDM}
Given $k\geq 1$, let $(U^n)_n$ be a sequence of $L^2(\Mc)^k$ and $\ml\in\Mc_+(\cosphere \Mc, \CC^{k\times k})$ be a Radon measure with nonnegative hermitian matrix values.
We say that $(U^n)_n$ admits $\ml$ as unique microlocal defect measure if
\begin{equation}\tag{MDM}\label{eq:1:MDM}
\lim_{n\to\infty}\pscal{U^n}{AU^n}_{L^2\p{\Mc}^k} = \int_{\cosphere \Mc}\tr\classe{\sigma^0(A)(\rho)\d \ml(\rho)}, \quad \forall A\in\pseudo^{0}_{1, \phg}(\Mc, \CC^{k\times k}),
\end{equation}
where $\tr\colon \CC^{k\times k}\mapsto\RR$ is the classical trace map.
\end{definition}
\begin{proposition}
Given $k\geq 1$, let $(U^n)_n$ be a sequence of $L^2(\Mc)^k$. If $(U^n)_n$ weakly converges to $0$, then there exists a subsequence of $(U^n)_n$ (still denoted $(U^n)_n$ with a slight abuse of notation) which admits a unique microlocal defect measure $\ml\in\Mc_+(\cosphere \Mc, \CC^{k\times k})$ in the sense of~\eqref{eq:1:MDM}.
\end{proposition}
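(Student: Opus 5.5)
The statement is the existence (up to extraction) of a microlocal defect measure for an $L^2(\Mc)^k$ sequence weakly converging to $0$. I would follow Gérard's original argument. Since a weakly convergent sequence is bounded, set $C:=\sup_n\norm{U^n}_{L^2}<\infty$. For $A\in\pseudo^0_{1,\phg}(\Mc,\CC^{k\times k})$ the numbers $\pscal{U^n}{AU^n}$ are bounded by $C^2\norm{A}_{\linear(L^2)}$.

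\emph{Step 1: reduction to symbols.} I first show that the limit, when it exists, depends only on $\sigma^0(A)$ restricted to $\cosphere\Mc$. If $A$ has order $-1$ and is compactly supported, then $A$ is compact on $L^2$ (Rellich, using the compact support). Hence $AU^n\to0$ strongly and $\pscal{U^n}{AU^n}\to0$. Two operators with the same homogeneous principal symbol differ by such a term, so the limit is a functional of $a:=\sigma^0(A)|_{\cosphere\Mc}\in\Cscr^\infty_c(\cosphere\Mc,\CC^{k\times k})$.

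\emph{Step 2: extraction and positivity.} Choose a countable set $\{a_j\}$ dense in $\Cscr^0_c(\cosphere\Mc,\CC^{k\times k})$ made of smooth compactly supported symbols. By a diagonal extraction, $\ell(a_j):=\lim_n\pscal{U^n}{\Op(a_j)U^n}$ exists for every $j$. The key bound is the Calderón–Vaillancourt/\garding{} type estimate
\[
\limsup_n\abs{\pscal{U^n}{AU^n}}\leq C^2\sup_{\cosphere\Mc}\abs{\sigma^0(A)}.
\]
This holds because the compact-modulo norm of $A$ equals $\sup\abs{\sigma^0(A)}$, which is the classical ($h=1$, homogeneous) analogue of Proposition~\ref{pr:Calderon:Vaillancourt}: the defect between $\norm{A}$ and $\sup\abs{\sigma^0}$ is a compact operator, which disappears in the limit by Step 1. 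The estimate shows $\ell$ extends to a bounded linear functional on $\Cscr^0_c(\cosphere\Mc,\CC^{k\times k})$.

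For positivity, take $a\geq0$ hermitian. Write $a=b^*b+r$ with $b$ smooth and $r$ of small sup norm; this avoids square-root regularity issues. Then $\pscal{U^n}{\Op(b)^*\Op(b)U^n}\geq0$, and $\Op(a)-\Op(b)^*\Op(b)$ equals $\Op(r)$ modulo compact operators. Letting $r\to 0$ gives $\ell(a)\geq0$. This is the sharp \garding{} step, and it is the main technical point. The matrix-valued case is handled the same way, or by testing against $e e^*$ for fixed vectors $e$.

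\emph{Step 3: Riesz representation.} A positive bounded functional on $\Cscr^0_c(\cosphere\Mc,\CC^{k\times k})$ is represented by a nonnegative hermitian matrix-valued Radon measure $\ml$, via the paired form $\ell(a)=\int\tr[a\,\d\ml]$; one applies Riesz to each matrix entry and uses positivity for hermitian nonnegativity. Finally, for a general $A\in\pseudo^0_{1,\phg}$, the limit exists along the extracted subsequence: approximate $\sigma^0(A)$ uniformly by the $a_j$ and use the estimate of Step 2.

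Uniqueness of $\ml$ holds because symbols of compactly supported homogeneous operators, restricted to $\cosphere\Mc$, are dense in $\Cscr^0_c$. I expect the main obstacle to be the positivity/\garding{} step together with the compactness of order $-1$ remainders on a possibly non-compact $\Mc$. The latter is exactly why the operators are required to be compactly supported.
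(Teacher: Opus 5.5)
Your proposal is correct and is essentially G\'erard's original argument, which is exactly what the paper relies on: it gives no proof of its own and cites \cite[Theorem 1]{gerard1991defectmeasure}. Its proof of the 2-microlocal analogue (Lemma~\ref{lem:principal:microlocalization}) follows the same scheme as yours, namely a countable dense family of symbols, diagonal extraction, a Calder\'on--Vaillancourt-type bound by $\sup\abs{\sigma}$, nonnegativity via \garding, and the Riesz representation theorem. Your way of getting positivity, by writing $a=b^*b+r$ with $\sup\abs{r}$ small and treating the order $-1$ errors as compact operators acting on a weakly null sequence, is a harmless and slightly more elementary variant of invoking the sharp \garding{} inequality.
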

\begin{proposition}\label{pr:inter:supp:k:ml}
Given $k\geq 1$, let $(U^n)_n$ be a sequence of $L^2(\Mc)^k$ that admits a unique microlocal defect measure $\p{\ml_{jl}}_{1\leq j, l\leq n}\in\Mc_+(\cosphere \Mc, \CC^{k\times k})$ in the sense of~\eqref{eq:1:MDM}. Then
\begin{equation*}
\forall j, l\in\set{1,\dots, k}, \quad \supp\p{\ml_{jl}}\subset \supp\p{\ml_{jj}}\cap\supp\p{\ml_{ll}}.
\end{equation*}
\end{proposition}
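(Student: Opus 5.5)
The plan is to reduce the statement to a Cauchy--Schwarz estimate at the level of the sequence $(U^n)_n$, tested against suitable matrix-valued operators in~\eqref{eq:1:MDM}. By symmetry of the roles of $j$ and $l$, and since $\ml$ is hermitian ($\ml_{lj} = \barre{\ml_{jl}}$), it suffices to prove $\supp(\ml_{jl})\subset\supp(\ml_{jj})$. So I fix $\rho_0\in\cosphere\Mc\setminus\supp(\ml_{jj})$ and an open conic neighbourhood $\Uc$ of $\rho_0$ with $\ml_{jj}(\Uc)=0$. The goal is to show $\int a\,\d\ml_{jl}=0$ for every $a\in S^0_{\phg}$ whose homogeneous part restricted to $\cosphere\Mc$ is supported in $\Uc$.

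\textbf{Step 1: reading off matrix entries.} For a scalar $A\in\pseudo^0_{1,\phg}(\Mc)$ and indices $r,s$, let $E_{rs}$ be the elementary matrix and use $A E_{rs}\in\pseudo^0_{1,\phg}(\Mc,\CC^{k\times k})$ in~\eqref{eq:1:MDM}. Up to the trace convention, this gives
\begin{equation*}
\lim_{n\to\infty}\pscal{U^n_r}{A U^n_s}_{L^2(\Mc)}=\int_{\cosphere\Mc}\sigma^0(A)\,\d\ml_{sr}.
\end{equation*}
In particular, taking $r=s=j$ gives $\lim_n\norm{BU^n_j}^2=\int\abs{\sigma^0(B)}^2\d\ml_{jj}$, using $A=B^*B$ and Proposition~\ref{pr:composition}.

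\textbf{Step 2: inserting a cutoff.} Choose a homogeneous $\chi$ of order $0$, compactly supported in $z$, with $\chi\equiv 1$ on a conic neighbourhood of $\supp a$ (at infinity) and $\supp\chi\cap\cosphere\Mc\subset\Uc$. Set $B:=\Op(\chi)$ and $A:=\Op(a)$. The principal symbol of $A(\mathrm{Id}-B)$ is $a(1-\chi)$, which vanishes for large $\abs{\zeta}$, so $A(\mathrm{Id}-B)$ is a compactly supported operator of order $-1$. It is therefore compact on $L^2$. Since $U^n_j\rightharpoonup 0$, this gives $\pscal{U^n_l}{AU^n_j}=\pscal{A^*U^n_l}{BU^n_j}+o(1)$.

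\textbf{Step 3: Cauchy--Schwarz and conclusion.} Step 2 yields
\begin{equation*}
\abs{\pscal{U^n_l}{AU^n_j}}\leq \norm{A^*}\,\sup_n\norm{U^n_l}\,\norm{BU^n_j}+o(1).
\end{equation*}
The uniform bound on $\norm{A^*}$ comes from Proposition~\ref{pr:boundedness:pseudo}, and $\sup_n\norm{U^n_l}<\infty$ holds because the sequence converges weakly. By Step 1, $\limsup_n\norm{BU^n_j}^2=\int\abs{\chi}^2\d\ml_{jj}=0$, because $\chi$ is supported in $\Uc$. Hence $\int a\,\d\ml_{jl}=0$ for all such $a$, so $\rho_0\notin\supp(\ml_{jl})$.

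The only delicate point is Step 2. One has to check that the cutoff error really is compact, which requires the compact support in $z$ when $\Mc$ is not compact, and that the weak convergence of $U^n$ is available. Both follow from the class $\pseudo^{0}_{1,\phg}$ and the standing hypothesis. An alternative is to argue directly from the positivity of the matrix measure, which gives $\abs{\ml_{jl}(E)}^2\le\ml_{jj}(E)\,\ml_{ll}(E)$ on Borel sets $E$. I would keep the operator argument above since it uses only~\eqref{eq:1:MDM}.
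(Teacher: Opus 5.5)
Your proof is correct and follows the same route as the paper. Both arguments bound the off-diagonal pairing $\pscal{U^n_l}{AU^n_j}$ by Cauchy--Schwarz and use $\norm{AU^n_j}^2=\pscal{U^n_j}{A^*AU^n_j}\to\int\abs{\sigma^0(A)}^2\d\ml_{jj}=0$. Your reduction to $\supp(\ml_{jl})\subset\supp(\ml_{jj})$ followed by hermitian symmetry is actually a cleaner way to reach the intersection than the paper's ``$\min$'' formulation, which implicitly needs a partition of unity. The cutoff $B$ of Step 2, together with the compactness and weak-convergence discussion, is unnecessary: you can apply Cauchy--Schwarz directly with $A$, whose symbol is already supported in $\Uc$.
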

\begin{proof}[Proof of Proposition~\ref{pr:inter:supp:k:ml}]
For all $j, l\in\set{1, \dots, k}$ and all $A\in\pseudo_{1, \phg}^0(\Mc)$ supported away from $\supp\p{\ml_{jj}}\cap\supp\p{\ml_{ll}}$, $A^* A$ is supported away from $\supp\p{\ml_{jj}}\cap\supp\p{\ml_{ll}}$, and thus
\begin{equation*}
    \min_{\bullet\in\set{j, l}}\norm{AU_{\bullet}^n}_{L^2} = \min_{\bullet\in\set{j, l}}\p*{\sqrt{\pscal{U_{\bullet}^n}{A^*A U_{\bullet}^n}}}\xrightarrow[n\to\infty]{}\min_{\bullet\in\set{j, l}}\sqrt{\pscal{\ml_{\bullet\bullet}}{\sigma(A)^*\sigma(A)}} = 0.
\end{equation*}
Then, using the Cauchy-Schwarz inequality, we have
\begin{equation*}
\abs{\pscal{U_j^n}{A U_l^n}}\leq \min\set{\norm{U_j^n}\norm{A U_l^n}, \norm{A^* U_j^n}\norm{U_l^n}}\xrightarrow[n\to\infty]{}0.
\end{equation*}
This concludes the proof of Proposition~\ref{pr:inter:supp:k:ml}.
\end{proof}
From~\cite[Definition E.41 and Proposition E.42]{dyatlov2019mathematical} we have the following definition and proposition.
The proofs of Propositions~\ref{pr:inter:supp:k:ml} and~\ref{pr:inter:supp:k} rely on the same argument.
\begin{definition}[Semiclassical defect measure]
Given $k\geq 1$, let $(h_n)_n$ be a sequence of $(0, 1]$ tending to $0$, $(U^n)_n$ be a sequence of $L^2(\Mc)^k$ and $\sdm\in\Mc_+(\cotang \Mc, \CC^{k\times k})$ be a Radon measure with nonnegative hermitian matrix values.
We say that $(U^n, h_n)_n$ admits $\sdm$ as unique semiclassical defect measure if, for all $A_{h}\in \pseudo^{\comp}_{h}(\Mc, \CC^{k\times k})$ with principal symbol $\sigma_h(A_h)$ independent of $h$, we have
\begin{equation}\tag{SDM}\label{eq:SDM}
\lim_{n\to\infty}\pscal{U^n}{A_{h_n}U^n}_{L^2(\Mc)^k} = \int_{\cotang \Mc}\tr\classe{\sigma_{h}(A_{h})(\rho)\d\sdm(\rho)},
\end{equation}
where $\tr\colon \CC^{k\times k}\mapsto\RR$ is the classical trace map.
\end{definition}
\begin{proposition}\label{pr:SDM}
Given $k\geq 1$, let $(h_n)_n$ be a sequence of $(0, 1]$ tending to $0$ and let $(U^n)_n$ be a bounded sequence of $L^2(\Mc)^k$.
Then, there exists a subsequence of $(U^n, h_n)_n$ (still denoted $(U^n, h_n)_n$ with a slight abuse of notation) which admits a unique semiclassical defect measure $\sdm\in\Mc_+(\cotang \Mc, \CC^{k\times k})$ in the sense of~\eqref{eq:SDM}.
\end{proposition}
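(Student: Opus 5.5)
The plan is to follow the standard construction of semiclassical defect measures (Gérard, or Zworski's book, Theorem 5.2), adapted to the manifold $\Mc$ and to matrix-valued symbols. First, we fix a countable family that is dense in $\Cscr^0_c(\cotang\Mc,\CC^{k\times k})$ for the sup norm. It consists of $h$-independent symbols $a_j\in\test(\cotang\Mc,\CC^{k\times k})$, with compact supports $K_j$ exhausting $\cotang\Mc$; this is possible because $\cotang\Mc$ is $\sigma$-compact. For each $j$ we set $A^j_h:=\Op^{\Mc}_h(a_j)\in\pseudo^{\comp}_h(\Mc,\CC^{k\times k})$. By Calder\'on--Vaillancourt (Proposition~\ref{pr:Calderon:Vaillancourt}, applied entrywise),
\begin{equation*}
\abs{\pscal{U^n}{A^j_{h_n}U^n}}\leq C\p{\sup\abs{a_j}+C_jh_n^{1/2}}\sup_n\norm{U^n}^2,
\end{equation*}
so each sequence is bounded. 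A diagonal extraction then gives a subsequence along which $\ell(a_j):=\lim_n\pscal{U^n}{A^j_{h_n}U^n}$ exists for every $j$.

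Next we extend $\ell$ to all of $\test$ and then to $\Cscr^0_c$. The key estimate is that for any $a\in\test$,
\begin{equation*}
\limsup_n\abs{\pscal{U^n}{\Op_{h_n}(a)U^n}}\leq C\sup\abs{a},
\end{equation*}
which again follows from Calder\'on--Vaillancourt. Consequently $\ell$ is Cauchy along approximating sequences and extends uniquely to a linear functional on $\Cscr^0_c(\cotang\Mc,\CC^{k\times k})$, bounded in sup norm on each compact set. Since the operator is determined modulo $h\pseudo^{\comp}_h$, which is $\O{h}$ in $\linear(L^2)$, the limit depends only on the principal symbol. This gives~\eqref{eq:SDM} for every $A_h\in\pseudo^{\comp}_h$ with $h$-independent principal symbol, once we also observe that two operators in $\pseudo^{\comp}_h$ with the same principal symbol differ by $\O{h}$ in $\linear(L^2)$. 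An operator with compact wavefront set differs from $\Op_h$ of a compactly supported symbol by $\O{h^\infty}$.

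The main step is positivity. For a hermitian nonnegative $a\in\test(\cotang\Mc,\CC^{k\times k})$ we need $\ell(a)$ to be a nonnegative real number. We would use the sharp G\aa rding inequality (matrix version): for $a\geq 0$, $\Re\pscal{U}{\Op_h(a)U}\geq -Ch\norm{U}^2$. If one wants to avoid citing it, we take $b=\sqrt{a+\varepsilon\chi}$ with $\chi$ a scalar cutoff equal to $1$ near $\supp a$, so that $b$ is smooth and hermitian. Proposition~\ref{pr:composition} gives $\Op_h(b)^*\Op_h(b)=\Op_h(a+\varepsilon\chi)+\O{h}$, hence $\ell(a)+\varepsilon\ell(\chi)\geq0$, and we let $\varepsilon\to0$. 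Taking adjoints via item 3 of Proposition~\ref{pr:composition} shows that $\ell$ is hermitian. Positivity of $\ell$ applied to the scalar functionals $f\mapsto \ell(f\,v v^*)$, for $v\in\CC^k$, together with the Riesz--Markov theorem, produces nonnegative Radon measures. Polarization then assembles them into a nonnegative hermitian matrix-valued measure $\sdm$ with $\ell(a)=\int\tr[a\,\d\sdm]$. Uniqueness is immediate, since a Radon measure is determined by its action on $\test$. The only delicate point is the matrix bookkeeping, namely that hermitian positivity passes to the trace pairing, which is handled by the rank-one decomposition above.
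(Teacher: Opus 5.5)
Your proposal is correct and is essentially the standard argument behind \cite[Proposition E.42]{dyatlov2019mathematical}, which the paper cites without proof: a countable dense family of compactly supported symbols, Calder\'on--Vaillancourt bounds, diagonal extraction, positivity via sharp G\aa rding, and Riesz representation, the same scheme the paper uses for Lemma~\ref{lem:principal:microlocalization}, with your rank-one polarization $f\mapsto\ell(f\,vv^*)$ handling the $\CC^{k\times k}$-valued case. In your G\aa rding-free variant, take the cutoff of the form $\chi=\psi^2$ with $\psi$ smooth, since for a general cutoff $\sqrt{a+\varepsilon\chi}$ need not be smooth where $\chi$ vanishes.
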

\begin{proposition}\label{pr:inter:supp:k}
Given $k\geq 1$, let $(h_n)_n$ be a sequence of $(0, 1]$ tending to $0$ and $(U^n)_n$ be a sequence of $L^2(\Mc)^k$ that admits a unique semiclassical defect measure $\p{\sdm_{jl}}_{1\leq j, l\leq n}\in\Mc_+(\cotang \Mc, \CC^{k\times k})$ in the sense of~\eqref{eq:SDM}. Then
\begin{equation*}
\forall j, l\in\set{1,\dots, k}, \quad \supp\p{\sdm_{jl}}\subset \supp\p{\sdm_{jj}}\cap\supp\p{\sdm_{ll}}.
\end{equation*}
\end{proposition}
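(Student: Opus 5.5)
The plan is to mirror the argument given for Proposition~\ref{pr:inter:supp:k:ml}, replacing homogeneous order-zero operators with compactly microlocalized semiclassical operators. Fix $j, l\in\set{1,\dots,k}$ and a point $\rho_0\in\cotang\Mc$ with $\rho_0\notin\supp\p{\sdm_{jj}}\cap\supp\p{\sdm_{ll}}$. Without loss of generality, assume $\rho_0\notin\supp\p{\sdm_{jj}}$; the other case is symmetric. The aim is to show that $\rho_0\notin\supp\p{\sdm_{jl}}$. That means finding a neighbourhood $W$ of $\rho_0$ such that $\int a\,\d\sdm_{jl} = 0$ for every $a\in\test(W)$.

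Choose $W$ relatively compact with $\barre{W}\cap\supp\p{\sdm_{jj}} = \emptyset$, take $a\in\test(W)$, and set $A_h := \Op^{\Mc}_h(a)\in\pseudo_h^{\comp}(\Mc)$. This operator has $h$-independent principal symbol $a$.

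\begin{itemize}
\item First apply~\eqref{eq:SDM} to the matrix operator that has $A_h^*A_h$ in the $(j,j)$ entry and zeros elsewhere. By Proposition~\ref{pr:composition}, its principal symbol is $\abs{a}^2$, which vanishes on $\supp\p{\sdm_{jj}}$. This gives
\begin{equation*}
\norm{A_{h_n}U_j^n}_{L^2}^2 = \pscal{U_j^n}{A_{h_n}^*A_{h_n}U_j^n}\xrightarrow[n\to\infty]{}\int\abs{a}^2\d\sdm_{jj} = 0.
\end{equation*}
\item Next apply~\eqref{eq:SDM} to the matrix operator with $A_h$ in the $(j,l)$ entry; here one must check the index convention of the trace pairing. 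This gives $\pscal{U_j^n}{A_{h_n}U_l^n}\to\int a\,\d\sdm_{lj}$, or the analogous quantity with $j,l$ swapped.
\item By Cauchy--Schwarz, $\abs{\pscal{U_j^n}{A_{h_n}U_l^n}} = \abs{\pscal{A_{h_n}^*U_j^n}{U_l^n}}\leq\norm{A^*_{h_n}U_j^n}\norm{U_l^n}$. Since $(U^n)_n$ is bounded in $L^2$, it suffices to show $\norm{A^*_{h_n}U^n_j}\to0$. This follows from the first step applied to $A_h^*$: it belongs to $\pseudo_h^{\comp}$ and has principal symbol $\barre{a}$, still supported in $W$. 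The $(l,j)$ entry is handled in the same way.
\end{itemize}

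We conclude that $\int a\,\d\sdm_{jl}=0$ for all $a\in\test(W)$, so $\rho_0\notin\supp\p{\sdm_{jl}}$.

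The main point requiring care, rather than a real obstacle, is bookkeeping. One has to match the trace convention $\tr[\sigma\,\d\sdm]$ to the correct entry $\sdm_{jl}$ versus $\sdm_{lj}$. Since $\sdm$ is Hermitian, $\sdm_{lj}=\barre{\sdm_{jl}}$, so the supports coincide and the convention does not matter in the end. One also has to make sure that every operator used lies in $\pseudo_h^{\comp}$ with $h$-independent principal symbol, so that~\eqref{eq:SDM} applies; any $\O{h}$ symbol corrections tend to zero by Proposition~\ref{pr:Calderon:Vaillancourt}.
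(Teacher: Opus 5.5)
Your proof is correct and follows essentially the paper's route: the paper reuses the argument of Proposition~\ref{pr:inter:supp:k:ml}, showing $\|A_{h_n}U_j^n\|_{L^2}\to 0$ (and likewise for $A_{h_n}^*$) through~\eqref{eq:SDM} applied to $A_h^*A_h$, then concluding by Cauchy--Schwarz; like the paper, you implicitly use that $(U^n)_n$ is bounded. Your localization at a point $\rho_0\notin\supp(\sdm_{jj})$ is the precise form of the paper's step, which is stated for operators supported away from $\supp(\sdm_{jj})\cap\supp(\sdm_{ll})$ even though such an operator need not be supported away from either set individually.
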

\section{Second microlocalization on the cosphere at infinity in the semiclassical limit}\label{section:microlocalization}
In this article, we study the equation~\eqref{eq:Wave} with potential $\mathsf{V} = \lmbd V$ --- for $V$ as~\eqref{df:potential:fixed:V} --- and, in particular, the case of large $\lmbd$.
In this case, setting the semiclassical parameter $h = \sqrt{\lmbd^{-1}}\in (0, 1]$, we use technical tools from semiclassical analysis, such as semiclassical defect measures, to study the behavior of solutions of~\eqref{eq:Wave} on the characteristic oscillation scale $\zeta \sim h^{-1}$.
However, this characteristic scale misses information contained on larger oscillation scales.
Following the ideas of~\cite[Section 2.6.2]{anantharaman2016wigner}, the goal of this technical section is to construct second microlocal defect measures
---
the notion was introduced by~\cite{miller1996propagation, Miller97Short, fermanian2000measures, Fermanian00Propagation, FKG02Mesures} (see also~\cite{Macia10HF, Anantharaman14Semiclassical, Anantharaman15longtime})
---
on the cosphere at infinity, namely $\partial\cotanginf\Mc$, to capture information in the regime of oscillations $\zeta\gg h^{-1}$.

We need second microlocalization both on $(M, g)$ and on $(I\times M, {\d t^2 + g})$, where $I$ is a nonempty open interval of $\RR$.
Accordingly, we consider a $\mathsf{d}$-dimensional, smooth, connected Riemannian manifold $(\Mc, \mathsf{g})$, which can play the role of either of these two manifolds.
The main result of this section is Theorem~\ref{Th:Microlocalization}, inspired by~\cite[Theorem 1]{gerard1991defectmeasure} and~\cite{fermanian2000measures}, which allows us to microlocalize bounded $L^2(\Mc)$-sequences on the cosphere at infinity in the semiclassical limit as $h$ tends to $0$.
This result will be crucial for the proof of Theorem~\ref{Th:GCC:UO}.
For simplicity, restricting ourselves to homogeneous pseudodifferential operators, we introduce our second microlocal defect measures on the unit cosphere $\cosphere\Mc$ and not directly on $\partial\cotanginf\Mc$.

In this section: first, we prove Theorem~\ref{Th:Microlocalization} in Subsection~\ref{subsection:Microlocalization:Proof}; second, we explain how to use it for the wave equation~\eqref{eq:Wave} in Subsection~\ref{subsection:Microlocalization:use:WE}.

\paragraph{Homogeneous symbols at infinity and 2-microlocal defect measure.}
In this paragraph we introduce and define the notions of 2-microlocal defect measure on the cosphere at infinity.
\begin{definition}[Homogeneous symbols of degree $m$ at infinity]\label{df:pseudo:phg:0}
Given $k\geq 1$, let $\tilde{S^m_{\phg}}\p{\cotang\Mc, \CC^{k\times k}}$ be the set of smooth functions $a^h\in S^m_{\comp}(\Mc, \CC^{k\times k})$ for which there is a symbol $a_{\phg}\in S^m_{\phg}(\cotang \Mc, \CC^{k\times k})$
---
see Subsection~\ref{section:pseudo:calculus} for definition 
---
and a smooth function $\chi^h\in S^m_{\comp}(\cotang \Mc, \CC^{k\times k})$ with   
\begin{equation*}
\lim_{R\to+\infty}\classe*{\sup_{h\in(0, 1]}\sup_{
\begin{subarray}{c}
(z, \zeta)\in\cotang \Mc,\\
\abs{\zeta}_{\mathsf{g}}\geq R
\end{subarray}
}\frac{\abs{\chi^h(z, \zeta)}}{\abs{\zeta}_{\mathsf{g}}^{m}}} = 0,
\end{equation*}
such that
\begin{equation}\label{eq:characterization:S^0_phg}
\forall (z,\zeta)\in\cotang \Mc,\quad a^h(z, \zeta) = a_{\phg}(z, \zeta) + \chi^h(z, \zeta).
\end{equation}
We say that $a_{\phg}$ is the homogeneous principal part of $a^h$.
\end{definition}
\begin{remark}
Note that, for $\epsilon > 0$, $S^{m-\epsilon}_{\phg}\p{\cotang \Mc, \CC^{k\times k}}\subset {S^{m}_{\phg}}\p{\cotang\Mc, \CC^{k\times k}}$.
Most of the time, we will have $\chi^h\in S_{\comp}^{m - \epsilon}(\cotang \Mc, \CC^{k\times k})$ with $\epsilon>0$.
\end{remark}
\begin{definition}\label{df:SFRC}
We call a \emph{smooth fiber-radial cut-off function} any smooth function 
\begin{equation*}
    \Theta\colon (R,\rho) \in\RR_+^*\times \cotang \Mc\longmapsto \Theta_R(\rho)\in [0,1]   
\end{equation*}
that satisfies the following two properties:
\begin{enumerate}
\item for all radius $R > 0$, $\Theta_R\in\test(\cotang\Mc, [0,1])$;
\item there are $\mathsf{C}^\star > \mathsf{C}_\star > 0$ such that for all radius $R \geq 1$,
\begin{enumerate}
\item $\Theta_R \equiv 1 \text{ near } \set{(z, \zeta)\in\cotang \Mc,~\jap{\zeta}_{\mathsf{g}} := \sqrt{1 + \abs{\zeta}_{\mathsf{g}}^2} \leq \mathsf{C}_\star R}$;
\item $\Theta_R \equiv 0 \text{ near } \set{(z, \zeta)\in\cotang \Mc,~\jap{\zeta}_{\mathsf{g}} \geq \mathsf{C}^\star R}$.
\end{enumerate}
\end{enumerate}
Using the notation of~\cite{Anantharaman14Semiclassical}, we also set $\Theta^R := 1-\Theta_R$.
\end{definition}
\begin{definition}[2-microlocal defect measure on the cosphere at infinity]\label{df:MDM}
Given $k\geq 1$,
let $(h_n)_n$ be a sequence in $(0, 1]$ tending to $0$, $(U^n)_n$ be a sequence in $L^2(\Mc)^k$ and 
$\mdm\in\Mc_+(\cosphere \Mc, \CC^{k\times k})$ be a Radon measure with nonnegative hermitian matrix values.
We say that $(U^n, h_n)_n$ admits $\mdm$ as unique 2-microlocal defect measure if:
for all \emph{smooth fiber-radial cut-off function} $\Theta$ (see Definition~\ref{df:SFRC}), and all $a^h\in \tilde{S^0_{\phg}}(\cotang \Mc, \CC^{k\times k})$ with homogeneous principal part $a_{\phg}\in S^0_{\phg}(\cotang \Mc, \CC^{k\times k})$, we have
\begin{equation}\tag{$2^{\mathrm{n d}}$-MDM}\label{eq:MDM}
\lim_{R\to\infty}\lim_{n\to\infty}\pscal{U^n}{\Op_{h_n}^{\Mc}\p{a^h\p{1 - \Theta_R}}U^n}_{L^2\p{\Mc}^k} = \int_{\cosphere \Mc}\tr\classe{{a_{\phg}(\rho)\d\mdm(\rho)}},
\end{equation}
where $\tr\colon \CC^{k\times k}\mapsto\RR$ is the classical trace map.
\end{definition}
\paragraph{Main result on 2-microlocal defect measures.}
In this paragraph, we state the main theorem of this section (proved in Subsection~\ref{subsection:Microlocalization:Proof}) and discuss a few related remarks.
\begin{theorem}\label{Th:Microlocalization}
Given $k\geq 1$, let $(h_n)_n$ be a sequence in $(0, 1]$ tending to $0$ and let $(U^n)_n$ be a bounded sequence in $L^2(\Mc)^k$.
Then, there exists a subsequence of $(U^n, h_n)_n$ (still denoted $(U^n, h_n)_n$ with a slight abuse of notation) that admits a unique 2-microlocal defect measure $\mdm\in\Mc_+(\cosphere \Mc, \CC^{k\times k})$ in the sense of~\eqref{eq:MDM}.
\end{theorem}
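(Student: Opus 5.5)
We follow the scheme of Gérard's existence proof of microlocal defect measures \cite[Theorem 1]{gerard1991defectmeasure}: a diagonal extraction on a countable dense family of test symbols, a positivity estimate, and an identification of the limit functional with a Radon measure on $\cosphere\Mc$. The new ingredient is the double limit ($n\to\infty$, then $R\to\infty$). We must show that the $R$-limit exists, that it depends only on the homogeneous principal part $a_{\phg}$, and that it is independent of the cut-off $\Theta$.

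First, we reduce to homogeneous symbols. Write $a^h = a_{\phg} + \chi^h$ as in~\eqref{eq:characterization:S^0_phg}. Since $\chi^h(1-\Theta_R)$ has sup norm at most $\sup_h\sup_{\abs{\zeta}_{\mathsf{g}}\geq \mathsf{C}_\star R-1}\abs{\chi^h}$, which tends to $0$ as $R\to\infty$, the Calder\'on--Vaillancourt bound (Proposition~\ref{pr:Calderon:Vaillancourt}) gives
\begin{equation*}
\limsup_{n}\abs{\pscal{U^n}{\Op_{h_n}^{\Mc}(\chi^h(1-\Theta_R))U^n}}\leq o_{R\to\infty}(1)\,\sup_n\norm{U^n}^2 .
\end{equation*}
This uses $h_n\to0$ to discard the $Ch^{1/2}$ term, after checking that the constant $C$ is uniform in $R\geq1$. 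That uniformity holds because $\Theta_R(z,\zeta)$ behaves like a symbol of order $0$ in $\zeta$, with $\partial_\zeta^\beta$ bounded by $R^{-\abs{\beta}}\lesssim \jap{\zeta}^{-\abs{\beta}}$ on its transition region. This last point should be built into the cut-off, for instance by taking $\Theta_R(\rho)=\theta(\jap{\zeta}/R)$, with a comparison argument for general $\Theta$. So it suffices to treat $a=a_{\phg}$ homogeneous of degree $0$, which we identify with $a_{\phg}|_{\cosphere\Mc}\in\Cscr^0_c$.

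Second, we construct the limit and prove positivity. Fix a reference cut-off $\Theta$ and a countable family $(a_j)$ of homogeneous symbols whose restrictions are dense in $\Cscr_c(\cosphere\Mc,\CC^{k\times k})$. For fixed $R$, the quantities $\ell_{R,n}(a_j)=\pscal{U^n}{\Op_{h_n}(a_j(1-\Theta_R))U^n}$ are bounded. Diagonal extraction over $j$ and over $R\in\NN$ gives a subsequence along which $\ell_R(a_j)=\lim_n\ell_{R,n}(a_j)$ exists for all $j,R$.

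The key step, which I expect to be the main obstacle, is a sharp G\aa rding-type inequality. If $a\geq0$ is homogeneous (a nonnegative hermitian matrix), then
\begin{equation*}
\liminf_n\,\mathrm{Re}\,\ell_{R,n}(a)\geq -o_{R}(1)
\end{equation*}
uniformly. I would prove this by writing $a(1-\Theta_R)=b_R^*b_R+r_R$, where $b_R=\sqrt{a+\epsilon}\,\sqrt{1-\Theta_R}$ is regularized, and using the calculus (Proposition~\ref{pr:composition}) to get $\Op(b_R)^*\Op(b_R)=\Op(a(1-\Theta_R))+\O{h}$. The $\O{h}$ is uniform in $R$ because derivatives fall on $\Theta_R$ only where $\jap{\zeta}\sim R$, where they gain powers of $R^{-1}$.

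Next, the $R$-monotonicity. For $R'>R$ the difference $\ell_R(a)-\ell_{R'}(a)$ is given by the symbol $a(\Theta_{R'}-\Theta_R)$. For $a\geq0$ this has a limsup lower bound of $-o(1)$, so $R\mapsto\ell_R(a)$ is asymptotically nonincreasing and bounded. Hence $\lim_R$ exists for $a\geq0$, and by linearity for all $a_j$. Call the limit $L(a_j)$.

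Third, we identify the measure and check independence of the choices. The bound $\abs{L(a)}\leq\sup\abs{a}\,\sup_n\norm{U^n}^2$ (Calder\'on--Vaillancourt again) extends $L$ to a positive, in the hermitian sense, continuous linear functional on $\Cscr_c(\cosphere\Mc,\CC^{k\times k})$. Riesz representation then produces $\mdm$ with values in nonnegative hermitian matrices, using the trace pairing.

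Independence from $\Theta$ follows because two cut-offs $\Theta,\tilde\Theta$ satisfy $\tilde\Theta_{R}\geq\Theta_{cR}$ and $\tilde\Theta_R\leq\Theta_{CR}$ for suitable constants. Squeezing with nonnegative $a$ through the positivity estimate then gives the same limit. Uniqueness is immediate, since the defining formula~\eqref{eq:MDM} determines $\int a_{\phg}\,\d\mdm$ for all continuous test symbols.
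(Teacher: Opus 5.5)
Your route differs from the paper's: you get the $R$-limit from monotonicity of a reference cut-off plus sharp G\aa rding, and independence of $\Theta$ from the squeeze $\Theta_{cR}\leq\tilde\Theta_R\leq\Theta_{CR}$. The paper instead extracts a sequence of radii $(R_m)_m$ and then compares everything through a semiclassical defect measure.

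The gap is in your last step. Definition~\eqref{eq:MDM} needs the inner limit $\lim_{n\to\infty}\pscal{U^n}{\Op_{h_n}^{\Mc}(a(1-\tilde\Theta_R))U^n}$ to exist for every smooth fiber-radial cut-off $\tilde\Theta$ and every large real $R$. Your diagonal extraction only provides it for the reference $\Theta$ and $R\in\NN$. The squeeze combined with G\aa rding yields only
\[
\lim_{R\to\infty}\liminf_{n\to\infty}\pscal{U^n}{\Op_{h_n}^{\Mc}(a(1-\tilde\Theta_R))U^n}=\lim_{R\to\infty}\limsup_{n\to\infty}\pscal{U^n}{\Op_{h_n}^{\Mc}(a(1-\tilde\Theta_R))U^n}=L(a),
\]
and says nothing about $\liminf_n=\limsup_n$ at a fixed $R$.

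Positivity cannot give more, because the test symbols $a_j(1-\Theta_N)$, $N\in\NN$, do not separate measures in the radial variable. Nothing in your extraction excludes a subsequence containing two sub-subsequences with different semiclassical measures while all your countably many limits exist. For instance, take $\delta_{(z_0,\omega_0)}\otimes\mu_i(\d s)$ in polar coordinates, $i=1,2$, with no mass escaping to $\abs{\zeta}\gg h^{-1}$. Choose $\mu_1-\mu_2\neq0$ annihilating the constants and every $s\mapsto\Theta_N(z_0,s\omega_0)$; it can be built band by band, each band meeting the transition region of only finitely many $\Theta_N$. Then pick an admissible $\tilde\Theta$ whose profile at radii $R_k\to\infty$ does see $\mu_1-\mu_2$. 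For that $\tilde\Theta$, the inner limit fails at every $R_k$.

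The missing ingredient is exactly the paper's key idea: extract once more so that $(U^n,h_n)_n$ also admits a semiclassical defect measure $\sdm$ (Proposition~\ref{pr:SDM}). For any $\tilde\Theta$, $R$ and $N$, the symbol $a(\Theta_N-\tilde\Theta_R)$ belongs to $\test(\cotang\Mc)$, so
\[
\lim_{n\to\infty}\pscal{U^n}{\Op_{h_n}^{\Mc}(a(1-\tilde\Theta_R))U^n}=\ell_N(a)+\int_{\cotang\Mc}\tr\classe{a(\Theta_N-\tilde\Theta_R)\,\d\sdm}
\]
exists. Since $\sdm$ is a finite measure and $\Theta_N,\tilde\Theta_R\to1$ pointwise, the last integral tends to $0$ as $N,R\to\infty$ by dominated convergence. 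This gives, all at once, the existence of the inner limits, the existence of $\lim_N\ell_N(a)$, and independence of $\Theta$. Your monotonicity and squeeze arguments then become unnecessary.

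Two side remarks. First, the uniformity in $R$ you seek in the Calder\'on--Vaillancourt and G\aa rding steps is never needed, since $n\to\infty$ is taken at fixed $R$. Second, if you keep the monotonicity route, you must choose the reference profile nonincreasing so that $\Theta_{R'}\geq\Theta_R$ for $R'>R$.
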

\begin{proposition}\label{pr:inter:supp:k:2mdm}
Given $k\geq 1$, let $(h_n)_n$ be a sequence of $(0, 1]$ tending to $0$ and $(U^n)_n$ be a sequence of $L^2(\Mc)^k$ that admits a unique 2-microlocal defect measure $\p{\mdm_{jl}}_{1\leq j, l\leq n}\in\Mc_+(\cosphere \Mc, \CC^{k\times k})$ in the sense of~\eqref{eq:MDM}. Then
\begin{equation*}
\forall j, l\in\set{1,\dots, k}, \quad \supp\p{\mdm_{jl}}\subset \supp\p{\mdm_{jj}}\cap\supp\p{\mdm_{ll}}.
\end{equation*}
\end{proposition}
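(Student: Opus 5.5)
We follow the argument used for Propositions~\ref{pr:inter:supp:k:ml} and~\ref{pr:inter:supp:k}, adapted to the double limit in~\eqref{eq:MDM}. Fix $j,l$ and a scalar homogeneous symbol $a_{\phg}\in S^0_{\phg}(\cotang\Mc)$ whose restriction to $\cosphere\Mc$ is supported away from $K:=\supp(\mdm_{jj})\cap\supp(\mdm_{ll})$. It suffices to show $\int a_{\phg}\,\d\mdm_{jl}=0$ for all such $a_{\phg}$. This quantity is obtained by testing~\eqref{eq:MDM} with the matrix symbol $a_{\phg}E_{lj}$, where $E_{lj}$ is the elementary matrix. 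Its limit is
\begin{equation*}
\lim_{R\to\infty}\lim_{n\to\infty}\pscal{U^n_j}{\Op_{h_n}^{\Mc}\p{a_{\phg}(1-\Theta_R)}U^n_l}.
\end{equation*}

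By a partition of unity on the compact set $\supp(a_{\phg})\cap\cosphere\Mc$, we may assume that $a_{\phg}$ is supported away from $\supp(\mdm_{jj})$, or away from $\supp(\mdm_{ll})$. Take the second case; the first is symmetric, using adjoints. Set
\begin{equation*}
A_{n,R}:=\Op_{h_n}^{\Mc}\p{a_{\phg}(1-\Theta_R)}.
\end{equation*}
Cauchy--Schwarz and the uniform bound $\sup_n\norm{U^n}\le C$ give
\begin{equation*}
\abs{\pscal{U^n_j}{A_{n,R}U^n_l}}\le C\norm{A_{n,R}U^n_l}.
\end{equation*}
We then expand
\begin{equation*}
\norm{A_{n,R}U^n_l}^2=\pscal{U^n_l}{A_{n,R}^*A_{n,R}U^n_l}.
\end{equation*}
By Proposition~\ref{pr:composition}, $A_{n,R}^*A_{n,R}=\Op_{h_n}^{\Mc}\p{\abs{a_{\phg}}^2(1-\Theta_R)^2}+\mathcal{O}(h_n)$ in $\mathcal{L}(L^2)$, for each fixed $R$.

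The remaining step is to identify the double limit of $\pscal{U^n_l}{\Op_{h_n}^{\Mc}(\abs{a_{\phg}}^2(1-\Theta_R)^2)U^n_l}$. One cannot plug $(1-\Theta_R)^2$ directly into~\eqref{eq:MDM}, but $(1-\Theta_R)^2=1-\tilde\Theta_R$ with $\tilde\Theta_R:=2\Theta_R-\Theta_R^2$. This $\tilde\Theta$ is again a smooth fiber-radial cut-off function in the sense of Definition~\ref{df:SFRC}: it takes values in $[0,1]$, equals $1$ where $\Theta_R=1$, and equals $0$ where $\Theta_R=0$, with the same constants. The $\mathcal{O}(h_n)$ remainder vanishes as $n\to\infty$ for fixed $R$. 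Hence~\eqref{eq:MDM} applied to the scalar symbol $\abs{a_{\phg}}^2E_{ll}\in\tilde{S^0_{\phg}}$ with cut-off $\tilde\Theta$ yields
\begin{equation*}
\lim_{R\to\infty}\lim_{n\to\infty}\norm{A_{n,R}U^n_l}^2=\int_{\cosphere\Mc}\abs{a_{\phg}}^2\,\d\mdm_{ll}=0,
\end{equation*}
since $a_{\phg}$ vanishes on $\supp\mdm_{ll}$.

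The limit in $n$ of $\norm{A_{n,R}U^n_l}$ exists for each $R$, because it is the square root of an expression whose limit exists by the uniqueness of the measure along the subsequence. Therefore $\lim_R\lim_n\abs{\pscal{U^n_j}{A_{n,R}U^n_l}}=0$. Hence $\int a_{\phg}\,\d\mdm_{jl}=0$, so $\mdm_{jl}$ vanishes outside $K$.

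The main obstacle is a technical point in this last step: checking that the composition remainder is uniformly $\mathcal{O}(h_n)$ in $\mathcal{L}(L^2)$ for each fixed $R$. Each $a_{\phg}(1-\Theta_R)$ is a genuine symbol in $S^0_{\comp}$ for fixed $R$, so Proposition~\ref{pr:composition} together with Proposition~\ref{pr:Calderon:Vaillancourt} gives this bound, with constants depending on $R$. Those constants are harmless because the limit in $n$ is taken first.
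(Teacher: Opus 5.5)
Your proof is correct and is essentially the argument the paper relies on. The paper gives no separate proof of Proposition~\ref{pr:inter:supp:k:2mdm} and implicitly reuses the Cauchy--Schwarz argument of Proposition~\ref{pr:inter:supp:k:ml}. Your two additions supply exactly the details needed to carry that argument through the double limit:
\begin{itemize}
\item the partition of unity, which reduces to symbols supported away from one of $\supp(\mdm_{jj})$, $\supp(\mdm_{ll})$ (the paper's ``$\min$'' step tacitly needs this);
\item the identity $(1-\Theta_R)^2=1-(2\Theta_R-\Theta_R^2)$, where $2\Theta_R-\Theta_R^2$ is again a smooth fiber-radial cut-off, so that~\eqref{eq:MDM} computes $\lim_{R}\lim_{n}\norm{A_{n,R}U^n_l}^2$.
\end{itemize}
One caveat: the bound $\sup_n\norm{U^n}<\infty$ you invoke is not part of the statement, but it is equally implicit in the paper, where all such measures come from Theorem~\ref{Th:Microlocalization} applied to bounded sequences.
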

\begin{remark}
For any cut-off function 
\begin{equation}\label{df:theta}
\theta\in\test(\RR_+, [0, 1])\text{ such that }\theta = 1\text{ near }[0, 1],
\end{equation}
the function $\Theta$ given by
\begin{equation}\label{df:Theta}
\Theta_R(z, \zeta) := \theta\p*{\frac{\jap{\zeta}_{\mathsf{g}}}{R}}, \quad R > 0, ~ (z, \zeta)\in\cotang \Mc
\end{equation}
satisfies the conditions of Definition~\ref{df:SFRC}.
In other words, if a sequence $(U^n, h_n)_n$ admits $\mdm$ as unique 2-microlocal defect measure in the sense of~\eqref{eq:MDM}, then: for all $a^h\in \tilde{S^0_{\phg}}(\cotang \Mc, \CC^{k\times k})$ with an homogeneous principal part $a_{\phg}$, we have
\begin{equation*}
\lim_{R\to\infty}\lim_{n\to\infty}
\pscal{U^n}{\Op_{h_n}^{\Mc}\p*{a^{h_n}\p*{1 - \theta\p*{{\jap{\zeta}_{\mathsf{g}}}/{R}}}}U^n}_{L^2\p{\Mc}^k}
=
\int_{\cosphere \Mc}\tr\classe{{a_{\phg}(\rho)\d\mdm(\rho)}}.
\end{equation*}
\end{remark}
\begin{remark}
Standard microlocal defect measures (as in Definition~\ref{df:1:MDM}) and 2-microlocal defect measures (as in Definition~\ref{df:MDM}) define nonnegative Radon measures on $\cosphere \Mc$ and capture the high-frequency regimes in phase space.
However, while a 2-microlocal defect measure only captures the regime of oscillations $\zeta\gg h^{-1}$, a microlocal defect measure captures all high-frequencies $\abs{\zeta}\to+\infty$.
\end{remark}
\paragraph{Link between microlocal, 2-microlocal and semiclassical defect measures.}
In the following lemma, we establish a link between these three defect measures.
\begin{lemma}\label{lem:ml:sdm:mdm}
Given $k\geq 1$, let $(h_n)_n$ be a sequence in $(0, 1]$ tending to $0$ and let $(U^n)_n$ be a sequence in $L^2(\Mc)^k$.
If $(U^n, h_n)_n$ admits: 
\begin{itemize}
\item a unique microlocal defect measure $\ml\in\Mc_+(\cosphere\Mc, \CC^{k\times k})$ in the sense of~\eqref{eq:1:MDM},
\item a unique semiclassical defect measure $\sdm\in\Mc_+(\cotang \Mc, \CC^{k\times k})$ in the sense of~\eqref{eq:SDM},
\item a unique 2-microlocal defect measure $\mdm\in\Mc_+(\cosphere\Mc, \CC^{k\times k})$ in the sense of~\eqref{eq:MDM},  
\end{itemize}
and is $h$-oscillating from below (see \cite[Definition 2.3]{anantharaman2016wigner}), \ie for all $\theta$ satisfying~\eqref{df:theta}, we have
\begin{equation*}
\lim_{\varepsilon \to 0^+}
\limsup_{n\to\infty}
\pscal{U^n}{\Op_{h_n}^{\Mc}\p*{\theta\p*{\frac{\jap{\zeta}_{\mathsf{g}}}{\varepsilon}}}U^n}_{L^2(\Mc)^k}
= 0.
\end{equation*}
Then
\begin{equation*}
\d\ml(z, \zeta) = \int_{\RR_+} r^{d-1} \d \sdm (z, r\zeta)\d r + \d \mdm(z, \zeta), \quad (z, \zeta)\in\cosphere\Mc.
\end{equation*}
In other words, for all $a\in \test\p{\cosphere\Mc}$, we have
\begin{equation}\label{eq:11}
\int_{\cosphere\Mc}\tr\classe{a\d\ml} =
\int_{\set{(z, \zeta)\in \cotang \Mc, ~\zeta \neq 0}}\tr\classe*{\textstyle a\p*{x, \frac{\zeta}{\abs{\zeta}_{\mathsf{g}}}}\d\sdm\p{z, \zeta}} + \int_{\cosphere\Mc}\tr\classe{a\d\mdm}.
\end{equation}
\end{lemma}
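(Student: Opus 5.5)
Fix $a\in\test(\cosphere\Mc)$ and extend it to a homogeneous symbol $a_{\phg}\in S^0_{\phg}(\cotang\Mc,\CC^{k\times k})$, that is, $a_{\phg}(z,\zeta)=a(z,\zeta/\abs{\zeta}_{\mathsf{g}})\,(1-\theta(\jap{\zeta}_{\mathsf{g}}/\varepsilon_0))$ with a fixed small $\varepsilon_0$ (compactly supported in $z$). The idea is to split the quantity $\pscal{U^n}{\Op^{\Mc}_{1}(a_{\phg})U^n}$, whose limit is $\int\tr[a\,\d\ml]$ by \eqref{eq:1:MDM}, into three frequency regimes, $\jap{\zeta}\lesssim\varepsilon$, $\varepsilon\lesssim h\jap{\zeta}\lesssim R$, and $h\jap{\zeta}\gtrsim R$, using the cutoffs $\Theta_R$ and $\theta(\cdot/\varepsilon)$ in the semiclassical variable. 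Before doing so, I would rewrite the classical quantization semiclassically: $\Op^{\Mc}_1(a_{\phg})=\Op^{\Mc}_{h}(a_{\phg}(z,\zeta/h))$ modulo $O(h)$ corrections in $\pseudo^{-1}$. Homogeneity gives $a_{\phg}(z,\zeta/h)=a_{\phg}(z,\zeta)$ for $\abs{\zeta}_{\mathsf{g}}\ge h$, so the same homogeneous symbol $b(z,\zeta):=a(z,\zeta/\abs{\zeta})$ appears at every scale. Since $U^n$ is only bounded in $L^2$ (weak convergence to $0$ is implicit in the existence of $\ml$), these remainder terms vanish in the limit, up to a low-frequency part that the microlocal measure does not see.

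Next, I would decompose $1=\theta(\jap{\zeta}/\varepsilon)+\big(\Theta_R-\theta(\jap{\zeta}/\varepsilon)\big)+(1-\Theta_R)$ with $\Theta_R=\theta(\jap{\zeta}/R)$ as in \eqref{df:Theta}, and multiply by $b$ to get three semiclassical symbols.
\begin{enumerate}
\item The first piece is handled by Calder\'on--Vaillancourt (Proposition~\ref{pr:Calderon:Vaillancourt}) and G\aa rding. Its contribution is bounded by $\norm{a}_\infty\limsup_n\pscal{U^n}{\Op_{h_n}(\theta(\jap{\zeta}/\varepsilon))U^n}$, up to a square-root/positivity argument, and this tends to $0$ as $\varepsilon\to0$ by the $h$-oscillation hypothesis.
\item The middle symbol $b\,(\Theta_R-\theta(\cdot/\varepsilon))$ is smooth and compactly supported away from $\zeta=0$, so it lies in $\pseudo^{\comp}_h$. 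By \eqref{eq:SDM} it converges to $\int\tr[b(\Theta_R-\theta(\cdot/\varepsilon))\,\d\sdm]$. Letting $R\to\infty$ and $\varepsilon\to0$, dominated convergence (as $\sdm$ is finite) yields $\int_{\zeta\ne0}\tr[a(z,\zeta/\abs{\zeta})\,\d\sdm]$. Here one must check that $\sdm(\{\zeta=0\})$ does not contribute; this again follows from $h$-oscillation, which forces $\sdm$ to vanish on the zero section.
\item The last piece, $b\,(1-\Theta_R)$, is exactly of the form in \eqref{eq:MDM} with $a^h=a_{\phg}$ (so $\chi^h=0$ on the support, up to a compactly supported correction killed by $1-\Theta_R$ for $R$ large). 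Its iterated limit $\lim_R\lim_n$ is therefore $\int\tr[a\,\d\mdm]$.
\end{enumerate}

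Summing the three contributions, I take $\lim_n$ first, then $R\to\infty$, then $\varepsilon\to0$. The left side is independent of $R$ and $\varepsilon$, which gives \eqref{eq:11}. The polar form of the identity follows by disintegrating $\sdm$ along rays, which is just a rewriting of the $\zeta\ne0$ integral.

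The main obstacle is the first step: justifying that the classical quantization of $a_{\phg}$ and the semiclassical quantization of the rescaled symbol coincide up to errors that vanish along the sequence, uniformly enough to commute with the limits. The difficulty sits in the region $\abs{\zeta}\lesssim h$, where $a_{\phg}(z,\zeta/h)$ is not a semiclassical symbol uniformly in $h$, since its derivatives blow up like $h^{-\abs{\beta}}$. To get around this, I would first insert the cutoff $1-\theta(\jap{\zeta}/\varepsilon)$ at the classical level. Classically it is an $h$-independent smoothing modification, so it does not change the limit because $U^n\rightharpoonup0$. After rescaling, the cutoffed symbol becomes a genuine semiclassical symbol supported in $\abs{\zeta}\gtrsim\varepsilon$. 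The cutoffs should be arranged so that the mismatch between $\theta(\jap{\zeta}/\varepsilon)$ at the two scales is controlled by the $h$-oscillation assumption; I would check this by a positivity argument with squares of cutoffs.
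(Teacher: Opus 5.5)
Your Steps~2--3, the order of limits and the polar rewriting are exactly the paper's argument. The paper, however, does not prove the remaining identification $\int\tr[a\,\d\ml]=\lim_{\varepsilon\to0}\lim_n\pscal{U^n}{\Op_{h_n}^{\Mc}\p{a_{\phg}(1-\theta(\abs{\zeta}_{\mathsf g}/\varepsilon))}U^n}$; it imports it from \cite[(2.19)]{anantharaman2016wigner}. That is the step you flag as the main obstacle, and as written your treatment of it fails.

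In semiclassical variables your low-frequency piece is $\Op_{h_n}^{\Mc}\p{a_{\phg}(z,\zeta/h_n)\,\theta(\abs{\zeta}_{\mathsf g}/\varepsilon)}$. Its $\zeta$-derivatives grow like $\abs{\zeta}^{-\abs{\beta}}$ down to $\abs{\zeta}\sim h_n$, hence like $h_n^{-\abs{\beta}}$, so it lies in no semiclassical class uniformly in $n$. Proposition~\ref{pr:Calderon:Vaillancourt} and the semiclassical \garding{} inequality therefore do not apply, and the bound claimed in your Step~1 is unjustified. Your remedy does not repair this. An $h$-independent classical cutoff $1-\theta(\jap{\xi}_{\mathsf g}/\varepsilon)$ only removes $\abs{\xi}\lesssim\varepsilon$, that is $\abs{\zeta}\lesssim h_n\varepsilon$ after rescaling. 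So the resulting symbol is not supported in $\abs{\zeta}\gtrsim\varepsilon$, and it is still not uniformly semiclassical. The whole band $h_n\varepsilon\lesssim\abs{\zeta}\lesssim\varepsilon$ remains, and ``arranging the cutoffs so that the mismatch is controlled by $h$-oscillation'' is precisely what is left unproved.

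What works is to insert at the classical level the $n$-dependent cutoff $\chi_n(z,\xi):=\theta(h_n\abs{\xi}_{\mathsf g}/\varepsilon)$. It is bounded in the classical class $S^0$ uniformly in $n$, because its $\xi$-derivatives are $\O{(h_n/\varepsilon)^{\abs{\beta}}}$ and supported where $\abs{\xi}\sim\varepsilon/h_n$.
\begin{itemize}
\item For $h_n\leq\varepsilon$, rescaling $\zeta=h_n\xi$ gives $\Op_1^{\Mc}\p{a_{\phg}(1-\chi_n)}=\Op_{h_n}^{\Mc}\p{a(z,\zeta/\abs{\zeta}_{\mathsf g})(1-\theta(\abs{\zeta}_{\mathsf g}/\varepsilon))}$. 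This has an honest $h$-independent symbol, and your Steps~2--3 apply to it verbatim.
\item For the other piece, the classical calculus (uniform in $n$) gives $\Op_1^{\Mc}(a_{\phg}\chi_n)=\Op_1^{\Mc}(a_{\phg})\Op_1^{\Mc}(\chi_n)+K_n$, with $K_n$ bounded in $\Op S^{-1}$ and compactly supported. Hence $\pscal{U^n}{K_nU^n}\to0$, since $U^n\rightharpoonup0$ and $K_n$ factors through a fixed compact operator.
\item Moreover $\Op_1^{\Mc}(\chi_n)=\Op_{h_n}^{\Mc}(\theta(\abs{\zeta}_{\mathsf g}/\varepsilon))$, so the main term is bounded by $C\norm{\Op_{h_n}^{\Mc}(\theta(\abs{\zeta}_{\mathsf g}/\varepsilon))U^n}$. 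The square of this norm is $\pscal{U^n}{\Op_{h_n}^{\Mc}(\theta^2(\abs{\zeta}_{\mathsf g}/\varepsilon))U^n}+\O{h_n}$. By sharp \garding{} with $\theta^2\leq\theta$, it is controlled by the $h$-oscillation quantity, whose $\limsup_n$ tends to $0$ as $\varepsilon\to0$.
\end{itemize}
With this, your proof is the paper's proof with the cited identity actually established.

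Finally, all small-scale cutoffs must use $\abs{\zeta}_{\mathsf g}$, not $\jap{\zeta}_{\mathsf g}$ as in the statement. Since $\jap{\zeta}_{\mathsf g}\geq1$, $\theta(\jap{\zeta}_{\mathsf g}/\varepsilon)\equiv0$ once $\varepsilon$ is small. Read literally, the $h$-oscillation hypothesis would then be empty, and your extension $a(z,\zeta/\abs{\zeta}_{\mathsf g})(1-\theta(\jap{\zeta}_{\mathsf g}/\varepsilon_0))$ would be singular at $\zeta=0$.
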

\begin{proof}[Proof of Lemma~\ref{lem:ml:sdm:mdm}]
Using~\cite[equation (2.19) from Section 2.6.2]{anantharaman2016wigner} and Definition~\ref{df:MDM}, we know that for all $a_{\phg}\in S^{0}_{\phg}\p{\cotang\Mc}$ and all $\theta$ satisfying~\eqref{df:theta}, we have
\begin{align*}
\int_{\cosphere\Mc}\tr\classe{a_{\phg}\d\ml}
=& \lim_{\varepsilon \to 0^+}\lim_{n\to\infty}\textstyle \pscal{U^n}{\Op_{h_n}^{\Mc}\p*{a_{\phg}\p*{1 - \theta\p*{\frac{\jap{\zeta}_{\mathsf{g}}}{\varepsilon}}}} U^n}_{L^2\p{\Mc}^k}\\
=& \lim_{
\begin{subarray}{ll}
R\to\infty\\
\varepsilon \to 0^+
\end{subarray}
}\lim_{n\to\infty}
\textstyle\pscal{U^n}{\Op_{h_n}^{\Mc}\p*{a_{\phg}\p*{\theta\p*{\frac{\jap{\zeta}_{\mathsf{g}}}{R}} - \theta\p*{\frac{\jap{\zeta}_{\mathsf{g}}}{\varepsilon}}}} U^n}_{L^2\p{\Mc}^k} + \int_{\cosphere\Mc}\tr\classe{a_{\phg}\d\mdm}.
\end{align*}
Then, since
\begin{equation*}
\lim_{
\begin{subarray}{ll}
R\to\infty\\
\varepsilon \to 0^+
\end{subarray}
}\lim_{n\to\infty}
{\textstyle\pscal{U^n}{\Op_{h_n}^{\Mc}\p*{a_{\phg}\p*{\theta\p*{\frac{\jap{\zeta}_{\mathsf{g}}}{R}} - \theta\p*{\frac{\jap{\zeta}_{\mathsf{g}}}{\varepsilon}}}} U^n}_{L^2\p{\Mc}^k}} 
= \int_{\cotang\Mc^*}\tr\classe*{\textstyle a_{\phg}\p*{x, \frac{\zeta}{\abs{\zeta}_{\mathsf{g}}}}\d\sdm\p{z, \zeta}},
\end{equation*}
with $\cotang\Mc^* := \set{(z, \zeta)\in \cotang \Mc, ~\zeta \neq 0}$, we conclude that~\eqref{eq:11} holds.
Moreover, since 
\begin{equation*}
(z, \zeta)\in \set{(z', \zeta')\in \cotang \Mc, ~\zeta' = 0}\mapsto \sqrt{\mathsf{g}_z\p{\zeta, \zeta}}\in\RR_+
\end{equation*}
is a smooth surjection and is a submersion, using coarea formula on Riemannian manifolds (see~\cite{chavel06riemannian} for details), we have
\begin{align*}
\int_{\cotang\Mc^*}\tr\classe*{\textstyle a_{\phg}\p*{x, \frac{\zeta}{\abs{\zeta}_{\mathsf{g}}}}\d\sdm\p{z, \zeta}}
&=\int_{r\in\RR_+}\int_{r\cosphere\Mc}\tr\classe*{a_{\phg}\d\sdm\p{z, \zeta}}\d r\\
&=\int_{r\in\RR_+}r^{d-1}\int_{\cosphere\Mc}\tr\classe*{a_{\phg}\d\sdm\p{z, r\zeta}}\d r
\end{align*}
with $r\cosphere\Mc = \set{(z, \zeta)\in\cotang \Mc,~\abs{\zeta}_{\mathsf{g}} = r}$ for all $r > 0$.
This, together with~\eqref{eq:11}, concludes the proof of Lemma~\ref{lem:ml:sdm:mdm}.
\end{proof}
\subsection{Proof of Theorem~\ref{Th:Microlocalization}}\label{subsection:Microlocalization:Proof}
In this subsection, we prove Theorem~\ref{Th:Microlocalization}.
We first prove an easier result, given by the following lemma with a fixed cut-off function and a discrete sequence of radii depending on the cut-off function.
\begin{lemma}\label{lem:principal:microlocalization}
Given $k\geq 1$, let us consider $\theta$ as in~\eqref{df:theta}, $(h_n)_n$ a sequence in $(0, 1]$ tending to $0$, and $({U}^n)_n$ a bounded sequence in $L^2(\Mc)^k$.
There is
\begin{enumerate}
\item a subsequence of $(U^n, h_n)_n$ (still denoted $(U^n, h_n)_n$), 
\item a measure $\mdm\in \Mc_+(\cosphere\Mc, \CC^{k\times k})$,
\item a sequence $(R_m)_m$ of radii tending to $+\infty$,
\end{enumerate}
such that for all $a^h\in\tilde{ S^0_{\phg}}\p{\cotang\Mc}$,
\begin{equation}\label{eq:pre:MDM}
\lim_{k\to\infty}\lim_{n\to\infty}
{\textstyle\pscal{{U}^n}{\Op_{h_n}^{\Mc}\p*{a^{h_n}\p*{1 - \theta\p*{\frac{\jap{\zeta}_{\mathsf{g}}}{R_k}}}}{U}^n}_{L^2(\Mc)^k}} = \int_{\cosphere \Mc}\tr\classe{{a_{\phg}(\rho)\d\mdm(\rho)}},
\end{equation}
where $\tr\colon \CC^{k\times k}\mapsto\RR$ is the classical trace map.
\end{lemma}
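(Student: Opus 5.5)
The plan follows Gérard's construction of microlocal defect measures: a diagonal extraction over a countable dense family of symbols, a positivity (Gårding) argument, and Riesz representation. Throughout, fix $\theta$ as in~\eqref{df:theta}, and write $\Theta^R := 1-\theta(\jap{\zeta}_{\mathsf{g}}/R)$. Set
\begin{equation*}
\ell_{n,R}(a) := \pscal{U^n}{\Op_{h_n}^{\Mc}\p{a\,\Theta^R}U^n}_{L^2(\Mc)^k},
\end{equation*}
and let $C_0 := \sup_n\norm{U^n}^2$.

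\textbf{Step 1: reduction to homogeneous symbols.} For $a^h = a_{\phg}+\chi^h\in\tilde{S^0_{\phg}}$, the remainder $\chi^h\Theta^R$ is supported in $\set{\jap{\zeta}_{\mathsf{g}}\geq \mathsf{C}R}$, and its sup tends to $0$ as $R\to\infty$, uniformly in $h$. By Proposition~\ref{pr:Calderon:Vaillancourt} (after localizing in $z$ with a compactly supported cutoff, possible since symbols are compactly supported in $\Mc$), we get $\limsup_n\abs{\pscal{U^n}{\Op_{h_n}^{\Mc}(\chi^{h_n}\Theta^R)U^n}}\leq C_0\sup\abs{\chi^h\Theta^R}\to 0$. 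Hence it suffices to treat $h$-independent $a=a_{\phg}\in S^0_{\phg}(\cotang\Mc,\CC^{k\times k})$. I expect the $h^{1/2}$ term in Calderón--Vaillancourt to be harmless, since $n\to\infty$ first. Similarly, two elements of $S^0_{\phg}$ with the same homogeneous part at infinity differ by a symbol supported in a compact set in $\zeta$, and $\Theta^R$ kills that difference once $R$ is large. So $a\mapsto \lim\ell_{n,R}(a)$ only depends on $a_{\phg}|_{\cosphere\Mc}$.

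\textbf{Step 2: diagonal extraction.} $\cosphere\Mc$ is $\sigma$-compact, so $\Cscr^0_c(\cosphere\Mc,\CC^{k\times k})$ is separable. Choose a countable family $(a_j)_j\subset S^0_{\phg}$ whose restrictions to $\cosphere\Mc$ are dense in that space, with supports in an exhausting sequence of compacts. Take $R$ integer. For each $(j,R)$ the sequence $\ell_{n,R}(a_j)$ is bounded by $C_0(\sup\abs{a_j}+o(1))$. A diagonal extraction in $n$ then gives limits $L_R(a_j):=\lim_n\ell_{n,R}(a_j)$ for all $j,R$, with $\abs{L_R(a_j)}\leq C_0\sup_{\cosphere\Mc}\abs{a_j}$. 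This bound uses Step 1 and $\sup\abs{a_j\Theta^R}\to\sup_{\cosphere}\abs{a_j}$ for large $R$. A second diagonal extraction in $R$ (over $R\in\NN$, giving $R_m\to\infty$) yields limits $L(a_j):=\lim_m L_{R_m}(a_j)$.

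\textbf{Step 3: extension and positivity.} By density and the uniform bound, $L$ extends to a bounded linear functional on $\Cscr^0_c(\cosphere\Mc,\CC^{k\times k})$. Moreover, the convergence of $\lim_m\lim_n\ell_{n,R_m}(a)$ holds for every $a\in S^0_{\phg}$ by a $3\varepsilon$ argument, using the uniform operator bound from Step 1. Positivity: if $a\geq 0$ is Hermitian-valued, then $a\Theta^R\geq 0$. The sharp Gårding inequality for semiclassical matrix-valued symbols in $S^0$ (or, more simply, writing $a\Theta^R + \varepsilon = b^*b + \O{h}$ with $b=\sqrt{a\Theta^R+\varepsilon}$ and using Proposition~\ref{pr:composition}) gives $\liminf_n\ell_{n,R}(a)\geq -\varepsilon C_0$. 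Hence $L(a)\geq 0$, and Riesz representation yields $\mdm\in\Mc_+(\cosphere\Mc,\CC^{k\times k})$ with $L(a)=\int\tr[a\,\d\mdm]$.

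\textbf{Main obstacle.} The delicate point is the uniformity in $R$ of the symbolic calculus in the positivity step. The symbol $\Theta^R$ has derivatives of size $R^{-1}$ in $\zeta$, so $\Theta^R$ belongs to $S^0$ uniformly in $R\geq1$. Since $h$-errors are $\O{h}$ with $R$ fixed, they vanish when $n\to\infty$ first. The order of limits ($n$ first, then $R$) is what makes this work. One must also check that $\sqrt{a\Theta^R+\varepsilon}$ is a legitimate symbol. This is fine because of the $+\varepsilon$ and because $a$ is compactly supported in $z$ (otherwise one adds $\varepsilon$ times a cutoff in $z$).
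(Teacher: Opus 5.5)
Your proposal is correct and follows essentially the same route as the paper: you reduce to the homogeneous part via the Calder\'on--Vaillancourt remainder estimate (the paper's Lemma~\ref{lem:microlocalization:remainder}), extract diagonally first in $n$ and then in the radii using the bound $\abs{F^{\mathsf{a}}_{R,n}}\leq C\norm{\mathsf{a}}_{L^\infty}+C_{R}h_n^{1/2}$, and conclude with \garding{} positivity, extension by density and Riesz representation. Your explicit $3\varepsilon$ argument, which extends convergence from the dense family to all symbols, and your $b^*b$ factorization as an elementary substitute for sharp \garding{} fill in steps the paper only asserts.
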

\subsubsection{Proof of Lemma~\ref{lem:principal:microlocalization}}
To prove Lemma~\ref{lem:principal:microlocalization}, we begin with an easier case where the homogeneous part (\ie $a_{\phg}$ in~\eqref{eq:characterization:S^0_phg}) vanishes identically.
\begin{lemma}\label{lem:microlocalization:remainder}
Given $k\geq 1$, let $\Theta$ be a \emph{smooth fiber-radial cut-off function} (see Definition~\ref{df:SFRC}), $(h_n)_n$ be a sequence in $(0, 1]$ tending to $0$, $({U}^n)_n$ be a bounded sequence in $L^2(\Mc)^k$ and $\chi^h\in S^0_{\comp}(\cotang \Mc, \CC^{k\times k})$ be such that
\begin{equation}\label{eq:chi:to:0}
\lim_{R\to+\infty}\classe*{\sup_{h\in(0, 1]}\p*{\sup_{(z, \zeta)\in\cotang \Mc,~\jap{\zeta}_{\mathsf{g}}\geq R}{\chi^h(z, \zeta)}}} = 0.
\end{equation}
Then, we have \({\displaystyle\lim_{R\to\infty}\lim_{n\to\infty}}\textstyle\pscal{{U}^n}{\Op_{h_n}^{\Mc}\p{\chi^{h_n}(1-\Theta_R)}{U}^n}_{L^2(\Mc)^k} = 0\).
\end{lemma}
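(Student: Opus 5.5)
The plan is to bound the quantity by the operator norm of $\Op_{h_n}^{\Mc}\p{\chi^{h_n}(1-\Theta_R)}$ on $L^2(\Mc)^k$, and then use the smallness of $\chi^h$ at large frequencies. Set $C_U := \sup_n \norm{U^n}_{L^2(\Mc)^k}$. By Cauchy--Schwarz,
\begin{equation*}
\abs*{\pscal{U^n}{\Op_{h_n}^{\Mc}\p{\chi^{h_n}(1-\Theta_R)}U^n}} \leq C_U^2\norm{\Op_{h_n}^{\Mc}\p{\chi^{h_n}(1-\Theta_R)}}_{\linear(L^2(\Mc)^k)}.
\end{equation*}
It therefore suffices to show that
\begin{equation*}
\lim_{R\to\infty}\limsup_{n\to\infty}\norm{\Op_{h_n}^{\Mc}\p{\chi^{h_n}(1-\Theta_R)}}_{\linear(L^2)} = 0.
\end{equation*}
We read \eqref{eq:chi:to:0} with absolute values (or matrix norms) inside the supremum; this is clearly the intended meaning.

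First, we check the symbol class. For each fixed $R$, $b_R^h := \chi^h(1-\Theta_R)$ lies in $S^0_{\comp}(\cotang\Mc,\CC^{k\times k})$. Indeed, $\Theta_R$ is compactly supported in $\cotang\Mc$, and $1-\Theta_R$ is constant outside a compact set, so its $\zeta$-derivatives give the required $\jap{\zeta}^{-\abs{\beta}}$ decay. Moreover, $\pi_{\Mc}\p{\supp b^h_R}\subset K_\chi$. Hence $\Op_h^{\Mc}(b_R^h)\in\pseudo^0_{h,\comp}$, with principal symbol $b_R^h$ up to $hS^{-1}$.

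Next, we apply the Calderón--Vaillancourt estimate of Proposition~\ref{pr:Calderon:Vaillancourt}, componentwise for matrix symbols, at the cost of a factor depending only on $k$. This gives
\begin{equation*}
\norm{\Op_{h}^{\Mc}(b_R^h)}_{\linear(L^2)}\leq C_k\sup_{\cotang\Mc}\abs{b_R^h} + C_{R}h^{1/2}.
\end{equation*}
Since $1-\Theta_R$ vanishes on $\set{\jap{\zeta}_{\mathsf{g}}\leq\mathsf{C}_\star R}$ and takes values in $[0,1]$, we get, uniformly in $h$,
\begin{equation*}
\sup\abs{b_R^h}\leq \varepsilon(\mathsf{C}_\star R) := \sup_{h\in(0,1]}\sup_{\jap{\zeta}_{\mathsf{g}}\geq \mathsf{C}_\star R}\abs{\chi^h}.
\end{equation*}
Taking $n\to\infty$ first kills the term $C_R h_n^{1/2}$, because $R$ is fixed during that limit. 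This leaves
\begin{equation*}
\limsup_n\abs{\pscal{U^n}{\Op_{h_n}^{\Mc}(b_R^{h_n})U^n}}\leq C_kC_U^2\,\varepsilon(\mathsf{C}_\star R),
\end{equation*}
and this tends to $0$ as $R\to\infty$ by \eqref{eq:chi:to:0}.

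The main obstacle is the dependence of the constant $C_A$ in Proposition~\ref{pr:Calderon:Vaillancourt} on the symbol, here on $R$. This is exactly why the order of limits ($n$ before $R$) matters, and one must make sure the leading coefficient in front of $\sup\abs{\sigma_h}$ does not depend on $R$. A second point to check is whether the sharp Calderón--Vaillancourt bound $\sup\abs{\sigma}+Ch^{1/2}$ applies to symbols that depend on $h$, since $\chi^h$ does. If the stated version requires $h$-independent principal symbols, I would fall back on the $h$-uniform version in \cite[Theorem 13.13]{zworski2022semiclassical}, which handles symbols in $S^0$ uniformly in $h$ and gives the same estimate. Alternatively, one can use a $TT^*$ argument: write $\Op_h(b)^*\Op_h(b)=\Op_h(\abs{b}^2)+hS^{-1}$ and apply a sharp Gårding-type bound $\Op_h(\varepsilon^2-\abs{b}^2)\geq -C_Rh$. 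Both routes yield the bound above.
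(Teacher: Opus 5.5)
Your proposal is correct and follows the paper's own proof essentially step for step. The paper also applies Proposition~\ref{pr:Calderon:Vaillancourt} to $\Op_{h_n}^{\Mc}(\chi^{h_n}(1-\Theta_R))$, bounds the supremum of the symbol by $\sup_{h}\sup_{\jap{\zeta}_{\mathsf{g}}\geq \mathsf{C}_\star R}\abs{\chi^h}$ plus an $R$-dependent $\mathcal{O}(h_n^{1/2})$ remainder, and lets $n\to\infty$ before $R\to\infty$. Your additional remarks (absolute values in \eqref{eq:chi:to:0}, the $k$-dependent constant for matrix symbols, the $h$-uniform Calder\'on--Vaillancourt bound for $h$-dependent symbols) make explicit points that the paper passes over silently.
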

\begin{proof}[Proof of Lemma~\ref{lem:microlocalization:remainder}]
Since the proof is the same for $k > 1$ and $k = 1$, we treat only the case $k = 1$.
By Proposition~\ref{pr:Calderon:Vaillancourt}, there is a constant $C_{\Theta, \chi, R} > 0$, depending on the choice of $\Theta$, $\chi^h$ and $R$, such that
\begin{align*}
\textstyle\norm{\Op_{h_n}^{\Mc}\p{\chi^{h_n}\Theta^R}}_{\linear(L^2(\Mc))}
&\textstyle\leq \norm{\chi^{h_n}\Theta^R}_{L^\infty\p{\cotang \Mc}} + C_{\Theta, \chi, R}h_n^{1/2}\\
&\leq \sup_{h\in(0, 1]}\norm{\indicatrice_{\set{\jap{\zeta}_{\mathsf{g}}\geq\mathsf{C}_{\star} R}}\chi^{h}}_{L^\infty\p{\cotang \Mc}} + C_{\Theta, \chi, R}h_n^{1/2}
\end{align*}
which concludes the proof when letting first $n$ tend to $+\infty$ since $h_n$ goes to $0$, and second using~\eqref{eq:chi:to:0} as $R$ tends to $\infty$.
\end{proof}
\begin{proof}[Proof of Lemma~\ref{lem:principal:microlocalization}]
Since the proof is the same for $k > 1$ and $k = 1$, we treat only the case $k = 1$.
The proof is largely inspired by~\cite[Theorem 1]{gerard1991defectmeasure}.
Thanks to Lemma~\ref{lem:microlocalization:remainder}, we can consider only functions $a_{\phg}$ in $S^0_{\phg}(\cotang \Mc)$.
Using the surjection
\begin{equation*}
a_{\phg} \in S^0_{\phg}\p{\cotang\Mc}\mapsto \p{a_{\phg}}_{|\cosphere\Mc}\in \test\p{\cosphere\Mc},
\end{equation*}
in what follows, we work with $\mathsf{a}\in \smooth\p{\cosphere \Mc}$ and extend $\mathsf{a}$ to $\cotang \Mc$ as an homogeneous symbol $a_{\phg}$ with
\begin{equation}\label{eq:Yannis}
\textstyle a_{\phg}(z, \zeta) = \mathsf{a} \p*{z, \frac{\zeta}{\abs{\zeta}_{\mathsf{g}}}}, \quad (z, \zeta)\in \set{(z', \zeta')\in\cotang \Mc, ~\abs{\zeta'}_{\mathsf{g}} \geq 1}.
\end{equation}
Because $\theta = 1$ near $[0, 1]$, then $(z, \zeta)\mapsto \theta\p*{\frac{\jap{\zeta}_{\mathsf{g}}}{R}}$ is equal to $1$ near $\set{(z, \zeta)\in\cotang \Mc, ~\abs{\zeta}_{\mathsf{g}} < 1}$ for any $R\geq \sqrt{2}$, thus the value of $a_{\phg}$ in $\set{(z, \zeta)\in\cotang \Mc, ~\abs{\zeta}_{\mathsf{g}} < 1}$ is not important.

For all $(\mathsf{a}, R)\in \Cscr^0_c(\cosphere\Mc)\times [\sqrt{2},+ \infty)$, with $a_{\phg}$ as defined in~\eqref{eq:Yannis}, we set
\begin{equation*}
\textstyle
F^{\mathsf{a}}_{R,n} := \pscal{{U}^n}{\Op_{h_n}^{\Mc}\p*{a_{\phg}\p*{1-\theta\p*{\frac{~\jap{\zeta}_{\mathsf{g}}}{R}}}}{U}^n}_{L^2(\Mc)}.
\end{equation*}
Let $\De$ be a countable subset of $\test\p{\cosphere\Mc}$ dense in $\Cscr_c^0\p{\cosphere\Mc}$.
By Proposition~\ref{pr:Calderon:Vaillancourt}, there is a constant $C>0$ such that: for all pairs $(\mathsf{a}, R)\in \De\times \p{\QQ_+^*\cap [\sqrt{2}, +\infty)}$, there is  $C_{\theta, \mathsf{a}, R} > 0$ such that
\begin{equation}\label{eq:bound:F:a:R:n}
\abs{F^{\mathsf{a}}_{R, n}}\leq C\norm{\mathsf{a}}_{L^\infty\p{\cosphere\Mc}} + C_{\theta, \mathsf{a}, R}h_n^{1/2}, \quad n\geq 1.
\end{equation}
This inequality allows us to perform two diagonal extraction arguments, the first one on $n$ and the second one on $R$.
\begin{enumerate}
\item After a first diagonal extraction argument on index $n$, $F^{\mathsf{a}}_{R, n}$ admits a limit as $n$ tends to infinity for each pair $(\mathsf{a}, R)\in \De\times\p{\QQ_+^*\cap[1, +\infty)}$, in other words
\begin{equation*}
\forall (\mathsf{a}, R)\in \De\times\p{\QQ_+^*\cap[1, +\infty)},~ \exists F^{\mathsf{a}}_{R, \infty}\in \CC, \quad \lim_{n\to\infty}F^{\mathsf{a}}_{R, n} = F^{\mathsf{a}}_{R, \infty}.
\end{equation*}
\item Using~\eqref{eq:bound:F:a:R:n}, the limit $F_{R, \infty}^{\mathsf{a}}$ is uniformly bounded with respect to $R$ by $C\norm{\mathsf{a}}_{L^\infty\p{\cosphere \Mc}}$.
Then, after a second diagonal extraction with respect to elements of $\De$, there is a sequence $(R_m)_m$ such that, for all $\mathsf{a}\in \De$, the limit
\begin{equation*}
\varLambda(\mathsf{a}) := \lim_{m\to+\infty}\p{F_{R_m, \infty}^{\mathsf{a}}}~\text{exists}.
\end{equation*}
\end{enumerate}
The map $\varLambda\colon \mathsf{a}\in \mathrm{Span}\p{\De} \to\varLambda(\mathsf{a}) \in \CC$ is a continuous
(for the topology of $\Cscr^0_c\p{\cosphere \Mc}$)
and nonnegative
(see \garding~inequality~\cite[Theorem 2.5.4]{lerner2011metrics} or~\cite[Theorem E.23]{dyatlov2019mathematical})
linear map.
We can therefore extend $\varLambda$ by density to $\Cscr^0_c\p{\cosphere \Mc}$ as $\underline{\varLambda}$.
The linear map $\underline{\varLambda}$ is continuous for the topology of $\Cscr^0_c\p{\cosphere \Mc}$ and nonnegative: it is thus a nonnegative Radon measure $\mdm$ on $\cosphere \Mc$, which satisfies
\begin{equation*}
\lim_{m\to+\infty}\p*{\lim_{n\to+\infty} F_{R_m, n}^{\mathsf{a}}} = \underline{\varLambda}\p{\mathsf{a}} = \pscal{\mdm}{\mathsf{a}}_{\Mc_+(\cosphere\Mc), \Cscr^0_c(\cosphere\Mc)}.
\end{equation*}
Thus, using $\mathsf{a} = \p{a_{\phg}}_{|\cosphere \Mc} $, we conclude the proof of Lemma~\ref{lem:principal:microlocalization}.
\end{proof}
\subsubsection{Proof of Theorem~\ref{Th:Microlocalization}}
In this part, we use Lemma~\ref{lem:principal:microlocalization} and generalize its statement to remove the \apriori~dependence of the 2-microlocal measure on the sequence of radii $(R_m)_m$ and on the cut-off function $\theta$, deducing the proof of Theorem~\ref{Th:Microlocalization}.
\begin{proof}[Proof of Theorem~\ref{Th:Microlocalization}]
Since the proof is the same for $k > 1$ and $k = 1$, we treat only the case $k = 1$.
Fix $\theta$ satisfying~\eqref{df:theta}, and let $\Theta$ be a \emph{smooth fiber-radial cut-off function} (see Definition~\ref{df:SFRC}).
Thanks to Lemma~\ref{lem:microlocalization:remainder}, we can consider only functions $a_{\phg}\in S^0_{\phg}(\cotang \Mc)$.
By~\cite[Theorem E.42]{dyatlov2019mathematical} and Lemma~\ref{lem:principal:microlocalization}, up to considering a subsequence in $({u}^n, h_n)_n$,
\begin{enumerate}
\item $(u^n, h_n)$ admits a unique semiclassical defect measure $\sdm\in\Mc_+(\cotang \Mc)$ in the sense of~\eqref{eq:SDM}.
The idea is to use the nonnegative character of $\sdm$ to be able to obtain the following upper bounds~\eqref{eq:upper:bound:proof:microlocalization} and~\eqref{eq:upper:bound:2:proof:microlocalization}.
\item $(u^n, h_n)$ admits a unique 2-microlocal defect measure $\mdm\in\Mc_+(\cosphere \Mc)$ in the sense of~\eqref{eq:pre:MDM} with the cut-off function $\theta$ and a radius sequence $(R_m)_m$.
\end{enumerate}
Since a symbol $a_{\hom}\in S^0_{\phg}(\cotang \Mc)$ is \emph{compactly supported} in $\Mc$, there is a cut-off function $\chi\in\test(\Mc, [0, 1])$ such that $a_{\hom} = \chi a_{\hom}$.
Furthermore, setting
\begin{equation*}
    \textstyle F^{\psi}_{m, n} := \pscal{u^n}{\Op_{h_n}^{\Mc}\p*{\psi\p*{1 - \theta\p*{\frac{\jap{\zeta}_{\mathsf{g}}}{R_m}}}}u^n}_{L^2\p{\Mc}},\quad \psi\in S^0_{\phg}\p{\cotang \Mc}
\end{equation*}
and according to Lemma~\ref{lem:principal:microlocalization}, $\p{\lim_{n\to\infty} F^{\chi}_{m, n}}_m$ tends to $\pscal{\mdm}{\chi}_{\Mc_+, \Cscr_c^0\p{\cosphere\Mc}}$ as $m$ tends to $\infty$.
Then, for all $\varepsilon > 0$ there is $N_\varepsilon\in\NN$ such that:
\begin{enumerate}
\item On the one hand, since $\p{\lim_{n\to\infty} F^{\chi}_{m, n}}_m$ is a Cauchy sequence, for all $m_1, m_2\geq N_\varepsilon$,
\begin{equation}
{\textstyle \abs*{\pscal{\sdm}{\chi\p*{\theta\p*{\frac{\jap{\zeta}_{\mathsf{g}}}{R_{m_1}}} - \theta\p*{\frac{\jap{\zeta}_{\mathsf{g}}}{R_{m_2}}}}}_{\Mc_+, \Cscr^0_c(\cotang \Mc)}}}
=\abs{\lim_{n\to\infty} F^{\chi}_{m_2, n} - \lim_{n\to\infty}F^{\chi}_{m_1, n}} \leq \varepsilon.\label{eq:sdm:cauchy:upper:bound}
\end{equation}
\item On the other hand, by~\eqref{eq:pre:MDM} from Lemma~\ref{lem:principal:microlocalization}, for all $m_3\geq N_\varepsilon$,
\begin{equation}\label{eq:Radon:1maj}
\lim_{n\to\infty}\textstyle
\abs*{F^{a_{\phg}}_{m_3, n}
-
\pscal{\mdm}{a_{\phg}}_{\Mc_+, \Cscr_c^0(\cosphere\Mc)}}\leq \varepsilon.
\end{equation}
\item Since $\Theta$ is a \emph{smooth fiber-radial cut-off function} (see Definition~\ref{df:SFRC}), for all $R$ large enough there are two indices $l > m\geq N_\varepsilon$
---
both depending on the choice of $\Theta$ and $R$
---
such that
\begin{equation}\label{eq:goulot:etranglement}
\textstyle\theta\p*{\frac{\jap{\zeta}_{\mathsf{g}}}{R_m}}\leq \Theta_R( z, \zeta)\leq \theta\p*{\frac{\jap{\zeta}_{\mathsf{g}}}{R_l}}.
\end{equation}
\end{enumerate}
Combining~\eqref{eq:sdm:cauchy:upper:bound},~\eqref{eq:Radon:1maj} and~\eqref{eq:goulot:etranglement}, we deduce that for all $\varepsilon > 0$, there are $N_{\varepsilon}, R_{\varepsilon} > 0$ such that for all $R > R_{\varepsilon}$ and for all $l, m\geq N_\varepsilon$ satisfying~\eqref{eq:goulot:etranglement} (and thus depending on the choice of $R$), using~\eqref{eq:SDM}, we have
\begin{equation*}
\lim_{n\to+\infty}\textstyle\abs*{\pscal{u^n}{\classe*{\Op_{h_n}^{\Mc}\p{a_{\phg}\p{1-\Theta_R}}}u^n}_{L^2(\Mc)} - F^{a_{\phg}}_{m, n}}=\abs*{\pscal{\sdm}{a_{\phg} \p*{\Theta_R - \theta\p*{\frac{\jap{\zeta}_{\mathsf{g}}}{R_m}}}}_{\Mc_+, \Cscr^0_c\p{\cosphere\Mc}}}.
\end{equation*}
Then, because $\sdm$ is a nonnegative Radon measure, we have
\begin{equation}
\textstyle\abs*{\pscal{\sdm}{a_{\phg}\p*{\Theta_R - \theta\p*{\frac{\jap{\zeta}_{\mathsf{g}}}{R_m}}}}_{\Mc_+, \Cscr_c^0(\cotang\Mc)}}
\leq \norm{a_{\phg}}_{L^\infty(\cosphere\Mc)}\pscal{\sdm}{\chi\p*{\Theta_R - \theta\p*{\frac{\jap{\zeta}_{\mathsf{g}}}{R_m}}}}_{\Mc_+, \Cscr_c^0(\cotang\Mc)}.
\label{eq:upper:bound:proof:microlocalization}
\end{equation}
Thus, because $l, m\geq N_\varepsilon$ and $(R, l, m)$ satisfies~\eqref{eq:goulot:etranglement}, we can use~\eqref{eq:sdm:cauchy:upper:bound} to obtain the upper bound
\begin{equation}
\textstyle\pscal{\sdm}{\chi\p*{\Theta_R - \theta\p*{\frac{\jap{\zeta}_{\mathsf{g}}}{R_m}}}}_{L^2\p{\Mc}}
\leq \pscal{\sdm}{\chi\p*{\theta\p*{\frac{\jap{\zeta}_{\mathsf{g}}}{R_l}} - \theta\p*{\frac{\jap{\zeta}_{\mathsf{g}}}{R_m}}}}_{L^2\p{\Mc}}
\leq \varepsilon.\label{eq:upper:bound:2:proof:microlocalization}
\end{equation}
Using now~\eqref{eq:upper:bound:2:proof:microlocalization} in~\eqref{eq:upper:bound:proof:microlocalization}, for all $R > R_{\varepsilon}$ and for all $l, m\geq N_\varepsilon$ satisfying~\eqref{eq:goulot:etranglement}, we obtain
\begin{equation}\label{eq:Radon:2maj}
\lim_{n\to+\infty}\textstyle\abs*{\pscal{u^n}{\classe*{\Op_{h_n}^{\Mc}\p{a_{\phg}\p{1-\Theta_R}}u^n}_{L^2\p{\Mc}} - F^{a_{\phg}}_{m, n}}}\leq \norm{a_{\phg}}_{L^\infty(\cosphere\Mc)}\varepsilon.
\end{equation}
Finally, using~\eqref{eq:Radon:1maj} and~\eqref{eq:Radon:2maj} we deduce that for all $\varepsilon > 0$ there is $R > 0$ large enough such that
\begin{equation*}
\lim_{n\to\infty}\abs*{\pscal{u^n}{\Op_{h_n}^{\Mc}\p{a_{\phg}\p{1-\Theta_R}}u^n}_{L^2\p{\Mc}} - \pscal{\mdm}{a_{\phg}}_{\Mc_+, \Cscr_c^0\p{\cosphere\Mc}}} \leq\p{\norm{a_{\phg}}_{L^\infty(\cosphere \Mc)} + 1}\varepsilon,
\end{equation*}
which concludes the proof of Theorem~\ref{Th:Microlocalization}.
\end{proof}
\subsection{Use of Theorem~\ref{Th:Microlocalization} in the context of the wave equation \texorpdfstring{\eqref{eq:Wave}}{}}\label{subsection:Microlocalization:use:WE}
In this subsection, we consider $(M, g)$ a smooth, connected and compact Riemannian manifold without boundary, a potential $V$ as~\eqref{df:potential:fixed:V}, a nonempty bounded open interval $I\subsetneq \RR$, and we define the Riemannian manifolds
\begin{equation}\label{df:MM}
\textstyle\MM := \RR\times M~\text{and}~\MM_I := I\times M~\text{equipped with the metric}~\d t^2 + g.
\end{equation}
\paragraph{Support of 2-microlocal defect measures.}
Like usual microlocal defect measure (given by~\eqref{eq:1:MDM}), 2-microlocal defect measures defined by~\eqref{eq:MDM} satisfy the following support property.
\begin{proposition}\label{pr:microlocal:properties}
Assume $(h_n)_n$ is a sequence in $(0, 1]$ tending to $0$, $(u^n)_n$ is a bounded sequence in $L^2(\Mc)$ such that the sequence $(u^n, h_n)_n$ admits $\sdm\in\Mc_{+}(\cotang \Mc)$ (\resp $\mdm\in\Mc_{+}(\cosphere \Mc)$) as unique semiclassical (\resp 2-microlocal) defect measure in the sense of~\eqref{eq:SDM} (\resp\eqref{eq:MDM}).
Let $A_{h}\in \pseudo_{h,\phg}^m(\Mc)$ with $m\geq 1$ (see Subsection~\ref{section:pseudo:calculus} for a definition) be selfadjoint with an $h$-independent principal homogeneous symbol, such that
\begin{equation}\label{eq:microlocal:properties}
A_{h_n}u^n = \o{1}_{H^{-m}_{h, \comp}\p{\Mc}},~\text{as}~ n\to +\infty.
\end{equation}
Then $\supp(\sdm)\subset \set{\rho\in\cotang \Mc, ~\sigma_{h}(A_{h})(\rho) = 0}$ (\resp$\supp(\mdm)\subset \set{\rho\in\cosphere \Mc, ~\sigma_{\phg}^{m}(A_{h})(\rho) = 0}$).
\end{proposition}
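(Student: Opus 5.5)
We follow the classical elliptic-regularity argument for defect measures: test the equation against a pseudodifferential operator microlocalized where the principal symbol does not vanish, and invert that symbol there. Both statements are handled in parallel. The semiclassical one is standard (see e.g.~\cite[Proposition E.43]{dyatlov2019mathematical}), so we concentrate on the 2-microlocal measure $\mdm$.

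\emph{Reduction.} Write $a:=\sigma^m_{\phg}(A_h)$. Let $\rho_0\in\cosphere\Mc$ with $a(\rho_0)\neq 0$. We choose a conic neighbourhood $\Gamma$ of $\rho_0$, with compact base in $\Mc$, on which $\abs{a(z,\zeta)}\geq c\abs{\zeta}_{\mathsf g}^m$ for $\abs{\zeta}_{\mathsf g}\geq 1$; this is possible by homogeneity and continuity. Take $b\in S^0_{\phg}(\cotang\Mc)$ real, supported in $\Gamma$, with $b(\rho_0)\neq0$. It suffices to show $\int_{\cosphere\Mc} b^2\,\d\mdm=0$. Set $e:=b^2\,\overline{a}^{\,-1}(1-\Theta_R)$, which lies in $S^{-m}_{\comp}$ uniformly in $R\geq 1$ because $\Theta_R=1$ for $\jap{\zeta}\leq \mathsf C_\star R$. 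The key identity is the symbolic one $\overline{e}\cdot a = b^2(1-\Theta_R)$, so that
\begin{equation*}
\Op_{h}^{\Mc}\p{b^2(1-\Theta_R)} = \Op_h^{\Mc}(e)^*A_h + h\,\mathcal R_{h,R},\qquad \mathcal R_{h,R}\in\pseudo^{-1}_{h,\comp}(\Mc).
\end{equation*}
The symbolic calculus of Proposition~\ref{pr:composition} gives this identity. Here we use that $A_h$ is homogeneous, i.e.\ $\sigma_h(A_h)=a+\O{\jap{\zeta}^{m-1}}$. The lower-order correction term $(\sigma_h(A_h)-a)\,\overline{e}$ belongs to $S^{-1}$, times a cutoff supported where $\jap\zeta\gtrsim R$.

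\emph{Estimates.} Pair with $u^n$. For fixed $R$,
\begin{equation*}
\abs{\pscal{\Op_{h_n}(e)u^n}{A_{h_n}u^n}}\leq \norm{\Op_{h_n}(e)u^n}_{H^m_h}\norm{A_{h_n}u^n}_{H^{-m}_{h,\comp}}\longrightarrow 0 .
\end{equation*}
This follows from Proposition~\ref{pr:boundedness:pseudo} (with $\Op(e)$ of order $-m$ and compact support) and from~\eqref{eq:microlocal:properties}. The remainder $h_n\mathcal R_{h_n,R}$ is $\O{h_n}$ in $\linear(L^2)$ for fixed $R$. The correction term is $\O{1/R}$ by Proposition~\ref{pr:Calderon:Vaillancourt}, since its symbol is bounded by $C\jap\zeta^{-1}\indicatrice_{\jap\zeta\geq \mathsf C_\star R}$. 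Hence
\begin{equation*}
\lim_{R\to\infty}\lim_{n\to\infty}\pscal{u^n}{\Op_{h_n}^{\Mc}\p{b^2(1-\Theta_R)}u^n}=0 .
\end{equation*}
By Definition~\ref{df:MDM}, this limit equals $\int b^2\,\d\mdm$, which gives $\rho_0\notin\supp\mdm$.

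The semiclassical case is the same argument with $e=b^2/\overline{\sigma_h(A_h)}$, where $b\in\test(\cotang\Mc)$ is supported where $\sigma_h(A_h)\neq0$. No $R$-limit is needed there, and the remainder is simply $\O{h}$.

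The main obstacle is the uniformity in $R$ of the symbolic remainders. The parametrix-type symbol $e$ involves the cutoff $1-\Theta_R$, whose derivatives are controlled by $\jap\zeta^{-1}$ uniformly in $R$. We must check that the $\O{h}$ composition remainders have constants independent of $R$, or at least that they vanish as $n\to\infty$ for each fixed $R$, which is enough given the order of limits. We also must check that the non-homogeneous part of $\sigma_h(A_h)$ contributes only $o(1)$ as $R\to\infty$. We expect both to follow from the fact that the $S^0$ seminorms of $\Theta_R$ are bounded uniformly in $R\geq1$.
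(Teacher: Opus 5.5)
Your proof is correct and is essentially the paper's argument. The paper tests with $b\,\sigma_h(A_h)\Theta^R$ for arbitrary $b\in S^{-m}_{\phg}$, writes $\Op_h^{\Mc}(b\,\sigma_h(A_h)\Theta^R)=A_h\Op_h^{\Mc}(b\Theta^R)+h\,\mathcal{R}^{-1}_{R,h}$, pairs with $u^n$ through the $H^{-m}_{h,\mathrm{comp}}$--$H^{m}_{h,\mathrm{loc}}$ duality, and lets $n\to\infty$ then $R\to\infty$ to get $\sigma^m_{\phg}(A_h)\,\mdm=0$. You run the same computation after dividing by the symbol near a non-characteristic point: putting $\Op_h^{\Mc}(e)^*$ on the left spares you the selfadjointness, and you treat the non-homogeneous part of $\sigma_h(A_h)$ by Calder\'on--Vaillancourt instead of absorbing it into $\widetilde{S^0_{\phg}}$. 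The uniformity in $R$ you flag is not actually needed: for fixed $R$ every remainder is $\mathcal{O}_R(h_n^{1/2})$ and $n\to\infty$ is taken first, so the only $R$-dependent bound you use is the sup-norm estimate $\mathcal{O}(R^{-1})$.
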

\begin{proof}[Proof of Proposition~\ref{pr:microlocal:properties}]
The proof is divided in two parts: the semiclassical part and the 2-microlocal part.

First, for the semiclassical defect measure, the result is given by \cite[Proposition 5.3]{zworski2022semiclassical}

Second, let $\Theta$ be defined by~\eqref{df:Theta} and for all $R > 0$, set $\Theta^R = 1- \Theta_R$.
For all $b \in {S^{-m}_{\phg}}(\Mc)$ and all temporarily fixed $R > 0$, we have
\begin{equation}\label{eq:proof:microlocal:properties}
\Op_{h}^{\Mc}\p*{b\sigma_{h}(A_{h})\Theta^R} = A_{h}\Op_{h}^{\Mc}\p*{b \Theta^R} + h\rest^{-1}_{R, h}
\end{equation}
with remainder $\rest^{-1}_{R, h}\in \pseudo_{h, \comp}^{-1}\p{\Mc}$.
Thus, using~\eqref{eq:proof:microlocal:properties},~\eqref{eq:microlocal:properties} and selfadjointness of $A_{h}$, we obtain
\begin{align*}
\pscal{u^n}{\Op_{h_n}^{\Mc}\p*{b\sigma_{h}(A_{h})\Theta^R} u^n}_{L^2\p{\Mc}}
&= \pscal{u^n}{A_{h_n}\Op_{h_n}^{\Mc}\p*{b\Theta^R} u^n}_{L^2\p{\Mc}} + \O{h_n}_{R}\\
&= \pscal{A_{h_n}u^n}{\Op_{h_n}^{\Mc}\p*{b\Theta^R} u^n}_{H^{-m}_{h_n, \comp}\p{\Mc}, H^{m}_{h_n, \loc}\p{\Mc}} + \O{h_n}_{R}\\
&= \O{h_n}_{R}.
\end{align*}
Letting $n$ tend to $\infty$ and then $R$ tend to $+\infty$, using~\eqref{eq:MDM}, we deduce
\begin{equation*}
\pscal{\mdm}{b\sigma_{\phg}^{m}(A_{h})}_{\Mc_+, \Cscr^0_c\p{\cosphere\Mc}}
= \pscal{\mdm}{b\sigma_{\phg}^{m}(A_{h})}_{\Mc_+, \Cscr^0_c\p{\cosphere\Mc}}
= 0.
\end{equation*}
Therefore, $\sigma_{\phg}^{m}(A_{h})\mdm$ vanishes identically on $\cosphere \Mc$ and hence $\supp(\mdm)\subset \set{\sigma_{\phg}^{m}(A_{h}) = 0}$.
\end{proof}
\paragraph{Application to the wave equation.}
We give a lemma which allows us to apply Theorem~\ref{Th:Microlocalization} more easily for sequences $(u^n, h_n)_n$ which only concentrate on the region $\abs{\tau}\sim\abs{\xi}_{g}$ of $\cotang\hspace{1 pt}\MM_I$ in the asymptotic regime $\tau^2 + \abs{\xi}_{g}^2\to\infty$.
This lemma is used in the proof of Theorem~\ref{Th:Slices} in Appendix~\ref{proof:Th:Slices}.
According to Proposition~\ref{pr:microlocal:properties} and~\cite[Theorems 5.2 and 5.3]{zworski2022semiclassical}, this will be the case for sequences $(u^n, h_n)_n$ of solutions to~\eqref{eq:Wave}, with potential $\mathsf{V} = h_n^{-2} V$, bounded in $L^2(\MM_I)$.
\begin{lemma}\label{lem:Wave:time:space:reduction}
Consider $\MM_I$ as in~\eqref{df:MM} with $I$ a nonempty bounded open interval of $\RR$.
Let $\theta$ be as in~\eqref{df:theta}, $(h_n)_n$ be a sequence tending to $0$ and $(u^n)_n$ be a bounded sequence in $L^2(\MM_I)$.
Assume $(u^n, h_n)_n$ admits $\mdm\in \Mc_+(\cosphere \hspace{1 pt}\MM_I)$ as unique 2-microlocal defect measure in the sense of~\eqref{eq:MDM} and that there are $c, C > 0$ such that
\begin{equation}
\supp(\mdm)\subset \set{(t, x, \tau, \xi)\in\cosphere\hspace{1 pt}\MM_I, ~c\abs{\tau} \leq \abs{\xi}_{g}\leq C\abs{\tau}}.\label{eq:supp:mdm}
\end{equation}
Then, for all $a\in \tilde{S^0_{\phg}}(\cotang \hspace{1 pt}\MM_I)$ associated by~\eqref{eq:characterization:S^0_phg} to an homogeneous symbol $a_{\phg}$, we have the following two limits:
\begin{align*}
\pscal{\mdm}{a_{\phg}}_{\Mc_+(\cosphere \hspace{1 pt} \MM_I), \Cscr_c^{0}(\cosphere \hspace{1 pt}\MM_I)}
&= \lim_{R\to\infty}\lim_{n\to\infty}\textstyle\pscal{u^n}{\Op_{h_n}^{\MM_I}\p{a\p{1 - \theta\p*{\frac{\jap{\xi}_{{g}}}{R}}}}u^n}_{L^2\p{\MM_I}}\\
&= \lim_{R\to\infty}\lim_{n\to\infty}\textstyle\pscal{u^n}{\Op_{h_n}^{\MM_I}\p{a\p{1 - \theta\p*{\frac{\tau}{R}}}}u^n}_{L^2\p{\MM_I}}.
\end{align*}
\end{lemma}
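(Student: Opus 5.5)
The plan is to compare the cut-offs $\theta(\jap{\xi}_g/R)$ and $\theta(\tau/R)$ with the full cut-off $\theta(\jap{(\tau,\xi)}/R)$ appearing in \eqref{eq:MDM}. The first equality in the statement is then just Definition~\ref{df:MDM}, applied with a different smooth fiber-radial cut-off, once we show the difference vanishes in the iterated limit.

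\textbf{Step 1: reduce to differences of cut-offs.} Set $\Theta_R(t,x,\tau,\xi):=\theta(\jap{(\tau,\xi)}_{\d t^2+g}/R)$, which is a smooth fiber-radial cut-off function by \eqref{df:Theta}. By Theorem~\ref{Th:Microlocalization} (more precisely by Definition~\ref{df:MDM}),
\[
\lim_{R\to\infty}\lim_{n\to\infty}\pscal{u^n}{\Op_{h_n}^{\MM_I}(a(1-\Theta_R))u^n}=\pscal{\mdm}{a_{\phg}}.
\]
It therefore suffices to show that
\[
\lim_R\lim_n\pscal{u^n}{\Op_{h_n}^{\MM_I}\bigl(a(\Theta_R-\theta(\jap{\xi}_g/R))\bigr)u^n}=0,
\]
and the analogous statement with $\theta(\tau/R)$ in place of $\theta(\jap{\xi}_g/R)$. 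One can read $\theta(\tau/R)$ as $\theta(\abs{\tau}/R)$ to keep the symbol even.

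\textbf{Step 2: sandwich the cut-offs.} The key observation is that outside the conic region $\Gamma_{c',C'}=\set{c'\abs{\tau}\le\abs{\xi}_g\le C'\abs{\tau}}$, with $c'<c$ and $C'>C$, the cut-offs do not agree. Inside $\Gamma_{c',C'}$, however, $\jap{\xi}_g$, $\abs{\tau}$ and $\jap{(\tau,\xi)}$ are comparable up to constants. Take a homogeneous symbol $\kappa\in S^0_{\phg}$, equal to $1$ near $\supp\mdm$ (conically) and supported in $\Gamma_{c',C'}$, and split $a=a\kappa+a(1-\kappa)$.

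For the $a\kappa$ part, the difference $\Theta_R-\theta(\jap{\xi}_g/R)$ is supported in a shell $\set{\alpha R\le\jap{(\tau,\xi)}\le\beta R}$. Compactness of the time interval handles localization in $t$; if needed, we also localize with a cut-off in $t$ compactly supported in $I$, or accept the paper's convention that symbols are compactly supported in $\MM_I$. On this shell, the difference is bounded by $\Theta_{R/\gamma}-\Theta_{\gamma R}$ for some fixed $\gamma$. Using the semiclassical defect measure $\sdm$ of $(u^n,h_n)$ (after extraction, by Proposition~\ref{pr:SDM}), we get
\[
\lim_n\abs{\cdot}\le\norm{a}_\infty\pscal{\sdm}{\chi(\Theta_{\gamma R}-\Theta_{R/\gamma})}.
\]
This tends to $0$ as $R\to\infty$, because $\sdm$ is a finite measure (the sequence is bounded in $L^2$) and the shells escape to infinity. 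Positivity is used here exactly as in \eqref{eq:upper:bound:proof:microlocalization}. The extraction of a subsequence is harmless, since the limit in $n$ is then identified independently of the subsequence.

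\textbf{Step 3: the off-cone part (main obstacle).} For $a(1-\kappa)$, I treat the two terms separately. First, $\lim_R\lim_n\pscal{u^n}{\Op(a(1-\kappa)(1-\Theta_R))u^n}=\pscal{\mdm}{a_{\phg}(1-\kappa)}=0$ by the support assumption \eqref{eq:supp:mdm}. Second, for $\Op(a(1-\kappa)(1-\theta(\jap{\xi}_g/R)))$, the symbol is not a fiber-radial cut-off in $(\tau,\xi)$: near the $\tau$-axis, $1-\theta(\jap{\xi}_g/R)$ vanishes even at large $\abs{\tau}$. I would write $1-\theta(\jap{\xi}_g/R)=(1-\Theta_R)\cdot(1-\theta(\jap{\xi}_g/R))+(\Theta_R-\ldots)$. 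The first piece is of the form $(1-\Theta_R)\times$ a bounded symbol supported off-cone. Dominating its principal part by $\abs{a}^2(1-\kappa)^2$ via G\aa rding (the sharp G\aa rding inequality, as in the proof of Lemma~\ref{lem:principal:microlocalization}), its contribution is controlled by $\pscal{\mdm}{\abs{a}^2(1-\kappa)^2}^{1/2}=0$ after a Cauchy--Schwarz argument. The remaining piece lives in a bounded region of $\tau,\xi$ in the relevant sense and is again handled by the $\sdm$ shell argument of Step 2.

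The delicate point is making the non-homogeneous cut-off $\theta(\jap{\xi}_g/R)$ fit into $\tilde{S^0_{\phg}}$-type calculus uniformly in $R$. I would check that each symbol lies in $S^0$ with seminorms uniform in $R\ge1$, so that the Calder\'on--Vaillancourt bound (Proposition~\ref{pr:Calderon:Vaillancourt}) applies with an $O_R(h^{1/2})$ remainder, which vanishes as $n\to\infty$ first. The $\tau$ version follows identically by exchanging the roles of $\abs{\tau}$ and $\jap{\xi}_g$.
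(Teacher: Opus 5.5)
Your proof is correct in substance, but it takes a different route from the paper's.

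\textbf{The paper's route.} The paper never introduces a semiclassical measure. It splits the symbol with a $0$-homogeneous $\chi$ supported in a slightly larger cone.
\begin{itemize}
\item Off the cone, it uses only the pointwise bound $\alpha\le\jap{(\tau,\xi)}$ (for $\alpha=\jap{\xi}_g$ or $\tau$). This gives $0\le 1-\theta(\alpha/R_1)\le 1-\Theta_R$ for some $R_1\ge R$. Sharp G\aa rding and the vanishing of $\mdm$ off the cone then finish the argument. No transition shell appears, because the radius is rescaled rather than the cut-off being split.
\item On the cone, it observes that $\chi\,\theta(\alpha/R)+(1-\chi)\Theta_R$ is itself a smooth fiber-radial cut-off. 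The result then follows because \eqref{eq:MDM} holds for \emph{every} such cut-off.
\end{itemize}

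\textbf{Your route.} You re-derive this cut-off independence by hand, as in the proof of Theorem~\ref{Th:Microlocalization}, using that $\sdm$ has finite mass, so shells escaping to infinity carry vanishing mass. You treat the off-cone piece with the Cauchy--Schwarz/G\aa rding bound $\norm{\Op(bc)u}^2\le\pscal{u}{\Op(\abs{c}^2)u}+O_R(h)$.

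\textbf{What each buys.} Your version handles complex $a$ directly. The paper's G\aa rding sandwich tacitly needs $a\ge 0$, even though it only states that $a$ is real. The price you pay is the extraction. For a double limit $\lim_R\limsup_n$, ``identified independently of the subsequence'' is not automatic: different $R$ could a priori require different subsequences, hence different $\sdm$.

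\textbf{How to close the extraction point.} Your shell quantity $\pscal{u^n}{\Op(\chi(\Theta_{\gamma R}-\Theta_{R/\gamma}))u^n}$ is the difference of the quantities in \eqref{eq:MDM} for the two fiber-radial cut-offs $R\mapsto\Theta_{R/\gamma}$ and $R\mapsto\Theta_{\gamma R}$. Its double limit is therefore $\pscal{\mdm}{\chi}-\pscal{\mdm}{\chi}=0$ for the whole sequence. At that point $\sdm$ is superfluous, and you are back to the paper's mechanism.

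\textbf{Symbol class.} Your planned check that every symbol lies in $S^0(\cotang\hspace{1pt}\MM_I)$ uniformly in $R$ fails off the cone. When $\abs{\tau}\gg R$, the $\xi$-derivatives of $\theta(\jap{\xi}_g/R)$ are $O(R^{-1})$, not $O(\jap{(\tau,\xi)}^{-1})$. The check has to be done in the class of symbols with bounded derivatives, uniformly in $R\ge 1$, where Calder\'on--Vaillancourt, composition and sharp G\aa rding still hold. The paper relies on this implicitly as well.
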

\begin{proof}[Proof of Lemma~\ref{lem:Wave:time:space:reduction}]
We consider $a^h\in\tilde{S^0_{\hom}}(\cotang\hspace{1 pt}\MM_I)$ real-valued, and, for $j\in\set{1, 2}$, we set
\begin{equation*}
\Cc_j := \set{(t, x, \tau, \xi)\in\cotang \hspace{1 pt}\MM_I, ~\frac{c}{j}\jap{\tau} \leq \jap{\xi}_{g}\leq jC\jap{\tau}}.
\end{equation*}
Let $\alpha\colon \cotang \hspace{1 pt}\MM_I\to\RR$ be either
\begin{equation*}
(t, x, \tau, \xi)\in\cotang \hspace{1 pt}\MM_I\mapsto \tau
\quad\text{or}\quad
(t, x, \tau, \xi)\in\cotang \hspace{1 pt}\MM_I\mapsto \jap{\xi}_g,
\end{equation*}
and, for $R > 0$, let $\Theta_{\alpha}^R := 1 - \theta\p{\frac{\alpha}{R}}$.
Then, let $\chi\in \tilde{S^0_{\phg}}\p{\cotang \hspace{1 pt}\MM_I}$ be an $h$-independent symbol such that: $\chi(\cdot, \cdot, \mathsf{R}\cdot, \mathsf{R}\cdot)\in S^0_{\phg}\p{\cotang \hspace{1 pt}\MM_I}$ for $\mathsf{R}$ large enough, and
\begin{equation*}
\textstyle 
\supp\p{\chi}\subset \Cc_2
\quad\text{and}\quad
\supp\p{1 - \chi}\cap \Cc_1 = \emptyset.
\end{equation*}

First, let us consider a \emph{smooth fiber-radial cut-off function} $\Theta$ (see Definition~\ref{df:SFRC}), with notation $\Theta^R = 1 - \Theta_R$.
By~\eqref{eq:supp:mdm} together with~\eqref{eq:MDM}, we have
\begin{equation}\label{eq:1}
\lim_{R\to\infty}\lim_{n\to\infty}
\pscal{u^n}{\Op_{h_n}^{\MM_I}\p*{a^{h_n} (1-\chi)\Theta^R}u^n}_{L^2\p{\MM_I}} = 0.    
\end{equation}
Then, still using~\eqref{eq:MDM}, we have
\begin{equation}\label{eq:2}
\pscal{\mdm}{a_{\phg}}_{\Mc_+, \Cscr^0_c\p{\cosphere\hspace{1 pt}\MM_I}}
=
\lim_{R\to\infty}\lim_{n\to\infty}
\pscal{u^n}{\Op_{h_n}^{\MM_I}\p*{a^{h_n} \chi\Theta^R}u^n}_{L^2\p{\MM_I}}.
\end{equation}

Second, since $\alpha(t, x, \tau, \xi)^{2}\leq {1 + \tau^2 + \abs{\xi}_{g}^2}$ for all $(t, x, \tau, \xi)\in \cotang \hspace{1 pt}\MM_I$, we have
\begin{equation*}
\forall R > 0,~
\exists R_1 \geq R,\quad
0\leq \Theta_\alpha^{R_1}\leq\Theta^{R}.
\end{equation*}
Thus, using sharp \garding~inequality from~\cite[Proposition E.23]{dyatlov2019mathematical}, we obtain that
\begin{equation*}
0
\leq 
\limsup_{n\to\infty}
\pscal{u^n}{\Op_{h_n}^{\MM_I}\p*{a^{h_n} (1 - \chi)\Theta_{\alpha}^{R_1}}u^n}_{L^2\p{\MM_I}}
\leq 
\lim_{n\to\infty}
\pscal{u^n}{\Op_{h_n}^{\MM_I}\p*{a^{h_n} (1 - \chi)\Theta^R}u^n}_{L^2\p{\MM_I}}.
\end{equation*}
Then, by~\eqref{eq:1}, we conclude 
\begin{equation}\label{eq:1:bis}
    \lim_{R\to\infty}\limsup_{n\to\infty}
    \pscal{u^n}{\Op_{h_n}^{\MM_I}\p*{a^{h_n} (1 - \chi)\Theta_{\alpha}^{R}}u^n}_{L^2\p{\MM_I}} = 0.
\end{equation}

Third, since $\abs{\tau}\sim_{\tau\to\pm\infty}\abs{\xi}_g$ on $\supp(\chi)$, we have
\begin{equation*}
    \forall R > 0,~
    \exists R_2 \geq R_1\geq R,\quad
    \eqcases{lll}{
    \chi\Theta^{R_2}
    &\leq \chi\Theta_{\alpha}^{R_1}
    &\leq \chi\Theta^{R}\\
    (1-\chi)\Theta^{R_2}
    &\leq (1-\chi)\Theta^{R_1}
    &\leq (1-\chi)\Theta^{R}
    }.
\end{equation*}
Thus, using that $\chi\Theta_{\alpha} + (1-\chi)\Theta$ is a \emph{smooth fiber-radial cut-off function} (see Definition~\ref{df:SFRC}) together with the sharp \garding~inequality from~\cite[Proposition E.23]{dyatlov2019mathematical}, we obtain that
\begin{align*}
\lim_{n\to\infty}
\pscal{u^n}{\Op_{h_n}^{\MM_I}\p*{a^{h_n}\Theta^{R_2}}u^n}_{L^2\p{\MM_I}}
&\leq 
\lim_{n\to\infty}
\pscal{u^n}{\Op_{h_n}^{\MM_I}\p*{a^{h_n} \classe{\chi\Theta_{\alpha}^{R_1} + (1-\chi)\Theta^{R_1}}}u^n}_{L^2\p{\MM_I}}\\
&\leq
\lim_{n\to\infty}
\pscal{u^n}{\Op_{h_n}^{\MM_I}\p*{a^{h_n} \Theta^{R}}u^n}_{L^2\p{\MM_I}}.
\end{align*}
Then, by~\eqref{eq:1} together with~\eqref{eq:2}, we conclude
\begin{equation}\label{eq:2:bis}
\pscal{\mdm}{a_{\phg}}_{\Mc_+, \Cscr^0_c\p{\cosphere\hspace{1 pt}\MM_I}}
=
\lim_{R\to\infty}\lim_{n\to\infty}
\pscal{u^n}{\Op_{h_n}^{\MM_I}\p*{a^{h_n} \chi\Theta_{\alpha}^R}u^n}_{L^2\p{\MM_I}}.    
\end{equation}

In conclusion, by~\eqref{eq:1:bis} and~\eqref{eq:2:bis}, we prove of Lemma~\ref{lem:Wave:time:space:reduction} for real-valued symbols.
Therefore, by linear combination, we conclude the proof of Lemma~\ref{lem:Wave:time:space:reduction} for any $a^{h}\in\tilde{S^{0}_{\phg}}(\cotang\hspace{1 pt}\MM_I)$.
\end{proof}
\section{Half-wave equation and slice defect measures}\label{section:slice:defect:measure}
In this section, we introduce the half-wave equation.
We study general properties of its solutions and then apply these results to solutions of the wave equation.
Subsection~\ref{subsection:half:wave:equation} introduces the half-wave equation.
Subsection~\ref{subsection:slice:defect:measure} then provides a key theorem (Theorem~\ref{Th:Slices}), inspired by~\cite[Proposition 4.4]{LecturePG}, which relates the semiclassical and 2-microlocal defect measures associated with solutions of the wave equation~\eqref{eq:Wave} to the corresponding ``slice'' semiclassical and 2-microlocal defect measures associated with solutions of the half-wave equation~\eqref{eq:HWE}.

Throughout this section, we consider $(M, g)$ a smooth, connected, compact Riemannian manifold without boundary, a potential $V$ as~\eqref{df:potential:fixed:V}, and fix a nonempty bounded open interval $I\subset \RR$. We recall that $\MM$ and $\MM_I$ are defined in~\eqref{df:MM}.
We use the notion of tangential symbol:
\begin{definition}[Tangential symbol]
The class of tangential symbols of order $m$, denoted $S_{\tang}^m(I\times \cotang M)$, is the set of smooth functions $a^h\in \smooth(I\times \cotang M)$ satisfying for any relatively compact local chart $K$ of $I\times M$ 
\begin{equation*}
\forall \alpha, \beta \in\NN_{0}^{d},~\forall \gamma\in\NN_{0},\quad \sup_{h\in(0, 1]}\sup_{\begin{subarray}{c} (t, x) \in K\\ \xi \in \mathrm{T}_x^{\star} M\end{subarray}}
\jap{\xi}^{\abs{\beta} - m}\abs{\partial_t^\gamma\partial_{x}^\alpha\partial_{\xi}^\beta a^h}<\infty.
\end{equation*}
$S^{m}_{\comp, \tang}(I\times \cotang M)$ denotes the class of tangential symbols of order $m$ \emph{compactly supported} in $I$ and $S_{\phg, \tang}^{m}(I\times \cotang M)$ denotes the class of homogeneous tangential symbols of order $m$, that is, the set of symbols $a_{\phg}\in S^{m}_{\comp, \tang}(I\times \cotang M)$ satisfying
\begin{equation*}
a_{\phg}(t, x,  R\xi) = R^m a_{\phg}(t, x, \xi), \quad \forall R \geq 1,~ \forall (t, x, \xi)\in I \times \cosphere M.
\end{equation*}
\end{definition}
\begin{definition}[Tangential quantization]\label{df:tangential:quantization}
For all $a^h\in S_{\tang}^m\p{I\times \cotang M}$, we define its tangential quantization by
\begin{equation*}
\classe{\Op_{h, \tang}^{M}\p{a^h} u} (t, x) = \classe{\Op_{h}^{M}\p{a^h(t, \cdot)} u(t, \cdot)}(x), \quad (t, x)\in\MM_I.
\end{equation*}
\end{definition}
Then, we state the following result from~\cite[Theorem 18.1.35]{Hörmander1985AnalysisOfLinearOpVol3}, which will be essential when using tangential symbols and usual symbols simultaneously.
\begin{theorem}[Hörmander 1985]\label{Th:hormander}
Let $m, m'\in\RR$, $a_1^h\in S^{m}\p{\cotang\hspace{1 pt} \MM_I}$ and $a_2^h\in S^{m'}_{\tang}\p{I\times \cotang M}$.
If there is $\varepsilon > 0$ such that 
\begin{equation*}
a_1^h(t, x, \tau, \xi) = 0 ~\text{near}~ \set{(t, x, \tau, \xi)\in\cotang\hspace{1 pt}\MM_I, ~\varepsilon\abs{\tau} > 1~\text{and}~\varepsilon\abs{\tau}\geq\abs{\xi}_g}
\end{equation*}
--- see Figure~\ref{fig:hormander} for an illustration.
Then, for $j, k\in\set{1, 2}$, $a_j^ha_k^h\in S^{m + m'}(\cotang \hspace{1 pt}\MM_I)$ and there is $c_{jk}^h\in S^{m + m'}\p{\cotang\hspace{1 pt} \MM_I}$ supported in $\supp\p{a_1^h}\cap\supp\p{a_2^h}$ such that 
\begin{equation*}
c_{jk}^h = a_j^ha_k^h\mod hS^{m + m'-1}(\cotang \hspace{1 pt}\MM_I)\quad\text{and}\quad\Op_{h}^{\MM}(c_{jk}^h) = \Op_{h}^{\MM}\p{a_j^h}\Op_{h}^{\MM}\p{a_k^h}\in\pseudo^{m + m'}_{h}(\MM_I).
\end{equation*}
\end{theorem}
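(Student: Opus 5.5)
The plan is to reduce everything to the standard symbol calculus on $\cotang\hspace{1 pt}\MM_I$. The reduction rests on one geometric observation: on the support of $a_1^h$ the time frequency is controlled by the space frequency. Indeed, by hypothesis, on $\supp(a_1^h)$ we have either $\varepsilon\abs{\tau}\leq 1$ or $\varepsilon\abs{\tau}<\abs{\xi}_g$. Hence
\begin{equation*}
\jap{\xi}_g\leq \jap{(\tau,\xi)}_{\d t^2+g}\leq C_\varepsilon \jap{\xi}_g \quad\text{on a conic neighbourhood of }\supp(a_1^h).
\end{equation*}
I read the statement for $j\neq k$ (and $j=k=1$, which is trivial); the pure tangential product $a_2^ha_2^h$ is only meaningful after this localisation.

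\emph{Step 1 (products are symbols).} I would fix a smooth cut-off $\chi\in S^0(\cotang\hspace{1 pt}\MM_I)$, homogeneous of degree $0$ for large frequencies. It should equal $1$ near the region $\set{\varepsilon\abs{\tau}\leq 1 \text{ or } \varepsilon\abs{\tau}\leq \abs{\xi}_g}$, taken slightly enlarged, and vanish where $\abs{\tau}\geq 2\varepsilon^{-1}\jap{\xi}_g$. Then $a_1^h=\chi a_1^h$.

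Next I would check that $\chi a_2^h\in S^{m'}(\cotang\hspace{1 pt}\MM_I)$. Since $a_2^h$ does not depend on $\tau$, any $\partial_\tau$ derivative falls on $\chi$. Such a derivative gains $\jap{(\tau,\xi)}^{-1}$ and lands on the region where $\jap{\xi}\sim\jap{(\tau,\xi)}$. Derivatives $\partial_t^\gamma\partial_x^\alpha\partial_\xi^\beta a_2^h$ are bounded by $\jap{\xi}^{m'-\abs{\beta}}$, and on $\supp\chi$ this is comparable to $\jap{(\tau,\xi)}^{m'-\abs{\beta}}$, whatever the sign of $m'-\abs{\beta}$. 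The Leibniz rule then gives $a_1^ha_2^h=a_1^h(\chi a_2^h)\in S^{m+m'}$.

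\emph{Step 2 (composition).} The tangential quantization of Definition~\ref{df:tangential:quantization} coincides with the full quantization $\Op_h^{\MM}$ of the $\tau$-independent (non-classical) symbol $a_2^h(t,x,\xi)$, since its kernel contains $\delta(t-s)$. I would split
\begin{equation*}
\Op_{h,\tang}^M(a_2^h)=\Op_h^{\MM}(\chi a_2^h)+\Op_h^{\MM}((1-\chi)a_2^h).
\end{equation*}
The first piece is a genuine operator in $\pseudo^{m'}_h$. For it, Proposition~\ref{pr:composition} (more precisely its full asymptotic version from~\cite[Appendix E]{dyatlov2019mathematical}) shows that $\Op_h(a_1^h)\Op_h(\chi a_2^h)$ and $\Op_h(\chi a_2^h)\Op_h(a_1^h)$ are in $\pseudo_h^{m+m'}$. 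Their symbols are $a_1^ha_2^h$ modulo $hS^{m+m'-1}$, and every term of the expansion is supported in $\supp a_1^h\cap\supp a_2^h$.

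For the second piece, $a_1^h$ and $(1-\chi)a_2^h$ have supports separated in $\tau$ by a conic distance. I would write the kernel of $\Op_h(a_1^h)\Op_h((1-\chi)a_2^h)$ as an oscillatory integral and integrate by parts in the intermediate variable $t'$, using $(\tau-\tau')/h\neq 0$, with $\abs{\tau-\tau'}\gtrsim\jap{(\tau,\xi)}+\jap{\tau'}$ on the relevant supports. This should show that the operator is $\O{h^\infty}$ in $\pseudo^{-\infty}_h$. The $t$-smoothness of $a_2^h$ is exactly what makes these integrations by parts legitimate. The same argument applies in the reverse order.

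\emph{Step 3 (exact support).} Finally I would resum the asymptotic expansion by a Borel procedure, keeping supports inside a fixed neighbourhood of $\supp a_1^h\cap\supp a_2^h$. The $\O{h^\infty}$ smoothing remainders can then be absorbed into $c_{jk}^h$ after multiplying the kernel remainder by cut-offs adapted to these supports, as in~\cite[Theorem 18.1.35]{Hörmander1985AnalysisOfLinearOpVol3}. The main obstacle I expect is the non-stationary phase estimate of Step 2. There the tangential symbol has no decay in $\tau'$, so one must make sure that each integration by parts produces decay in both $\tau$ and $\tau'$, uniformly in $h$. I would handle this by inserting the operator $(1+\abs{\tau-\tau'}^2)^{-1}(1-h^2\partial_{t'}^2)$ repeatedly.
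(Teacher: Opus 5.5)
The paper does not prove this statement; it quotes H\"ormander's Theorem 18.1.35. So your outline has to stand on its own. Step 1 and the splitting $\Op^{M}_{h,\tang}(a_2^h)=\Op_h^{\MM}(\chi a_2^h)+\Op_h^{\MM}((1-\chi)a_2^h)$ are fine, and your reading of the cases $j=k$ is the sensible one.

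\textbf{Step 2: the non-stationary phase estimate fails as written.} The supports of $a_1^h$ and $(1-\chi)a_2^h$ are \emph{not} separated in the $\tau$ variable. Take $(\tau,\xi)$ with $\abs{\xi}_g=2\varepsilon\abs{\tau}$, where $a_1^h$ may well be nonzero. Take $(\tau',\xi')=(\tau,0)$, which lies in $\supp((1-\chi)a_2^h)$ as soon as $\abs{\tau}\geq 2\varepsilon^{-1}$ and $a_2^h(\cdot,\cdot,0)\neq 0$. Then $\tau-\tau'=0$ while $\abs{\tau}$ is arbitrarily large, so integrating by parts in $t'$ alone gains nothing there. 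This happens for any admissible $\chi$.

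There is a second problem with the operator you propose. The transpose of $(1+\abs{\tau-\tau'}^2)^{-1}(1-h^2\partial_{t'}^2)$ produces only factors $\jap{\tau-\tau'}^{-2}$, never a power of $h$. So it could not give $O(h^\infty)$ even if the separation held.

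The repair is to integrate by parts in all intermediate variables $z'=(t',x')$ of a chart, with $\bigl(1+h^{-2}\abs{\zeta-\eta}^2\bigr)^{-1}(1-\Delta_{z'})$, where $\zeta=(\tau,\xi)$ and $\eta=(\tau',\xi')$. You then use the joint separation $\abs{\zeta-\eta}\geq c\,(\jap{\zeta}+\jap{\eta})$ on $\supp a_1^h\times\supp((1-\chi)a_2^h)$. This conic separation does hold, provided the cone on which $\chi=1$ is wide enough to absorb the variation of $g$ over the chart. That condition is needed because the two symbols are evaluated at different base points $z$ and $z'$. Each step then gains $h^2\jap{\zeta}^{-1}\jap{\eta}^{-1}$. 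This supplies both the factor $h^\infty$ and the integrability in $\tau'$ that $(1-\chi)a_2^h$ lacks.

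\textbf{Step 3: the exact conclusion is false, so it cannot be reached.} Suppose $\supp a_1^h\cap\supp a_2^h=\emptyset$, for instance with $a_1^h$ and $a_2^h$ localized near different values of $\xi$; the hypothesis allows this. Then the only admissible $c_{jk}^h$ is $0$. Yet $\Op^{M}_{h,\tang}(a_2^h)\Op_h^{\MM}(a_1^h)$ is in general a nonzero, though $O(h^\infty)$, smoothing operator. Multiplying a kernel remainder by cut-offs changes the operator, so nothing can be ``absorbed''.

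What your argument proves, once Step 2 is repaired and the expansion terms are Borel-summed, is the following. There is $c_{jk}^h$ supported in $\supp a_1^h\cap\supp a_2^h$, with $\Op_h^{\MM}(c_{jk}^h)=\Op_h^{\MM}(a_j^h)\Op_h^{\MM}(a_k^h)+O(h^\infty)$ in $\pseudo_h^{-\infty}(\MM_I)$. This is all the paper needs, since every application of the theorem already carries $O(h)$ remainders.
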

\begin{figure}
\begin{center}
\begin{tikzpicture}[scale=1]
\def\eps{1.7}
\def\xmax{4}
\def\ymax{4}
\fill[gray!30]
  (-\ymax/\eps,\ymax) --
  (\ymax/\eps,\ymax) --
  (0,0) --
  cycle;
\fill[gray!30]
  (-\ymax/\eps,-\ymax) --
  (\ymax/\eps,-\ymax) --
  (0,0) --
  cycle;
\fill[white]
    (-\xmax, \eps) --
    (\xmax, \eps) --
    (\xmax, -\eps) --
    (-\xmax, -\eps) --
    cycle;
\def\sample{120}
\draw[thick] plot[domain=-\xmax:\xmax, samples=\sample] (\x, {sqrt(\x*\x + 0.5)});
\draw[thick] plot[domain=-\xmax:\xmax, samples=\sample] (\x, {-sqrt(\x*\x + 0.5)});
\draw[dashed] (-\xmax, \eps) -- (\xmax, \eps);
\draw[dashed] (-\xmax, -\eps) -- (\xmax, -\eps);
\draw[->] (-\xmax,0) -- (\xmax,0) node[right] {$\xi$};
\draw[->] (0,-\ymax) -- (0,\ymax) node[above] {$\tau$};
\node[above] at (\xmax, \eps) {$\varepsilon \abs{\tau} = 1$};
\node at (1, 2*\eps) {$\varepsilon\abs{\tau}> \abs{\xi}_{g}$};
\node[right] at (1.2, 1) {$\set{\tau^2 = \abs{\xi}_g^2 + V(x)}$};
\end{tikzpicture}
\end{center}
\caption{For any $(t, x)\in\MM_I$ and all $\varepsilon > 0$, here is an illustration of the problematic region $\set{(\tau, \xi)\in\mathrm{T}_{t, x}^\star\hspace{1 pt}\MM_I, ~\varepsilon\abs{\tau} > 1~\text{and}~\varepsilon\abs{\tau}\geq\abs{\xi}_g}$ in light gray.
We compare it, for $\varepsilon$ small enough, to the set $\set{(\tau, \xi) \in\mathrm{T}_{t, x}^\star\hspace{1 pt}\MM_I, ~ \tau^2 = \abs{\xi}_{g}^2 + V}$ in black.
One can choose $\varepsilon > 0$ uniformly small with respect to the choice of $(t, x)$.}
\label{fig:hormander}
\end{figure}
\subsection{Half-wave equation}\label{subsection:half:wave:equation}
In this subsection, we define the half-wave equation and give the first results on its solutions.
\paragraph{Half-Schrödinger operator.}
We begin by defining the Schrödinger operator 
\begin{equation*}
\SchroOp_{h} := - h^2\Delta_g + V\in \pseudo_{h}^2(M).
\end{equation*}
The principal symbol of $\SchroOp_{h}$ is $p=\sqrtsymbol^2 \in S^2(\cotang M)$ both defined in~\eqref{df:Sqrt:Sym}. Recall that $\sqrtsymbol = \sqrt{p} \in S^1(\cotang M)$ is real-valued and that we assume $V > 0$ on $M$.
Following~\cite[Remark 2.11]{laurent2016uniform}, we define an approximate square root of $\SchroOp_{h}$ by  
\begin{equation}\label{df:SqrtOp}
\SqrtOp_{h} := \frac{1}{2}(\Op^M_{h}(\sqrtsymbol) +\Op^M_{h}(\sqrtsymbol)^*)\in \pseudo_{h}^1(M).
\end{equation}
The operator $\SqrtOp_{h}$ is selfadjoint, its principal symbol is $\sqrtsymbol$ (see Proposition~\ref{pr:composition}) and is elliptic in the sense that
\begin{equation*}
\sqrtsymbol(x, \xi) \geq \p{\textstyle\min_MV}^{\frac{1}{2}}\jap{\xi}_g, ~ (x, \xi)\in \cotang M.
\end{equation*}
Moreover, using Proposition~\ref{pr:composition} and selfadjointness of $\SqrtOp_{h}$ and $\SchroOp_{h}$, there is a selfadjoint operator $\rest^{\SqrtOp, 1}_{h}\in \pseudo^1_{h}(M)$ such that
\begin{equation}\label{df:remainder:order:1}
\SqrtOp_{h}^2 = \SchroOp_{h} + h\rest^{\SqrtOp, 1}_{h}.
\end{equation}
\paragraph{Half-wave operator.}
We define the symbol $\timespacesymbol\in S^2\p{\cotang\hspace{1pt} \MM}$ as
\begin{equation}\label{df:Sym:time:times:space}
\timespacesymbol(t, x, \tau, \xi) = p(x, \xi) - \tau^2 = \abs{\xi}_g^2 + V(x) - \tau^2,\quad (t, x, \tau, \xi)\in \cotang\hspace{1.5pt}\MM.
\end{equation}
\begin{remark}\label{rk:sign}
Here and throughout, for any sign $\signe = \pm$, $-\signe$ denotes the opposite sign $\mp$.
\end{remark}
We define
\begin{equation*}
\ell^{\signe} = \tau + \signe\sqrtsymbol \in S^1\p{\cotang I} + S_{\tang}^1(I\times \cotang M),\quad \signe\in\set{-, +},
\end{equation*}
so that $-\timespacesymbol = \ell^{\signe}\ell^{-\signe}$.
We define the positive (\resp negative) half-wave operator as
\begin{equation}\label{eq:Half:Wave:Operator}
\HWOp^{\signe}_{h} := h D_t + \signe \SqrtOp_{h}, \quad \signe\in\set{-, +}.
\end{equation}
Note that $\ell^{\signe}\notin S^1(\cotang\hspace{1 pt}\MM_I)$ and $\HWOp^{\signe}_{h}\notin\pseudo_{h}^1(\MM_I)$.
Now, using $\classe{hD_t, \SqrtOp_{h}} = 0$ and taking the time-independent operator $\rest^{\SqrtOp, 1}_{h}\in\Psi^{1}_{h}(M)$ from~\eqref{df:remainder:order:1}, we have
\begin{equation*}
-\HWOp^{\signe}_{h}\HWOp^{-\signe}_{h} = -h^2D_t^2 + \SqrtOp_{h}^2 =  (h^2\partial_t^2 + \SchroOp_{h}) + h\rest^{\SqrtOp, 1}_{h}.
\end{equation*}
By~\cite[Theorem 10.2]{zworski2022semiclassical}, we know that the operator $\SqrtOp_{h}$ with domain $H^1_{h}(M)$ is essentially selfadjoint on $L^2(M)$ and $(e^{\frac{it}{h}\SqrtOp_{h}})_{t\in\RR}$ is a group of unitary operators on $L^2(M)$ such that, in a weak sense, we have:
\begin{equation}\label{eq:unitary:operators}
\forall u\in L^2(M), \quad \HWOp^{\signe}_{h}(e^{-\signe\frac{it}{h}\SqrtOp_{h}}u) = hD_t (e^{-\signe\frac{it}{h}\SqrtOp_{h}}u) + \signe\SqrtOp_{h}(e^{-\signe\frac{it}{h}\SqrtOp_{h}}u) = 0, \quad \signe\in\set{-, +}.
\end{equation}
\paragraph{Application to solutions of the wave equation.}
We use the notion of semiclassical Sobolev spaces defined in the introduction, see the paragraph on pseudodifferential operators on Riemannian manifolds in Subsection~\ref{section:pseudo:calculus}.
We define $u^h$ as the $\Cscr^0(I, H^1_{h}(M))\cap\Cscr^1(I, L^2(M))$-solution of
\begin{equation}\label{df:sequence}
    \eqcases{rll}{
    (h^2\partial_t^2 - h^2\Delta_g  + V)u^h&=0&\text{in}~\RR\times M\\
    (u^h, h\partial_t u^h)_{|t = 0} &=(u_0^h, hu_1^h)&\text{bounded in}~H^1_{h}(M)\times L^2(M).
    }
\end{equation}
For all $h \in (0, 1]$, we define the rescaled energy term as
\begin{equation}\label{df:NRJ:rescaled}
\Escr^{h}(\mathsf{u}, \mathsf{v}) := h^2\Escr_{h^{-2}V}(\mathsf{u}, \mathsf{v}) = \frac{1}{2}\int_{M}\abs{h\nabla_g \mathsf{u}}_g^2 + V\abs{\mathsf{u}}^2 + \abs{h\mathsf{v}}^2\d x, \quad (\mathsf{u}, \mathsf{v})\in H^1_h(M)\times L^2(M).
\end{equation}
As $\Escr_{h^{-2}V}$ defined in~\eqref{eq:Obs}, the rescaled energy $\Escr^{h}$ of $u^{h}$, namely $\Escr^{h}\p{u^{h}, \partial_t u^{h}}$, is preserved in time for any $h$ (see~\eqref{eq:WP}).
\begin{remark}
In the notation of~\eqref{eq:Wave}, $u^h$ is the (unique) solution with potential $\mathsf{V}=h^{-2}V$ and initial data $(u_0^h,u_1^h)$, where $(u_0^h,hu_1^h)_{h}$ is uniformly bounded in $H^1_{h}(M)\times L^2(M)$ or, equivalently, where 
\begin{equation*}
\sup_{h}\Escr^h(u_0^h, u_1^h) < \infty.
\end{equation*}
\end{remark}
Then, we also consider the intermediate and remainder terms
\begin{equation}\label{df:sequence:intermediate:terms}
\text{for a sign}~ \signe\in\set{-, +},~ h\in(0, 1[, \quad v^{\signe}_{h} := \HWOp^{\signe}_{h}u^h
\quad\text{and}\quad
f^h := h^{-1}\HWOp^{-\signe}_{h}v_{h}^{\signe} = -\rest^{\SqrtOp, 1}_{h} u^h,
\end{equation}
where $\rest^{\SqrtOp, 1}_{h}$ is the time-independent operator defined in~\eqref{df:remainder:order:1} and does not depend on the choice of sign $\signe$ (see Remark~\ref{rk:sign} for the definition of $-\signe$).
Note that for any $h\in (0, 1]$, we also have the following key equalities
\begin{equation}\label{eq:sum:of:split:terms}
v^+_{h} - v^-_{h} = 2\SqrtOp_{h}u^h
\quad\text{and}\quad
v^+_{h} + v^-_{h} = 2hD_tu^h.
\end{equation}
Then, as $\SqrtOp_{h}$ is an elliptic pseudodifferential operator in $\pseudo_{h}^1(M)$, using parametrix construction, there are remainder terms $\tilde{\rest}_{h}^{\SqrtOp, 0}$ and $\rest^{\SqrtOp, 0}_{h}$ in $\pseudo_{h}^{0}(M)$ (see for example~\cite[section E.7, exercise 11]{dyatlov2019mathematical}) with real-valued principal symbols such that
\begin{equation*}
f^h = -\rest^{\SqrtOp, 1}_{h}u^h = \tilde{\rest}^{\SqrtOp, 0}_{h}\SqrtOp_{h}u^h + h\rest^{\SqrtOp, 0}_{h}u^h.
\end{equation*}
As a consequence, by~\eqref{eq:sum:of:split:terms}, we have
\begin{equation}\label{eq:decomposition:f}
f^h = \frac{1}{2}\tilde{\rest}^{\SqrtOp, 0}_{h}\p{v^+_{h} - v^-_{h}} + h\rest^{\SqrtOp, 0}_{h}u^h.
\end{equation}   
These sequences satisfy the half-wave equation~\eqref{eq:HWE}, \ie for any $h\in (0, 1]$ we have
\begin{equation}\label{eq:HWE}\tag{HWE}
\eqcases{rcll}{\HWOp^{-\signe}_{h}v^{\signe}_{h} &=& hf^{h} &\text{in}~ I\times M,\\
\vspace{-8pt}\\
\p{v^{\signe}_{h}}_{|t = 0} &=& (-ihu_1^h + \signe \SqrtOp_{h} u_0^h) &\text{in}~M.}
\end{equation}
Note that, for any operator $A_{h}\in\pseudo^0_{h, \comp}(\MM_I)$ and any $h\in (0, 1]$, since the crossed terms cancel each other out, we have
\begin{align}
\notag\frac{1}{2}\sum_{\signe\in\set{-, +}}\pscal{v^{\signe}_{h}}{A_{h}v^{\signe}_{h}}_{L^2(\MM_I)}
&= \frac{1}{2}\sum_{\signe\in\set{-, +}}\pscal{\p{hD_t + \signe\SqrtOp_{h}}u^h}{A_{h}\p{hD_t + \signe\SqrtOp_{h}}u^h}_{L^2(\MM_I)}\\
&= \pscal{h\partial_t u^h}{A_{h} h\partial_t u^h}_{L^2(\MM_I)} + \pscal{\SqrtOp_{h} u^h}{A_{h} \SqrtOp_{h} u^h}_{L^2(\MM_I)}.\label{eq:identity:v:to:u}
\end{align}
Then, with the same calculation, for all $t\in I$, all $A_{h}\in \pseudo^0_{h}\p{M}$ and all $h\in (0, 1]$, we also have
\begin{equation}
\frac{1}{2}\sum_{\signe\in\set{-, +}}\pscal{v^{\signe}_{h}(t)}{A_{h}v^{\signe}_{h}(t)}_{L^2(M)} = \pscal{h\partial_t u^h(t)}{A_{h} h\partial_t u^h(t)}_{L^2(M)} + \pscal{\SqrtOp_{h} u^h(t)}{A_{h} \SqrtOp_{h} u^h(t)}_{L^2(M)}.\label{eq:identity:v:to:u:t}
\end{equation}
\paragraph{Boundedness for solutions to half-wave and wave equation.}
Using~\eqref{eq:WP} we have a first boundedness result.
\begin{lemma}\label{lem:SPLITTING}
The family $(u^h)_{h}$, defined in~\eqref{df:sequence},  is bounded in $L^{\infty}(I, H^1_{h}(M))$ and the families $\p{h D_t u^h}_{h}$, $(v_{h}^{\signe})_{h}$ and $\p{f^h}_{h}$ defined in~\eqref{df:sequence:intermediate:terms} are bounded in $L^{\infty}(I, L^2(M))$.
\end{lemma}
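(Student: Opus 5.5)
The plan is to read the bounds off the conserved rescaled energy $\Escr^h$ from~\eqref{df:NRJ:rescaled}, and then transfer them to $v^{\signe}_h$ and $f^h$ using the $L^2$-boundedness of the pseudodifferential operators involved.

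First, by~\eqref{eq:WP} applied with $\mathsf{V}=h^{-2}V$, for every $t\in I$ we have $\Escr^h(u^h(t),\partial_t u^h(t))=\Escr^h(u^h_0,u^h_1)$, and the right-hand side is bounded uniformly in $h$ by the assumption in~\eqref{df:sequence}. Since $V\geq \min_M V>0$,
\begin{equation*}
\norm{h\nabla_g u^h(t)}_{L^2}^2+\min_M V\,\norm{u^h(t)}_{L^2}^2+\norm{h\partial_t u^h(t)}_{L^2}^2\leq 2\Escr^h(u^h(t),\partial_t u^h(t)).
\end{equation*}
The left-hand side controls $\norm{u^h(t)}_{H^1_h(M)}^2$, because on the compact manifold $M$ the semiclassical $H^1_h$ norm is equivalent, uniformly in $h$, to $\norm{h\nabla_g\cdot}_{L^2}+\norm{\cdot}_{L^2}$. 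This gives boundedness of $(u^h)_h$ in $L^\infty(I,H^1_h(M))$ and of $(hD_tu^h)_h$ in $L^\infty(I,L^2(M))$.

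Next, $\SqrtOp_h\in\pseudo^1_h(M)$, so Proposition~\ref{pr:boundedness:pseudo} (with $\chi=1$, since $M$ is compact) shows that $\SqrtOp_h$ is uniformly bounded from $H^1_h(M)$ to $L^2(M)$. Hence $v^{\signe}_h(t)=hD_tu^h(t)+\signe\SqrtOp_hu^h(t)$ is bounded in $L^2(M)$ uniformly in $t\in I$ and $h$. Likewise, $f^h=-\rest^{\SqrtOp,1}_hu^h$ with $\rest^{\SqrtOp,1}_h\in\pseudo^1_h(M)$ time-independent, so the same proposition gives $\norm{f^h(t)}_{L^2}\leq C\norm{u^h(t)}_{H^1_h}$, which is uniformly bounded. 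One could also use~\eqref{eq:decomposition:f} together with Calderón–Vaillancourt, but the direct route suffices.

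The only delicate points are routine. First, one must check that the $H^1_h$ norm from~\cite{dyatlov2019mathematical} is equivalent to the energy-type norm uniformly in $h$; this follows from the ellipticity of $-h^2\Delta_g+1$. Second, one must make sense of the time-pointwise statements: $u^h$ lies in $\Cscr^0(I,H^1_h)\cap\Cscr^1(I,L^2)$, so each quantity is continuous in $t$ with values in $L^2$, and the $L^\infty$ bounds hold pointwise in $t$.
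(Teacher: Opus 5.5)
Your proof is correct. For $u^h$, $hD_tu^h$ and $f^h$ it is the paper's argument: energy conservation, then Proposition~\ref{pr:boundedness:pseudo} applied to $\rest^{\SqrtOp,1}_h$.

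The difference lies in how you handle $v^{\signe}_h$.
\begin{itemize}
\item The paper treats $v^{\signe}_h$ as a solution of the half-wave equation~\eqref{eq:HWE} with source $hf^h$. It invokes the hyperbolic energy inequality $\sup_{t\in I}\norm{v^{\signe}_h(t)}_{L^2(M)}\leq\norm{v^{\signe}_h(0)}_{L^2(M)}+h\norm{f^h}_{L^1(I,L^2(M))}$. It then bounds $v^{\signe}_h(0)=-ihu^h_1+\signe\SqrtOp_hu^h_0$ using the boundedness of $\SqrtOp_h$ from $H^1_h$ to $L^2$, and bounds the source term using the estimate already obtained for $f^h$.
\item You bound $v^{\signe}_h(t)=hD_tu^h(t)+\signe\SqrtOp_hu^h(t)$ directly at every time $t$. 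This uses the time-uniform bounds on $u^h(t)$ in $H^1_h$ and on $hD_tu^h(t)$ in $L^2$ that energy conservation already provides.
\end{itemize}

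Your route is shorter. It needs neither the unitary half-wave group, nor a Duhamel-type estimate, nor the prior bound on $f^h$. In fact, the paper's bound at $t=0$ is exactly your argument at a single time, so its propagation step is redundant here. The paper's route only uses that $v^{\signe}_h$ solves a first-order equation with bounded source, which is the viewpoint adopted later in Theorem~\ref{Th:Slices}. For this lemma, however, it gives nothing beyond what your direct argument gives.
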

\begin{proof}[Proof of Lemma~\ref{lem:SPLITTING}]
First, using the boundedness of $(u_0^h, hu_1^h)_{h}$ in $H_{h}^1(M)\times L^2(M)$ and~\eqref{eq:WP}, we have that
\begin{equation*}
\max\set{\sup_{t\in I}\norm{u^h(t)}_{H^1_{h}(M)}^2,~\sup_{t\in I}\norm{hD_t u^h(t)}_{L^2(M)}^2} \leq \sup_{t\in I}\Escr^h(u^h(t), \partial_t u^h(t)) = \Escr^h(u_0^h, u^h_1) \leqsim 1,\quad h\in (0, 1].
\end{equation*}
Thus, $\p{\sup_{t\in I}\norm{u^h(t)}_{H^1_{h}(M)}}_{h}$ and $\p{\sup_{t\in I}\norm{hD_t u^h(t)}_{L^2(M)}}_{h}$ are bounded sequences.
Then, using $f^h = -\rest^{\SqrtOp, 1}_{h}u^h$ from~\eqref{df:sequence:intermediate:terms} and boundedness of $\rest^{\SqrtOp, 1}_{h}$ from Proposition~\ref{pr:boundedness:pseudo}, we have that
\begin{equation}\label{eq:5}
\sup_{t\in I}\norm{f^h(t)}_{L^2(M)}\leqsim \sup_{t\in I}\norm{u^h(t)}_{H^1_{h}(M)}\leqsim 1,\quad h\in (0, 1]. 
\end{equation}

Second, using~\eqref{eq:unitary:operators} and (classical) hyperbolic energy inequality on~\eqref{eq:HWE} (see for example~\cite[Chapter 23]{Hörmander1985AnalysisOfLinearOpVol3}), we obtain
\begin{equation*}
\notag\sup_{t\in I}\norm{v^{\signe}_{h}(t)}_{L^2(M)} \leq \norm{v^{\signe}_{h}(0)}_{L^2(M)} + h\norm{f^h}_{L^1(I, L^2(M))},\quad h\in (0, 1].
\end{equation*}
Then, by boundedness from Proposition~\ref{pr:boundedness:pseudo} in addition with $v^{\signe}_{h}(0) = -ihu_1^h + \signe \SqrtOp_{h} u_0^h$, we have
\begin{equation*}
\sup_{t\in I}\norm{v^{\signe}_{h}(t)}_{L^2(M)} \leqsim h\norm{u_1^h}_{L^2(M)} + \norm{u_0^{h}}_{H_{h}^1(M)} + h\norm{f^h}_{L^1(I, L^2(M))},\quad h\in (0, 1].
\end{equation*}
Finally, using $\Escr^h(u_0^h, u_1^h) \leqsim 1$ (see~\eqref{df:sequence}) and~\eqref{eq:5}, we have
\begin{equation*}
\sup_{t\in I}\norm{v^{\signe}_{h}(t)}_{L^2(M)} \leqsim 1 + h \leqsim 1,\quad h\in (0, 1].
\end{equation*}
This concludes the proof of Lemma~\ref{lem:SPLITTING}.
\end{proof}
\paragraph{First results on localization of semiclassical and 2-microlocal defect measures.}
We conclude this subsection by giving a result on the localization of semiclassical and 2-microlocal defect measures associated to a subsequence of $(u^h)_{h}$ defined in~\eqref{df:sequence}.
\begin{lemma}\label{lem:hDt:SqrtOp:sdm:mdm}
Let $(h_n)_n$ be a sequence of $(0, 1]$ tending to $0$, $\p{u^n}_n = \p{u^{h_n}}_n $ be the sequence defined in~\eqref{df:sequence}, $\sqrtsymbol$ as defined in~\eqref{df:Sqrt:Sym} and $\SqrtOp_{h}$ as defined in~\eqref{df:SqrtOp}.
There is a subsequence of $\p*{\p{u^n, h_n D_t u^n, \SqrtOp_{h_n}u^n}, h_n}_n$, still denoted $\p*{\p{u^n, h_n D_t u^n, \SqrtOp_{h_n}u^n}, h_n}_n$, which admits a Radon measure with hermitian matrix values 
\begin{equation*}
\begin{pmatrix}
\sdm_{(u, u)} &* & *\\
*&\sdm_{(\tau, \tau)} & *\\
* &*& \sdm_{(\sqrtsymbol, \sqrtsymbol)}
\end{pmatrix}\in\Mc_+\p{\cotang\hspace{1 pt}\MM_I, \CC^{3\times 3}}~\p*{\resp
\begin{pmatrix}
\mdm_{(u, u)} &* &*\\
*&\mdm_{(\tau, \tau)} & *\\
*&* & \mdm_{(\sqrtsymbol, \sqrtsymbol)}
\end{pmatrix}\in\Mc_+\p{\cosphere\hspace{1 pt}\MM_I, \CC^{3\times 3}}
}
\end{equation*}
as unique semiclassical (\resp 2-microlocal) defect measure in the sense of~\eqref{eq:SDM} (\resp \eqref{eq:MDM}).
Then, for all $\bullet\in\set{u, \tau, \sqrtsymbol}$, we have
\begin{equation}\label{eq:16}
\eqcases{l}{
\supp\p{\sdm_{(\bullet, \bullet)}}\subset \set{(t, x, \tau, \xi)\in\cotang\hspace{1 pt}\MM_I, ~\tau^2 = \sqrtsymbol(x, \xi)^2},\\
\supp\p{\mdm_{(\bullet, \bullet)}}\subset \set{(t, x, \tau, \xi)\in\cosphere\hspace{1 pt}\MM_I, ~\tau^2 = \abs{\xi}_g^2}.
}
\end{equation}
\end{lemma}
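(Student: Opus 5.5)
The plan is to extract the measures first and then read off the supports from the wave operator with Proposition~\ref{pr:microlocal:properties}. By Lemma~\ref{lem:SPLITTING}, the three sequences $u^n$, $h_nD_tu^n$ and $\SqrtOp_{h_n}u^n$ are bounded in $L^\infty(I, L^2(M))$, hence in $L^2(\MM_I)$. Proposition~\ref{pr:SDM} and Theorem~\ref{Th:Microlocalization} with $k=3$ then give a common subsequence admitting both a semiclassical and a 2-microlocal matrix-valued defect measure.

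For the supports we use the operator
\begin{equation*}
P_h := h^2D_t^2 - h^2\Delta_g + V.
\end{equation*}
It is selfadjoint, of order $2$, and its semiclassical principal symbol is $\tau^2 - \sqrtsymbol^2$. Its homogeneous principal symbol is $\tau^2-\abs{\xi}_g^2$, since $V$ is of lower order at infinity.

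By~\eqref{df:sequence}, $P_{h_n}u^n = 0$ exactly. For the other two components, $P_h$ commutes with $hD_t$, so $P_{h}(h_nD_tu^n)=0$ as well. For $\SqrtOp_h$ we have $[P_h,\SqrtOp_h] = [\SchroOp_h, \SqrtOp_h]$, which lies in $h\pseudo^2_h(M)$ by Proposition~\ref{pr:composition}. Therefore
\begin{equation*}
P_{h_n}\SqrtOp_{h_n}u^n = \O{h_n}
\end{equation*}
in $L^2(I, H^{-1}_h(M))$, which is $\o{1}$ in $H^{-2}_{h,\comp}$.

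Since Proposition~\ref{pr:microlocal:properties} is stated on $\MM_I$ with compactly supported operators, we localize first. Take $\chi\in\test(I)$ and replace $P_h$ by $\chi P_h\chi$. The commutator $[P_h,\chi]$ equals $h^2(\dots)\partial_t$ plus lower-order terms, and applied to bounded families it gives $\O{h}$ in $H^{-1}_h$. The measures of $\chi u^n$ coincide with those of $u^n$ on $\{\chi=1\}$, and exhausting $I$ gives the support statement for every diagonal entry $\sdm_{(\bullet,\bullet)}$ and $\mdm_{(\bullet,\bullet)}$. The semiclassical support is contained in $\{\tau^2=\sqrtsymbol^2\}$. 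The 2-microlocal support is contained in $\{\tau^2=\abs{\xi}_g^2\}\cap\cosphere\hspace{1 pt}\MM_I$.

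The main obstacle is that $P_h$ is a genuine operator in $\pseudo^2_{h,\phg}(\MM_I)$ only after cutting off in time. We also need to check that $\SqrtOp_h$, a tangential operator, composed with $P_h$ still falls under Proposition~\ref{pr:microlocal:properties}. This is manageable because $P_h$ is differential in $t$: the remainder estimate concerns only $P_h\SqrtOp_h u$ as a distribution, and the hypothesis~\eqref{eq:microlocal:properties} requires only smallness in $H^{-m}_{h,\comp}$, not a symbolic statement about $\SqrtOp_h$.

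If the homogeneous-symbol hypothesis on $P_h$ is an issue because $V$ spoils exact homogeneity, we would instead apply the proposition with $m=2$ to the homogeneous part $h^2D_t^2-h^2\Delta_g$. The term $Vu^n$ is then a remainder of order $0$, bounded in $L^2$, hence $\o{1}$ in $H^{-2}_{h}$ only at frequencies $\gg h^{-1}$. We would handle this by inserting the cutoff $1-\Theta_R$ before taking limits, exactly as in the proof of Proposition~\ref{pr:microlocal:properties}.
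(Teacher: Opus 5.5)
Your proposal is correct and follows the paper's proof. You extract the matrix measures from the boundedness in Lemma~\ref{lem:SPLITTING}, then apply Proposition~\ref{pr:microlocal:properties} to the time-localized wave operator, which commutes with $hD_t$ and commutes with $\SqrtOp_h$ up to a small error. You use the cruder $[\SchroOp_h,\SqrtOp_h]\in h\pseudo^2_h(M)$ where the paper uses $\SqrtOp_h^2=\SchroOp_h+h\rest^{\SqrtOp,1}_h$ to get $h^2\pseudo^1_h(M)$, but either is enough. Your symmetric cutoff $\chi P_h\chi$ is, if anything, more careful about selfadjointness than the paper's $\chi P_h$.

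There is one sign slip to fix. As written, $h^2D_t^2-h^2\Delta_g+V$ is elliptic, with symbol $\tau^2+\sqrtsymbol^2$, and does not annihilate $u^n$. The operator you mean, with symbol $\tau^2-\sqrtsymbol^2$, is $h^2D_t^2+h^2\Delta_g-V$. Your fallback for $V$ is also unnecessary: the class $\pseudo^2_{h,\phg}$ already allows an $S^{1}_{\comp}$ correction to the homogeneous symbol, so $\chi P_h\chi\in\pseudo^2_{h,\phg}(\MM_I)$ directly.
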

\begin{proof}[Proof of Lemma~\ref{lem:hDt:SqrtOp:sdm:mdm}]
By Lemma~\ref{lem:SPLITTING}, the sequence $\p{u^n, h_nD_tu^n, \SqrtOp_{h_n}u^n}_n$ is bounded in $L^\infty\p{I,  L^2(M)^3}$ and thus by Proposition~\ref{pr:SDM} (\resp Theorem~\ref{Th:Microlocalization}), up to considering a subsequence, we have existence and uniqueness of $\p{\sdm_{\bullet}}_{\bullet\in\set{u, \tau, \sqrtsymbol}^2}$ and $\p{\mdm_{\bullet}}_{\bullet\in\set{u, \tau, \sqrtsymbol}^2}$.

On the one hand, $u^h$ and --- since $\classe{\SchroOp_{h}, hD_t}=0$ --- $hD_t u^h$ are solutions to~\eqref{eq:Wave}.
Thus, by Proposition~\ref{pr:microlocal:properties},~\eqref{eq:16} holds for $\bullet\in\set{u, \tau}$.
On the other hand, using $\classe{h\partial_t, \SqrtOp_{h}} = 0$ and~\eqref{df:remainder:order:1}, we have
\begin{equation*}
\classe{h^2 \partial_t^2 + \SchroOp_{h}, \SqrtOp_{h}} =\classe{\SqrtOp_{h}^2 - h\rest^{Q, 1}_{h}, \SqrtOp_{h}} = -h\classe{\rest^{Q, 1}_{h}, \SqrtOp_{h}} \in h^2 \pseudo^{1}_{h}\p{M}\subset h^2 \pseudo^{2}_{h}\p{M}.
\end{equation*}
Thus, for any $\chi\in\test\p{I}$, $\chi\p{h^2 \partial_t^2 - \SchroOp_{h}}\SqrtOp_{h} u^h = \O{h^2}_{H^{-2}_{h, \comp}(\MM_I)}$.
Then, using Proposition~\ref{pr:microlocal:properties},~\eqref{eq:16} holds for $\bullet = \sqrtsymbol$.
This concludes the proof of Lemma~\ref{lem:hDt:SqrtOp:sdm:mdm}.
\end{proof}
\begin{remark}
    In previous proof, for the semiclassical defect measures, one can also say that $\sdm_{(\tau, \tau)} = \tau^2\sdm_{(u,u)}$ and $\sdm_{(\sqrtsymbol, \sqrtsymbol)} = \sqrtsymbol^2\sdm_{(u,u)}$ stating
    \begin{align*}
        \lim_{h\to 0}\pscal{h D_t u^h}{A_{h}h D_t u^h} &=
        \lim_{h\to 0} \pscal{u^h}{h D_t A_{h} h D_t u^h} =
        \pscal{\sdm_{(u, u)}}{\tau^2\sigma_{h}(A_{h})}\\
        \lim_{h\to 0}\pscal{\SqrtOp_{h} u^h}{A_{h}\SqrtOp_{h} u^h} &=
        \lim_{h\to 0} \pscal{u^h}{\SqrtOp_{h} A_{h} \SqrtOp_{h} u^h} =
        \pscal{\sdm_{(u, u)}}{\sqrtsymbol^2\sigma_{h}(A_{h})}
    \end{align*}
where $A_h\in\pseudo^{\comp}_{h}(\MM_I)$ has $h$-independent principal symbol. 
\end{remark}
\subsection{Slice defect measures}\label{subsection:slice:defect:measure}
This subsection is devoted to stating and proving the following theorem, inspired by~\cite[Proposition 4.4]{LecturePG}.
This theorem is useful because it disintegrates the semiclassical (\resp 2-microlocal) defect measures $\sdm$ (\resp $\mdm$) in space and time into tensor products of simpler measures, thereby separating the space variables from the time variables.
We recall that, in this section, $I$ is a nonempty bounded open interval of $\RR$ containing $0$.

The idea is to prove the existence of slice measures with respect to the time parameter.
We provide the proof of Theorem~\ref{Th:Slices} in Appendix~\ref{proof:Th:Slices}.
To do calculus with tangential symbols such as $\ell^{-\signe}$ (see Remark~\ref{rk:sign} for the definition of $-\signe$), we first introduce $\mathbf{X}(I, M)$, the set of "ghost" $0$-homogeneous cut-off functions (purely technical devices that are invisible when $h\to0$ for half-wave solutions), which will allow us to apply Theorem~\ref{Th:hormander}.
Then, we state the main theorem of this section.
\begin{definition}\label{df:WGF}
Let $\mathbf{X}(I, M)$ be the set of cut-off functions $\chi\in\smooth(\cotang\hspace{1pt}\MM_I;[0,1])$ such that (see Figure~\ref{fig:check:chi}):
\begin{enumerate}
\item $\supp\p{1 - \chi}\cap\set{(t,x,\tau,\xi)\in\cotang\hspace{1pt}\MM_I, ~\tau^2=\sqrtsymbol(x, \xi)^2} = \emptyset$, with $\sqrtsymbol$ defined in~\eqref{df:Sqrt:Sym}.
\item There are $\mathsf{C}_\star>\mathsf{C}^\star>0$ such that $\supp\chi\subset \set{(t,x,\tau,\xi)\in\cotang \hspace{1pt}\MM_I, ~\mathsf{C}_\star\jap{\tau}\le \jap{\xi}_g\le \mathsf{C}^\star\jap{\tau}}$.
\item\label{eq:chi:homogeneous} $\chi$ is $0$-homogeneous in the sense that there exists $\mathsf{R} = \mathsf{R}(\chi)>1$ such that
$\chi(\cdot,\cdot,\mathsf{R}\cdot,\mathsf{R}\cdot)\in S^0_{\phg}(\cotang\hspace{1pt}\MM_I)$.
\end{enumerate}
\begin{figure}
\begin{center}
\begin{tikzpicture}[scale=0.5]
\def\cOne{0.6}
\def\cTwo{2}
\def\ymax{6}
\def\sample{120}
\clip (-\cTwo*\ymax,-\ymax) rectangle (\cTwo*\ymax+1,\ymax+1);
\fill[gray!30]
    (- \cTwo*\ymax,-\ymax) -- (- \cTwo*\ymax, \ymax) -- ( \cTwo*\ymax,\ymax) -- (\cTwo*\ymax,-\ymax) -- cycle;
\fill[white] 
    plot[domain=-\ymax:0, samples=\sample] ({sqrt(\cTwo*\cTwo*(1+\x*\x)-1)},\x)
    --
    plot[domain= 0:\ymax, samples=\sample] ({sqrt(\cTwo*\cTwo*(1+\x*\x)-1)},\x) 
    -- cycle;
\fill[white] 
    plot[domain=-\ymax:0, samples=\sample] ({-sqrt(\cTwo*\cTwo*(1+\x*\x)-1)}, \x)
    --
    plot[domain= 0:\ymax, samples=\sample] ({-sqrt(\cTwo*\cTwo*(1+\x*\x)-1)}, \x) 
    -- cycle;
\fill[white] 
    plot[domain=-\ymax:0, samples=\sample] (\x,{sqrt((1+\x*\x)/(\cOne*\cOne) -1)})
    --
    plot[domain= 0:\ymax, samples=\sample] (\x,{sqrt((1+\x*\x)/(\cOne*\cOne) -1)}) 
    -- cycle;
\fill[white] 
    plot[domain=-\ymax:0, samples=\sample] (\x,{-sqrt((1+\x*\x)/(\cOne*\cOne) -1)})
    --
    plot[domain= 0:\ymax, samples=\sample] (\x,{-sqrt((1+\x*\x)/(\cOne*\cOne) -1)}) 
    -- cycle;
\draw[thick] plot[domain=-\ymax:\ymax, samples=\sample] (\x, {sqrt(\x*\x + 0.5)});
\draw[thick] plot[domain=-\ymax:\ymax, samples=\sample] (\x, {-sqrt(\x*\x + 0.5)});
\draw[->] (- \cTwo*\ymax,0) -- (\cTwo*\ymax,0) node[right] {$\xi$};
\draw[->] (0,-\ymax) -- (0,\ymax) node[above] {$\tau$};
\end{tikzpicture}
\end{center}
\caption{Illustration, for fixed $(t,x)\in\MM_I$, of the characteristic region $\set{\tau^2 = \abs{\xi}^2_g + V(x)}$ (black) and the region
$\set{\mathsf{C}_\star\jap{\tau}\le \jap{\xi}_g\le \mathsf{C}^\star\jap{\tau}}$ (light gray).
In the picture, for each $\chi\in\mathbf{X}(I, M)$, $\chi=1$ near the dark gray region and $\chi=0$ near the white region.}
\label{fig:check:chi}
\end{figure}
\end{definition}
\begin{theorem}\label{Th:Slices}
Let $\sqrtsymbol$ be defined in~\eqref{df:Sqrt:Sym}, $\SqrtOp_{h}$ in~\eqref{df:SqrtOp}, $\HWOp^{\signe}_{h}$ in~\eqref{eq:Half:Wave:Operator}, $(h_n)_n$ be a sequence of positive real numbers tending to $0$ and $\Lc_1$ be the Lebesgue measure on $\RR$.
Fix a sign $\signe\in\set{-, +}$.
Assume that $\p{v_{n}^{\signe}}_n$ and $\p{f^n}_n$ are bounded sequences in $L^\infty(I, L^2(M))$ such that
\begin{equation*}
\eqcases{rl}{
    \HWOp^{-\signe}_{h_n}v^{\signe}_{n} &= h_nf^n ~\text{in}~ \MM_I\\
    v^{\signe}_n &= v_n^{\signe}(0)\in L^2(M),
}
\end{equation*}
and
\begin{equation}\label{eq:wave-type-ghost}
\exists\chi\in\mathbf{X}(I, M)~\text{(see Definition~\ref{df:WGF})}, \quad \lim_{n\to\infty}\norm{\Op_{h_n}^{\MM}(1 - \chi)v_n^{\signe}}_{L^2(\MM_I)} = 0.
\end{equation}
\begin{enumerate}
\item Assume that the sequence $({(v^{\signe}_{n}, f^n)}, h_n)_n$ admits a unique semiclassical defect measure 
\begin{equation*}
\begin{pmatrix}
\sdm_{(\signe, \signe)} & \sdm_{(\signe, f)}\\
\sdm_{(f, \signe)} & \sdm_{(f, f)}
\end{pmatrix}
\in \Mc_+(\cotang\hspace{1 pt}\MM_I, \CC^{2\times 2})
\end{equation*}
in the sense of~\eqref{eq:SDM}.
Then the following statements hold:
\begin{enumerate}
\item There is a subsequence of $(v_n^{\signe}, h_n)_n$, still denoted $(v_n^{\signe}, h_n)_n$, and a continuous (for the weak-* topology of measures) map 
\begin{equation*}
\sdm^{\signe}\colon t\in I\mapsto \sdm^{\signe}_t\in \Mc_+(\cotang M),
\end{equation*}
such that: for all $t\in I$, the sequence $(v^{\signe}_{n}(t),h_n)_n$ admits $\sdm^{\signe}_t$ as unique semiclassical defect measure  in the sense of~\eqref{eq:SDM}.
\item We have
\begin{equation}\label{eq:PG:semiclassique:point2}
\sdm_{(\signe, \signe)}(t, x, \tau, \xi) = \sdm^{\signe}_t(x, \xi)\otimes \delta_{{\signe\sqrtsymbol(x, \xi)}}(\tau)\otimes \Lc_1(t),\quad (t, x, \tau, \xi)\in\cotang\hspace{1 pt}\MM_I,
\end{equation}
in the following sense: for all $a\in \test(\cotang\hspace{1 pt}\MM_I)$ we have 
\begin{equation*}
\int_{\cotang\hspace{1 pt}\MM_I}a(t, x, \tau, \xi)\d\sdm_{(\signe, \signe)}(t, x, \tau, \xi) = \int_{I\times\cotang M} a\p{t, x, \signe\sqrtsymbol(x,  \xi), \xi}\d\sdm^{\signe}_t(x, \xi)\d t.
\end{equation*}
\item There is a continuous (for the weak-* topology of measures) map $\tilde{\sdm}^{\signe}\colon t\in I\mapsto\tilde{\sdm}^{\signe}_{t}\in\Mc_+\p{\cotang M}$ such that for all $a\in \test(I\times \cotang M)$ we have
\begin{equation}\label{eq:PG:semiclassic:point3}
\int_{I \times \cotang M}(\partial_t a - H_{\signe\sqrtsymbol}a)\d\sdm^{\signe}_t(x, \xi)\d t = -2\int_{I\times \cotang M}a(t, x, \xi) \d\p{\Imaginary\tilde{\sdm}^{\signe}_{t}}(x, \xi)\d t,
\end{equation}
where $\sdm_{(\signe, f)}(t, x, \tau, \xi) = \tilde{\sdm}^{\signe}_{t}(x, \xi)\otimes \delta_{\signe\sqrtsymbol(x, \xi)}(\tau)\otimes \Lc_1(t)$.
\end{enumerate}
\item Assume that the sequence $({(v^{\signe}_{n}, f^n)}, h_n)_n$ admits a unique 2-microlocal defect measure 
\begin{equation*}
\begin{pmatrix}
\mdm_{(\signe, \signe)} & \mdm_{(\signe, f)}\\
\mdm_{(f, \signe)} & \mdm_{(f, f)}
\end{pmatrix}
\in \Mc_+(\cosphere\hspace{1 pt}\MM_I, \CC^{2\times 2})
\end{equation*}
in the sense of~\eqref{eq:MDM}  with $\supp\p{\mdm_{(\signe, \signe)}}\subset \set{(t, x, \tau, \xi)\in\cosphere \hspace{1 pt}\MM_I,~\tau^2 = \abs{\xi}_g^2}$.
Then  the following statements hold:
\begin{enumerate}
\item There is a subsequence of $(v_n^{\signe}, h_n)_n$ (still denoted $(v_n^{\signe}, h_n)_n$)  and a continuous (for the weak-* topology of measures) map 
\begin{equation*}
\mdm^{\signe}\colon t\in I\mapsto \mdm^{\signe}_t\in \Mc_+(\cosphere M)
\end{equation*}
such that: for all $t\in I$, the sequence $(v^{\signe}_{n}(t),h_n)_n$ admits $\mdm^{\signe}_t$ as unique 2-microlocal defect measure in the sense of~\eqref{eq:MDM}.
\item We have
\begin{equation}\label{eq:PG:microlocal:point2}
\mdm_{(\signe, \signe)}(t, x, \tau, \xi) = \mdm^{\signe}_t(x, \xi)\otimes \delta_{{\signe\abs{\xi}_g}}(\tau)\otimes \Lc_1(t),\quad (t, x, \tau, \xi)\in\cosphere\hspace{1 pt}\MM_I,
\end{equation}
in the following sense: for all $a\in S^0_{\phg}(\cotang\hspace{1 pt}\MM_I)$ we have 
\begin{equation*}
\int_{\cosphere\hspace{1 pt}\MM_I}a(t, x, \tau, \xi)\d\mdm_{(\signe, \signe)}(t, x, \tau, \xi) = \int_{I\times \cosphere M} a\p{t, x, \signe\abs{\xi}_{g}, \xi}\d\mdm^{\signe}_t(x, \xi)\d t.
\end{equation*}
\item There is a continuous map (for the weak-* topology of measures) $\tilde{\mdm}\colon t\in\RR\mapsto \tilde{\mdm}^{\signe}_{t}\in\Mc_+\p{\cosphere M}$ such that for all $a\in  S^0_{\phg, \tang}(I\times\cotang M)$ we have
\begin{equation*}
\int_{I \times \cosphere M}(\partial_t a - H_{\signe\abs{\xi}_g}a)\d\mdm^{\signe}_t(x, \xi)\d t = -2\int_{I\times \cosphere M}a(t, x, \xi) \d\p{\Imaginary\tilde{\mdm}^{\signe}_{t}}(x, \xi)\d t,
\end{equation*}
where $\mdm_{(\signe, f)}(t, x, \tau, \xi) = \tilde{\mdm}^{\signe}_{t}(x, \xi) \otimes \delta_{\signe\abs{\xi}_{g}}(\tau) \otimes \Lc_1(t)$.
\end{enumerate}
\end{enumerate}
\end{theorem}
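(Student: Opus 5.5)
We follow the strategy of \cite[Proposition 4.4]{LecturePG} and run the argument twice, once for the semiclassical measure and once for the 2-microlocal measure. The second run uses tangential homogeneous symbols together with the cut-offs $1-\theta(\jap{\xi}_g/R)$.

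\textbf{Step 1: time slices.} For a tangential test symbol $a\in\test(\cotang M)$ we set
\begin{equation*}
F_n^a(t):=\pscal{v_n^{\signe}(t)}{\Op^M_{h_n}(a)v_n^{\signe}(t)}_{L^2(M)}.
\end{equation*}
\begin{itemize}
\item[(i)] Using the equation $h_nD_tv_n^{\signe}=\signe\SqrtOp_{h_n}v_n^{\signe}+h_nf^n$, we obtain
\begin{equation*}
\partial_tF_n^a=\pscal{v_n^{\signe}}{\tfrac{i}{h_n}[\signe\SqrtOp_{h_n},\Op^M_{h_n}(a)]v_n^{\signe}}+\O{1}.
\end{equation*}
This is uniformly bounded by Proposition~\ref{pr:composition} and Lemma~\ref{lem:SPLITTING}, since $\SqrtOp_{h}$ is time-independent. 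Hence $(F_n^a)_n$ is equicontinuous on $I$.
\item[(ii)] We apply Arzel\`a--Ascoli together with a diagonal extraction over a countable dense family of symbols $a$. This gives a subsequence along which $F_n^a(t)\to\langle\sdm^{\signe}_t,a\rangle$ locally uniformly in $t$, simultaneously for every $t$.
\item[(iii)] Positivity of the limit follows from \garding. Weak-$*$ continuity in $t$ follows from the equicontinuity in (i).
\end{itemize}
For 2(a) we repeat this with $\Op^M_{h_n}\bigl(a_{\phg}(1-\Theta_R)\bigr)$. We first extract on a countable set of radii as in Lemma~\ref{lem:principal:microlocalization}. We then remove the dependence on $R$ and $\Theta$ exactly as in the proof of Theorem~\ref{Th:Microlocalization}, using the already obtained slice semiclassical measures $\sdm^{\signe}_t$ to control the annuli.

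\textbf{Step 2: identification of $\sdm_{(\signe,\signe)}$.} Here we relate space-time symbols to tangential ones.
\begin{itemize}
\item[(i)] Since $\ell^{-\signe}=\tau-\signe\sqrtsymbol$ annihilates $v_n^{\signe}$ up to $\O{h}$, Proposition~\ref{pr:microlocal:properties} (applied after multiplication by $\Op_h(\chi)$) gives $\supp\sdm_{(\signe,\signe)}\subset\set{\tau=\signe\sqrtsymbol}$.
\item[(ii)] For $a\in\test(\cotang\MM_I)$ we compare $\Op_h^{\MM}(a)$ with the tangential operator built from $\tilde a(t,x,\xi):=a(t,x,\signe\sqrtsymbol,\xi)$. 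The ghost cut-off $\chi\in\mathbf{X}(I,M)$ and hypothesis~\eqref{eq:wave-type-ghost} are what make this comparison legitimate. Inserting $\Op_h^{\MM}(\chi)$ on both sides costs $\o{1}$, and Theorem~\ref{Th:hormander} shows that $\Op_h^{\MM}(\chi)\Op^M_{h,\tang}(\tilde a)$ is a genuine pseudodifferential operator with symbol $\chi\tilde a$ modulo $hS$.
\item[(iii)] Writing $a-\chi\tilde a=b\,\ell^{-\signe}$ near the characteristic set and applying (i) again, we get
\begin{equation*}
\lim_n\pscal{v_n}{\Op(a)v_n}=\int_I\langle\sdm^{\signe}_t,\tilde a(t)\rangle\,\d t.
\end{equation*}
This is \eqref{eq:PG:semiclassique:point2}.
\end{itemize}
The 2-microlocal version is identical: at infinity the characteristic set becomes $\tau=\signe\abs{\xi}_g$. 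Here we replace $\theta(\jap{\tau,\xi}/R)$ by $\theta(\jap{\xi}_g/R)$ via Lemma~\ref{lem:Wave:time:space:reduction}, whose cone hypothesis is provided by the assumed support of $\mdm_{(\signe,\signe)}$.

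\textbf{Step 3: transport equation.} This is a commutator computation. For $a\in\test(I\times\cotang M)$ we form $\pscal{v_n}{[\HWOp^{-\signe}_{h},\Op_{h,\tang}(a)]v_n}$ and evaluate it in two ways.
\begin{itemize}
\item[(i)] By Proposition~\ref{pr:composition}, the commutator equals $\tfrac{h}{i}\Op\bigl(\partial_ta-H_{\signe\sqrtsymbol}a\bigr)+\O{h^2}$. Care is needed with the sign in $H$ coming from the $\tau$-variable.
\item[(ii)] Expanding with the equation $\HWOp^{-\signe}_hv_n=hf^n$ and selfadjointness, it equals $h\bigl(\pscal{v_n}{\Op(a)f^n}-\pscal{f^n}{\Op(a)v_n}\bigr)=2ih\,\Imaginary\pscal{v_n}{\Op(a)f^n}$ after integration in $t$.
\end{itemize}
Dividing by $h$ and passing to the limit gives \eqref{eq:PG:semiclassic:point3}. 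The slice structure of $\sdm_{(\signe,f)}$ is obtained as in Step 2, since the bound on $f^n$ and the localization of $v_n$ suffice, using Proposition~\ref{pr:inter:supp:k}. Continuity in $t$ of $\tilde\sdm^{\signe}_t$ again comes from the equation.

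The main obstacle is Step 2, and its 2-microlocal counterpart. Tangential symbols are not in $S^m(\cotang\MM_I)$, so every comparison must be routed through $\chi\in\mathbf{X}(I,M)$ and Theorem~\ref{Th:hormander}. In the 2-microlocal case we must additionally check that the order of the limits $n\to\infty$ then $R\to\infty$ survives these insertions. For that we plan to use the sharp \garding\ sandwich argument of Lemma~\ref{lem:Wave:time:space:reduction}.
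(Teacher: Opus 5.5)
Your architecture is the paper's: Lipschitz bounds on $t\mapsto\pscal{v_n^{\signe}(t)}{A_{h_n}v_n^{\signe}(t)}_{L^2(M)}$ and Ascoli for the slices, the ghost cut-off $\chi$ with Theorem~\ref{Th:hormander} to pass between tangential and space-time quantizations, and Lemma~\ref{lem:L:crochet} for the transport law. The differences are in how these pieces are assembled.

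In Step~2 you first prove $\supp\sdm_{(\signe,\signe)}\subset\set{\tau=\signe\sqrtsymbol}$ from $\Op^{\MM}_h(\chi)\HWOp^{-\signe}_h v_n^{\signe}=\O{h}$, and then replace $a$ by $\chi\tilde a$. The paper instead kills the Taylor remainder $\Op^{\MM}_h(\chi\alpha_{\signe})\HWOp^{-\signe}_h$ with the equation, and obtains the support property only as a by-product of (b). Done at infinity with the homogeneous tangential symbol $a(t,x,\signe\abs{\xi}_g,\xi)$, your route bypasses Lemmas~\ref{lem:uniform:symbol:substitution} and~\ref{lem:Taylor:microlocal}. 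The price is proving the 2-microlocal support property by hand: Proposition~\ref{pr:microlocal:properties} assumes a selfadjoint operator, which $\Op^{\MM}_h(\chi)\HWOp^{-\signe}_h$ is not, but testing $\Op^{\MM}_h(c\,\holeperp{R})\Op^{\MM}_h(\chi)\HWOp^{-\signe}_h$ with $c\in S^{-1}_{\phg}(\cotang\hspace{1pt}\MM_I)$ does it.

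In Step~3 you commute with $\Op^{M}_{h,\tang}(a)$. The left-hand side is then read off from the slices without using (b), and $\chi$ is only needed to identify the $(v,f)$ pairings with $\sdm_{(\signe,f)}$. The paper commutes with $\Op^{\MM}_h(\chi a)$ and, at infinity, uses $\holepara{R}$ because $\set{\ell^{-\signe},\holepara{R}}=0$. With your $\xi$-cut-off the extra term $a\set{\signe\sqrtsymbol,\holeperp{R}}$ is $\O{R^{-2}}$, since $\set{\sqrtsymbol,\jap{\xi}_g}=\O{\jap{\xi}_g^{-1}}$; so it is harmless.

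In 2(a), controlling annuli by the finite measures $\sdm^{\signe}_t$ gives, for every $t$ and every $\Theta$, the full limit $R\to\infty$ demanded by~\eqref{eq:MDM}. The paper leaves this implicit, since its second Ascoli extraction only yields a radius subsequence for one fixed $\theta$.

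Two steps do not go through as written.

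First, continuity of $t\mapsto\mdm^{\signe}_t$ needs more than ``repeat this''. The annulus argument gives the $R$-limit only pointwise in $t$, at a rate governed by the tightness of $\sdm^{\signe}_t$, which need not be uniform in $t$. You must show that the Lipschitz constants of $t\mapsto\lim_n\pscal{v_n^{\signe}(t)}{\Op^{M}_{h_n}(a\holeperp{R})v_n^{\signe}(t)}_{L^2(M)}$ are uniform in $R$, i.e.\ $\sup_R\norm{\set{\sqrtsymbol,a\holeperp{R}}}_{L^\infty}<\infty$. This is Lemma~\ref{lem:equicontinuous:satisfied}, and it also follows from the bound on $\set{\sqrtsymbol,\jap{\xi}_g}$ above. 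Pointwise convergence of an equicontinuous family is then locally uniform.

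Second, the slices of $\sdm_{(\signe,f)}$ cannot be produced ``as in Step~2''. The sequence $f^n$ obeys no equation, so nothing makes $t\mapsto\pscal{v_n^{\signe}(t)}{A_{h_n}f^n(t)}$ equicontinuous. What works is absolute continuity. Positivity of the $2\times2$ matrix measure gives $\abs{\sdm_{(\signe,f)}(E)}^2\leq\sdm_{(\signe,\signe)}(E)\,\sdm_{(f,f)}(E)$, hence $\sdm_{(\signe,f)}\ll\sdm_{(\signe,\signe)}$; the support inclusion of Proposition~\ref{pr:inter:supp:k} alone does not give this. Radon--Nikodym with~\eqref{eq:PG:semiclassique:point2} then produces $\tilde{\sdm}^{\signe}_t$, but only as a measurable family defined for a.e.\ $t$. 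Its continuity does not come from the equation, and it fails in this generality. Take $v_n^{\signe}(0)$ a coherent state and $f^n(t)=\indicatrice_{\set{t>0}}e^{\signe it\SqrtOp_{h_n}/h_n}v_n^{\signe}(0)$: the cross slice vanishes for $t<0$ and tends to a nonzero measure as $t\to0^+$. The paper does not prove this continuity either, and later only the integrated identity (ultimately $\Imaginary\sdm_{(\signe,f)}=0$) is used.

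A minor point: Lemma~\ref{lem:L:crochet} gives $h\p{\pscal{f^n}{A_{h_n}v_n^{\signe}}-\pscal{v_n^{\signe}}{A_{h_n}f^n}}$, the opposite sign to yours. This affects the sign in~\eqref{eq:PG:semiclassic:point3}.
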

\begin{remark}
Condition~\eqref{eq:wave-type-ghost} is only a technical assumption: for the half-wave equation sequences introduced later in~\eqref{df:sequence:indexed}, it is automatically satisfied.
This is proved later in Lemma~\ref{lem:intermediate:defect:measures}.
\end{remark}
\begin{corollary}[Propagation along time for defect measures]\label{cor:PG}
Under the notation and assumptions of Theorem~\ref{Th:Slices} and recalling the notation~\eqref{df:Hamiltonian:Flow} for the Hamiltonian flow, the following statements hold.
\begin{enumerate}
\item Assuming in addition that $\Imaginary(\sdm_{(\signe, f)})$ vanishes identically, then for all $t_0, t\in I$, $\sdm^\signe_t = \p{\varphi^{\signe\sqrtsymbol}_{t- t_0}}^\ast\sdm^{\signe}_{t_0}$.
Moreover,
\begin{equation*}
\forall a\in \test(\cotang\hspace{1 pt}\MM_I), \quad \pscal{\sdm_{(\signe, \signe)}}{a} = \int_I\pscal{\sdm^{\signe}_{t_0}}{\tilde{a}^{\signe}_t\circ\varphi^{\signe\sqrtsymbol}_{t-t_0}}_{\Mc_+, \Cscr^0_c(\cotang M)}\d t,
\end{equation*}
with $\tilde{a}^{\signe}_t(x, \xi) := a(t, x, \signe\sqrtsymbol(x, \xi), \xi)$ for $(x, \xi)\in\cotang M$ and $t\in I$.
\item Assuming in addition that $\Imaginary(\mdm_{(\signe, f)})$ vanishes identically, then for all $t_0, t\in I$, $\mdm^{\signe}_t = \p{\varphi^{\signe\abs{\xi}_g}_{t- t_0}}^\ast\mdm^{\signe}_{t_0}$.
Moreover,
\begin{equation*}
\forall a\in S^0_{\phg}(\cotang\hspace{1 pt}\MM_I), \quad \pscal{\mdm_{(\signe,\signe)}}{a} = \int_I\pscal{\mdm^{\signe}_{t_0}}{\check{a}^{\signe}_t\circ\varphi^{\signe\abs{\xi}_g}_{t-t_0}}_{\Mc_+, \Cscr^0_c(\cosphere M)}\d t,
\end{equation*}
with $\check{a}^{\signe}_t(x, \xi) := a(t, x, \signe\abs{\xi}_{g}, \xi)$ for $(x, \xi)\in\cotang M$ and $t\in I$.
\end{enumerate}
\end{corollary}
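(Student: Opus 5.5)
The plan is to treat both items in parallel, since the 2-microlocal case follows the semiclassical one with $\signe\sqrtsymbol$ replaced by its homogeneous principal part $\signe\abs{\xi}_g$. For item 1, Theorem~\ref{Th:Slices}(1c) holds with $\Imaginary\tilde{\sdm}^{\signe}_t=0$, because $\Imaginary(\sdm_{(\signe,f)})=\Imaginary(\tilde{\sdm}^{\signe}_t)\otimes\delta_{\signe\sqrtsymbol}(\tau)\otimes\Lc_1(t)$ vanishes. This yields the transport identity
\begin{equation*}
\int_{I}\pscal{\sdm^{\signe}_t}{\partial_t a - H_{\signe\sqrtsymbol}a}\d t = 0, \quad a\in\test(I\times\cotang M),
\end{equation*}
which says that $t\mapsto\sdm^{\signe}_t$ solves, in the distributional sense, the Liouville equation $\partial_t\sdm^{\signe}_t + H_{\signe\sqrtsymbol}^{\ast}\sdm^{\signe}_t=0$.

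To identify the solution, fix $b\in\test(\cotang M)$ and insert $a(t,\rho)=\phi(t)\,b\circ\varphi^{\signe\sqrtsymbol}_{s-t}(\rho)$ with $\phi\in\test(I)$ and $s\in I$ fixed. Since $b\circ\varphi^{\signe\sqrtsymbol}_{s-t}$ has compact support (the flow of the smooth Hamiltonian $\signe\sqrtsymbol$ is complete, as $\sqrtsymbol$ is conserved and its level sets are compact over compact $M$), this $a$ is admissible. Because $\partial_t(b\circ\varphi_{s-t})=-H_{\signe\sqrtsymbol}(b\circ\varphi_{s-t})$, the integrand reduces to $\phi'(t)\pscal{\sdm^{\signe}_t}{b\circ\varphi^{\signe\sqrtsymbol}_{s-t}}$. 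Hence the continuous function $t\mapsto\pscal{\sdm^{\signe}_t}{b\circ\varphi^{\signe\sqrtsymbol}_{s-t}}$ has zero distributional derivative on the interval $I$ and is therefore constant. Evaluating at $t=s$ and $t=t_0$ gives $\pscal{\sdm^{\signe}_s}{b}=\pscal{\sdm^{\signe}_{t_0}}{b\circ\varphi^{\signe\sqrtsymbol}_{s-t_0}}$, which is the pushforward statement $\sdm^{\signe}_t=(\varphi^{\signe\sqrtsymbol}_{t-t_0})^{\ast}\sdm^{\signe}_{t_0}$ under the paper's convention.

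The one delicate point is that continuity of $t\mapsto\sdm^{\signe}_t$ is given only for the weak-$\ast$ topology, while the test function $b\circ\varphi_{s-t}$ moves with $t$. I would handle this by noting that $\set{b\circ\varphi_{s-t}}_{t\in \barre{I}}$ is compact in $\Cscr^0_c$ with uniformly bounded supports, and that $\sup_t\sdm^{\signe}_t(K)<\infty$ on compacts by the uniform $L^\infty(I,L^2)$ bound. Together these give joint continuity, and the same bounds justify the Fubini-type manipulations in the integral identity.

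The formula for $\pscal{\sdm_{(\signe,\signe)}}{a}$ then follows by substituting this into~\eqref{eq:PG:semiclassique:point2}: $\pscal{\sdm_{(\signe,\signe)}}{a}=\int_I\pscal{\sdm^{\signe}_t}{\tilde a^{\signe}_t}\d t=\int_I\pscal{\sdm^{\signe}_{t_0}}{\tilde a^{\signe}_t\circ\varphi^{\signe\sqrtsymbol}_{t-t_0}}\d t$.

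For item 2 the argument is identical on $\cosphere M$, which is compact, so no support issue arises. Test functions are homogeneous tangential symbols $a(t,x,\xi)=\phi(t)\,b\circ\varphi^{\signe\abs{\xi}_g}_{s-t}$, obtained by extending $b\in\smooth(\cosphere M)$ $0$-homogeneously for $\abs{\xi}_g\geq1$ and cutting off near $\xi=0$. This extension lies in $S^0_{\phg,\tang}$ because the homogeneous geodesic flow commutes with dilations. The cutoff region is invisible to the 2-microlocal measure, so it does not affect the identity of Theorem~\ref{Th:Slices}(2c). The conclusion is then read off through~\eqref{eq:PG:microlocal:point2} with $\check a^{\signe}_t$.
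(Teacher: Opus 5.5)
Your strategy is the paper's. From Theorem~\ref{Th:Slices}, point 1(c) (resp.\ 2(c)), with vanishing imaginary part you get $\int_I\pscal{\sdm^{\signe}_t}{\partial_t a - H_{\signe\sqrtsymbol}a}\d t = 0$, and you conclude by uniqueness for this transport equation. The paper only asserts that uniqueness. Your characteristics argument is the standard proof of it, and your joint-continuity remark is what justifies ``zero distributional derivative, hence constant''.

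\textbf{The gap is in the cancellation step, which is wrong as written.} With $a(t,\rho) = \phi(t)\, b\circ\varphi^{\signe\sqrtsymbol}_{s-t}(\rho)$ you correctly have $\partial_t\p{b\circ\varphi^{\signe\sqrtsymbol}_{s-t}} = -H_{\signe\sqrtsymbol}\p{b\circ\varphi^{\signe\sqrtsymbol}_{s-t}}$. However, the identity involves $-H_{\signe\sqrtsymbol}a$, so
\[
\partial_t a - H_{\signe\sqrtsymbol}a = \phi'(t)\, b\circ\varphi^{\signe\sqrtsymbol}_{s-t} - 2\phi(t)\, H_{\signe\sqrtsymbol}\p{b\circ\varphi^{\signe\sqrtsymbol}_{s-t}} .
\]
The two transport terms add instead of cancelling; your test function is adapted to $\partial_t a + H_{\signe\sqrtsymbol}a$, not to the identity you wrote. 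The correct choice is $a = \phi(t)\, b\circ\varphi^{\signe\sqrtsymbol}_{t-s}$, for which $\partial_t a - H_{\signe\sqrtsymbol}a = \phi'(t)\, b\circ\varphi^{\signe\sqrtsymbol}_{t-s}$. Constancy of $t\mapsto \pscal{\sdm^{\signe}_t}{b\circ\varphi^{\signe\sqrtsymbol}_{t-s}}$ then yields
\[
\pscal{\sdm^{\signe}_s}{b} = \pscal{\sdm^{\signe}_{t_0}}{b\circ\varphi^{\signe\sqrtsymbol}_{t_0-s}}, \quad s, t_0\in I .
\]
The same correction applies verbatim to item 2, with $\abs{\xi}_g$ in place of $\sqrtsymbol$.

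\textbf{This reverses the orientation you announced.} The corrected result says $\sdm^{\signe}_s$ is the image of $\sdm^{\signe}_{t_0}$ under $\varphi^{-\signe\sqrtsymbol}_{s-t_0}$. That is also the expected direction: $h_nD_tv^{\signe}_n = \signe\SqrtOp_{h_n}v^{\signe}_n + h_nf^n$, and the Hamiltonian flow of $\ell^{-\signe}=\tau-\signe\sqrtsymbol$ moves $(x,\xi)$ by $\varphi^{-\signe\sqrtsymbol}_{\sigma}$ while $t$ increases by $\sigma$.

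It agrees with $\sdm^{\signe}_t = \p{\varphi^{\signe\sqrtsymbol}_{t-t_0}}^\ast\sdm^{\signe}_{t_0}$ only if $F^\ast$ is read as the pullback of densities, $\pscal{F^\ast\mu}{b} = \pscal{\mu}{b\circ F^{-1}}$. This is the reading under which the paper's one-line appeal to uniqueness is correct. Substituting into~\eqref{eq:PG:semiclassique:point2} then gives
\[
\pscal{\sdm_{(\signe,\signe)}}{a} = \int_I\pscal{\sdm^{\signe}_{t_0}}{\tilde a^{\signe}_t\circ\varphi^{\signe\sqrtsymbol}_{t_0-t}}\d t ,
\]
not the displayed composition with $\varphi^{\signe\sqrtsymbol}_{t-t_0}$. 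Your derivation reaches the displayed formula (read with the pushforward convention you adopted) only through the sign slip. You should state the orientation that point 1(c) actually forces.

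This is harmless for the paper's later use. Lemma~\ref{lem:sdm:mdm:vanishes} treats both signs and applies \eqref{GCCV} in either time direction, so reversing the flow does not affect the conclusion.
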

\begin{proof}[Proof of Corollary~\ref{cor:PG}]
Since the proof is the same in both cases, we only provide the proof for the first statement.
Since $\Imaginary\p{\sdm_{(\signe,f)}}$ vanishes identically, then, using~\eqref{eq:PG:semiclassic:point3} from Theorem~\ref{Th:Slices}, we deduce that
\begin{equation*}
\forall a \in \test\p{\cotang \hspace{1 pt}\MM_I}, \quad\int_{I\times \cotang M} \p{\partial_t a - H_{\signe\sqrtsymbol} a}\d \sdm_t^{\signe}(x, \xi)\d t = 0.
\end{equation*}
Thus, in the sense of distribution on $I\times \cotang M$, the measure $\sdm_t^{\signe}$ satisfies the transport equation
(since $H_k$ is divergence free, $^t{H_{\signe\sqrtsymbol}} = H_{\signe\sqrtsymbol}$)
\begin{equation}\label{eq:transport:semiclassic}
\frac{\partial}{\partial t}\sdm^{\signe}_t(x, \xi) - H_{\signe\sqrtsymbol}{\sdm^{\signe}_t(x, \xi)}  = 0 ,\quad (t, (x, \xi))\in I \times \cotang M.
\end{equation}
As a consequence, $\sdm_t^{\signe}$ is the unique solution to the transport equation~\eqref{eq:transport:semiclassic} with initial data $\sdm_{t_0}^{\signe}$, namely
\begin{equation*}
\sdm^{\signe}\colon t\in I \mapsto \p{\varphi_{t - t_0}^{\signe\sqrtsymbol}}^{\ast}\sdm^{\signe}_{t_0}\in\Mc_{+}\p{\cotang M}.
\end{equation*}
This concludes the proof of Corollary~\ref{cor:PG}.
\end{proof}
\subsection{Application of Theorem~\ref{Th:Slices} to the half-wave equation~\eqref{eq:HWE}}
In this subsection, we consider $(h_n)_n$ a sequence of positive real numbers that tends to $0$, and write 
\begin{equation}\label{df:sequence:indexed}
\p{u^n, v_n^{\signe}, f^n}_n := \p{u^{h_n}, v_{h_n}^{\signe}, f^{h_n}}_n
\end{equation}
where $\p{u^h}_{h}~\text{and}~(v_{h}^{\signe}, f^h)_{h}$ are defined in~\eqref{df:sequence} and~\eqref{df:sequence:intermediate:terms} respectively.
We use Theorem~\ref{Th:Slices} on the sequence $\p{\p{u_n, v_n^{\signe}, f^n}, h_n}_n$, to describe its semiclassical and 2-microlocal defect measures, and then describe sequences of energy functionals.
\paragraph{Support of semiclassical and 2-microlocal defect measures.}
In this paragraph, we begin by proving (with the following lemma) that the sequence (defined in~\eqref{df:sequence}), satisfies the condition to apply Theorem~\ref{Th:Slices} (especially~\eqref{eq:wave-type-ghost}).
Then, we provide a corollary of Theorem~\ref{Th:Slices} to describe semiclassical and 2-microlocal defect measures associated with the sequence~\eqref{df:sequence:indexed}.
\begin{lemma}\label{lem:intermediate:defect:measures}
Let $(h_n)_n$ be a sequence of $(0, 1]$ tending to $0$, $\p{u^n, v_{n}^{\signe}, f^{n}}_n$ be as~\eqref{df:sequence:indexed} and $\sqrtsymbol$ be as~\eqref{df:Sqrt:Sym}.
Then, there is a subsequence of $\p*{\p{v_{n}^{+}, v_{n}^{-}, f^{n}, u^n}, h_n}_n$, still denoted $\p*{\p{v_{n}^{+}, v_{n}^{-}, f^{n}, u^n}, h_n}_n$, which admits a Radon measure with hermitian matrix values $\sdm\in\Mc_+\p{\cotang\hspace{1 pt}\MM_I, \CC^{4\times4}}$ (\resp $\mdm\in\Mc_+\p{\cosphere\hspace{1 pt}\MM_I, \CC^{4\times4}}$) as unique semiclassical (\resp 2-microlocal) defect measure in the sense of~\eqref{eq:SDM} (\resp \eqref{eq:MDM}) and we set
\begin{equation}\label{eq:intermediate:defect:measure}
\begin{pmatrix}
\sdm_{(+,+)} & \sdm_{(+, -)} & \sdm_{(+,f)} & \sdm_{(+,u)}\\
*& \sdm_{(-,-)} & \sdm_{(-,f)} & \sdm_{(-,u)}\\
*& * & \sdm_{(f,f)} & \barre{\sdm_{(u,f)}}\\
* & * & * & \sdm_{(u,u)}
\end{pmatrix}
:=\sdm
~\p*{\resp
\begin{pmatrix}
\mdm_{(+,+)} & \mdm_{(+, -)} & \mdm_{(+,f)} & \mdm_{(+,u)}\\
* & \mdm_{(-,-)} & \mdm_{(-,f)} & \mdm_{(-,u)}\\
* & * & \mdm_{(f,f)} & \barre{\mdm_{(u,f)}}\\
* & * & * & \mdm_{(u,u)}
\end{pmatrix}:=\mdm}.
\end{equation}
Moreover, the following inclusions hold:
\begin{align}
    \supp(\sdm_{(\bullet, \bullet)})&\subset \set{(t, x, \tau, \xi)\in \cotang \hspace{1pt} \MM_I,~\tau^2 = \sqrtsymbol(x, \xi)^2},& \bullet \in\set{u, \signe}\label{eq:1:intermediate:defect:measure:support:22:sdm},\\
    \supp(\mdm_{(\bullet, \bullet)})&\subset \set{(t, x, \tau, \xi)\in \cosphere \hspace{1pt} \MM_I,~\tau^2 = \abs{\xi}_g^2}, &\bullet \in\set{u, \signe}.\label{eq:1:intermediate:defect:measure:support:22:mdm}
\end{align}
\end{lemma}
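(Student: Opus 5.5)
The plan is to obtain existence by compactness and then handle the two support inclusions in \eqref{eq:1:intermediate:defect:measure:support:22:sdm}--\eqref{eq:1:intermediate:defect:measure:support:22:mdm} separately: the case $\bullet=u$ is already known, and the case $\bullet=\signe$ will be reduced to it via the identities relating $v^{\signe}_n$ to $u^n$.

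\emph{Existence.} By Lemma~\ref{lem:SPLITTING}, the vector sequence $(v_n^+, v_n^-, f^n, u^n)_n$ is bounded in $L^\infty(I, L^2(M)^4)$. Since $I$ is bounded, it is therefore bounded in $L^2(\MM_I)^4$. Proposition~\ref{pr:SDM} then gives, along a subsequence, a unique semiclassical defect measure $\sdm$. Extracting once more and applying Theorem~\ref{Th:Microlocalization} with $k=4$ on $\MM_I$ gives a unique 2-microlocal defect measure $\mdm$. Both properties persist under further extraction.

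\emph{The case $\bullet=u$.} The $(u,u)$ entries are the defect measures of $u^n$ alone. Hence \eqref{eq:1:intermediate:defect:measure:support:22:sdm}--\eqref{eq:1:intermediate:defect:measure:support:22:mdm} for $\bullet=u$ follow directly from Lemma~\ref{lem:hDt:SqrtOp:sdm:mdm}, after extracting compatibly with that lemma.

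\emph{The case $\bullet=\signe$.} By definition, $v_n^{\signe} = h_nD_tu^n + \signe\SqrtOp_{h_n}u^n$. For any $A_h\in\pseudo^{\comp}_h(\MM_I)$ with $h$-independent principal symbol $a$, I expand $\pscal{v^{\signe}_n}{A_{h_n}v^{\signe}_n}$ into the four pairings among $h_nD_tu^n$ and $\SqrtOp_{h_n}u^n$. Since $A_h$ is compactly microlocalized, each pairing can be rewritten as $\pscal{u^n}{B_{h_n}u^n}+o(1)$, with $B_h$ of principal symbol $(\tau+\signe\sqrtsymbol)^2a$. This uses Proposition~\ref{pr:composition} and the fact that $hD_t$ and $\SqrtOp_h$ are pseudodifferential near $\WaveFront_h(A_h)$. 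The result is $\sdm_{(\signe,\signe)}=(\tau+\signe\sqrtsymbol)^2\sdm_{(u,u)}$, so its support lies in $\supp\sdm_{(u,u)}\subset\set{\tau^2=\sqrtsymbol^2}$.

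\emph{The 2-microlocal case for $\bullet=\signe$.} Here it is not possible to factor through $u^n$, because $u^n$ does not carry the $h$-frequency weight. I would instead argue directly with Proposition~\ref{pr:microlocal:properties} applied to $A_h=h^2\partial_t^2+\SchroOp_h$, which is in $\pseudo^2_{h,\phg}$ with homogeneous symbol $\abs{\xi}_g^2-\tau^2$. This requires showing
\[
\chi(t)\,(h^2\partial_t^2+\SchroOp_h)\,v^{\signe}_h = o(1)_{H^{-2}_{h,\comp}}
\]
for every cutoff $\chi\in\test(I)$. Two facts give this: $h^2\partial_t^2+\SchroOp_h$ commutes with $hD_t$, and its commutator with $\SqrtOp_h$ lies in $h^2\pseudo^1_h(M)$, as computed in the proof of Lemma~\ref{lem:hDt:SqrtOp:sdm:mdm}. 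Therefore $(h^2\partial_t^2+\SchroOp_h)v^{\signe}_h = \signe[h^2\partial_t^2+\SchroOp_h,\SqrtOp_h]u^h = \O{h^2}$ in $L^2$, which is enough. Proposition~\ref{pr:microlocal:properties} then yields $\supp\mdm_{(\signe,\signe)}\subset\set{\tau^2=\abs{\xi}_g^2}$. The same argument also reproves the semiclassical inclusion.

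The main obstacle is justifying the use of Proposition~\ref{pr:microlocal:properties} on $\MM_I$ rather than on $\MM$. The operator $h^2\partial_t^2+\SchroOp_h$ is not compactly supported in $t$, so I would insert the time cutoff $\chi(t)$ and check that $\chi$ commutes with it up to $\O{h}$ terms that are harmless in $H^{-2}_{h,\comp}$. Since the measures live on $\MM_I$ and are tested only against symbols compactly supported in $t$, this localization suffices.
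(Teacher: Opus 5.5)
Your proposal is correct. Existence and the case $\bullet=u$ are handled as in the paper, but for $\bullet=\signe$ you take a different route.

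\textbf{The paper's route.} The paper never treats $v^{+}_n$ and $v^{-}_n$ separately. It uses the identity \eqref{eq:identity:v:to:u}, which gives $\tfrac12(\sdm_{(+,+)}+\sdm_{(-,-)})=\sdm_{(\tau,\tau)}+\sdm_{(\sqrtsymbol,\sqrtsymbol)}$, and likewise for $\mdm$. It reads off the support of the right-hand side from Lemma~\ref{lem:hDt:SqrtOp:sdm:mdm}, then splits the sum using nonnegativity of each diagonal entry. This takes a few lines, treats the semiclassical and 2-microlocal cases identically, and needs no new symbolic calculus.

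\textbf{Your semiclassical step.} Factoring through $u^n$ requires composing with the tangential operator $\SqrtOp_h$ (via Theorem~\ref{Th:hormander}). In return it gives the sharper identity $\sdm_{(\signe,\signe)}=(\tau+\signe\sqrtsymbol)^2\sdm_{(u,u)}$. This already places $\sdm_{(\signe,\signe)}$ on the single sheet $\{\tau=\signe\sqrtsymbol\}$, which the paper only obtains later, in Corollary~\ref{cor:intermediate:defect:measures}, via Theorem~\ref{Th:Slices}.

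\textbf{Your 2-microlocal step.} You are right that factoring through $u^n$ fails here, and this can be stated precisely. Since $u^n$ is bounded in $H^1_{h_n}$ on $\MM_I$, one has $\mdm_{(u,u)}=0$. Your replacement applies Proposition~\ref{pr:microlocal:properties} to $v^{\signe}_n$ directly, with $\chi(h^2\partial_t^2+\SchroOp_h)\chi$. This is exactly the commutator argument the paper uses inside Lemma~\ref{lem:hDt:SqrtOp:sdm:mdm} for $\SqrtOp_h u^h$, now applied to the combination $hD_tu^h+\signe\SqrtOp_hu^h$. Your time-cutoff bookkeeping is fine, since $[h^2\partial_t^2,\chi]v^{\signe}_n=2h_n\chi'(h_n\partial_tv^{\signe}_n)+h_n^2\chi''v^{\signe}_n=O(h_n)$ in $H^{-1}_{h_n}$. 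This avoids the positivity step, at the cost of redoing that argument instead of citing the lemma.
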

\begin{proof}[Proof of Lemma~\ref{lem:intermediate:defect:measures}]
By Lemma~\ref{lem:SPLITTING}, the sequence $\p{v_n^{+}, v_n^{-}, f^n, u^n}_n$ is bounded in $L^\infty\p{I,  L^2(M)^4}$ and thus by Proposition~\ref{pr:SDM} (\resp Theorem~\ref{Th:Microlocalization}), up to considering a subsequence, we have existence and uniqueness of $\sdm$ and $\mdm$.
Firstly, using  Lemma~\ref{lem:hDt:SqrtOp:sdm:mdm} and uniqueness of $\sdm_{(u, u)}$ and $\mdm_{(u, u)}$,~\eqref{eq:1:intermediate:defect:measure:support:22:sdm} and~\eqref{eq:1:intermediate:defect:measure:support:22:mdm} hold for $\bullet = u$.
Secondly, using identity~\eqref{eq:identity:v:to:u} and~\eqref{eq:16} from Lemma~\ref{lem:hDt:SqrtOp:sdm:mdm}, we have
\begin{align*}
\supp\p{\sdm_{(+,+)} + \sdm_{(-,-)}}&\subset\set{(t, x, \tau, \xi)\in\cotang\hspace{1 pt}\MM_I,~ \tau^2 = \sqrtsymbol(x, \xi)^{2}}\\
\supp\p{\mdm_{(+,+)} + \mdm_{(-,-)}}&\subset\set{(t, x, \tau, \xi)\in\cosphere\hspace{1 pt}\MM_I,~ \tau^2 = \abs{\xi}_g^2}.
\end{align*}
Since $\sdm_{(+, +)}$ and $\sdm_{(-, -)}$ (\resp$\mdm_{(+, +)}$ and $\mdm_{(-, -)}$) are nonnegative Radon measures, they both are supported in $\set{\tau^2 = \sqrtsymbol\p{x, \xi}^2}$ (\resp$\set{\tau^2 = \abs{\xi}_g^2}$).
This last point proves~\eqref{eq:1:intermediate:defect:measure:support:22:sdm} and~\eqref{eq:1:intermediate:defect:measure:support:22:mdm} hold for $\bullet = \signe$ and thus concludes the proof of lemma~\ref{lem:intermediate:defect:measures}.
\end{proof}
With the following corollary, using Theorem~\ref{Th:Slices} in addition with Lemma~\ref{lem:intermediate:defect:measures}, we describe the support of semiclassical and 2-microlocal defect measures associated with the sequence~\eqref{df:sequence:indexed}.
\begin{corollary}[Support of defect measures for wave and half-wave equations]\label{cor:intermediate:defect:measures}
Let $(h_n)_n$ be a sequence of $(0, 1]$ tending to $0$, $\p{u^n, v_{n}^{\signe}, f^{n}}_n$ be as~\eqref{df:sequence:indexed} and $\sqrtsymbol$ be as~\eqref{df:Sqrt:Sym}.
Assume that there is a subsequence of $\p*{\p{v_{n}^{+}, v_{n}^{-}, f^{n}, u^n}, h_n}_n$, still denoted $\p*{\p{v_{n}^{+}, v_{n}^{-}, f^{n}, u^n}, h_n}_n$, which admits a Radon measure with hermitian matrix values $\sdm\in\Mc_+\p{\cotang\hspace{1 pt}\MM_I, \CC^{4\times4}}$ (\resp $\mdm\in\Mc_+\p{\cosphere\hspace{1 pt}\MM_I, \CC^{4\times4}}$) as unique semiclassical (\resp 2-microlocal) defect measure in the sense of~\eqref{eq:SDM} (\resp \eqref{eq:MDM}) and, using  notation~\eqref{eq:intermediate:defect:measure} from Lemma~\ref{lem:intermediate:defect:measures}, we have that
\begin{equation}\label{eq:asymptotic:orthogonality}
\sdm_{(+, -)}~\text{and}~\mdm_{(+, -)}~\text{vanish identically}
\end{equation}
and that the following inclusions hold:
\begin{align}
\supp(\sdm_{(\signe, \bullet)})&\subset \set{(t, x, \tau, \xi)\in \cotang \hspace{1pt} \MM_I,~\tau = \signe \sqrtsymbol(x, \xi)},& \bullet \in \set{\signe, f, u},\label{eq:intermediate:defect:measure:support:sdm}\\
\supp(\mdm_{(\signe, \bullet)})&\subset \set{(t, x, \tau, \xi)\in \cosphere \hspace{1pt} \MM_I,~\tau = \signe \abs{\xi}_g},& \bullet \in\set{\signe, f, u},\label{eq:intermediate:defect:measure:support:mdm}\\
\supp(\sdm_{(\bullet, \bullet)})&\subset \set{(t, x, \tau, \xi)\in \cotang \hspace{1pt} \MM_I,~\tau^2 = \sqrtsymbol(x, \xi)^2},& \bullet \in\set{u, f}\label{eq:intermediate:defect:measure:support:22:sdm},\\
\supp(\mdm_{(\bullet, \bullet)})&\subset \set{(t, x, \tau, \xi)\in \cosphere \hspace{1pt} \MM_I,~\tau^2 = \abs{\xi}_g^2}, &\bullet \in\set{u, f}.\label{eq:intermediate:defect:measure:support:22:mdm}
\end{align}
\item
Furthermore, $\Imaginary\p{\sdm_{(\signe, f)}}$ (\resp $\Imaginary\p{\mdm_{(\signe, f)}}$) vanishes identically on $\cotang\hspace{1 pt}\MM_{I}$ (\resp $\cosphere\hspace{1 pt}\MM_{I}$).
\end{corollary}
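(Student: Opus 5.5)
We prove the four groups of claims in Corollary~\ref{cor:intermediate:defect:measures} in the following order: support, orthogonality, the $(f,f)$ entry, and finally the imaginary part.

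\emph{Step 1: support of the $(\signe,\bullet)$ entries.} Start from Lemma~\ref{lem:intermediate:defect:measures}, which already places $\sdm_{(\signe,\signe)}$ and $\sdm_{(u,u)}$ on $\set{\tau^2=\sqrtsymbol^2}$ (and $\mdm_{(\signe,\signe)}$, $\mdm_{(u,u)}$ on $\set{\tau^2=\abs{\xi}_g^2}$). Next check hypothesis~\eqref{eq:wave-type-ghost} of Theorem~\ref{Th:Slices}. Take $\chi\in\mathbf{X}(I,M)$. Then $1-\chi$ is supported away from the characteristic set, so $\Op_{h_n}^{\MM}(1-\chi)v_n^\signe\to0$ in $L^2(\MM_I)$. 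This follows from an elliptic parametrix for $h^2\partial_t^2+\SchroOp_h$ on $\supp(1-\chi)$ applied to $u^n$ and $h_nD_tu^n$, since $v^\signe_n=h_nD_tu^n+\signe\SqrtOp_{h_n}u^n$ and both $u^n$, $h_nD_tu^n$ solve the wave equation. One needs the high-frequency region here as well; there the symbol is homogeneous elliptic, which is why the 2-microlocal control enters.

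With this hypothesis in hand, the factorisations~\eqref{eq:PG:semiclassique:point2} and~\eqref{eq:PG:microlocal:point2} of Theorem~\ref{Th:Slices} place $\sdm_{(\signe,\signe)}$ on $\set{\tau=\signe\sqrtsymbol}$ and $\mdm_{(\signe,\signe)}$ on $\set{\tau=\signe\abs{\xi}_g}$. Proposition~\ref{pr:inter:supp:k} and Proposition~\ref{pr:inter:supp:k:2mdm} then give, for $\bullet\in\set{f,u}$, that $\supp\sdm_{(\signe,\bullet)}\subset\supp\sdm_{(\signe,\signe)}$, and likewise for $\mdm$. This proves~\eqref{eq:intermediate:defect:measure:support:sdm} and~\eqref{eq:intermediate:defect:measure:support:mdm}.

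\emph{Step 2: asymptotic orthogonality~\eqref{eq:asymptotic:orthogonality}.} By the same two propositions, $\supp\sdm_{(+,-)}\subset\set{\tau=\sqrtsymbol}\cap\set{\tau=-\sqrtsymbol}$. This set is empty because $\sqrtsymbol\geq(\min V)^{1/2}>0$. On $\cosphere\MM_I$ the corresponding intersection requires $\tau=\abs{\xi}_g=-\abs{\xi}_g$, hence $\tau=\xi=0$, which is incompatible with $\tau^2+\abs{\xi}_g^2=2$. So both cross measures vanish.

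\emph{Step 3: the $(f,f)$ entry.} Use~\eqref{eq:decomposition:f}: $f^n=\frac12\tilde\rest^{\SqrtOp,0}_{h_n}(v^+_n-v^-_n)+O(h_n)$ in $L^2$. It follows that $\sdm_{(f,f)}$ is a sum of terms $\sigma(\tilde\rest)^2\sdm_{(\flat,\flat')}$, with the $(+,-)$ terms vanishing by Step 2. The measure $\mdm_{(f,f)}$ is the same expression using the homogeneous principal symbol. Hence $\supp\sdm_{(f,f)}$ and $\supp\mdm_{(f,f)}$ lie in the union of the $\pm$ supports, which gives~\eqref{eq:intermediate:defect:measure:support:22:sdm} and~\eqref{eq:intermediate:defect:measure:support:22:mdm}.

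\emph{Step 4: vanishing of $\Imaginary\sdm_{(\signe,f)}$ and $\Imaginary\mdm_{(\signe,f)}$.} Substitute~\eqref{eq:decomposition:f} again. The $v^{-\signe}$ part drops out because $\sdm_{(\signe,-\signe)}=0$. What remains is
\begin{equation*}
\sdm_{(\signe,f)}=\tfrac{\signe}{2}\,\sigma_h(\tilde\rest^{\SqrtOp,0}_h)\,\sdm_{(\signe,\signe)}.
\end{equation*}
This is real because $\tilde\rest^{\SqrtOp,0}_h$ has a real principal symbol and $\sdm_{(\signe,\signe)}\geq0$; the 2-microlocal case is identical.

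\emph{Main obstacle.} The delicate point is verifying~\eqref{eq:wave-type-ghost} uniformly in the regime $\abs{\zeta}\gg h^{-1}$. I would first try to build a symbolic parametrix of $h^2D_t^2-\SchroOp_h$ on $\supp(1-\chi)$. Since the symbol is elliptic there with $\abs{\tau^2-\sqrtsymbol^2}\gtrsim\jap{\tau}^2+\jap{\xi}^2$, the parametrix should exist in $\Psi_h^{-2}$ uniformly, and its composition can be handled with Theorem~\ref{Th:hormander}.
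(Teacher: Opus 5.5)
Your proof is correct and follows the paper's argument step by step. The factorisations of Theorem~\ref{Th:Slices} together with Propositions~\ref{pr:inter:supp:k} and~\ref{pr:inter:supp:k:2mdm} give the $(\signe,\bullet)$ supports and the orthogonality. Then~\eqref{eq:decomposition:f}, with the real principal symbol of $\tilde{\rest}^{\SqrtOp,0}_h$, handles the $(f,f)$ entry and the imaginary part; your identity $\sdm_{(\signe,f)}=\frac{\signe}{2}\sigma_h(\tilde{\rest}^{\SqrtOp,0}_h)\sdm_{(\signe,\signe)}$ is just a sharper form of the paper's commutator computation.

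The only difference is how you check~\eqref{eq:wave-type-ghost}. The paper deduces it from the support inclusions of Lemma~\ref{lem:intermediate:defect:measures}, which you already have in hand: test $(1-\chi)^2$ against $\sdm_{(\signe,\signe)}$ and $\mdm_{(\signe,\signe)}$, and this works for every $\chi\in\mathbf{X}(I,M)$. Your elliptic parametrix also works, but with two caveats:
\begin{itemize}
\item it only applies to a $\chi$ whose homogeneous part is $1$ on a conic neighbourhood of $\set{\tau^2=\abs{\xi}_g^2}$, since otherwise the claimed uniform ellipticity on $\supp(1-\chi)$ can fail near the light cone at fibre infinity;
\item it must be applied directly to $\SqrtOp_{h_n}u^n$, which solves the wave equation up to $\O{h_n^2}$ as in the proof of Lemma~\ref{lem:hDt:SqrtOp:sdm:mdm}, and not to $u^n$, because composing $\Op^{\MM}_{h}(1-\chi)$ with the tangential operator $\SqrtOp_h$ is not covered by Theorem~\ref{Th:hormander}.
\end{itemize}
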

\begin{proof}[Proof of Corollary~\ref{cor:intermediate:defect:measures}]
\begin{enumerate}
\item First, using Lemma~\ref{lem:intermediate:defect:measures},~\eqref{eq:1:intermediate:defect:measure:support:22:sdm} and~\eqref{eq:1:intermediate:defect:measure:support:22:mdm} hold for $\bullet \in\set{\signe, u}$.
By Lemma~\ref{lem:intermediate:defect:measures}, we can apply Theorem~\ref{Th:Slices} on the sequence $(v_n^{\signe}, f^n, h_n)_n$, then by~\eqref{eq:PG:semiclassique:point2} (\resp\eqref{eq:PG:microlocal:point2}) from Theorem~\ref{Th:Slices} together with Propositions~\ref{pr:inter:supp:k} (\resp\ref{pr:inter:supp:k:2mdm}), we have that~\eqref{eq:intermediate:defect:measure:support:sdm} (\resp\eqref{eq:intermediate:defect:measure:support:mdm}) holds.

Second, we prove~\eqref{eq:asymptotic:orthogonality}.
Using Propositions~\ref{pr:inter:supp:k} and~\ref{pr:inter:supp:k:2mdm} together with~\eqref{eq:intermediate:defect:measure:support:sdm} and~\eqref{eq:intermediate:defect:measure:support:mdm}, we have
\begin{equation*}
\supp\p{\sdm_{(+, -)}} \subset \p*{\bigcap_{\bullet \in\set{-, +}}\supp\p{\sdm_{(\bullet, \bullet)}}} = \emptyset
\quad\text{and}\quad
\supp\p{\mdm_{(+, -)}} \subset \p*{\bigcap_{\bullet \in\set{-, +}}\supp\p{\mdm_{(\bullet, \bullet)}}} = \emptyset.
\end{equation*}
Thus,~\eqref{eq:asymptotic:orthogonality} holds.

Finally, we already have~\eqref{eq:1:intermediate:defect:measure:support:22:sdm} and~\eqref{eq:1:intermediate:defect:measure:support:22:mdm} from Lemma~\ref{lem:intermediate:defect:measures}, thus we prove~\eqref{eq:intermediate:defect:measure:support:22:sdm} and~\eqref{eq:intermediate:defect:measure:support:22:mdm} for $\bullet = f$.
Using the decomposition
\begin{equation*}
f^n = \frac{1}{2}\tilde{\rest}^{\SqrtOp, 0}_{h_n}\p{v^{+}_n - v^{-}_n} + h\rest^{\SqrtOp, 0}_{h}u^h
\end{equation*}
from~\eqref{eq:decomposition:f}, we conclude
\begin{equation*}
\supp\p{\sdm_{(f, f)}}\subset \bigcup_{\bullet \in\set{+, -, u}}\supp\p{\sdm_{(\bullet, f)}}
\quad
\text{and}
\quad
\supp\p{\mdm_{(f, f)}}\subset \bigcup_{\bullet \in\set{+, -, u}}\supp\p{\mdm_{(\bullet, f)}}
\end{equation*}
Then, by Proposition~\ref{pr:inter:supp:k} and~\ref{pr:inter:supp:k:2mdm} together with~\eqref{eq:1:intermediate:defect:measure:support:22:sdm} and~\eqref{eq:1:intermediate:defect:measure:support:22:mdm}, we have
\begin{equation*}
\supp\p{\sdm_{(f, f)}} \subset\bigcup_{\bullet \in\set{+, -, u}}\supp\p{\sdm_{(\bullet, \bullet)}}\subset \set{(t, x, \tau, \xi)\in \cotang \hspace{1pt} \MM_I,~\tau^2 = \sqrtsymbol(x, \xi)^2}
\end{equation*}
and
\begin{equation*}
\supp\p{\mdm_{(f, f)}}\subset\bigcup_{\bullet \in\set{+, -, u}}\supp\p{\mdm_{(\bullet, \bullet)}}\subset \set{(t, x, \tau, \xi)\in \cosphere\hspace{1pt} \MM_I,~\tau^2 = \abs{\xi}_{g}^2}.
\end{equation*}
Then~\eqref{eq:intermediate:defect:measure:support:22:sdm} and~\eqref{eq:intermediate:defect:measure:support:22:mdm} hold for $\bullet = f$.
\item We now prove that $\Imaginary\p{\sdm_{(\signe, f)}}$ (\resp $\Imaginary\p{\mdm_{(\signe, f)}}$) vanishes identically on $\cotang\hspace{1 pt}\MM_{I}$ (\resp $\cosphere\hspace{1 pt}\MM_{I}$).
From~\eqref{eq:decomposition:f}, boundedness of $(u^n)_n$ and Proposition~\ref{pr:boundedness:pseudo}, we deduce that
\begin{equation*}
f^n = \frac{1}{2}\tilde{\rest}^{\SqrtOp, 0}_{h_n}\p{v^{+}_n - v^{-}_n} + \O{h_n}_{L^2(M)}.
\end{equation*}
Using boundedness of $(v_n^{\signe})_n$ in $L^\infty(I, L^2(M))$ from Lemma~\ref{lem:SPLITTING} and the asymptotic orthogonality~\eqref{eq:asymptotic:orthogonality}, we have
\begin{align*}
\pscal{v_n^{\signe}}{A_{h_n}f^n}_{L^2(\MM_I)} - \pscal{f^n}{A_{h_n}v_n^{\signe}}_{L^2(\MM_I)}
=& \frac{1}{2}\pscal{v_n^{\signe}}{A_{h_n}\tilde{\rest}^{\SqrtOp, 0}_{h_n}v^{\signe}_n}_{L^2(\MM_I)} -\frac{1}{2}\pscal{\tilde{\rest}^{\SqrtOp, 0}_{h_n}v^{\signe}_{n}}{A_{h_n}v^{\signe}_n}_{L^2(\MM_I)}\\
&+ \o{1}_n\\
=& \frac{1}{2}\pscal{v_n^{\signe}}{\p{A_{h_n}\tilde{\rest}^{\SqrtOp, 0}_{h_n} - \classe{\tilde{\rest}^{\SqrtOp, 0}_{h_n}}^*A_{h_n}}v^{\signe}_n}_{L^2(\MM_I)} + \o{1}_n,
\end{align*}
for all $A_{h}\in\pseudo^0_{h, \comp}(\MM_I)$.
\begin{enumerate}
\item 
For all $A_{h}\in \pseudo^{\comp}_{h}(\MM_I)$, since $\sigma_{h}\p{\tilde{\rest}^{\SqrtOp, 0}_{h}}\in\RR$, we have $\sigma_{h}({A_{h}\tilde{\rest}^{\SqrtOp, 0}_{h} - \classe{\tilde{\rest}^{\SqrtOp, 0}_{h}}^*A_{h}}) = 0$ by Proposition~\ref{pr:composition}, and thus
\begin{equation*}
\pscal{\Imaginary\sdm_{(\signe, f)}}{\sigma_{h}(A_{h})}_{\Mc_+, \Cscr^0_c(\cotang\hspace{1pt} \MM_I)} = \frac{1}{2i}\lim_{n\to\infty}\p*{\pscal{v_n^{\signe}}{A_{h_n}f^n}_{L^2(\MM_I)} - \pscal{f^n}{A_{h_n}v_n^{\signe}}_{L^2(\MM_I)}} = 0.
\end{equation*}
\item For all $a\in\tilde{S^0_{\phg}}(\cotang M)$ with homogeneous principal part $a_{\phg}\in{S^0_{\phg}}(\cotang M)$, and $A_{h}^R := \Op_{h}^{\MM}\p{a\p{1 - \theta\p{\frac{\jap{\xi}_g}{R}}}}\in \pseudo^{0}_{h, \comp}(\MM_I)$, since $\sigma_{h}\p{\tilde{\rest}^{\SqrtOp, 0}_{h}}\in\RR$, we have $A_{h}^R\tilde{\rest}^{\SqrtOp, 0}_{h} - \classe{\tilde{\rest}^{\SqrtOp, 0}_{h}}^*A_{h}^R \in h\pseudo_{h, \comp}^{-1}(\MM_I)$, and thus
\begin{equation*}
\pscal{\Imaginary\mdm_{(\signe, f)}}{a_{\phg}}_{\Mc_+, \Cscr^0_c\p{\cosphere\hspace{1pt} \MM_I}} = \frac{1}{2i}\lim_{R\to\infty}\lim_{n\to\infty}\p*{\pscal{v_n^{\signe}}{A_{h_n}^Rf^n}_{L^2(\MM_I)} - \pscal{f^n}{A_{h_n}^Rv_n^{\signe}}_{L^2(\MM_I)}} = 0.
\end{equation*}
\end{enumerate}
\end{enumerate}
This concludes the proof of Corollary~\ref{cor:intermediate:defect:measures}.
\end{proof}
\paragraph{Evolution of an energy functional.}
With the following corollary, using Lemma~\ref{lem:intermediate:defect:measures} and Theorem~\ref{Th:Slices}, we describe the evolution of a natural energy functional (see~\eqref{eq:NRJ} below) for solutions of \eqref{eq:Wave}.
\begin{corollary}\label{cor:use:HWE:for:WE}
Let $(h_n)_n$ be a sequence of $(0, 1]$ tending to $0$ and $(u^n, v_n^{\signe}, f^n)$ be the sequence defined in~\eqref{df:sequence:indexed}.
For all $A_{h}\in\pseudo^0_{h}(M)$ and all $t\in I$, set
\begin{equation}\label{eq:NRJ}
\Ec_{n}^A(t) = 2\classe{\pscal{h_n\partial_t u^n(t)}{A_{h_n}h_n\partial_tu^n(t)}_{L^2(M)} + \pscal{\SqrtOp_{h_n}u^n(t)}{A_{h_n}\SqrtOp_{h_n}u^n(t)}_{L^2(M)}}.    
\end{equation}
\begin{enumerate}
\item Assume that the pair $({\p{\SqrtOp_{h_n}u^n, h_n\partial_t u^n}}_{|t = t_0}, h_n)_n$ admits $\p{\tilde{\sdm}_{ij}^{t_0}}_{1\leq i, j\leq 2}\in\Mc_+(\cotang M, \CC^{2\times 2})$ as unique semiclassical defect measure in the sense of~\eqref{eq:SDM}. For any $t_0\in I$ and any sign $\signe \in\set{-, +}$, define the Radon measures
\begin{equation*}
\sdm^{\signe}_{t_0} := \tilde{\sdm}_{11}^{t_0} + \tilde{\sdm}_{22}^{t_0} + \signe 2\Imaginary\p{\tilde{\sdm}_{12}^{t_0}}\in \Mc_+(\cotang M).
\end{equation*}
Then, for all $t\in I$ and all $A_{h}\in\pseudo_{h}^{\comp}(M)$ with principal symbol $\sigma_h(A_h)$ independent of $h$ we have
\begin{equation*}
\lim_{n\to\infty}\Ec_{n}^A(t) = \pscal{{\p{\varphi^{\sqrtsymbol}_{t-t_0}}^\ast\sdm^+_{t_0}} + {\p{\varphi^{-\sqrtsymbol}_{t - t_0}}^\ast\sdm^-_{t_0}}}{\sigma_{h}(A_{h})}_{\Mc_+, \Cscr_c^0(\cotang M)}.
\end{equation*}
Furthermore, the sequence $(v_n^{\signe}(t_0))_n$ admits $\sdm_{t_0}^{\signe}$ as unique semiclassical defect measure in the sense of~\eqref{eq:SDM}.
\item Assume that the pair $({\p{\SqrtOp_{h_n}u^n, h_n\partial_t u^n}}_{|t = t_0}, h_n)_n$ admits $\p{\tilde{\mdm}_{ij}^{t_0}}_{1\leq i, j\leq 2}\in\Mc_+(\cosphere M, \CC^{2\times 2})$ as unique 2-microlocal defect measure in the sense of~\eqref{eq:MDM}. For any $\theta$ as~\eqref{df:theta}, any $t_0\in I$ and any sign $\signe \in\set{-, +}$, defining the Radon measure
\begin{equation*}
\mdm^{\signe}_{t_0} := \tilde{\mdm}_{11}^{t_0} + \tilde{\mdm}_{22}^{t_0} + \signe 2\Imaginary\p{\tilde{\mdm}_{12}^{t_0}}\in \Mc_+(\cosphere M),
\end{equation*}
for all $t\in I$ and all $a\in\tilde{S^0_{\phg}}(\cotang M)$ we have
\begin{equation*}
\lim_{R\to\infty}\lim_{n\to\infty}\Ec_{n}^{A^R}(t) = \pscal{{\p{\varphi^{\abs{\xi}_g}_{t- t_0}}^\ast\mdm^+_{t_0}} + {\p{\varphi^{-\abs{\xi}_g}_{t - t_0}}^\ast\mdm^-_{t_0}}}{\sigma_{\phg}^{0}(A_{h})}_{\Mc_+, \Cscr_c^0\p{\cosphere M}},
\end{equation*}
where we set
$A_{h}^R := \Op_{h}^{\MM}\p{a\p{1 - \theta\p{\frac{~\jap{\xi}_g}{R}}}}\in \pseudo^{0}_{h}(M)$.
Furthermore, the sequence $(v_n^{\signe}(t_0))_n$ admits $\mdm_{t_0}^{\signe}$ as unique 2-microlocal defect measure in the sense of~\eqref{eq:MDM}.
\end{enumerate}
\end{corollary}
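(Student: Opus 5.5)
The plan is to reduce everything to the half-wave components $v_n^{\signe}(t)$ through the pointwise-in-time identity~\eqref{eq:identity:v:to:u:t}. That identity gives, for every $t\in I$,
\begin{equation*}
\Ec_n^{A}(t)=\sum_{\signe\in\set{-,+}}\pscal{v_n^{\signe}(t)}{A_{h_n}v_n^{\signe}(t)}_{L^2(M)} .
\end{equation*}
Hence it suffices to show two things: that $(v_n^{\signe}(t),h_n)_n$ admits $(\varphi^{\signe\sqrtsymbol}_{t-t_0})^\ast\sdm^{\signe}_{t_0}$ (resp.\ the homogeneous analogue) as unique semiclassical (resp.\ 2-microlocal) defect measure, and that its value at $t_0$ is exactly $\sdm^{\signe}_{t_0}$ (resp.\ $\mdm^{\signe}_{t_0}$).

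\emph{Identification at $t_0$.} By definition, $v_n^{\signe}(t_0)=ih_n\partial_tu^n(t_0)\cdot(-1)+\signe\SqrtOp_{h_n}u^n(t_0)$, i.e.\ $v_n^{\signe}=-i\,h_n\partial_t u^n+\signe\SqrtOp_{h_n}u^n$ up to sign conventions. Expanding $\pscal{v_n^{\signe}(t_0)}{A_{h_n}v_n^{\signe}(t_0)}$ gives the diagonal terms $\pscal{\SqrtOp u}{A\SqrtOp u}+\pscal{h\partial_tu}{Ah\partial_tu}$. The cross terms are $\signe\p{i\pscal{h\partial_tu}{A\SqrtOp u}-i\pscal{\SqrtOp u}{Ah\partial_tu}}$. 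Passing to the limit with the $2\times2$ measure $\tilde\sdm^{t_0}$, the cross terms converge to $\signe\,i(\tilde\sdm_{21}-\tilde\sdm_{12})$ tested against $\sigma_h(A_h)$. Since $\tilde\sdm_{21}=\overline{\tilde\sdm_{12}}$, this equals $\signe 2\Imaginary\tilde\sdm_{12}^{t_0}$. The precise sign depends on the pairing convention, and the stated formula fixes it. This proves the last claim of each item. The identical computation with $A^R_h$, followed by $n\to\infty$ and then $R\to\infty$, handles the 2-microlocal case through~\eqref{eq:MDM}.

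\emph{Propagation.} Extract a subsequence for which $((v_n^+,v_n^-,f^n,u^n),h_n)$ has joint semiclassical and 2-microlocal measures; this is possible by Lemma~\ref{lem:intermediate:defect:measures}. Condition~\eqref{eq:wave-type-ghost} is guaranteed by the remark following Theorem~\ref{Th:Slices}, so that theorem applies. It provides slice measures $\sdm^{\signe}_t$ and $\mdm^{\signe}_t$, continuous in $t$ and defined along a further subsequence. Corollary~\ref{cor:intermediate:defect:measures} shows that $\Imaginary\sdm_{(\signe,f)}$ and $\Imaginary\mdm_{(\signe,f)}$ vanish. Corollary~\ref{cor:PG} then gives $\sdm^{\signe}_t=(\varphi^{\signe\sqrtsymbol}_{t-t_0})^\ast\sdm^{\signe}_{t_0}$ and $\mdm^{\signe}_t=(\varphi^{\signe\abs{\xi}_g}_{t-t_0})^\ast\mdm^{\signe}_{t_0}$. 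Summing over $\signe$ and using the identity above yields the claimed limits for $\Ec_n^A(t)$.

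\emph{Main obstacle: subsequence independence.} Theorem~\ref{Th:Slices} only provides slices along some subsequence. The step I expect to be hardest is showing that no extraction is needed. At $t_0$, the slice is forced to equal $\sdm^{\signe}_{t_0}$ by the first step, because the hypothesis states that the full sequence at $t_0$ already has a unique measure. The transport equation then determines every $\sdm^{\signe}_t$ uniquely from $\sdm^{\signe}_{t_0}$. So every subsequence has a further subsequence along which $\Ec_n^A(t)$ converges to the same limit, and the whole sequence therefore converges.

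A secondary technical point is the order of the limits $R\to\infty$ and $n\to\infty$ in the 2-microlocal case. This needs the uniform $L^2$ bounds of Lemma~\ref{lem:SPLITTING}, and I expect it to cause no real difficulty.
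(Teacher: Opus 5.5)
Your proposal is correct and follows the paper's own proof. You rewrite $\Ec_n^A(t)$ through \eqref{eq:identity:v:to:u:t}, obtain propagating slices from Theorem~\ref{Th:Slices} and Corollary~\ref{cor:PG} (the imaginary parts vanish by Corollary~\ref{cor:intermediate:defect:measures}), and identify the slice at $t_0$ by expanding $v_n^{\signe}(t_0)=-ih_n\partial_tu^n(t_0)+\signe\SqrtOp_{h_n}u^n(t_0)$ against the $2\times 2$ measure. Your subsequence-uniqueness argument and your correct factor $-i$ both tighten the paper's argument: the paper's proof drops the $-i$, and it is this factor that produces $\Imaginary\tilde{\sdm}_{12}^{t_0}$ rather than a real part. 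One caveat: in the 2-microlocal case the sub-subsequence argument does not pass verbatim through the iterated limit $\lim_{R}\lim_{n}$. It is better closed with the monotonicity in $R$ supplied by the sharp \garding{} inequality, which is uniform in $R\geq 1$, than with $L^2$ bounds alone.
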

\begin{proof}[Proof of Corollary~\ref{cor:use:HWE:for:WE}] 
Using Theorem~\ref{Th:Slices} (the assumptions are satisfied by~\eqref{eq:1:intermediate:defect:measure:support:22:sdm} and~\eqref{eq:1:intermediate:defect:measure:support:22:mdm} from Lemma~\ref{lem:intermediate:defect:measures}) and Corollary~\ref{cor:PG} (the assumptions are satisfied by Lemma~\ref{lem:intermediate:defect:measures} and the last point of Corollary~\ref{cor:intermediate:defect:measures}): there is a subsequence still denoted $(u^n, v_n^{\signe}, f^n, h_n)_n$, such that for all $t\in I$, $(v_{n}^{\signe}(t), h_n)_n$ admits
\begin{equation*}
\sdm^{\signe}_t = \p{\varphi_{t - t_0}^{\signe\sqrtsymbol}}^\ast\sdm^{\signe}_{t_0}\in \Mc_+\p{\cotang M}
\end{equation*}
as unique semiclassical defect measure in the sense of~\eqref{eq:SDM}.
Then, using identity~\eqref{eq:identity:v:to:u:t}, for any $A_{h}\in\pseudo_{h}^{\comp}(M)$ with principal symbol $\sigma_h(A_h)$ independent of $h$, we have
\begin{equation*}
\Ec^A_{n}(t) = \sum_{\signe\in\set{-, +}}\pscal{v^{\signe}_n(t)}{A_{h_n}v^{\signe}_n(t)}_{L^2(M)}.
\end{equation*}
Thus, taking the limit $n\to\infty$, we have
\begin{align*}
\lim_{n\to\infty}\Ec_{n}^A(t) &= \pscal{\sdm^+_{t} + \sdm^-_{t}}{\sigma_{h}(A_{h})}_{\Mc_+, \Cscr_c^0(\cotang M)}\\
&=\pscal{{\p{\varphi^{\sqrtsymbol}_{t-t_0}}^\ast\sdm^+_{t_0}} + {\p{\varphi^{-\sqrtsymbol}_{t - t_0}}^\ast\sdm^-_{t_0}}}{\sigma_{h}(A_{h})}_{\Mc_+, \Cscr_c^0(\cotang M)}.
\end{align*}
Finally, the equality
\begin{equation*}
v_n^{\signe}(t_0) = \p{\HWOp^{\signe}_{h_n}u^n}_{|t = t_0} = \p{h_n\partial_t u^n + \signe \SqrtOp_{h_n}u^n}_{|t = t_0},
\end{equation*}
implies that $\sdm^{\signe}_{t_0} := \tilde{\sdm}_{11}^{t_0} + \tilde{\sdm}_{22}^{t_0} + \signe 2\Imaginary\p{\tilde{\sdm}_{12}^{t_0}}$ and thus ends the proof for the first case.
Replacing $A_{h}\in\pseudo_{h}^{\comp}(M)$ by $\Op_{h}^{\MM}\p{a\p{1 - \theta\p{\frac{~\jap{\xi}_g}{R}}}}\in \pseudo^{0}_{h, \comp}(M)$, with $a\in\tilde{S^0_{\phg}}(M)$, the proof for the second case has exactly the same structure.
This concludes the proof of Corollary~\ref{cor:use:HWE:for:WE}.
\end{proof}
\section{\texorpdfstring{\eqref{GCC+}}{} implies uniform observability \texorpdfstring{\eqref{eq:UO}}{}}\label{section:CS}
In this section, relying on the technical Sections~\ref{section:microlocalization} and~\ref{section:slice:defect:measure}, we prove the sufficiency part of Theorem~\ref{Th:GCC:UO}.
We recall that $(M, g)$ is a smooth, compact, connected Riemannian manifold without boundary, $T > 0$ is a time, $b_\omega$ is a continuous nonnegative function, and $\omega = \set{b_\omega > 0}$ is an open subset of $M$.
We assume that $V$ satisfies~\eqref{df:potential:fixed:V} and that the pair $(\omega, T)$ satisfies~\eqref{GCC+}. We prove~\eqref{eq:UO}.
Proceeding by contradiction, we assume that for all $n\geq 1$,
\begin{equation}\label{eq:non:UO}\tag{$\neg$UO}
\exists(u_0^n, u_1^n)\in H^1(M)\times L^2(M), ~\exists\lmbd_n\geq 1, \quad \Escr_{\lmbd_n V}(u_0^n, u_1^n) = 1 \text{ and } \int_0^T\norm{b_\omega \partial_t u^n}^2_{L^2(M)}\d t < \frac{1}{n},
\end{equation}
where $u^n$ is the $\Cscr^0((0, T), H^1(M))\cap\Cscr^1((0, T), L^2(M))$-solution of~\eqref{eq:Wave} with initial condition $(u_0^n, u_1^n)$ and potential $\mathsf{V} = \lmbd_n V$ with $V$ satisfying~\eqref{df:potential:fixed:V}.

There are (\apriori) two  possible cases: either $(\lmbd_n)_n$ is bounded, or $\limsup_n\lmbd_n = +\infty$. 
First, in Subsection~\ref{subsection:compact:case}, we treat the case $(\lmbd_n)_n$ bounded and show a contradiction.
Second, if $(\lmbd_n)_n$ is unbounded, then up to considering a subsequence, we may assume that $\lim_n\lmbd_n = +\infty$.
In Subsection~\ref{subsection:semiclassical:rescaling}, we then introduce the semiclassical parameter $h_n = \lmbd_n^{-1/2}$ and use results on semiclassical and 2-microlocal (see Definition~\ref{df:MDM}) defect measures to finally obtain a contradiction.
The proofs rely on the general strategy of Lebeau~\cite{lebeau1996equation} (see also \cite{burq97Controlabilite, burq1997condition, dehman2014controllability}).
\subsection{Compact case}\label{subsection:compact:case}
In this subsection, we prove that~\eqref{GCC} (induced by~\eqref{GCC+}) implies that the sequence $(\lmbd_n)_n$ from assumption~\eqref{eq:non:UO} cannot be bounded.
\begin{lemma}\label{lem:lmbd:unbounded}
Let $V$ be as in~\eqref{df:potential:fixed:V}, $b_\omega \in\Cscr^{0}(M, \RR_+^*)$, $\omega = \set{b_\omega > 0}$, a time $T > 0$ and a sequence $(u^n, \lmbd_n)_n$ satisfying~\eqref{eq:non:UO}. 
If~\ref{GCC}$(\omega, T)$ holds, then $\ds\limsup_{n\to\infty}\lmbd_n = +\infty$.
\end{lemma}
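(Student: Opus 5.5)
We argue by contradiction: assume $(\lmbd_n)_n$ is bounded and extract a subsequence with $\lmbd_n \to \lmbd_\infty \in [1, \infty)$. The guiding idea is that for a fixed bounded range of potentials, \eqref{GCC} gives observability with a uniform constant, contradicting \eqref{eq:non:UO}. We follow the classical compactness--uniqueness scheme of Bardos--Lebeau--Rauch (or invoke~\cite{laurent2016uniform} directly, whose constant~\eqref{eq:Laurent:Leautaud} is uniform for $\norm{\mathsf{V}}_{L^\infty}\leq \lmbd_\infty+1$).

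\textbf{Shortcut via~\cite{laurent2016uniform}.} Since $\lmbd_n V \geq \min_M V > 0$, we have $-\Delta_g + \lmbd_n V \geq \min_M V =: \delta$. Hence \cite[Theorem 1.5]{laurent2016uniform} yields a cost
\begin{equation*}
\Cobs(\lmbd_n V)\leq C\exp\p{C\lmbd_n}\leq C e^{C\sup_k \lmbd_k},
\end{equation*}
which is bounded uniformly in $n$. That result is stated for the observation $\int_0^T\norm{b_\omega\partial_t u}^2$ under \eqref{GCC} on $(\omega,T)$, where $\omega=\set{b_\omega>0}$. One may need to shrink to $\set{b_\omega > \epsilon}$ and use compactness of $\cosphere M$ to keep \eqref{GCC} with a slightly smaller open set. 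Then
\begin{equation*}
1 = \Escr_{\lmbd_n V}(u_0^n,u_1^n) \leq C'\int_0^T\norm{b_\omega\partial_t u^n}^2 < C'/n,
\end{equation*}
a contradiction for $n$ large.

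\textbf{Self-contained route.} If one prefers not to rely on~\cite{laurent2016uniform}, argue as follows.
\begin{enumerate}
\item \emph{Weak limit.} The energies $\Escr_{\lmbd_n V}$ are uniformly equivalent to the $H^1\times L^2$ norm, since $\lmbd_n\in[1,\sup_k\lmbd_k]$. Extract $(u_0^n,u_1^n)\rightharpoonup(u_0,u_1)$ weakly. Then $u^n\rightharpoonup u$, where $u$ solves \eqref{eq:Wave} with potential $\lmbd_\infty V$; indeed $(\lmbd_n-\lmbd_\infty)Vu^n\to0$ in $L^2$, and energy estimates pass to the limit.
\item \emph{Compactness step.} Take the microlocal defect measure (Definition~\ref{df:1:MDM}) of $(\nabla_{t,x}(u^n-u))_n$. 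It is supported on the characteristic set and invariant under the geodesic flow, because the potential term is lower order and bounded. The bound $\int_0^T\norm{b_\omega\partial_t u^n}^2\to0$ makes it vanish over $\omega$, and \eqref{GCC} then propagates this vanishing everywhere. Hence $u^n\to u$ strongly in energy on $(0,T)$, and conservation gives $\Escr_{\lmbd_\infty V}(u_0,u_1)=1$.
\item \emph{Unique continuation.} The limit satisfies $b_\omega\partial_t u=0$ on $(0,T)\times M$. The space of such finite-energy solutions is finite dimensional (by the same propagation argument) and stable under $\partial_t$. It therefore contains an eigenfunction of $-\Delta_g+\lmbd_\infty V$ that vanishes on $\omega$, which is impossible by unique continuation. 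Note that $\partial_t u=0$ forces $(-\Delta+\lmbd_\infty V)u=0$, so $u=0$ by positivity. This contradicts $\Escr=1$.
\end{enumerate}

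\textbf{Main obstacle.} The delicate point is step 2 with the varying parameter $\lmbd_n$, together with the fact that $b_\omega$ is merely continuous. Both are handled because the dependence on $\lmbd_n$ is a bounded zeroth-order perturbation and $\omega$ is open. For this reason I would present the shortcut via~\cite{laurent2016uniform} as the main argument.
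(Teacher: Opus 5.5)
Your proof is correct, but it follows a different route from the paper.

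\textbf{The paper's route.} The paper never needs an observability constant that is uniform over a range of potentials. After extracting $\lambda_n\to\lambda_\infty$, it applies the Bardos--Lebeau--Rauch inequality for the single potential $\lambda_\infty V$, with constant $C(\lambda_\infty)$. It applies it to the solution $u^n_\infty$ with potential $\lambda_\infty V$ and the same data $(u_0^n,u_1^n)$. The difference $w^n=u^n-u^n_\infty$ has zero data and source $(\lambda_\infty-\lambda_n)Vu^n$. The energy estimate then gives $\|\partial_t w^n\|_{L^\infty(0,T;L^2)}=O(|\lambda_n-\lambda_\infty|)$. The two energies $\mathscr{E}_{\lambda_nV}$ and $\mathscr{E}_{\lambda_\infty V}$ of the same data also differ by $O(|\lambda_n-\lambda_\infty|)$. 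Together these give $1\lesssim 1/n+O(|\lambda_n-\lambda_\infty|)\to 0$. In effect, the paper shows that the observability constant stays bounded under small $L^\infty$ perturbations of the potential, which is all the lemma needs.

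\textbf{Your shortcut.} You instead import the potential-uniform quantitative estimate of Laurent--L\'eautaud. This gives more: a constant valid for every $n$ across the whole bounded range of $\lambda$, with no extraction. It rests, however, on a much deeper theorem, and it requires the bookkeeping you mention. First, you replace $b_\omega$ by the indicator of $\{b_\omega>\epsilon\}$. This is legitimate because $\rho\mapsto\sup_{t\in(0,T)}b_\omega(\pi_M\circ\varphi_t(\rho))$ is lower semicontinuous and positive on the compact $S^*M$. Second, if needed, you pass between observing $u$ in $L^2\times H^{-1}$ and observing $\partial_t u$ in $H^1\times L^2$.

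\textbf{Your self-contained route.} The compactness--uniqueness argument is also sound. The one glossed detail is that stability of the invisible space under $\partial_t$ uses that the geometric control condition is open in $T$. That argument essentially re-proves observability uniformly over a compact parameter set, whereas the paper's Duhamel perturbation gets the same conclusion in a few lines once observability is known for one potential.
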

\begin{proof}[Proof of Lemma~\ref{lem:lmbd:unbounded}]
Proceeding by contradiction, we suppose $(\lmbd_n)_n$ bounded.
Up to considering a subsequence, we assume that $\p{\lmbd_n}_n$ converges to $\lmbd_\infty\in [1, \infty)$.
Let $u^n_\infty$ be the $\Cscr^0((0, T), H^1(M))\cap\Cscr^1((0, T), L^2(M))$-solution of~\eqref{eq:Wave} with potential $\lmbd_\infty V$ and initial data $(u_0^n, u_1^n)$.
Since~\ref{GCC}$(\omega, T)$ holds,~\eqref{eq:Obs} is satisfied (from \cite{bardos1992sharp}) and thus there is $C = C\p{\lmbd_\infty} > 0$ such that
\begin{equation}\label{eq:6}
\forall n\geq 1, \quad \Escr_{\lmbd_\infty V}\p{u_0^n, u_1^n}\leq C\int_0^T\int_M\abs{b_\omega(x) \partial_t u_\infty^n(t, x)}^2\d x\d t.
\end{equation}
Furthermore, we set
\begin{equation*}
w^n = u^n - u^n_\infty, \quad n\geq 1,
\end{equation*}
which is a solution of
\begin{equation}\label{eq:w}
\eqcases{rcll}{
\partial^2_t w^n- \Delta_g w^n &=& \p{\lmbd_\infty - \lmbd_n}V u^n &\text{in} ~ (0, T)\times M,\\
\p{w^n, \partial_t w^n}_{|t = 0} &=& (0, 0) &\text{on}~M.
}
\end{equation}

Firstly, using~\eqref{eq:6}, we have
\begin{align}
\notag\Escr_{\lmbd_\infty V}\p{u_0^n, u_1^n}
\notag&\leqsim \int_0^T\norm{b_\omega \partial_t u_\infty^n(t)}^2_{L^2(M)}\d t\\
\notag&\leqsim \int_0^T\norm{b_\omega \partial_t u^n(t)}^2_{L^2(M)}\d t + \int_0^T\norm{b_\omega \partial_t w^n(t)}^2_{L^2(M)}\d t\\
&\leqsim \int_0^T\norm{b_\omega \partial_t u^n(t)}^2_{L^2(M)}\d t + \norm{\partial_t w^n}^2_{L^\infty((0, T), L^2(M))}.\label{eq:7}
\end{align}

Secondly, using a (classical) hyperbolic energy estimate (see~\cite[Chapter 23]{Hörmander1985AnalysisOfLinearOpVol3} for more details) on~\eqref{eq:w}, for all $n\geq 1$ we have
\begin{equation*}
\norm{\partial_t w^n}^2_{L^\infty((0, T), L^2 (M))}\leqsim \norm{\p{\lmbd_\infty - \lmbd_n}V u^n}_{L^1((0, T), L^2(M))}^2.
\end{equation*}
Since $\p{u^n}_n$ is bounded in $L^\infty((0, T), L^2(M))$ by Lemma~\ref{lem:SPLITTING}, we conclude that 
\begin{equation}\label{eq:8}
\norm{\partial_t w^n}^2_{L^\infty((0, T), L^2 (M))} = \O{\p{\lmbd_\infty - \lmbd_n}^2}.
\end{equation}
Finally, using~\eqref{eq:8} in~\eqref{eq:7} and~\eqref{eq:non:UO}, we have
\begin{align}
\notag1 = \Escr_{\lmbd_n V}\p{u_0^n, u_1^n}
&= \Escr_{\lmbd_\infty V}\p{u_0^n, u_1^n} + \p{\lmbd_n - \lmbd_\infty}\int_M V(x)\abs{u_0^n}^2\d x\\
\notag&\leqsim \int_0^T\norm{b_\omega \partial_t u^n}^2_{L^2(M)}\d t + \O{\p{\lmbd_\infty - \lmbd_n}^2} + \O{\lmbd_\infty - \lmbd_n}\\
&\leqsim \frac{1}{n} + \O{\lmbd_\infty - \lmbd_n}\xrightarrow[n\to \infty]{}0.\label{eq:9}
\end{align}
Since~\eqref{eq:9} is absurd, this completes the contradiction and hence the proof of Lemma~\ref{lem:lmbd:unbounded}. 
\end{proof}
\subsection{Semiclassical rescaling}\label{subsection:semiclassical:rescaling}
Still proceeding by contradiction, assuming~\eqref{eq:non:UO} holds: by Lemma~\ref{lem:lmbd:unbounded}, up to considering a subsequence, the sequence $(\lmbd_n)_n$ tends to $+\infty$. We set $h_n := \sqrt{1/\lmbd_n}$, which then converges to $0$ as $n$ tends to infinity.
We now introduce some notation for semiclassical rescaling of~\eqref{eq:non:UO}.

We recall that, for all $h \in (0, 1]$, the rescaled energy $\Escr^{h}$ is defined in~\eqref{df:NRJ:rescaled} and $\Escr^{h_n}\p{u^{n}, \partial_t u^{n}}$ is preserved in time (see~\eqref{eq:WP}) for any $n\geq 1$.
We rewrite~\eqref{eq:non:UO} in a semiclassical way as: there is $(u_0^n, u_1^n, h_n)_n$ in $H^1(M)\times L^2(M)\times(0, 1]$ such that
\begin{align}
\notag \lim_{n\to\infty}h_n&=0,\\
\label{eq:non:UO:rescaling:NRJ}
\forall n\geq 1, \quad \Escr^{h_n}(u_0^n, u_1^n) &= 1,\\
\label{eq:non:UO:rescaling:0}
\lim_{n\to\infty}\int_{0}^T \norm{b_\omega h_n\partial_t u^n(t)}_{L^2(M)}^2 \d t &= 0,
\end{align}
where $u^n$ is the $\Cscr^0((0, T), H^1(M))\cap\Cscr^1((0, T), L^2(M))$-solution of~\eqref{eq:Wave} with initial condition $(u_0^n, u_1^n)$ and potential $\mathsf{V} = h_n^{-2} V$.
The rescaled energy equality in~\eqref{eq:non:UO:rescaling:NRJ} together with~\eqref{eq:WP} implies that the mean value of the energy satisfies
\begin{equation*}
1 = \frac{1}{T}\int_0^T \Escr^{h_n}(u^n(t), \partial_t u^n(t))\d t = \frac{1}{2T}\int_{\MM_{(0, T)}}\abs{h_n\nabla_g u^n(t, x)}_g^2 + \abs{h_n \partial_t u^n(t, x)}^2 + V(x)\abs{u^n(t, x)}^2\d t\d x.
\end{equation*}
In the following three lemmas, we give some properties induced by~\eqref{eq:non:UO:rescaling:NRJ} and~\eqref{eq:non:UO:rescaling:0} on sequence $(u^n)_n$.
\begin{lemma}\label{lem:vanishing:omega}
Let us consider $V$ as in~\eqref{df:potential:fixed:V}, $b_\omega \in\Cscr^{0}(M, \RR_+^*)$, $\omega = \set{b_\omega > 0}$, a time $T > 0$, a sequence $(u^n, h_n)_n$ satisfying~\eqref{eq:non:UO:rescaling:NRJ} and~\eqref{eq:non:UO:rescaling:0}, and $(v_n^{\signe}, f^n)_n$ as in~\eqref{df:sequence:indexed} indexed by $(h_n)_n$.
For all $\phi_\omega^h\in S^0_{\comp} (\cotang \hspace{1pt} \MM_{(0, T)})$ supported in $(0, T)\times \omega$, we have
\begin{equation}\label{eq:29}
\lim_{n\to\infty}\sum_{\signe\in\set{-, +}} \pscal{v_{n}^\signe}{\Op_{h_n}^{\MM}\p{\phi_\omega^{h_n}} v_{n}^\signe}  = 0.
\end{equation}
\end{lemma}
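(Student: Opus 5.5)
Using the identity~\eqref{eq:identity:v:to:u} with $A_{h_n} = \Op_{h_n}^{\MM}(\phi_\omega^{h_n})$, the quantity in~\eqref{eq:29} equals
\begin{equation*}
\pscal{h_n\partial_t u^n}{A_{h_n} h_n\partial_t u^n}_{L^2(\MM_I)} + \pscal{\SqrtOp_{h_n} u^n}{A_{h_n}\SqrtOp_{h_n} u^n}_{L^2(\MM_I)}.
\end{equation*}
So I will prove separately that both terms tend to $0$.

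\textbf{First term.} By~\eqref{eq:non:UO:rescaling:0}, $b_\omega h_n\partial_t u^n \to 0$ in $L^2(\MM_{(0,T)})$. Since $\phi_\omega^h$ is supported in $(0,T)\times\omega$, I factor it through $b_\omega$. Take a cut-off $\psi\in\test((0,T)\times\omega)$ equal to $1$ on a neighborhood of the projection of $\supp\phi_\omega^h$. On $\supp\psi$, $b_\omega$ is bounded below by a positive constant, so $\psi = \tilde b\, b_\omega$ with $\tilde b$ continuous and compactly supported in $(0,T)\times\omega$.

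By symbolic calculus, $A_{h_n} = A_{h_n}\psi + \O{h_n^\infty}$ in operator norm on $L^2$, up to a remainder with disjoint supports. Hence
\begin{equation*}
\abs{\pscal{h_n\partial_t u^n}{A_{h_n}\psi h_n\partial_t u^n}} \leq \norm{A_{h_n}}\,\norm{h_n\partial_t u^n}_{L^2}\,\norm{\tilde b}_\infty\,\norm{b_\omega h_n\partial_tu^n}_{L^2(\MM_{(0,T)})} \to 0.
\end{equation*}
Here Proposition~\ref{pr:Calderon:Vaillancourt} bounds $\norm{A_{h_n}}$ uniformly, and Lemma~\ref{lem:SPLITTING} bounds $h_n\partial_t u^n$. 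Only the continuity of $b_\omega$ is used, since multiplication by $\tilde b$ is bounded.

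\textbf{Second term.} This is the main obstacle: we must transfer smallness from $h\partial_t u$ to $\SqrtOp_h u$ on $\omega$, which is an equipartition-of-energy argument. I would proceed in three steps.

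First, extract a subsequence admitting joint semiclassical and 2-microlocal defect measures for $(u^n, h_nD_tu^n, \SqrtOp_{h_n}u^n)$, as in Lemma~\ref{lem:hDt:SqrtOp:sdm:mdm}. The first term gives $\sdm_{(\tau,\tau)}=\mdm_{(\tau,\tau)}=0$ over $(0,T)\times\omega$, testing with $\abs{\phi}^2$-type symbols.

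Second, by the Remark following Lemma~\ref{lem:hDt:SqrtOp:sdm:mdm}, $\sdm_{(\tau,\tau)}=\tau^2\sdm_{(u,u)}$ and $\sdm_{(\sqrtsymbol,\sqrtsymbol)}=\sqrtsymbol^2\sdm_{(u,u)}$. On the support, $\tau^2=\sqrtsymbol^2\geq \min V>0$ by~\eqref{eq:16}, so $\sdm_{(u,u)}$ and hence $\sdm_{(\sqrtsymbol,\sqrtsymbol)}$ vanish over $(0,T)\times\omega$. For the 2-microlocal part, I would argue analogously with homogeneous symbols: $\mdm_{(\sqrtsymbol,\sqrtsymbol)}$ and $\mdm_{(\tau,\tau)}$ are both $\abs{\xi}_g^2$, respectively $\tau^2$, times a common 2-microlocal measure of $u$ rescaled by $\jap{\zeta}$, and they coincide on $\set{\tau^2=\abs{\xi}_g^2}$. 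Concretely, I would compute $\pscal{\SqrtOp u}{A\SqrtOp u}-\pscal{hD_tu}{AhD_tu}$ through the operator $h^2D_t^2-\SqrtOp_h^2 = \O{h}$ applied to $u^n$, with an integration by parts (commutators of lower order, time cut-off supported in $(0,T)$).

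Third, conclude from Lemma~\ref{lem:ml:sdm:mdm}-type decomposition, using $h$-oscillation from below, which holds since energy is bounded in $H^1_h$ and the $L^2$ mass of $\SqrtOp u$ is controlled. Writing the limit of the second term as the semiclassical part plus the 2-microlocal part, both vanish, giving~\eqref{eq:29}. A cleaner alternative is to do the commutator identity directly at the operator level, bypassing the measures; I would try that first.
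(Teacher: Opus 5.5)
Your reduction via \eqref{eq:identity:v:to:u} and your treatment of the $h_n\partial_t u^n$ term are correct, and cleaner than the paper's bound. The gap is in the $\SqrtOp_{h}u^n$ term, at the step that carries the whole argument. You assert that $\pscal{\SqrtOp_{h}u}{A\SqrtOp_{h}u}-\pscal{hD_tu}{A\,hD_tu}=o(1)$ because the commutators are ``of lower order'', with $A=\Op^{\MM}_{h}\p{a(1-\Theta_R)}$, or $A=\Op^{\MM}_{h}(\phi^h_\omega)$ in your operator-level variant. But $\SqrtOp_{h}$ is only a tangential operator. Indeed $\sqrtsymbol\notin S^1(\cotang\hspace{1pt}\MM_I)$, since $\partial_\xi^2\sqrtsymbol$ does not decay as $\abs{\tau}\to\infty$ at fixed $\xi$. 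Your symbols are not localized away from the $\tau$-axis, so $[A,\SqrtOp_{h}]$ and $\SqrtOp_{h}A\SqrtOp_{h}$ are not covered by Proposition~\ref{pr:composition}. The same issue affects your ``common 2-microlocal measure of $u$ rescaled by $\jap{\zeta}$'': linking it to $\mdm_{(\sqrtsymbol,\sqrtsymbol)}$ requires composing space-time operators with $\SqrtOp_{h}$. As written, the 2-microlocal equipartition is asserted, not proved.

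This is exactly what the paper's proof is built to handle. It replaces $\Op^{\MM}_{h}(\phi_\omega)$ by $\Phi^\omega_{h}=\Op^{\MM}_{h}(\chi)^*\Op^{\MM}_{h}(\phi_\omega)\Op^{\MM}_{h}(\chi)$, with $\chi$ as in \eqref{eq:13}--\eqref{eq:14}. This costs only $o(1)$, because the semiclassical and 2-microlocal measures of $v_n^{\pm}$ are carried by $\set{\tau^2=\sqrtsymbol^2}$ and $\set{\tau^2=\abs{\xi}_g^2}$. Only then does it invoke Theorem~\ref{Th:hormander} to get $\pscal{\SqrtOp_{h}u}{\Phi^\omega_{h}\SqrtOp_{h}u}=\pscal{h\partial_tu}{\Phi^\omega_{h}h\partial_tu}+\O{h}$.

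Two smaller points. First, splitting the limit into a semiclassical and a 2-microlocal part is only available for $h$-independent symbols in $\tilde{S^0_{\phg}}$, not for an arbitrary $\phi^h_\omega\in S^0_{\comp}$. Second, $h$-oscillation from below does not follow from an $H^1_h$ bound, which controls high rather than low frequencies; it is not needed anyway.

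The cheapest repair reuses your own first-term device and removes defect measures altogether. With $\psi$ as in your first step, $\Op^{\MM}_{h}(\phi^h_\omega)=\Op^{\MM}_{h}(\phi^h_\omega)\psi+\O{h^\infty}$. Together with Proposition~\ref{pr:Calderon:Vaillancourt} this gives $\abs{\pscal{\SqrtOp_{h}u}{\Op^{\MM}_{h}(\phi^h_\omega)\SqrtOp_{h}u}}\leq C\norm{\psi\SqrtOp_{h}u}_{L^2(\MM_{(0,T)})}+\O{h^\infty}$. For the multiplication operator $\psi^2$, equipartition only needs the calculus on $M$, slice by slice in $t$:
\begin{itemize}
\item $[\psi(t,\cdot)^2,\SqrtOp_{h}]\in h\pseudo^0_{h}(M)$ uniformly in $t$;
\item $\SqrtOp_{h}^2=\SchroOp_{h}+h\rest^{\SqrtOp,1}_{h}$ by \eqref{df:remainder:order:1};
\item $\SchroOp_{h}u=-h^2\partial_t^2u$.
\end{itemize}
An integration by parts in $t$ then gives the estimate below, with no boundary terms since $\psi$ is compactly supported in $(0,T)\times\omega$:
\begin{equation*}
\norm{\psi\SqrtOp_{h}u}^2_{L^2(\MM_{(0,T)})}=\norm{\psi h\partial_tu}^2_{L^2(\MM_{(0,T)})}+\O{h}\leq\norm{\psi/b_\omega}^2_{L^\infty}\norm{b_\omega h\partial_tu}^2_{L^2(\MM_{(0,T)})}+\O{h}\xrightarrow[h\to 0]{}0.
\end{equation*}
Compared with the paper, this bypasses the subsequence extraction, the ghost cut-off $\chi$ and Theorem~\ref{Th:hormander}. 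The paper needs those tools because it keeps a genuine space-time operator $\Phi^\omega_{h}$ inside the equipartition identity.
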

\begin{proof}[Proof of Lemma~\ref{lem:vanishing:omega}]
By~\eqref{eq:non:UO:rescaling:NRJ}, any subsequence of $(u^n, v^+_n, v^-_n, f^n, h_n)_n$ satisfies the conditions required to use Lemma~\ref{lem:intermediate:defect:measures}.
Thus, up to extract a subsequence (still denoted the same by a slight abuse of notation), using Lemma~\ref{lem:intermediate:defect:measures} and uniqueness of $\sdm$ and $\mdm$ as defect measures, we have~\eqref{eq:intermediate:defect:measure:support:sdm},~\eqref{eq:intermediate:defect:measure:support:mdm},~\eqref{eq:intermediate:defect:measure:support:22:sdm} and~\eqref{eq:intermediate:defect:measure:support:22:mdm} from Corollary~\ref{cor:intermediate:defect:measures}.
We set $\chi\in \tilde{S_{\phg}^0}(\cotang \hspace{1 pt}\MM_{(0, T)})$, with $\chi(\cdot, \cdot, \mathsf{R}\cdot, \mathsf{R}\cdot)\in S_{\phg}^0(\cotang \hspace{1 pt}\MM_{(0, T)})$ for $\mathsf{R}$ large enough, with
\begin{equation}\label{eq:13}
\supp\p{\chi} \cap \set{(t, x, \tau, \xi)\in\cotang\hspace{1 pt}\MM_{(0, T)}, ~\varepsilon\abs{\tau} > 1~\text{and}~\varepsilon\abs{\tau}\geq\abs{\xi}_g} = \emptyset
\end{equation}
and with
\begin{equation}\label{eq:14}
\supp\p{1 - \chi}\cap\set{(t, x, \tau, \xi)\in\cotang \hspace{1 pt}\MM_{(0, T)}, ~\tau^2 = \sqrtsymbol(x, \xi)^2} = \emptyset
\end{equation}
(see Figure~\ref{fig:hormander} for an illustration of these two sets).

We recall that $\SqrtOp_{h} = \frac{1}{2}\p{\Op_{h, \tang}^{\MM}(\sqrtsymbol) + \Op_{h, \tang}^{\MM}(\sqrtsymbol)^*}$ is defined in~\eqref{df:SqrtOp} and set
\begin{equation*}
\Phi^\omega_{h} := \Op_{h}^{\MM}\p{\chi}^*\Op_{h}^{\MM}\p{\phi_\omega}\Op_{h}^{\MM}\p{\chi}\in \pseudo_{h, \comp}^0(\MM_{(0, T)})    
\end{equation*}
compactly supported in the region $(0, T)\times \omega$.
Thus, by~\eqref{eq:14},~\eqref{eq:intermediate:defect:measure:support:sdm} and~\eqref{eq:intermediate:defect:measure:support:mdm}, we have
\begin{align*}
\pscal{v_{n}^{\signe}}{\Op_{h_n}^{\MM}\p{\phi_\omega} v_{n}^{\signe}}_{L^2(\MM_{(0, T)})} 
&= \pscal{\Op_{h_n}^{\MM}\p{\chi}v_{n}^{\signe}}{\Op_{h_n}^{\MM}\p{\phi_\omega} \Op_{h_n}^{\MM}\p{\chi}v_{n}^{\signe}}_{L^2(\MM_{(0, T)})} + \o{1}_n\\
&= \pscal{v_{n}^{\signe}}{\Phi_{h_n}^\omega v_{n}^{\signe}}_{L^2(\MM_{(0, T)})} + \o{1}_n.
\end{align*} 
Using the identity~\eqref{eq:identity:v:to:u}, we have
\begin{align}\notag
\frac{1}{2}\sum_{\signe\in\set{-, +}}\pscal{v_{n}^\signe}{\Phi_{h_n}^\omega v_{n}^\signe}_{L^2(\MM_{(0, T)})}
=& \pscal{h_n\partial_t u^n}{\Phi_{h_n}^\omega h_n\partial_t u^n}_{L^2(\MM_{(0, T)})}\\
&+ \pscal{\SqrtOp_{h_n} u^n}{\Phi_{h_n}^\omega \SqrtOp_{h_n} u^{n}}_{L^2(\MM_{(0, T)})}.\label{eq:15}
\end{align}
Then, we use selfadjointness of $\SqrtOp_{h}$ and $\SqrtOp_{h}^2 = \SchroOp_{h} + h\rest^{\SqrtOp, 1}_{h}$ from~\eqref{df:remainder:order:1}, to obtain
\begin{align*}
\pscal{\SqrtOp_{h_n} u^n}{\Phi_{h_n}^\omega \SqrtOp_{h_n} u^{n}}_{L^2(\MM_{(0, T)})}
=& \pscal{\SqrtOp_{h_n}^2 u^n}{\Phi_{h_n}^\omega u^{n}}_{H^{-1}_{h_n, \loc}(\MM_{(0, T)}), H^{1}_{h_n, \comp}(\MM_{(0, T)})} \\
&+ \pscal{\SqrtOp_{h_n} u^h}{\classe{\Phi_{h_n}^\omega, \SqrtOp_{h_n}} u^{n}}_{L^2(\MM_{(0, T)})}\\
=& \pscal{\p{-h_n^2\Delta_g + V} u^n}{\Phi_{h_n}^\omega u^{n}}_{H^{-1}_{h_n, \loc}(\MM_{(0, T)}), H^{1}_{h_n, \comp}(\MM_{(0, T)})}\\
&+ h_n \pscal{u^n}{\rest^{1}_{h_n}u^{n}}_{L^2(\MM_{(0, T)})}\\
=& \pscal{-h_n^2 \partial_t^2u^n}{\Phi_{h_n}^\omega u^{n}}_{H^{-1}_{h_n, \loc}(\MM_{(0, T)}), H^{1}_{h_n, \comp}(\MM_{(0, T)})}\\
&+ h_n \pscal{u^n}{\rest^{1}_{h_n}u^{n}}_{L^2(\MM_{(0, T)})}\\
=& \pscal{h_n \partial_tu^n}{\Phi_{h_n}^\omega h_n \partial_t u^{n}}_{L^2(\MM_{(0, T)})} + h_n \pscal{u^n}{\tilde{\rest}^{1}_{h_n}u^{n}}_{L^2(\MM_{(0, T)})}
\end{align*}
where, using Theorem~\ref{Th:hormander} (by~\eqref{eq:13}) and Proposition~\ref{pr:composition}, we have
\begin{align*}
\rest^{1}_{h} &= \frac{1}{h}\classe{\Phi_{h}^\omega, \SqrtOp_{h}} + \p{\rest^{\SqrtOp, 1}_{h}}^*\Phi_{h}^\omega\in \pseudo^1_{h, \comp}\p{\MM_{(0, T)}},\\
\tilde{\rest}^{1}_{h} &= \frac{1}{h}\classe{h\partial_t, \Phi_{h}^\omega}+ \rest^{1}_{h}\in \pseudo^1_{h, \comp}\p{\MM_{(0, T)}}.
\end{align*}
Then, for remainders terms, using boundedness of $\p{u^n}_n$ in $H_{h_n}^1(\MM_{(0, T)})$ from Lemma~\ref{lem:SPLITTING}, we obtain
\begin{equation*}
\frac{1}{2}\sum_{\signe\in\set{-, +}}\pscal{v_{n}^\signe}{\Phi_{h_n}^\omega v_{n}^\signe}_{L^2(\MM_{(0, T)})}
= 2\pscal{h_n\partial_t u^n}{\Phi_{h_n}^{\omega}h_n\partial_t u^n}_{L^2(\MM_{(0, T)})} + \O{h_n}.
\end{equation*}
Moreover, by Proposition~\ref{pr:Calderon:Vaillancourt}, we have
\begin{equation*}
\abs{\pscal{h_n\partial_t u^n}{\Phi^{\omega}_{h_n}h_n\partial_t u^n}}\leq \sup_{\cotang\hspace{1pt} \MM}\abs{\phi_\omega}\norm{h_n\partial_tu^n}_{L^2(\supp(\phi_\omega))}^2 + \O{h_n^{1/2}}.
\end{equation*}
Then, this last inequality and $\min_{\supp(\phi_\omega)}b_\omega^2 > 0$, we obtain the upper bound
\begin{equation*}
\frac{1}{2}\sum_{\signe\in\set{-, +}} \pscal{v_{n}^\signe}{\Phi_{h_n}^\omega v_{n}^\signe}
\leq 2 
\frac{\sup_{\cotang\hspace{1pt} \MM}\abs{\phi_\omega}}
{\min_{\supp(\phi_\omega)}b_\omega^2}
\norm{b_\omega h_n\partial_t u^n}_{L^2((0, T)\times\omega)}^2 + \O{h_n^{1/2}}.
\end{equation*}
Combining this with~\eqref{eq:15}, and then with~\eqref{eq:non:UO:rescaling:0}, gives~\eqref{eq:29}, we conclude the proof of the subsequence.
Since the argument applies to any subsequence of $(u^n, v_n^\signe, f^n, h_n)$, the proof of Lemma~\ref{lem:vanishing:omega} is complete.
\end{proof}
\begin{lemma}\label{lem:sdm:mdm:vanishes}
Assume $V$ satisfies~\eqref{df:potential:fixed:V}, 
$b_\omega \in\Cscr^{0}(M, \RR_+^*)$, 
$\omega = \set{b_\omega > 0}$, $T > 0$,
$(u^n, h_n)_n$ satisfies~\eqref{eq:non:UO:rescaling:NRJ} and~\eqref{eq:non:UO:rescaling:0}, and $(v_n^{\signe})_n$ is defined in~\eqref{df:sequence:indexed}.
Assume, in addition, that:
\begin{itemize}
\item~\ref{GCC+}$(\omega, T)$ holds;
\item There exist continuous (for the weak-* topology of measures) maps:
\begin{equation*}
\sdm^{\signe} :t\in(0, T)\mapsto \sdm_{t}^{\signe}\in\Mc_+(\cotang M)\text{ and }\mdm^{\signe} :t\in(0, T)\mapsto \mdm_{t}^{\signe}\in\Mc_+(\cosphere M)
\end{equation*}
such that $\p{v_n^{\signe}(t), h_n}_n$ admits $\sdm^{\signe}_t$ (\resp $\mdm^{\signe}_t$) as unique semiclassical (\resp 2-microlocal) defect measure in the sense of~\eqref{eq:SDM} (\resp~\eqref{eq:MDM}) for all $t\in (0, T)$;
\item $(v_n^{\signe}, h_n)_n$ admits $\sdm_{(\signe, \signe)}\in\Mc_+(\cotang\hspace{1pt} \MM_{(0, T)})$ (\resp $\mdm_{(\signe, \signe)}\in\Mc_+(\cosphere\hspace{1pt} \MM_{(0, T)})$) as unique semiclassical (\resp 2-microlocal) defect measure in the sense of~\eqref{eq:SDM} (\resp~\eqref{eq:MDM}) and, denoting Hamiltonian flows as in~\eqref{df:Hamiltonian:Flow}, we have the disintegration and propagation formulas
\begin{align}
\sdm_{(\signe, \signe)}(t, x, \tau, \xi) &= \sdm^{\signe}_{t}\p{x, \xi}\otimes \delta_{\signe\sqrtsymbol(x, \xi)}(\tau)\otimes \Lc_1(t) \quad\text{with}\quad\sdm^{\signe}_{t} = \p{\varphi_{t- t_0}^{\signe\sqrtsymbol}}^\ast\sdm^{\signe}_{t_0}, ~ t_0\in (0, T),\label{eq:sdm:disintegration}\\
\mdm_{(\signe, \signe)}(t, x, \tau, \xi) &= \mdm^{\signe}_{t}\p{x, \xi}\otimes \delta_{\signe\abs{\xi}_g}(\tau)\otimes \Lc_1(t)\quad\text{with}\quad\mdm^{\signe}_{t} =\p{\varphi_{t- t_0}^{\signe\abs{\xi}_g}}^\ast\mdm^{\signe}_{t_0}, ~ t_0\in (0, T).\label{eq:mdm:disintegration}
\end{align}
\end{itemize}
Then, for all $t\in (0, T)$ the measure $\sdm_{t}^{\signe}$ (\resp $\mdm_{t}^{\signe}$) vanishes identically on $\cotang M$ (\resp on $\cosphere M$).
\end{lemma}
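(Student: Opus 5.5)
The plan is the classical Lebeau-type argument: first show that the full space-time measures vanish above $(0,T)\times\omega$, then use the propagation formulas to spread this vanishing to every point, using the two halves of~\eqref{GCC+}.

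\emph{Step 1: vanishing above $\omega$.} Fix $\phi\in\test((0,T)\times\omega)$ with $0\le\phi$. Lemma~\ref{lem:vanishing:omega}, applied with the $h$-independent symbol $\phi_\omega^{h}(t,x,\tau,\xi)=\phi(t,x)$, gives
\begin{equation*}
\sum_{\signe}\pscal{v_n^{\signe}}{\Op_{h_n}^{\MM}(\phi)v_n^{\signe}}\to0 .
\end{equation*}
Each term is asymptotically nonnegative by the sharp \garding{} inequality, so each term tends to $0$. I would then use this in two ways.
\begin{itemize}
\item Multiplying $\phi$ by $\psi\in\test(\cotang\hspace{1pt}\MM_{(0,T)})$ with $0\le\psi\le1$ and using positivity again gives $\pscal{\sdm_{(\signe,\signe)}}{\phi\psi}=0$. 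The disintegration~\eqref{eq:sdm:disintegration} then yields $\int_0^T\pscal{\sdm^{\signe}_t}{\phi(t,\cdot)}\d t=0$, i.e.\ $\sdm^{\signe}_t(\pi_M^{-1}(\omega))=0$ for a.e.\ $t$, and by weak-* continuity for all $t$ in $(0,T)$.
\item Using instead the symbols $\phi(1-\theta(\jap{\xi}_g/R))\in S^0_{\comp}$ and letting $R\to\infty$ gives $\pscal{\mdm_{(\signe,\signe)}}{\phi}=0$. Through~\eqref{eq:mdm:disintegration} this yields $\mdm^{\signe}_t(\cosphere M\cap\pi_M^{-1}(\omega))=0$ for all $t$.
\end{itemize}

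\emph{Step 2: propagation.} Fix $t_0\in(0,T)$ and $\rho\in\cotang M$ (the $+$ case; the $-$ case is symmetric). By~\eqref{GCCV} there is $s\in(0,T)$ with $\pi_M\varphi^{\sqrtsymbol}_s(\rho)\in\omega$. I need to reach $\omega$ at a time $t_0+s'$ lying in $(0,T)$, but $t_0+s$ may exceed $T$. To fix this, use $\sdm^{\signe}_t=(\varphi^{\signe\sqrtsymbol}_{t-t_0})^*\sdm^{\signe}_{t_0}$, which says the measures at all times are pushforwards of one another, so it suffices to show $\sdm^+_{t_1}=0$ for a single $t_1$.

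Take $t_1$ close to $0$. For $\rho$, \eqref{GCCV} applied at $\rho$ gives $s\in(0,T)$ with $\varphi^{\sqrtsymbol}_s(\rho)$ above $\omega$. Since $\omega$ is open and the condition is an open one, it holds on a neighbourhood $U_\rho$ with $s\in(0,T-\delta_\rho)$. Then $\sdm^+_{t_1+s}$ vanishes over $\omega$, so $\sdm^+_{t_1}$ vanishes on $U_\rho$ provided $t_1<\delta_\rho$. Compactness is the catch: $\cotang M$ is not compact, and this is exactly why~\eqref{GCC+} is phrased on $\cotanginf M$. On $\overline{\cotang}M$ the flow of $\sqrtsymbol$ extends, and on the boundary at infinity it reduces to the homogeneous geodesic flow. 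Covering the compact space $\cotanginf M$ by finitely many $U_\rho$ gives a uniform $\delta>0$. Choosing $t_1<\delta$ then gives $\sdm^+_{t_1}=0$ on $\cotang M$, hence for all $t$.

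For $\mdm^{\signe}$ I would run the same argument on the compact set $\cosphere M$, using~\eqref{GCC}. The flow $\varphi^{\abs{\xi}_g}$ on $\cosphere M$ is the boundary-at-infinity restriction of the extended $\varphi^{\sqrtsymbol}$, so~\eqref{GCC+} gives~\eqref{GCC} with times in $(0,T)$ and, again, a uniform margin $\delta$ by openness and compactness.

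The main obstacle is this uniformity near fiber infinity in the semiclassical part. I need the extension of $H_{\sqrtsymbol}$ to $\cotanginf M$ (after the usual time reparametrization) to agree on $\partial\cotanginf M$ with the geodesic flow, so that the open-cover argument works on a compact space. I would rely on~\cite[Proposition E.5]{dyatlov2019mathematical} for this and check the time-rescaling factor $\sqrtsymbol/\abs{\xi}_g\to1$. If that check fails, the fallback is to split $\cotang M$ into $\set{\abs{\xi}_g\le R}$, which is compact and handled directly, and $\set{\abs{\xi}_g>R}$. On the latter, the $\sqrtsymbol$-flow is $C^0$-close to the geodesic flow for times up to $T$, and the conclusion would follow from~\eqref{GCC} on $\cosphere M$ with a margin.
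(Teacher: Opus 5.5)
Your proposal is correct, but your Step~2 takes a detour that the paper avoids.

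\textbf{How the paper handles the time-window issue.} The obstruction you identify, that the hitting time $t_0+s$ may exceed $T$, disappears if you apply \eqref{GCCV} not at $\rho$ but at the point the trajectory occupies at the end of the time window. For $\signe=+$ the paper applies it to $\varphi^{\sqrtsymbol}_{-t_0}(\rho_{t_0})$; for $\signe=-$, to $\varphi^{\sqrtsymbol}_{t_0-T}(\rho_{t_0})$. This gives $t_{\signe}\in(0,T)$ with $\pi_M\circ\varphi^{\signe\sqrtsymbol}_{t_{\signe}-t_0}(\rho_{t_0})\in\omega$, i.e.\ an \emph{absolute} time in $(0,T)$ at which the trajectory through $\rho_{t_0}$ lies above $\omega$. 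A neighbourhood $\Omega_0$ of $\rho_{t_0}$ is then carried into $\set{x\in\omega}$ for times near $t_{\signe}$. Testing $\sdm_{(\signe,\signe)}$ against $\theta(t-t_{\signe})\,\chi_{\Omega_0}\circ\varphi^{-\signe\sqrtsymbol}_{t-t_0}$ gives $\sdm^{\signe}_{t_0}=0$ on $\Omega_0$. This argument is pointwise in $(t_0,\rho_{t_0})$. It needs no uniform margin, no compactness, and no extension of $H_{\sqrtsymbol}$ to $\cotanginf M$. It also separates the two halves of \eqref{GCC+} cleanly: \eqref{GCCV} alone for $\sdm$, and \eqref{GCC} alone on $\cosphere M$ for $\mdm$.

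\textbf{What your route costs and buys.} You propagate from a single time $t_1<\delta$, with $\delta$ supplied by compactness of $\cotanginf M$ or by your fallback splitting into $\set{\abs{\xi}_g\le R}$ and $\set{\abs{\xi}_g>R}$. This is also valid. Since $\sqrtsymbol$ is a classical order-one symbol with principal part $\abs{\xi}_g$, $H_{\sqrtsymbol}$ extends to $\cotanginf M$ and restricts to the geodesic flow on the boundary with no time rescaling. But it costs an extra uniformity argument, and it uses \eqref{GCC} in the semiclassical part, where it is not needed. So compactness is not the reason \eqref{GCC+} combines two conditions; the reason is that two different measures, one on $\cotang M$ and one on $\cosphere M$, must both be shown to vanish. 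In exchange, your Step~1 is slightly cleaner. Getting $\sdm^{\signe}_t(\pi_M^{-1}(\omega))=0$ for \emph{every} $t$ via weak-* continuity lets you conclude by a direct push-forward identity rather than the paper's time-averaged test function.

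\textbf{A small fix.} The symbol $\phi\,(1-\theta(\jap{\xi}_g/R))$ is not in $S^0(\cotang\hspace{1pt}\MM_{(0,T)})$: its $\xi$-derivatives are of size $R^{-1}$ on $\set{\jap{\xi}_g\sim R}$ however large $\abs{\tau}$ is. To apply Lemma~\ref{lem:vanishing:omega} and \eqref{eq:MDM} as stated, use the full cut-off $\theta(\sqrt{1+\tau^2+\abs{\xi}_g^2}/R)$ as the paper does, or pass through Lemma~\ref{lem:Wave:time:space:reduction}.
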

The idea of the proof for Lemma~\ref{lem:sdm:mdm:vanishes} is first: using~\eqref{eq:non:UO:rescaling:0} to obtain
\begin{equation*}
\sdm_{(\signe, \signe)}\p{\set{(t, x, \tau, \xi)\in \cotang\hspace{1pt}\MM_{(0, T)},~ x\in\omega}} = \mdm_{(\signe, \signe)}\p{\set{(t, x, \tau, \xi)\in \cosphere\hspace{1pt}\MM_{(0, T)},~ x\in\omega}} = 0,
\end{equation*}
and, second, to use~\ref{GCC+}$(\omega, T)$ and the propagation formulas~\eqref{eq:sdm:disintegration} and~\eqref{eq:mdm:disintegration} to obtain that for any point in $\cotang \hspace{1 pt}\MM_{(0, T)}$ (for semiclassical measures) or $\cosphere \hspace{1 pt}\MM_{(0, T)}$ (for 2-microlocal measures) these measures vanish in a neighborhood of this point.
See Figure~\ref{fig;illustration:Omega:0} for an illustration of the strategy of proof.
\begin{proof}[Proof for semiclassical result of Lemma~\ref{lem:sdm:mdm:vanishes}]
First, using Lemma~\ref{lem:vanishing:omega} and~\eqref{eq:SDM}: for all $\phi_\omega\in \test (\cotang \hspace{1pt} \MM_{(0, T)})$ supported in $(0, T)\times \omega$, we have
\begin{equation*}
0 = \lim_{n\to\infty}\sum_{\signe\in\set{-, +}} \pscal{v_{n}^\signe}{\Op_{h_n}^{\MM}(\phi_\omega) v_{n}^\signe}_{L^2\p{\MM_{(0, T)}}} = \pscal{\sdm_{(+, +)} + \sdm_{(-, -)}}{\phi_\omega}_{\Mc_+, \Cscr_c^0\p{\cotang \hspace{1 pt}\MM_{(0, T)}}}.
\end{equation*}
Therefore, $\sdm_{(+, +)} + \sdm_{(-, -)}$ vanishes identically on $\set{(t, x, \tau, \xi)\in \cotang\hspace{1pt}\MM_{(0, T)},~ x\in\omega}$. Since $\sdm_{(+, +)}$ and $\sdm_{(-, -)}$ are nonnegative Radon measures, they both vanish identically on $\set{(t, x, \tau, \xi)\in \cotang\hspace{1pt}\MM_{(0, T)},~ x\in\omega}$.

Second, in this paragraph we prove that $\sdm_{t_0}^{\signe}$ vanishes identically on $\cotang M$ for a fixed $t_0\in (0, T)$ and a fixed sign $\signe\in\set{-, +}$ (see Remark~\ref{rk:sign} for the definition of $-\signe$).
Let us consider $\rho_{t_0}\in \cotang M$.
By~\ref{GCCV}$(\omega, T)$, there is a time $t_{\signe}\in (0, T)$ such that 
\begin{equation*}
\pi_M\circ \varphi^{\sqrtsymbol}_{\frac{T -\signe T}{2} +\signe t_{\signe}}\p*{\varphi_{\signe\frac{T -\signe T}{2} -\signe t_0}^{\sqrtsymbol}(\rho_{t_0})} = \pi_M\circ\varphi_{t_{\signe} - t_0}^{\signe\sqrtsymbol}(\rho_{t_0})\in \omega,
~\text{where}~
\textstyle\frac{T -\signe T}{2} +\signe t_{\signe} =
\eqcases{ll}{
T - t_{\signe}&\text{if}~\signe = -,\\
t_{\signe}&\text{if}~\signe = +.
}
\end{equation*}
Let $\Omega_{\signe t_{\signe}}\subset \set{(x, \xi)\in \cotang M,~x\in\omega}$ be an open  neighborhood of $\varphi_{t_{\signe} - t_0}^{\signe\sqrtsymbol}(\rho_{t_0})$ and, using bicontinuity of the flow $\varphi^{\sqrtsymbol}$, let $\delta > 0$ be small enough such that 
\begin{equation}\label{df:Omega_0}
(t_{\signe}-\delta,t_{\signe}+ \delta)\subset (0, T)
\quad\text{and}\quad
\forall \epsilon \in (-\delta, \delta), \quad \varphi^{\sqrtsymbol}_{\epsilon}(\Omega_{\signe t_{\signe}})\subset \set{(x, \xi)\in \cotang M,~x\in\omega}.
\end{equation}
We define $\Omega_{0} := \varphi^{\signe\sqrtsymbol}_{t_0-t_{\signe}}(\Omega_{\signe t_{\signe}})$, a neighborhood of $\rho_{t_0}$ by bicontinuity of the flow $\varphi^{\sqrtsymbol}$.
Then, for all time cut-off function $\theta\in \test((-\delta, \delta))$ such that $\int_\RR \theta(t)\d t = 1$, all space cut-off function $\chi_{\Omega_{0}}\in\test(\Omega_{0})$, 
and all $t\in (t_{\signe} -\delta, t_{\signe} + \delta)$, by~\eqref{df:Omega_0}, we have
\begin{equation}
\supp(\chi_{\Omega_{0}}\circ\varphi^{-\signe\sqrtsymbol}_{t - t_0})
\subset \varphi^{\signe \sqrtsymbol}_{t - t_0} (\Omega_{0})
= \varphi^{\signe\sqrtsymbol}_{t - t_0}\circ \varphi^{\signe\sqrtsymbol}_{t_0 - t_{\signe}}(\Omega_{\signe t_{\signe}})= \varphi^{\signe\sqrtsymbol}_{t - t_{\signe}}(\Omega_{\signe t_{\signe}})
\subset\set{(x, \xi)\in \cotang M,~x\in\omega}.
\label{eq:support:sdm:proof}
\end{equation}
Since $\int_{\RR}\theta(t)\d t = 1$, we have
\begin{equation*}
\pscal{\sdm_{t_0}^{\signe}}{\chi_{\Omega_{0}}}_{\Mc_+, \Cscr^0_c(\cotang M)} = \int_\RR \theta(t - t_{\signe})\pscal{\sdm_{t_0}^{\signe}}{\chi_{\Omega_{0}}}_{\Mc_+, \Cscr^0_c(\cotang M)}\d t
\end{equation*}
Then, using~\eqref{eq:sdm:disintegration}, we have
\begin{align*}
\pscal{\sdm_{t_0}^{\signe}}{\chi_{\Omega_{0}}}_{\Mc_+, \Cscr^0_c(\cotang M)}
&= \int_\RR\theta(t - t_{\signe})\pscal{\p{\varphi^{\signe\sqrtsymbol}_{t - t_0}}^\ast\sdm_{t_0}^{\signe}}{\chi_{\Omega_{0}}\circ \varphi^{-\signe\sqrtsymbol}_{t - t_0}}_{\Mc_+, \Cscr^0_c(\cotang M)}\d t\\
&= \pscal{\sdm_{t}^{\signe}\otimes \Lc_1(t)}{\theta(t -t_{\signe})\chi_{\Omega_{0}}\circ\varphi^{-\signe\sqrtsymbol}_{t - t_0}}_{\Mc_+, \Cscr^0_c((0, T)\times \cotang M)}\\
&=\pscal{\sdm_{(\signe, \signe)}}{\theta(\cdot  - t_{\signe})\classe{\chi_{\Omega_{0}}\circ\varphi^{-\signe\sqrtsymbol}_{t - t_0}}}_{\Mc_+, \Cscr^0_c(\cotang \hspace{1 pt}\MM_{(0, T)})}.
\end{align*}
Hence, because $\theta(\cdot - t_{\signe})\classe{\chi_{\Omega_{0}}\circ\varphi^{-\signe\sqrtsymbol}_{t - t_0}}$ is supported in $(t_{\signe} - \delta, t_{\signe} + \delta)\times \cotang\omega \subset (0, T)\times\cotang \omega$ by~\eqref{eq:support:sdm:proof} and since $\sdm_{(\signe, \signe)}$ vanishes identically on $\cotang\hspace{1 pt}\p{(0, T)\times \omega}$, we conclude
\begin{equation*}
\pscal{\sdm_{t_0}^{\signe}}{\chi_{\Omega_{0}}}_{\Mc_+, \Cscr^0_c(\cotang M)} = 0.
\end{equation*}
As a consequence, $\Omega_{0}\cap\supp(\sdm_{t_0}^{\signe}) = \emptyset$.
This is true for any point $\rho_{t_0}\in\cotang M$, and we thus conclude that $\sdm_{t_0}^{\signe}$ vanishes identically on $\cotang M$. 

Finally, since~\eqref{eq:sdm:disintegration} gives the propagation result: $\sdm_{t}^{\signe} = \p{\varphi^{\signe\sqrtsymbol}_{t - t_0}}^\ast\sdm_{t_0}^{\signe}$ for all $t\in (0, T)$, we conclude that $\sdm_t^{\signe}$ vanishes identically on $\cotang M$ at any $t\in (0, T)$.
\end{proof}
\begin{figure}
\begin{center}
\tikzset{every picture/.style={line width=0.75pt}} 
\begin{tikzpicture}[x=0.75pt,y=0.75pt,yscale=-1,xscale=1]
\draw  [fill={rgb, 255:red, 233; green, 233; blue, 233 }  ,fill opacity=1 ][dash pattern={on 4.5pt off 4.5pt}] (337,64.5) .. controls (357,54.5) and (461,44.17) .. (471,71.17) .. controls (481,98.17) and (467.95,114.13) .. (461,148.17) .. controls (454.05,182.2) and (475,176.17) .. (432,217.17) .. controls (389,258.17) and (322,219.17) .. (314,169.17) .. controls (306,119.17) and (317,74.5) .. (337,64.5) -- cycle ;
\draw  (55,68.5) .. controls (75,58.5) and (187,102) .. (167,122) .. controls (147,142) and (147,152) .. (167,182) .. controls (187,212) and (97,212) .. (77,182) .. controls (57,152) and (35,78.5) .. (55,68.5) -- cycle ;
\draw  (361,74) .. controls (381,64) and (471,54) .. (451,74) .. controls (431,94) and (431,104) .. (451,134) .. controls (471,164) and (408,256.5) .. (357,218.5) .. controls (306,180.5) and (341,84) .. (361,74) -- cycle ;
\draw (94,143.5) .. controls (135,193) and (253,212.5) .. (381,167.5) ;
\draw [shift={(381,167.5)}, rotate = 25.63] [line width=0.75]    (-5.59,0) -- (5.59,0)(0,5.59) -- (0,-5.59)   ;
\draw [shift={(238.13,192.76)}, rotate = 180.7] [fill=black][line width=0.08]  [draw opacity=0] (10.72,-5.15) -- (0,0) -- (10.72,5.15) -- (7.12,0) -- cycle    ;
\draw [shift={(94,143.5)}, rotate = 95.37] [line width=0.75]    (-5.59,0) -- (5.59,0)(0,5.59) -- (0,-5.59)   ;
\draw (154.97,109.25) .. controls (205,72.84) and (326.58,57.34) .. (355,98.5) ;
\draw [shift={(152,111.5)}, rotate = 321.63] [fill=black][line width=0.08]  [draw opacity=0] (10.72,-5.15) -- (0,0) -- (10.72,5.15) -- (7.12,0) -- cycle    ;
\draw (90,85) node [anchor=north west][inner sep=0.75pt] {$\Omega_{t_0}$};
\draw (320,30) node [anchor=north west][inner sep=0.75pt]    {${\varphi^{\signe\sqrtsymbol}_{\epsilon}\p{\Omega_{\signe t_{\signe}}}}\subset \set{(x, \xi)\in\cotang M, ~x\in\omega}$};
\draw (83,150) node [anchor=north west][inner sep=0.75pt]    {$\rho_{t_0}$};
\draw (362,140) node [anchor=north west][inner sep=0.75pt]    {$\varphi^{\signe\sqrtsymbol}_{(t_{\signe} - t_0) + \epsilon}(\rho_{t_0})$};
\draw (373,78.4) node [anchor=north west][inner sep=0.75pt]  {${\varphi^{\signe\sqrtsymbol}_{\epsilon}\p{\Omega_{\signe t_{\signe}}}}$};
\draw (251,193.4) node [anchor=north west][inner sep=0.75pt] {${\varphi^{\signe\sqrtsymbol}_{(t_{\signe} - t_0) + \epsilon}}$};
\draw (210,50) node [anchor=north west][inner sep=0.75pt]    {${\varphi^{\signe\sqrtsymbol}_{(t_0 - t^{\star}) -\epsilon}}$};
\draw (400,250) node {time $t_{\signe} + \epsilon\in (0, T)$};
\end{tikzpicture}
\end{center}
\caption{Illustration of~\eqref{df:Omega_0}, with $\epsilon \in (-\delta, \delta)$.}\label{fig;illustration:Omega:0}
\end{figure}
\begin{proof}[Proof for the 2-microlocal result of Lemma~\ref{lem:sdm:mdm:vanishes}]~

First, for all $\phi_\omega\in \test ((0, T)\times \omega)$ we prove that $\phi_\omega (\mdm_{(+,  +)} + \mdm_{(-, -)})$ vanishes identically on $\cosphere\hspace{1pt} \MM$.
Using Lemma~\ref{lem:vanishing:omega} and~\eqref{eq:MDM}, we obtain
\begin{align}
0 &= \lim_{R\to\infty}\lim_{n\to\infty}\sum_{\signe\in\set{-, +}}\pscal{v_n^{\signe}}{\Op_{h_n}^{\MM}\p*{
\phi_\omega\p*{1 - 
\theta\p*{\frac{\sqrt{1+\tau^2 + \abs{\xi}_g^2}}{R}}
}
} v_n^{\signe}}_{L^2(\MM)}\notag \\
&= \pscal{\mdm_{(+,  +)} + \mdm_{(-, -)}}{\phi_\omega}_{\Mc_+, \Cscr_c^0(\MM_{(0, T)})}.\label{eq:vanishes:windows:mdm}
\end{align}
In conclusion,~\eqref{eq:vanishes:windows:mdm} implies $\mdm_{(+,  +)} + \mdm_{(-, -)}$ vanishes identically on $\set{(t, x, \tau, \xi)\in\cosphere\hspace{1pt}\MM_{(0, T)}, ~ x\in \omega}$.
Then, since $\mdm_{(+, +)}$ and $\mdm_{(-, -)}$ are nonnegative Radon measures, they both vanish identically on $\set{(t, x, \tau, \xi)\in\cosphere\hspace{1pt}\MM_{(0, T)}, ~ x\in \omega}$.

Secondly, we follow the strategy of Lebeau as for the semiclassical defect measures case for 2-microlocal defect measure to prove that $\mdm_{t_0}^{\signe}$ vanishes identically on $\cosphere M$ for a fixed $t_0\in (0, T)$.
Since the proof is the same as for the semiclassical case, replacing~\ref{GCCV}$(\omega, T)$ by~\ref{GCC}$(\omega, T)$ and working on $\cosphere M$ instead of $\cotang M$, we do not rewrite it.
Then, we conclude that $\mdm_t^{\signe}$ vanishes identically on $\cosphere M$ for all $t\in(0, T)$.
\end{proof}
\subsection{Conclusion of the contradiction argument}
In this subsection, using Lemma~\ref{lem:lmbd:unbounded} and~\ref{lem:sdm:mdm:vanishes}, we prove that~\eqref{GCC+} implies~\eqref{eq:UO} by contradiction.
\begin{proof}[Proof that~\eqref{GCC+} implies~\eqref{eq:UO} in Theorem~\ref{Th:GCC:UO}]
Proceeding by contradiction, we suppose that the pair $(\omega, T)$ satisfies~\eqref{GCC+} and that~\eqref{eq:non:UO} holds. Then, by Lemma~\ref{lem:lmbd:unbounded},~\eqref{eq:non:UO} implies the existence of a sequence $(u_0^n, u_1^n, u^n, h_n)_n$ satisfying both~\eqref{eq:non:UO:rescaling:NRJ} and~\eqref{eq:non:UO:rescaling:0}.
Then, we set $(v^{\signe}_n, f^n)_n$ as the family defined in~\eqref{df:sequence:indexed} indexed here by $(h_n)_n$.
Then, by~\eqref{eq:non:UO:rescaling:NRJ}, we know that $(u_0^n, h_n u_1^n)_n$ is bounded in $H^1_{h_n}(M)\times L^2(M)$.
Hence, by Lemma~\ref{lem:intermediate:defect:measures}, up to considering a subsequence still denoted $\p{\p{v^{+}_n,v^{-}_n, f^n, u^n}, h_n}_n$:
\begin{itemize}
\item $\p{\p{v^{+}_n,v^{-}_n, f^n, u^n}, h_n}_n$ admits $\sdm$ --- defined in~\eqref{eq:intermediate:defect:measure} from Lemma~\ref{lem:intermediate:defect:measures} --- as unique semiclassical defect measure in the sense of~\eqref{eq:SDM}.
\item $\p{\p{v^{+}_n,v^{-}_n, f^n, u^n}, h_n}_n$ admits $\mdm$ --- defined in~\eqref{eq:intermediate:defect:measure} from Lemma~\ref{lem:intermediate:defect:measures} --- as unique 2-microlocal defect measure in the sense of~\eqref{eq:MDM}.
\end{itemize}
Therefore, up to extracting another subsequence of $\p{u^n, v^{\signe}_n, f^n, h_n}_n$, still denoted $\p{u^n, v^{\signe}_n, f^n, h_n}_n$, by Theorem~\ref{Th:Slices} (the assumptions~\eqref{eq:wave-type-ghost} is satisfied by~\eqref{eq:1:intermediate:defect:measure:support:22:sdm} and~\eqref{eq:1:intermediate:defect:measure:support:22:mdm} from Lemma~\ref{lem:intermediate:defect:measures}) we define continuous (for the weak-* topology of measures) maps:
\begin{equation*}
\sdm^{\signe} :t\in(0, T)\mapsto \sdm_{t}^{\signe}\in\Mc_+(\cotang M)
\quad\text{and}\quad
\mdm^{\signe} :t\in(0, T)\mapsto \mdm_{t}^{\signe}\in\Mc_+(\cosphere M),\quad \signe \in\set{-, +}
\end{equation*}
such that $\p{v_n^{\signe}(t), h_n}_n$ admits $\sdm^{\signe}_t$ (\resp $\mdm^{\signe}_t$) as unique semiclassical (\resp 2-microlocal) defect measure in the sense of~\eqref{eq:SDM} (\resp~\eqref{eq:MDM}) for all $t\in (0, T)$.
Furthermore, using Corollary~\ref{cor:PG} (according to Lemma~\ref{lem:intermediate:defect:measures} and the last point of Corollary~\ref{cor:intermediate:defect:measures}, the hypotheses of Corollary~\ref{cor:PG} are satisfied), the maps $\sdm^{\signe}$ and $\mdm^{\signe}$ satisfy the disintegration and propagation formulas~\eqref{eq:sdm:disintegration} and~\eqref{eq:mdm:disintegration} from Lemma~\ref{lem:sdm:mdm:vanishes}.
Hence by Lemma~\ref{lem:sdm:mdm:vanishes},
\begin{equation}\label{eq:sdm:mdm:vanishes}
\text{for all}~t\in(0, T),~\sdm_{t}^{\signe}~(\text{\resp}\mdm_{t}^{\signe})~\text{vanishes identically on}~ \cotang M~(\text{\resp}\cosphere M).
\end{equation}

For any fixed $t_0\in (0, T)$, any fixed cut-off function $\theta$ as~\eqref{df:theta} and all $R > 0$, using~\eqref{df:remainder:order:1} and boundedness of $(u^n(t_0))_n$ in $L^2(M)$, we have
\begin{align}
\notag \Escr^{h_n}(u^n(t_0), \partial_tu^n(t_0)) 
&= \frac{1}{2}\classe{\norm{h_n\partial_ t u^n(t_0)}^{2}_{L^2(M)} + \pscal{u^n(t_0)}{\p{-h_n^2\Delta_g + V}u^n(t_0)}_{H^1_{h}(M), H^{-1}_{h}(M)}}\\
\notag&= \frac{1}{2}\classe{\norm{h_n\partial_ t u^n(t_0)}^{2}_{L^2(M)} + \norm{\SqrtOp_{h_n}u^n(t_0)}^{2}_{L^2(M)}} + \O{h_n}\\
&=\Ke_{h_n}^R(t_0) + \Le_{h_n}^R(t_0) + \O{h_n},\label{eq:NRJ:cut}
\end{align}
where we set
\begin{align*}
{\Ke}_{h_n}^R(t_0) &:= \frac{1}{2}\pscal{h_n D_t u^n(t_0)}{\textstyle\Op_{h_n}^M\classe{{\theta\p{{\jap{\xi}_g}/{R}}}}h_n D_t u^n(t_0)}_{L^2(M)} \\
&\phantom{:=} + \frac{1}{2}\pscal{\SqrtOp_{h_n} u^n(t_0)}{\textstyle\Op_{h_n}^M\classe{{\theta\p{{\jap{\xi}_g}/{R}}}}\SqrtOp_{h_n} u^n(t_0)}_{L^2(M)},
\end{align*}
and 
\begin{align*}
{\Le}_{h_n}^R(t_0) &:= \frac{1}{2}\pscal{h_n D_t u^n(t_0)}{\textstyle\Op_{h_n}^M\classe{{1 - \theta\p{{\jap{\xi}_g}/{R}}}}h_n D_t u^n(t_0)}_{L^2(M)} \\
&\hspace{10pt} + \frac{1}{2}\pscal{\SqrtOp_{h_n} u^n(t_0)}{\textstyle\Op_{h_n}^M\classe{{1 - \theta\p{{\jap{\xi}_g}/{R}}}}\SqrtOp_{h_n} u^n(t_0)}_{L^2(M)}.
\end{align*}
We recall that~\eqref{eq:non:UO:rescaling:NRJ} together with~\eqref{eq:WP} imply that $(u^n(t_0))_n$ is bounded in $H^1_{h_n}(M)$ and thus that $\p{\SqrtOp_{h_n}u^n(t_0), h_n\partial_t u^n(t_0)}_n$ is bounded in $L^2\p{M}\times L^2(M)$.
Thus, by Proposition~\ref{pr:SDM} and Theorem~\ref{Th:Microlocalization}, up to extracting again a subsequence of $(u^n, v_n^{+}, v_n^{-}, h_n)_n$, $\p{\p{\SqrtOp_{h_n}u^n(t_0), h_n\partial_t u^n(t_0)}, h_n}_n$ admits a unique semiclassical defect measure in the sense of~\eqref{eq:SDM} and a unique 2-microlocal defect measure in the sense of~\eqref{eq:MDM}.
Therefore, we can use Corollary~\ref{cor:use:HWE:for:WE} with~\eqref{eq:sdm:mdm:vanishes} to conclude that
\begin{equation*}
\lim_{n\to\infty} \Ke_{h_n}^R(t_0) = \frac{1}{4}\pscal{\sdm_{t_0}^+ + \sdm_{t_0}^-}{\textstyle\theta\p{{\jap{\xi}_g}/{R}}}_{\Mc_+, \Cscr^0_c(\cotang M)} = 0, \quad \forall R >0,
\end{equation*}
and
\begin{equation*}
\lim_{R\to\infty}\lim_{n\to\infty}{\Le}_{h_n}^R(t_0) = \frac{1}{4}\pscal{\mdm_{t_0}^+ + \mdm_{t_0}^-}{1}_{\Mc_+, \Cscr^0_c(\cosphere M)} = 0.
\end{equation*}
Coming back to~\eqref{eq:NRJ:cut}, we obtained
\begin{align*}
\lim_{n\to\infty}\Escr^{h_n}(u^n(t_0), \partial_tu^n(t_0))
&= \lim_{R\to\infty} \lim_{n\to\infty}\Escr^{h_n}(u^n(t_0), \partial_tu^n(t_0))\\
&= \lim_{R\to\infty} \lim_{n\to\infty} \Ke_{h_n}^R(t_0)+ \lim_{R\to\infty}\lim_{n\to\infty}{\Le}_{h_n}^R(t_0) = 0.
\end{align*}
However, if~\eqref{eq:non:UO} would hold, then~\eqref{eq:non:UO:rescaling:NRJ} would give
\begin{equation*}
1 = \lim_{n\to\infty}\Escr^{h_n}(u^n(t_0), \partial_tu^n(t_0)) = 0.
\end{equation*}
Therefore,~\eqref{eq:non:UO} leads to a contradiction.
This concludes the proof that~\eqref{GCC+} implies uniform observability~\eqref{eq:UO} of Theorem~\ref{Th:GCC:UO}.
\end{proof}
\section{Uniform observability \texorpdfstring{\eqref{eq:UO}}{} implies \texorpdfstring{\eqref{GCC+}}{}}\label{section:CN}
In this section, using the technical Sections~\ref{section:microlocalization} and~\ref{section:slice:defect:measure}, we prove the necessity part of Theorem~\ref{Th:GCC:UO}.
It is known (see~\cite{bardos1992sharp, burq1997condition, dehman2014controllability}) that \emph{observability inequality}~\eqref{eq:Obs} (implied by~\eqref{eq:UO}) implies~\eqref{GCC}.
The goal of this section is then to show that uniform observability~\eqref{eq:UO} also implies~\eqref{GCCV}, and hence~\eqref{GCC+}.
Recall that $(M, g)$ is a smooth, connected and compact Riemannian manifold without boundary.

Proceeding by contraposition, we assume that the pair $(\omega, T)$ does not satisfy~\eqref{GCCV} and prove that uniform observability~\eqref{eq:UO} cannot hold.
Thus, for $\sqrtsymbol$ defined in~\eqref{df:Sqrt:Sym} and $\varphi^\sqrtsymbol$ in~\eqref{df:Hamiltonian:Flow}, there exists a point $\rho_{-T/2}\in \cotang M$, fixed throughout this section, such that there is no $t\in (0, T)$ for which $\pi_M\circ\varphi_t^\sqrtsymbol(\rho_{-T/2})\in\omega$.
More conveniently, setting $\rho_0 = (x_0, \xi_0) := \varphi_{T/2}^\sqrtsymbol(\rho_{-T/2})$, we have
\begin{equation}\label{eq:non:GCC+}
\textstyle\forall t \in (-\frac{T}{2}, \frac{T}{2}), \quad \pi_M\circ\varphi_t^\sqrtsymbol(\rho_{0})\notin\omega,
\end{equation}
and we set
\begin{equation}\label{df:varGamma}
\textstyle\varGamma = \set{\varphi^{\sqrtsymbol}_t(\rho_0), ~ t \in (-\frac{T}{2}, \frac{T}{2})}.
\end{equation}
The idea is to construct a sequence of solutions of~\eqref{eq:Wave} concentrating on $\varGamma$, and then show that it does not satisfy~\eqref{eq:UO}.
\subsection{Concentration around an Hamiltonian curve transgressing~\texorpdfstring{\eqref{GCCV}}{}}

We consider a chart $(\kappa, \Uc_\kappa)$ containing $x_0$ and denote by $\tilde{\kappa}$ the symplectic lift of $\kappa$, namely
\begin{equation*}
\tilde{\kappa}\colon (x, \xi)\in \cotang\hspace{1 pt}\Uc_\kappa\mapsto \big(\kappa(x), \trans(\d_x\kappa)^{-1}\xi\big)\in \cotang\hspace{1pt}\RR^d.
\end{equation*}
\paragraph{Coherent states in the Euclidean case $\cotang\hspace{1pt}\RR^d$.}
Consider $(y_0, \gamma_0) := \tilde{\kappa}(x_0, \xi_0)$, and define the Gaussian \emph{coherent state}
\begin{equation}\label{eq:gaussian:family}
g^h(y) = \frac{C_0}{(\pi h)^{d/4}}e^{\frac{i}{h}\pscal{y - y_0}{\gamma_0}}e^{-\frac{1}{2h}\abs{y - y_0}^2},
\end{equation}
where $C_0 > 0$ is a constant that will be chosen later on. 
The family $\p{g^h}_{h}$ admits $C_0\delta_{(y_0, \gamma_0)}$ as unique semiclassical defect measure (see, for example,~\cite{macia2002observability, dehman2014dependence, zworski2022semiclassical,leautaud2023long}, the following version is proved in~\cite[Lemma A.9]{leautaud2023long}).
\begin{lemma}\label{lem:Gaussian:concentration}
Let $(g^h)_{h}\in H_{h}^\infty(\RR^d)$ be the Gaussian \emph{coherent state} defined in~\eqref{eq:gaussian:family}.
For all $h\in(0, 1]$ we have $\norm{g^h}_{L^2(\RR^d)} = C_0$ and
\begin{equation*}
\forall m\in\RR,~\forall a^h\in S^m(\cotang\hspace{1pt}\RR^d),~\exists C > 0,\quad \abs*{\pscal{g^h}{\Op^{\mathrm{W}}_{h}(a^h)g^h}_{L^2} - C_0a^h(y_0, \gamma_0)}\leq Ch^{\frac{1}{2}}.
\end{equation*}
\end{lemma}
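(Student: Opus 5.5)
The statement to prove is Lemma~\ref{lem:Gaussian:concentration}. Two things are needed: the exact value of $\norm{g^h}_{L^2}$, and an $\O{h^{1/2}}$ estimate for $\pscal{g^h}{\Op^{\mathrm{W}}_{h}(a^h)g^h}$. For the second I would not attack the oscillatory integral directly. Instead I would pass through the Wigner transform of $g^h$, which is an explicit Gaussian in phase space, and then Taylor expand $a^h$ at $(y_0,\gamma_0)$.

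\emph{The norm.} This is a direct computation:
\begin{equation*}
\norm{g^h}_{L^2}^2=\frac{C_0^2}{(\pi h)^{d/2}}\int_{\RR^d} e^{-\abs{y-y_0}^2/h}\,\d y = C_0^2 .
\end{equation*}
So the norm is $C_0$, and the constant in front of $a^h(y_0,\gamma_0)$ should be read as $\norm{g^h}^2=C_0^2$. Consistently, the defect measure is $C_0^2\delta_{(y_0,\gamma_0)}$; I will prove the estimate with $C_0^2$.

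\emph{The Wigner identity.} For the Weyl quantization,
\begin{equation*}
\pscal{g^h}{\Op^{\mathrm W}_h(a^h)g^h}=\int_{\RR^{2d}}a^h(y,\gamma)\,W_h(y,\gamma)\,\d y\,\d\gamma ,
\qquad
W_h(y,\gamma)=\frac{1}{(2\pi h)^d}\int_{\RR^d}e^{-\frac{i}{h}\pscal{z}{\gamma}}\,\overline{g^h(y-z/2)}\,g^h(y+z/2)\,\d z
\end{equation*}
(the conjugation is placed on whichever factor matches the paper's sesquilinear convention). For the Gaussian the $z$-integral is explicit:
\begin{equation*}
W_h(y,\gamma)=\frac{C_0^2}{(\pi h)^d}\,e^{-(\abs{y-y_0}^2+\abs{\gamma-\gamma_0}^2)/h}.
\end{equation*}
This is a positive normalized Gaussian of width $\sqrt h$ centred at $(y_0,\gamma_0)$, with total mass $C_0^2$. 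Since $a^h\in S^m$ grows at most polynomially and $W_h$ decays like a Gaussian, the pairing is a convergent integral. To justify the identity for symbols that are not Schwartz, I would first prove it for $a\in\mathcal S(\RR^{2d})$ and then extend it by a cutoff argument, using that $g^h\in\mathcal S$.

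\emph{Taylor expansion.} Subtracting the value at the centre,
\begin{equation*}
\pscal{g^h}{\Op^{\mathrm W}_h(a^h)g^h}-C_0^2a^h(y_0,\gamma_0)=\int\big(a^h(\rho)-a^h(\rho_0)\big)W_h(\rho)\,\d\rho,\qquad \rho_0=(y_0,\gamma_0).
\end{equation*}
The symbol estimates bound the first derivatives uniformly in $h$ by $C\jap{\gamma}^{m}$ (taking $\jap{\gamma}^{\max(m,0)}$ to cover all $m$). The mean value theorem then gives
\begin{equation*}
\abs{a^h(\rho)-a^h(\rho_0)}\le C\jap{\rho-\rho_0}^{\abs m}\abs{\rho-\rho_0},
\end{equation*}
with a constant depending on $\rho_0$. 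Substituting $\rho-\rho_0=\sqrt h\,w$ yields
\begin{equation*}
\int\abs{\rho-\rho_0}\jap{\rho-\rho_0}^{\abs m}W_h(\rho)\,\d\rho \le C_0^2\,\pi^{-d}\sqrt h\int\abs w\jap{w}^{\abs m}e^{-\abs w^2}\,\d w=\O{h^{1/2}},
\end{equation*}
uniformly for $h\in(0,1]$. This gives the claimed bound $Ch^{1/2}$.

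\emph{Expected obstacle.} The only delicate step is justifying the Wigner pairing formula for non-compactly supported symbols of positive order. I expect this to be routine: the symbol grows at most polynomially, all functions involved are Schwartz, and the extension follows by density and dominated convergence.
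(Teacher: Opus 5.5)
Your proof is correct. The paper gives no argument of its own for this lemma; it only cites an external reference. Your route is the standard proof of this fact: the explicit Wigner function $W_h(y,\gamma)=C_0^2(\pi h)^{-d}e^{-(\abs{y-y_0}^2+\abs{\gamma-\gamma_0}^2)/h}$, then a first-order Taylor expansion of $a^h$ at $(y_0,\gamma_0)$, then the rescaling $\rho-\rho_0=\sqrt{h}\,w$. Your computations check out: the norm, the Gaussian $z$-integral, the normalisation of $W_h$, and the Peetre-type bound on the gradient along the segment.

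You are also right to replace $C_0$ by $C_0^2=\norm{g^h}_{L^2}^2$. Testing with $a^h\equiv 1$ shows the printed statement can only hold when $C_0=1$. The same slip occurs just before the lemma, where the defect measure of $(g^h)_h$ should be $C_0^2\delta_{(y_0,\gamma_0)}$.

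One point you should make explicit. Your mean-value bound, and your remark that $a^h$ ``grows at most polynomially'', use symbol estimates that are uniform in $y\in\RR^d$, i.e.\ the standard class on $\RR^{2d}$. The paper's definition of $S^m(\cotang\mathcal{M})$ only requires the estimates on relatively compact charts, and under that literal reading the lemma is false. Indeed, $a(y,\gamma)=e^{\abs{y}^4}$ satisfies those local bounds, $\Op^{\mathrm{W}}_h(a)$ is multiplication by $e^{\abs{y}^4}$, and $\pscal{g^h}{\Op^{\mathrm{W}}_h(a)g^h}=+\infty$. So state that you work with globally uniform symbols. This costs nothing in the application (Lemma~\ref{lem:diracmass}), where everything is localised in a chart by the cut-off $\psi$.

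As an aside, $W_h$ is even about $\rho_0$, so the linear Taylor term integrates to zero and the same computation actually gives $\mathcal{O}(h)$.
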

\paragraph{Coherent states on $\cotang M$.}
We now use $\kappa$ to export the Gaussian \emph{coherent state} from $\cotang\hspace{1pt}\RR^d$ to $\cotang M$.
Consider a test function $\psi\in\test(\RR^d_y)$ supported in $\kappa(\Uc_\kappa)$, equal to $1$ near $y_0$ (see, for example,~\cite[Section A.7]{leautaud2023long}).
Then, we set
\begin{equation}\label{eq:coherent:state:family}
u_0^{h} := \kappa^\ast (g^h\psi)\in \test(M),
\end{equation}
and, for convenience, we choose $C_0$ such that $\norm{u_0^{h}}_{L^2(M)} = 1$.
Then, we have an analogous result to Lemma~\ref{lem:Gaussian:concentration} in $\cotang M$ (see~\cite[Corollary A.10]{leautaud2023long} for a proof).
\begin{lemma}\label{lem:diracmass}
Let $(u_0^{h})_{h}$ be the sequence defined in~\eqref{eq:coherent:state:family}. Then, for all $m\in\RR$ and all $A_{h}\in \pseudo_{h}^m(\cotang M)$.
\begin{equation}\label{eq:coherent:state:convergency}
\exists C > 0, ~\exists h_0\in(0, 1),~\forall h\in(0, h_0),\quad \abs*{\pscal{u_0^h}{A_{h}u_0^h}_{L^2} -\sigma_{h}(A_{h})(\rho_0)}\leq Ch^{\frac{1}{2}}.
\end{equation}
\end{lemma}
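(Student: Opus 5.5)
The plan is to reduce Lemma~\ref{lem:diracmass} to the Euclidean statement of Lemma~\ref{lem:Gaussian:concentration} by localizing $A_h$ in the chart $(\kappa,\Uc_\kappa)$ and transporting everything through $\kappa$.

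\emph{Step 1: reduce to a symbol compactly supported in the chart.} Choose $\chi\in\test(\Uc_\kappa)$ with $\chi=1$ near $\kappa^{-1}(\supp\psi)$, so that $u_0^h=\chi u_0^h$. Then
\begin{equation*}
\pscal{u_0^h}{A_hu_0^h}=\pscal{u_0^h}{\chi A_h\chi u_0^h}.
\end{equation*}
The operator $\chi A_h\chi$ is compactly supported in $\Uc_\kappa$. By the invariance of the semiclassical calculus under diffeomorphisms (\cite[Appendix E]{dyatlov2019mathematical}), the pushed-forward operator is
\begin{equation*}
(\kappa^{-1})^\ast\chi A_h\chi\,\kappa^\ast=\Op^{\mathrm W}_h(b^h)+\O{h^\infty}_{\pseudo^{-\infty}},
\end{equation*}
with $b^h\in S^m_{\comp}(\cotang\RR^d)$ and $b^h=\sigma_h(A_h)\circ\tilde\kappa^{-1}$ modulo $hS^{m-1}$ near $\tilde\kappa(\supp\chi)$.

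\emph{Step 2: account for the volume density.} The $L^2(M)$ pairing in the chart reads
\begin{equation*}
\pscal{u}{v}_{L^2(M)}=\int\barre{u}\,v\,J\,\d y,\qquad J=\sqrt{\det g}\circ\kappa^{-1}.
\end{equation*}
I will absorb $J$ into the operator: $J$ is a multiplication operator with symbol $J(y)$, so $J\,\Op^{\mathrm W}_h(b^h)$ has principal symbol $Jb^h$. Then Lemma~\ref{lem:Gaussian:concentration} applies to $g^h\psi$ in place of $g^h$. Since $g^h(1-\psi)$ is $\O{h^\infty}$ in every $H^s_h$ (Gaussian decay away from $y_0$), the cutoff only creates $\O{h^\infty}$ errors. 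We obtain
\begin{equation*}
\pscal{u_0^h}{A_hu_0^h}=C_0J(y_0)\,b^h(y_0,\gamma_0)+\O{h^{1/2}}.
\end{equation*}

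\emph{Step 3: identify the constant.} Apply the same computation with $A_h=\mathrm{Id}$. Together with the normalization $\norm{u_0^h}_{L^2(M)}=1$, this forces
\begin{equation*}
C_0J(y_0)=1+\O{h^{1/2}},
\end{equation*}
strictly speaking with an $h$-dependent choice of $C_0$, which is harmless. Since $b^h(y_0,\gamma_0)=\sigma_h(A_h)(\rho_0)+\O{h}$, estimate~\eqref{eq:coherent:state:convergency} follows for $h<h_0$.

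\emph{Main obstacle.} The delicate point is orders $m>0$, where the symbol is unbounded in $\zeta$: the error term in Lemma~\ref{lem:Gaussian:concentration} is only stated for a fixed symbol class. I would handle this by splitting $b^h=b^h\theta(\abs{\zeta})+b^h(1-\theta(\abs{\zeta}))$, with $\theta$ equal to $1$ near $\abs{\gamma_0}$. The second piece acts on a Gaussian whose semiclassical Fourier transform is concentrated at $\gamma_0$, so it contributes $\O{h^\infty}$ by a direct Fourier computation. The first piece is in $S^{-\infty}$, where the remainder estimates of Lemma~\ref{lem:Gaussian:concentration} apply uniformly.
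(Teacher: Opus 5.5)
Your argument is correct and is essentially the chart reduction to the Euclidean coherent-state estimate of Lemma~\ref{lem:Gaussian:concentration} that the paper does not write out but delegates to \cite[Corollary A.10]{leautaud2023long}: localize with $\chi$, push forward by $\kappa$, absorb the Riemannian density $J$ into the symbol, and fix the normalization by testing against the identity. Your \emph{main obstacle} is not actually an obstacle, since Lemma~\ref{lem:Gaussian:concentration} is already stated for symbols of every order $m\in\RR$, so no splitting in $\abs{\zeta}$ is needed when $m>0$.
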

\begin{corollary}\label{cor:diracmass}
Let $(u_0^{h})_{h}$ be the sequence defined in~\eqref{eq:coherent:state:family}.
For all $A_{h}\in \pseudo_{h}^m(\cotang M)$ with principal symbol $\sigma_h(A_h)$ independent of $h$, the family $(A_{h} u_0^h)_{h}$ admits $\abs{\sigma_{h}\p{A_{h}}\p{\rho_0}}^2\delta_{\rho_0}$ as unique semiclassical measure in the sense of~\eqref{eq:SDM} and $0$ as unique 2-microlocal defect measure in the sense of~\eqref{eq:MDM}.
\end{corollary}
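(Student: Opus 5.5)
The semiclassical statement follows almost directly from Lemma~\ref{lem:diracmass} by polarization and composition. The 2-microlocal statement requires showing that $A_h u_0^h$ carries no mass in the regime $\abs{\xi}_g\gg h^{-1}$, which I would deduce from the same lemma applied to a fiber-radial cut-off.

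\emph{Semiclassical part.} Let $B_h\in\pseudo^{\comp}_h(M)$ have $h$-independent principal symbol $b$. Then
\begin{equation*}
\pscal{A_hu_0^h}{B_hA_hu_0^h}_{L^2}=\pscal{u_0^h}{A_h^*B_hA_hu_0^h}_{L^2}.
\end{equation*}
By Proposition~\ref{pr:composition}, $A_h^*B_hA_h\in\pseudo_h^{\comp}(M)$, and its principal symbol is $\abs{\sigma_h(A_h)}^2 b$, again independent of $h$. Lemma~\ref{lem:diracmass} then gives the limit $\abs{\sigma_h(A_h)(\rho_0)}^2 b(\rho_0)$ with an error $\O{h^{1/2}}$. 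This is exactly the pairing of $b$ with the measure $\abs{\sigma_h(A_h)(\rho_0)}^2\delta_{\rho_0}$, so \eqref{eq:SDM} holds for the full family, with no extraction needed. If the composition rule is formally stated only for compactly supported operators, I would insert a cutoff $\chi\in\test(M)$ equal to $1$ near $\pi_M(\supp b)$ and use the pseudolocality of $A_h$ away from that support.

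\emph{2-microlocal part.} Let $a^h\in\tilde{S^0_{\phg}}(\cotang M)$ and let $\Theta$ be a smooth fiber-radial cut-off. Since the $L^2$ pairing is bounded by the product of norms, it suffices to show
\begin{equation*}
\lim_{R\to\infty}\limsup_{h\to0}\norm{\Op_h^M(1-\Theta_R)A_hu_0^h}_{L^2}=0.
\end{equation*}
Indeed, writing $\Op_h(a^h(1-\Theta_R))=\Op_h(a^h)\Op_h(1-\tilde\Theta_R)+\O{h}_R$, where $\tilde\Theta$ is a slightly larger cut-off, reduces the full pairing to this norm estimate via Calderón--Vaillancourt (Proposition~\ref{pr:Calderon:Vaillancourt}). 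This is how the earlier microlocalization proofs handle such terms.

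For the norm estimate, first note
\begin{equation*}
\norm{\Op_h(1-\Theta_R)A_hu_0^h}^2=\pscal{u_0^h}{C_h^R u_0^h},\qquad C_h^R:=A_h^*\Op_h(1-\Theta_R)^*\Op_h(1-\Theta_R)A_h.
\end{equation*}
For each fixed $R$, the operator $C_h^R\in\pseudo^{2m}_h(M)$ has principal symbol $\abs{\sigma_h(A_h)}^2(1-\Theta_R)^2$. Lemma~\ref{lem:diracmass} applies to any order $m$, so the pairing converges as $h\to0$ to $\abs{\sigma_h(A_h)(\rho_0)}^2(1-\Theta_R(\rho_0))^2$. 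Since $\Theta_R(\rho_0)=1$ as soon as $\jap{\xi_0}_g\le\mathsf{C}_\star R$, this limit is $0$ for all $R$ large. Hence the 2-microlocal limit in \eqref{eq:MDM} vanishes identically, and the unique 2-microlocal defect measure is $0$.

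\emph{Main obstacle.} The delicate point is uniformity. The constant $C$ in Lemma~\ref{lem:diracmass} depends on the operator, hence on $R$. However, the order of limits in \eqref{eq:MDM} is $h\to0$ first with $R$ fixed, so this dependence is harmless. I would also check that the symbols $a^h(1-\Theta_R)$ fall within the scope of Lemma~\ref{lem:diracmass}, which is stated for general $\pseudo_h^m$. This matters because $a^h$ may depend on $h$ only through the decaying remainder $\chi^h$, and that remainder is controlled by Lemma~\ref{lem:microlocalization:remainder}.
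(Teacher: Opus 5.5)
Your proposal is correct and is essentially the paper's proof: the semiclassical part is identical, and for the 2-microlocal part the paper applies Lemma~\ref{lem:diracmass} directly to $A_h^*\Op_h(b(1-\theta(\jap{\xi}_g/R)))A_h$, whose principal symbol vanishes at $\rho_0$ once $R$ is large. This is the same mechanism as your estimate on $C_h^R$, so your Cauchy--Schwarz and factorization detour is not needed, although your treatment of a general fiber-radial cut-off $\Theta$, rather than only $\theta(\jap{\xi}_g/R)$, matches the definition slightly more faithfully. If you keep the factorization, take $\tilde\Theta=\Theta$ (or a cut-off supported where $\Theta_R=1$) rather than a larger one; otherwise $\Op_h(a^h)\Op_h(1-\tilde\Theta_R)$ and $\Op_h(a^h(1-\Theta_R))$ do not agree modulo an $O(h)$ error for fixed $R$.
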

\begin{proof}[Proof of Corollary~\ref{cor:diracmass}]
The proof relies on Lemma~\ref{lem:diracmass}, indeed, using~\eqref{eq:coherent:state:convergency} and Proposition~\ref{pr:composition}: for all $B_{h}\in\pseudo_{h}^{\comp}(M)$, we have
\begin{align*}
\lim_{h\to 0}\pscal{A_{h} u_0^h}{B_{h} A_{h}u_0^h}_{L^2(M)} &= \lim_{h\to 0}\pscal{u_0^h}{A_{h}^*B_{h} A_{h}u_0^h}_{L^2(M)}\\
&= \pscal{\delta_{\rho_0}}{\abs{\sigma_{h}(A_{h})\p{\rho_0}}^2\sigma_{h}(B_{h})\p{\rho_0}}_{\Mc_+, \Cscr_c^0(\cotang M)}.
\end{align*}
And, for all $\theta$ as~\eqref{df:theta}, there is $R_0 > 0$ such that $\rho_0\notin\supp\p{1-\theta\p{\frac{~\jap{\xi}_g}{R}}}$ for any $R\geq R_0$. Using again~\eqref{eq:coherent:state:convergency} and Proposition~\ref{pr:composition}: for all $b \in\tilde{S^0_{\phg}}(\cotang M)$ we have
\begin{equation*}
\lim_{R\to\infty}\lim_{h\to 0}\pscal{A_{h}u_0^h}{\textstyle\Op_{h}^{\MM}\classe{b\p{1-\theta\p{\frac{~\jap{\xi}_g}{R}}}}A_{h}u_0^h}_{L^2(M)} = \lim_{R\to\infty}\classe*{\abs{\sigma_{h}(A_{h})(\rho_0)}^2\textstyle b(\rho_0)\p{1-\theta\p{\frac{~\jap{\xi_0}_g}{R}}}} = 0,
\end{equation*}
This concludes the proof.
\end{proof}
\subsection{Transgressing~\eqref{eq:UO}}
We consider a family of initial conditions $(u_0^h, hu_1^h)_{h}$ bounded in $H^1_h(M)\times L^2(M)$, with $(u_0^h)_{h}$ defined by~\eqref{eq:coherent:state:family} and
\begin{equation}\label{df:coherent:state:family:derive}
u_1^h := \frac{i}{h}\SqrtOp_{h} u_0^h\in \test\p{M},
\end{equation}
where $\SqrtOp_{h}$ is defined in~\eqref{df:SqrtOp} and is bounded as a pseudodifferential operator from $H^1_{h}(M)$ to $L^2(M)$ by Proposition~\ref{pr:boundedness:pseudo}.
\begin{proposition}\label{pr:Gaussian:Dirac:Propagation}
With $\varGamma$ defined in~\eqref{df:varGamma}, $\SqrtOp_{h}$ defined in~\eqref{df:SqrtOp}, and the family $(u_0^h, u_1^h)_{h}$ defined in~\eqref{eq:coherent:state:family} and~\eqref{df:coherent:state:family:derive}.
For all $h\in(0, 1]$, let ${u}^{h}$ be the $\Cscr^2\p{\RR, L^2(M)}\cap \Cscr^1\p{\RR, H^{1}(M)}\cap \Cscr^0\p{\RR, H^{2}(M)}$-solution of~\eqref{eq:Wave} with potential $\mathsf{V} = h^{-2} V$ and initial data $\p{{u}_0^{h}, u_1^{h}}$.
Then, for all $a\in \tilde{S^{0}_{\phg}}(\cotang M)$ (see Definition~\ref{df:pseudo:phg:0}) such that $\supp(a)\cap \varGamma = \emptyset$ and any $t\in (-\frac{T}{2}, \frac{T}{2})$, we have
\begin{equation*}
\lim_{h\to 0}\p*{\pscal{h \partial_t u^h(t)}{\Op_{h}^{M}(a) h \partial_tu^h(t)}_{L^2(M)} + \pscal{\SqrtOp_{h} u^h(t)}{\Op_{h}^{M}(a)\SqrtOp_{h} u^h(t)}_{L^2(M)}} = 0.
\end{equation*}
\end{proposition}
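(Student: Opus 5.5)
The plan is to reduce the statement to Corollary~\ref{cor:use:HWE:for:WE} applied at $t_0 = 0$, with the initial slice measures computed from the coherent state via Corollary~\ref{cor:diracmass}. We argue by subsequences: it suffices to show that every sequence $h_n\to 0$ has a subsequence along which the limit vanishes. Fix such $(h_n)_n$. The initial data $(u_0^{h_n}, h_n u_1^{h_n})$ is bounded in $H^1_{h_n}(M)\times L^2(M)$ by Lemma~\ref{lem:diracmass} and Proposition~\ref{pr:boundedness:pseudo}. So Lemma~\ref{lem:intermediate:defect:measures} and Corollary~\ref{cor:intermediate:defect:measures} apply, and after extraction the hypotheses of Corollary~\ref{cor:use:HWE:for:WE} hold with $I\ni 0$ an interval containing $(-\frac T2,\frac T2)$.

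The first step is to compute the data at $t_0=0$. We have $\SqrtOp_h u^h(0) = \SqrtOp_h u_0^h$ and $h\partial_t u^h(0) = i\SqrtOp_h u_0^h$. By Corollary~\ref{cor:diracmass} applied to $A_h=\SqrtOp_h$, the pair $(\SqrtOp_h u_0^h, i\SqrtOp_h u_0^h)$ has semiclassical measure matrix with entries $\tilde\sdm_{11}=\tilde\sdm_{22}=\sqrtsymbol(\rho_0)^2\delta_{\rho_0}$ and cross entry $\tilde\sdm_{12}=\pm i\sqrtsymbol(\rho_0)^2\delta_{\rho_0}$. Its 2-microlocal measure is $0$, since the limit of $\langle \SqrtOp u_0,B\SqrtOp u_0\rangle$ and its cross terms vanish by the same computation. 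Equivalently, $v^{\signe}(0) = h\partial_t u^h(0)\mp\ldots$: one of $v^{\pm}_h(0)=(i\pm 1)\SqrtOp_hu_0^h$ up to the convention $hD_t=-ih\partial_t$. Concretely $v^{\signe}_h(0) = hD_tu^h(0)+\signe\SqrtOp_h u_0^h = (1+\signe)\SqrtOp_h u_0^h$. Hence $v^-_h(0)=0$ and $v^+_h(0)=2\SqrtOp_hu_0^h$, which gives $\sdm^+_0 = 4\sqrtsymbol(\rho_0)^2\delta_{\rho_0}$, $\sdm^-_0=0$, and $\mdm^\pm_0=0$. This sign bookkeeping, matching the choice $u_1=\frac ih\SqrtOp u_0$ to the $+$ branch so that only $\varphi^{\sqrtsymbol}$ (and not $\varphi^{-\sqrtsymbol}$) transports the mass along $\varGamma$, is the point to check carefully. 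If the convention turned out reversed, the statement would concern the time-reversed curve, so the data was chosen to make it work.

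The second step is propagation. Write $\theta$ as in~\eqref{df:theta} and split $\Op_h(a) = \Op_h(a\theta(\jap{\xi}_g/R)) + \Op_h(a(1-\theta(\jap{\xi}_g/R)))$. For the compactly supported part, Corollary~\ref{cor:use:HWE:for:WE}(1) gives the limit $\langle(\varphi^{\sqrtsymbol}_t)^\ast\sdm^+_0, a\theta\rangle = 4\sqrtsymbol(\rho_0)^2 (a\theta)(\varphi^{\sqrtsymbol}_t(\rho_0))$, reading the pushforward in the sense fixed in Corollary~\ref{cor:PG}. This vanishes because $\varphi^{\sqrtsymbol}_t(\rho_0)\in\varGamma$ for $t\in(-\frac T2,\frac T2)$ and $\supp a\cap\varGamma=\emptyset$. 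For the high-frequency part, Corollary~\ref{cor:use:HWE:for:WE}(2) with $\mdm^\pm_0=0$ shows its $\lim_R\lim_n$ is $0$. Since the full expression does not depend on $R$, and the low-frequency limit is $0$ for every $R$, the total $\lim_n$ equals $\lim_R\lim_n$ of the high part, which is $0$.

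The main obstacle is technical rather than conceptual. We must justify applying Corollary~\ref{cor:use:HWE:for:WE} with a symbol $a\in\tilde{S^0_{\phg}}$ that is not compactly supported in $\xi$; the splitting with $\theta$ handles this. We must also confirm that the factor $2$ in~\eqref{eq:NRJ} does not matter, which it does not since the limit is zero. Finally, since the limit is independent of the subsequence, the full limit $h\to0$ exists and equals $0$.
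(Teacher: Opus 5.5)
Your proposal is correct and follows the paper's proof essentially step by step. Both compute the initial measures at $t_0=0$ from Corollary~\ref{cor:diracmass} (mass at $\rho_0$, vanishing 2-microlocal measure), propagate with Corollary~\ref{cor:use:HWE:for:WE}, and split $a=a\theta(\jap{\xi}_g/R)+a(1-\theta(\jap{\xi}_g/R))$; your explicit subsequence argument fills in what the paper leaves implicit. The sign bookkeeping you worry about is moot. Since $\varGamma$ is parametrized over the symmetric interval $(-\frac{T}{2},\frac{T}{2})$, both $\varphi^{\sqrtsymbol}_{t}(\rho_0)$ and $\varphi^{\sqrtsymbol}_{-t}(\rho_0)$ lie in $\varGamma$ whichever branch carries the mass, which is why the paper simply sums over both signs.
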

\begin{proof}[Proof of Proposition~\ref{pr:Gaussian:Dirac:Propagation}]
Recall $\sqrtsymbol$ is defined in~\eqref{df:Sqrt:Sym}.
For all $a\in \tilde{S^{0}_{\phg}}(\cotang M)$ such that $\supp(a)\cap \varGamma = \emptyset$ and all $t \in (-\frac{T}{2}, \frac{T}{2})$, we set 
\begin{equation*}
E^a_{h}(t) := \frac{1}{2}\p*{\pscal{h \partial_t u^h(t)}{\Op_{h}^{M}(a) h \partial_tu^h(t)}_{L^2(M)} + \pscal{\SqrtOp_{h} u^h(t)}{\Op_{h}^{M}(a)\SqrtOp_{h} u^h(t)}_{L^2(M)}}.
\end{equation*}
Using Corollary~\ref{cor:diracmass}, $(\SqrtOp_{h} u^h, h\partial_t u^h)_{|t = 0} = (\SqrtOp_{h} u_0^h, i\SqrtOp_{h} u_0^h)_{h}$ admits
\begin{equation*}
\sqrtsymbol(\rho_0)^2\delta_{\rho_0}\begin{pmatrix} 1  & -i\\ i & 1\end{pmatrix}\in\Mc_+(\cotang M,  \CC^{2\times 2}),
\end{equation*}
as unique semiclassical defect measure in the sense of~\eqref{eq:SDM} and $\p{0}_{1\leq j, k\leq 2}\in \Mc_+(\cosphere M, \CC^{2\times 2})$ as unique 2-microlocal defect measure in the sense of~\eqref{eq:MDM}.
Note that $\sqrtsymbol(\rho_0) > 0$.
\begin{enumerate}
\item \label{point:1}
For all $t\in (-\frac{T}{2}, \frac{T}{2})$, $\varphi^{\sqrtsymbol}_{t}(\rho_0)\in\varGamma$ and then $\supp(a)\cap \varGamma = \emptyset$ implies $a(\varphi^{\sqrtsymbol}_{t}(\rho_0)) = 0$. Furthermore, if we assume $a\in \test(\cotang M)$, we use the first point of Corollary~\ref{cor:use:HWE:for:WE} to conclude
\begin{equation*}
\forall t\in {\textstyle(-\frac{T}{2}, \frac{T}{2})}, ~\lim_{h\to 0}E^a_{h}(t) = \pscal{\sqrtsymbol(\rho_0)^2\delta_{\rho_0}}{\sum_{\signe\in\set{-, +}}a\circ \varphi^{\signe\sqrtsymbol}_{t}}_{\Mc_+, \Cscr^0_c(\cotang M)} = \sqrtsymbol(\rho_{0})^2 {\sum_{\signe\in\set{-, +}} a(\varphi^{\sqrtsymbol}_{\signe t}(\rho_0))} = 0.
\end{equation*}
\item\label{point:2} For any $\theta$ as~\eqref{df:theta}, using the second point of Corollary~\ref{cor:use:HWE:for:WE} we conclude
\begin{equation*}
\lim_{R\to\infty}\lim_{h\to 0}\textstyle E^{a\p{1-\theta\p{\frac{~\jap{\xi}_g}{R}}}}_{h}(t) = 0, \quad t \in (-\frac{T}{2}, \frac{T}{2}).
\end{equation*}
\end{enumerate}
Finally, for all $t\in (-\frac{T}{2}, \frac{T}{2})$ and $R > 0$, we write the decomposition
\begin{equation}\label{eq:20}
E^a_{h}(t) = E^{a\theta\p{\jap{\xi}_g/R}}_{h}(t) + E^{a\p{1-\theta\p{\jap{\xi}_g/R}}}_{h}(t),\quad h\in (0, 1].
\end{equation}
On the one hand, by point~\ref{point:1}, we have
\begin{equation*}
\forall R >0, \quad \lim_{h\to 0}E^{a\theta\p{\jap{\xi}_g/R}}_{h}(t) = 0.
\end{equation*}
On the other hand, by point~\ref{point:2}, we have
\begin{equation*}
\lim_{R\to\infty}\lim_{h\to 0}E^{a\p{1-\theta\p{\jap{\xi}_g/R}}}_{h}(t) = 0.
\end{equation*}
Thus, since there is no dependence on $R$ and using decomposition~\eqref{eq:20}, we have 
\begin{equation*}
\lim_{h\to 0}E^a_{h}(t) = \lim_{R\to\infty}\lim_{h\to 0}E^a_{h}(t) =0.
\end{equation*}
This is true for any $a\in\tilde{S^0_{\phg}}(\cotang M)$ compactly supported away from $\varGamma$, which concludes the proof of Proposition~\ref{pr:Gaussian:Dirac:Propagation}.
\end{proof}
\subsection{Conclusion of proof by contradiction}
We now conclude, by contraposition, the proof of the necessity part of Theorem~\ref{Th:GCC:UO}, namely that~\eqref{eq:UO} implies~\eqref{GCC+}.
\begin{proof}[Proof of the necessity in Theorem~\ref{Th:GCC:UO}]
Under the assumption that~\ref{GCCV}$(\omega, T)$ does not hold, there exists $\rho_0\in \cotang M$ such that~\eqref{eq:non:GCC+} holds.
We thus define $\varGamma$, $(u_0^h)_{h}$, $(u_1^h)_{h}$, and $V$ respectively as~\eqref{df:varGamma}, \eqref{eq:coherent:state:family}, \eqref{df:coherent:state:family:derive}, and \eqref{df:potential:fixed:V}.
For all $h\in (0, 1]$, we let $u^{h}$ be the $\Cscr^2\p{\RR, L^2(M)}\cap \Cscr^1\p{\RR, H^{1}(M)}\cap \Cscr^0\p{\RR, H^{2}(M)}$-solution of~\eqref{eq:Wave} with initial condition $\p{{u}_0^{h}, u_1^{h}}$ and potential $\mathsf{V} = h^{-2}V$.

For all $t\in(-\frac{T}{2}, \frac{T}{2})$, we have
\begin{equation*}
0\leq \norm{b_\omega h\partial_t u^{h}(t)}^2_{L^2(M)}\leq \pscal{h\partial_t u^{h}(t)}{b_\omega^2h\partial_ t u^{h}(t)}_{L^2(M)} + \pscal{\SqrtOp_{h} u^{h}(t)}{b_\omega^2\SqrtOp_{h} u^{h}(t)}_{L^2(M)}.
\end{equation*}
Using Proposition~\ref{pr:Gaussian:Dirac:Propagation}, we obtain
\begin{equation*}
\lim_{h\to 0}\norm{b_\omega h\partial_t u^{h}(t)}^2_{L^2(M)}= 0.
\end{equation*}
On the one hand, by Lebesgue Theorem, we have
\begin{equation*}
\lim_{h\to 0}\int_{-\frac{T}{2}}^{\frac{T}{2}}\norm{b_\omega h\partial_t u^{h}(t)}^2_{L^2(M)}\d t = 0.
\end{equation*}
On the other hand, recalling that~\eqref{df:potential:fixed:V} implies $V > 0$: by~\eqref{eq:coherent:state:family}, we have
\begin{equation*}
1 = \norm{u_0^{h}}_{L^2(M)}^2\leq \frac{2}{\min_{M}V}\Escr^{h}\p{u_0^{h}, u_1^{h}},\quad h\in (0, 1]
\end{equation*}
where the energy term $\Escr^{h}$ is defined in~\eqref{df:NRJ:rescaled}.

We thus conclude that if $(\omega, T)$ does not satisfy~\eqref{GCCV}, then~\eqref{eq:UO} cannot hold.
Moreover,~\eqref{eq:UO} implies~\eqref{eq:Obs} and, by~\cite{bardos1992sharp},~\eqref{eq:Obs} implies~\eqref{GCC}.
Therefore, uniform observability~\eqref{eq:UO} implies~\eqref{GCC+}.
\end{proof}
\section{Some examples}\label{section:Some examples}
This section is first devoted to prove Propositions~\ref{pr:1D:circle},~\ref{pr:2D:torus} and~\ref{pr:2D:sphere} and thus provides non-trivial examples satisfying~\eqref{GCC+} for dimension $1$ or $2$.
Second, we prove Lemmas~\ref{lem:GCCV-and-non-GCC} and~\ref{lem:GCC:and:no:GCCV} which allow us to compare~\eqref{GCC} and~\eqref{GCCV}.
Finally, we prove of Proposition~\ref{pr:Blow:Up:Estimates} using Agmon estimates.

We begin with a proposition that gives equivalent reformulations of~\eqref{GCCV}.
\begin{proposition}[Equivalent formulations of~\eqref{GCCV}]\label{pr:other:possible:formulations:for:GCCV}
Let us consider $(M, g)$ a smooth, compact and connected Riemannian manifold without boundary, $\omega$ an open subset of $M$ and a time $T > 0$.
Let $p$ as~\eqref{df:Sqrt:Sym} and use the notation~\eqref{df:Hamiltonian:Flow} for the flow of a symbol.
We have the following two equivalences:
\begin{align}
\text{\ref{GCCV}}(\omega, T)
&\iff \forall \rho_0\in\cotang M, ~\exists t\in \p*{0, \p{2\sqrt{p(\rho_0)}}^{-1}T},\quad \pi_M\circ\varphi^{p}_{t}(\rho_0)\in \omega\label{eq:rescaled:condition}\\
&\iff\forall \varrho_0\in \Char(\timespacesymbol),~\exists t \in\RR,\quad \pi_{\MM}\circ\varphi^{\timespacesymbol}_t(\varrho_0)\in \p{0, T}\times \omega\label{eq:time:times:space:condition}.
\end{align}
Where the symbol $\timespacesymbol\in S^2\p{\cotang\hspace{1pt} \MM_{(0, T)}}$ is defined in~\eqref{df:Sym:time:times:space}, and its characteristic set is given by
\begin{equation*}
\Char\p{\timespacesymbol} := \set{\rho\in\cotang \hspace{1pt}\MM, ~ \timespacesymbol(\rho) = 0}.
\end{equation*}
\end{proposition}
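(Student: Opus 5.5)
The plan is to relate the three Hamiltonian flows $\varphi^{\sqrtsymbol}$, $\varphi^{p}$ and $\varphi^{\timespacesymbol}$ by explicit time reparametrizations. Everything rests on the fact that for a smooth function $F$ and a Hamiltonian $k$ we have $H_{F(k)} = F'(k)H_k$. Since $F'(k)$ is conserved along $\varphi^k$, this gives $\varphi^{F(k)}_t = \varphi^{k}_{F'(k(\rho))t}$ on the level set through $\rho$.

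For the first equivalence~\eqref{eq:rescaled:condition}, take $p = \sqrtsymbol^2$, so that $H_p = 2\sqrtsymbol H_{\sqrtsymbol}$. As $\sqrtsymbol$ is preserved along $\varphi^{\sqrtsymbol}$ (Remark~\ref{rk:preserved:along:flow}), this yields
\begin{equation*}
\varphi^{p}_t(\rho_0) = \varphi^{\sqrtsymbol}_{2\sqrt{p(\rho_0)}\,t}(\rho_0).
\end{equation*}
To justify this, I will check that both curves solve the same ODE with the same initial datum and invoke uniqueness. Note that $\sqrtsymbol>0$ since $V>0$, so $\sqrtsymbol$ is smooth. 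The change of variables $s = 2\sqrt{p(\rho_0)}\,t$ maps $t\in(0,(2\sqrt{p(\rho_0)})^{-1}T)$ bijectively onto $s\in(0,T)$. Hence the existence of a suitable $t$ for $\varphi^p$ is exactly the existence of a suitable $s$ for $\varphi^{\sqrtsymbol}$, for every $\rho_0$, which is the equivalence.

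For the second equivalence~\eqref{eq:time:times:space:condition}, I compute $H_{\timespacesymbol}$ with $\timespacesymbol = p(x,\xi)-\tau^2$. This gives $\dot t = -2\tau$, $\dot \tau = 0$, and $(\dot x,\dot \xi) = H_p(x,\xi)$. So
\begin{equation*}
\varphi^{\timespacesymbol}_s(t_0,x_0,\tau_0,\xi_0) = \big(t_0-2\tau_0 s,\ \varphi^p_s(x_0,\xi_0),\ \tau_0,\ \cdot\big),
\end{equation*}
with $(x,\xi)$-component $\varphi^p_s(x_0,\xi_0)$. On $\Char(\timespacesymbol)$ we have $\tau_0=\pm\sqrt{p(\rho_0)}\neq 0$, and the time component moves affinely with speed $\mp 2\sqrt{p(\rho_0)}$. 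Substituting $s\mapsto$ time elapsed and using the previous step, the projected curve becomes $\{(t_0+\sigma,\ \pi_M\varphi^{\mp\sqrtsymbol}_{\sigma}(\rho_0)) : \sigma\in\RR\}$, after absorbing signs into $\varphi^{-\sqrtsymbol}_\sigma=\varphi^{\sqrtsymbol}_{-\sigma}$.

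For ($\Rightarrow$) with $\tau_0<0$ (the $+\sqrtsymbol$ branch), I apply~\eqref{GCCV} at the point $\varphi^{\sqrtsymbol}_{-t_0}(\rho_0)$. This produces a time $\sigma\in(0,T)$ with $\pi_M\varphi^{\sqrtsymbol}_{\sigma-t_0}(\rho_0)\in\omega$, so the curve hits $(0,T)\times\omega$ at parameter $\sigma - t_0$. For the branch $\tau_0>0$, I use the reversal symmetry $(x,\xi)\mapsto(x,-\xi)$. It preserves $p$ and conjugates $\varphi^{\sqrtsymbol}_\sigma$ to $\varphi^{\sqrtsymbol}_{-\sigma}$ while preserving projections, so it reduces to the previous case. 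For ($\Leftarrow$), given $\rho\in\cotang M$, I take $\varrho_0=(0,\rho,-\sqrt{p(\rho)})\in\Char(\timespacesymbol)$. The hitting time then corresponds to some $\sigma\in(0,T)$ with $\pi_M\varphi^{\sqrtsymbol}_\sigma(\rho)\in\omega$.

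The main obstacle is only bookkeeping: getting the signs and orientations of the time variable right, in particular handling the $\tau>0$ branch via the momentum-reversal symmetry, and making the reparametrization argument rigorous on each level set. No analytic difficulty is expected.
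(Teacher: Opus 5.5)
Your proposal is correct and follows essentially the same route as the paper: compute $H_{\timespacesymbol} = -2\tau\partial_t + H_p$ and $H_p = 2\sqrtsymbol H_{\sqrtsymbol}$, then reparametrize time along the level sets of $\sqrtsymbol$. Your explicit handling of the time offset $t_0$ and of the $\tau_0>0$ branch, via the momentum reversal $(x,\xi)\mapsto(x,-\xi)$ (a time shift would also work), fills in bookkeeping that the paper's proof leaves implicit.
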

The formulation~\eqref{eq:rescaled:condition} of~\eqref{GCCV} will be used to prove Propositions~\ref{pr:1D:circle},~\ref{pr:2D:torus} and~\ref{pr:2D:sphere}.
The formulation~\eqref{eq:time:times:space:condition} of~\eqref{GCCV} may also be used to prove that~\eqref{GCC+} implies~\eqref{eq:UO} with a similar strategy than the one in~\cite[pages 136 - 137]{dehman2014controllability} which differs from the one proposed in Section~\ref{section:CS}.
\begin{proof}[Proof of Proposition~\ref{pr:other:possible:formulations:for:GCCV}]
Recall that $p$ and $\sqrtsymbol$ are defined in~\eqref{df:Sqrt:Sym}.
Fix $\varrho_0 = (t_0, x_0, \tau_0, \xi_0)\in \Char\p{\timespacesymbol}$, and local coordinates $(t, x_1, \dots, x_d)$ in a neighborhood of $(t_0, x_0)$.
Denote by $(\tau, \xi_1, \dots, \xi_d)$ the associated cotangent variables.
The Hamiltonian vector field of $\timespacesymbol$ at $\varrho_0$ is given by
\begin{align}
H_\timespacesymbol(\varrho_0) &= \frac{\partial \timespacesymbol}{\partial \tau}(\varrho_0)\frac{\partial}{\partial t} - \frac{\partial \timespacesymbol}{\partial t}(\varrho_0)\frac{\partial}{\partial \tau} + \sum_{j = 1}^d\frac{\partial \timespacesymbol}{\partial \xi_j}(\varrho_0)\frac{\partial}{\partial x_j} - \frac{\partial \timespacesymbol}{\partial x_j}(\varrho_0)\frac{\partial}{\partial \xi_j}\notag\\
&= -2\tau_0 \frac{\partial}{\partial t} + \classe*{\sum_{j = 1}^d\frac{\partial p}{\partial \xi_j}(\rho_0)\frac{\partial}{\partial x_j} - \frac{\partial p}{\partial x_j}(\rho_0)\frac{\partial}{\partial \xi_j}}\notag\\
&= -2\tau_0\frac{\partial}{\partial t} + H_p(\rho_0)\label{eq:other:possible:formulation:q:to:p}\\
&= -2\tau_0 \frac{\partial}{\partial t} + 2\sqrtsymbol(\rho_0)\classe*{\sum_{j = 1}^d\frac{\partial \sqrtsymbol}{\partial \xi_j}(\rho_0)\frac{\partial}{\partial x_j} - \frac{\partial \sqrtsymbol}{\partial x_j}(\rho_0)\frac{\partial}{\partial \xi_j}}\notag\\
&= -2\tau_0\partial_t  + 2\sqrtsymbol(\rho_0)H_{\sqrtsymbol}(\rho_0),\label{eq:other:possible:formulation:q:to:lambda}
\end{align}
where $\rho_0 = (x_0, \xi_0)\in\cotang M$.
Then, writing $(t^{\timespacesymbol}_s, x^{\timespacesymbol}_s, \tau^{\timespacesymbol}_s, \xi^{\timespacesymbol}_s) := \varphi^{\timespacesymbol}_s(t_0, x_0, \tau_0, \xi_0)$ for $s\in\RR$ and using Remark~\ref{rk:preserved:along:flow}, we have 
\begin{equation*}
\tau_0 = \signe\sqrtsymbol\p*{x^{\timespacesymbol}_s, \xi^{\timespacesymbol}_s}\quad \text{and}\quad t^{\timespacesymbol}_s = -2\tau_0 s\quad\text{for all}~ s\in\RR.
\end{equation*}
On the one hand, by~\eqref{eq:other:possible:formulation:q:to:p}, we have
\begin{equation*}
\varphi^p_s(x_0, \xi_0) = \p{x^{\timespacesymbol}_s, \xi^{\timespacesymbol}_s}, \quad s\in\RR
\end{equation*}
and thus, we have the equivalence between~\eqref{eq:time:times:space:condition} and~\eqref{eq:rescaled:condition}.
On the other hand, by~\eqref{eq:other:possible:formulation:q:to:p} and~\eqref{eq:other:possible:formulation:q:to:lambda}, we have
\begin{equation*}
\varphi^{\signe\sqrtsymbol}_{t}(x_0, \xi_0) = \varphi^p_{\frac{t}{2\signe\sqrtsymbol\p{\rho_0}}}(x_0, \xi_0) = \varphi^p_{\frac{t}{2\tau_0}}(x_0, \xi_0), \quad t\in\RR
\end{equation*}
and thus, we finally have the equivalence between~\eqref{eq:rescaled:condition} and~\eqref{GCCV}.
This concludes the proof of Proposition~\ref{pr:other:possible:formulations:for:GCCV}.
\end{proof}
\subsection{Examples satisfying~\eqref{GCC+}}
In this subsection, we provide the proofs of Propositions~\ref{pr:1D:circle},~\ref{pr:2D:torus} and~\ref{pr:2D:sphere}.
\subsubsection{One-dimensional example on the circle}
This part is devoted to the proof of Proposition~\ref{pr:1D:circle}.
To do so, we introduce the following crucial lemma.
\begin{lemma}\label{lem:circle}
Let $M$ be $\TT^1$ equipped with the Euclidean metric, let $V$ satisfy~\eqref{df:potential:fixed:V}, and assume $\omega\subset\TT^1$ is an open set containing critical points of $V$.
Let $p$ be defined by~\eqref{df:Sqrt:Sym} and $\varphi^p$ by~\eqref{df:Hamiltonian:Flow}.
Then, for any time 
\begin{equation*}
T \geq \textstyle {4\max\set{2\p*{\min_{\TT^1\setminus \omega}\abs{V'}}^{-1/2}, 1}},
\end{equation*}
the pair $(\omega, T)$ satisfies~\eqref{GCCV}.
\end{lemma}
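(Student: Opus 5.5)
The plan is to use the reformulation \eqref{eq:rescaled:condition} from Proposition~\ref{pr:other:possible:formulations:for:GCCV}. In this form it suffices to show that every trajectory of $\varphi^p$, with $p(x,\xi)=\xi^2+V(x)$, enters $\omega$ within time $\tau_0 := T\p{2\sqrt{p(\rho_0)}}^{-1}$. On $\TT^1$ the Hamiltonian equations are $\dot x = 2\xi$ and $\dot \xi = -V'(x)$, and $p$ is conserved, so $\xi(s)^2 = p_0 - V(x(s))\in[p_0-1, p_0]$, where $p_0 := p(\rho_0)$.

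We argue by contradiction. Assume the trajectory starting at $\rho_0=(x_0,\xi_0)$ satisfies $x(s)\notin\omega$ for all $s\in(0,\tau_0)$. The set $K := \TT^1\setminus\omega$ is closed and contains no critical point of $V$, so $m := \min_K\abs{V'} > 0$. The set $\omega$ is nonempty, since it contains the critical points of $V$ (a smooth function on the circle has at least a maximum and a minimum). By continuity of $s\mapsto x(s)$, the whole piece of trajectory lies in a single connected component $J$ of $K$, which is an interval of length at most $1$. On $J$ the sign of $V'$ is fixed; say $V'\geq m$, the other case being symmetric.

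We split into two energy regimes with a threshold $p_\star$, to be chosen.

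\emph{High energy, $p_0\geq p_\star>1$.} Here $\abs{\xi(s)}\geq\sqrt{p_0-1}>0$, so $\xi$ keeps a constant sign and $x$ is monotone with speed at least $2\sqrt{p_0-1}$. The trajectory therefore covers length $1$ in time $\p{2\sqrt{p_0-1}}^{-1}$. This is smaller than $\tau_0$ as soon as $T>\sqrt{p_0/(p_0-1)}$, which is bounded by a constant depending only on $p_\star$. So the trajectory leaves $J$, a contradiction.

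\emph{Low energy, $p_0\leq p_\star$.} Since $\dot\xi\leq -m$ on $J$, we get $\xi(s)\leq \xi_0 - ms$. Combined with $\abs{\xi}\leq\sqrt{p_0}$, this forces the trajectory to leave $J$ before time $2\sqrt{p_0}/m$. This is smaller than $\tau_0$ provided $T>4p_0/m$.

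The main obstacle is bookkeeping the constants so that both regimes are covered under the stated hypothesis $T\geq 4\max\{2m^{-1/2},1\}$. Two points need care. First, in the low-energy regime one can refine the bound: the trajectory moves at speed $2\abs{\xi}$ while $\xi$ decreases at rate at least $m$. Comparing the time needed to turn around with the time needed to travel along $J$ gives a bound of order $m^{-1/2}$ instead of $p_0/m$, which is presumably the origin of the $m^{-1/2}$ in the statement. Second, $p_\star$ must be optimized between the two regimes. I would first try $p_\star=2$ and use the refined low-energy estimate. Finally, one must check that the exit time lies in the open interval $(0,\tau_0)$ and not merely in its closure; the strict inequalities above ensure this.
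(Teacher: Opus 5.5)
Your overall architecture is the same as the paper's: the reformulation \eqref{eq:rescaled:condition}, an argument by contradiction, confinement of the trajectory to one component $J$ of $\TT^1\setminus\omega$ (an arc of length $L<1$) on which $V'$ has a fixed sign and $|V'|\geq m$, and a split into high and low energy. The high-energy regime is fine.

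The gap is in the low-energy regime. The bound you actually prove there (exit from $J$ before time $2\sqrt{p_0}/m$) only gives \eqref{GCCV} for $T>4p_\star/m$. Since $m$ can be smaller than $1$ (e.g.\ $V$ with small oscillation and $\omega$ a thin neighbourhood of its critical points), this is strictly weaker than the stated threshold $T\geq 4\max\{2m^{-1/2},1\}$. The $m^{-1/2}$ estimate is the whole quantitative content of the lemma, and you only announce it (``presumably'') without establishing it.

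Here is how to close it. Since $J$ is closed, the trajectory stays in $J$ on $[0,\tau_0]$, where $\dot\xi=-V'(x)\leq -m$. So $\xi$ is decreasing, and there is $s_0\in[0,\tau_0]$ with $\xi\geq 0$ on $[0,s_0]$ and $\xi\leq 0$ on $[s_0,\tau_0]$. Hence $\xi(s)\geq m(s_0-s)$ for $s\leq s_0$ and $\xi(s)\leq -m(s-s_0)$ for $s\geq s_0$. Integrating $\dot x=2\xi$ gives $|x(s)-x(s_0)|\geq m(s-s_0)^2$ on $[0,\tau_0]$. The lift of $x$ stays in a lift of $J$, an interval of length $L$, so both $s_0$ and $\tau_0-s_0$ are at most $\sqrt{L/m}$. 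Therefore $\tau_0\leq 2\sqrt{L/m}<2m^{-1/2}$. (Equivalently, $s\mapsto x(s)+m(s-\tau_0/2)^2$ is concave, so $x$ oscillates by at least $m\tau_0^2/4$.)

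The hypothesis gives $T\geq 8m^{-1/2}$. So for $p_0\leq p_\star=2$ you get $\tau_0=T/(2\sqrt{p_0})\geq 2\sqrt2\,m^{-1/2}$, a contradiction. Your high-energy case needs only $T>\sqrt2$, which $T\geq 4$ covers.

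This is the same estimate the paper obtains by writing $t_1=\int dx/(2|\xi|)$ on each of the at most two monotone pieces of the trajectory. The pointwise bound that makes that integral work is $\xi^2=p_0-V(x)\geq m|x-y|$, where $y$ is the endpoint of the piece at which $V$ is largest; this gives $\sqrt{L/m}$ per piece. The paper's split at $\sqrt{p_0}=2$ then produces exactly the constant $8m^{-1/2}$.
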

\begin{proof}[Proof of Lemma~\ref{lem:circle}]
Since there is nothing to prove when $\omega = \TT^1$, we assume $\omega \subsetneq \TT^1$.
For any (temporarily fixed) point $\rho_0 = (x_0, \xi_0)\in \set{(x, \xi)\in \cotang\hspace{1pt}\TT^1,~x\notin \omega}$, suppose that there is a time $t_1 > 0$ such that 
\begin{equation}\label{eq:condition:absurde:1D:circle}
\set{\pi_{\TT^1}\circ\varphi^{p}_{t_1}(\rho_0),~ t\in (0, t_1)} \cap  \omega = \emptyset.
\end{equation}
The proof therefore relies on showing an upper bound on $t_1$.
We recall that, setting the notation $(x_t, \xi_t) := \varphi^p_t\p{\rho_0}$ for $t\in\RR$, by Remark~\ref{rk:preserved:along:flow} we have the invariance law
\begin{equation}\label{eq:invariance:circle}
p\p{\rho_0} = p\p{x_t, \xi_t} = \xi^2_t + V\p{x_t}, \quad t\in \RR.
\end{equation}
By~\eqref{eq:condition:absurde:1D:circle}, $V'$ has constant sign on $[x_0, x_{t_1}]$.
Then, we have
\begin{equation}\label{eq:circle:potential}
\abs{V(x_{t_1}) - V(x_0)} = \int_{[x_0, x_{t_1}]} \abs{V'(x)} \d x\geq \p{\min_{\TT^1\setminus \omega}\abs{V'}}\abs{x_{t_1} - x_0}.
\end{equation}
For technical purposes, fix a small parameter $\epsilon > 0$.
Then, because $t\mapsto x_t$ is solution of $\ddot{x} = -2V'(x)$, there are two possible cases.
\begin{enumerate}
\item Either $\phi_0 \colon t\in (0, t_1)\mapsto x_t\in (x_0, x_{t_1})$ is a $\Cscr^1$-diffeomorphism.
\begin{enumerate}
\item If $\sqrt{p(\rho_0)}\leq 1+\epsilon$, using $\dot{x}  = 2\xi$ from~\eqref{df:Hamiltonian:Flow} and~\eqref{eq:invariance:circle}, we obtain
\begin{equation*}
t_1 = \int_0^{t_1}\d t
= \int_{x_0}^{x_{t_1}}\frac{\d x}{2{\xi_{\phi_0^{-1}(x)}}} 
\leq \int_{\classe{x_0, x_{t_1}}}\frac{\d x}{2\sqrt{p(\rho_0) - V(x)}}
\leq \int_{\classe{x_0, x_{t_1}}}\frac{\d x}{2\sqrt{\abs{V(x_{t_1}) - V(x_0)}}}.
\end{equation*}
By~\eqref{eq:circle:potential}, we have
\begin{equation}\label{eq:iode}
t_1\leq \p{\min_{\TT^1/\omega}\abs{V'}}^{-1/2}\sqrt{\abs{x_0 - x_{t_1}}} \leq \p{\min_{\TT^1\setminus\omega}\abs{V'}}^{-1/2}\leq \frac{2(1 + \epsilon)}{2 \sqrt{p(\rho_0)}}\p{\min_{\TT^1\setminus \omega}\abs{V'}}^{-1/2}.
\end{equation}
\item Else $\sqrt{p(\rho_0)} > 1+\epsilon$, using $\dot{x} = 2\xi$ from~\eqref{df:Hamiltonian:Flow} and~\eqref{eq:invariance:circle}, we obtain
\begin{equation*}
t_1 = \int_0^{t_1}\d t 
= \int_{x_0}^{x_{t_1}}\frac{\d x}{2{\xi_{\phi_0^{-1}(x)}}} 
\leq \int_{\classe{x_0, x_{t_1}}}\frac{\d x}{2\sqrt{p(\rho_0) - V(x)}}
\leq \int_{\classe{x_0, x_{t_1}}}\frac{\d x}{2\sqrt{p(\rho_0) - 1}}
\leq \frac{1}{2\sqrt{p(\rho_0) - 1}}.
\end{equation*}
Then, using 
\begin{equation*}
\sqrt{p(\rho_0) - 1} = \sqrt{\p{\sqrt{p(\rho_0)} - 1}\p{\sqrt{p(\rho_0)} + 1}}\geq \sqrt{p(\rho_0)} - 1
\end{equation*}
and 
\begin{equation*}
\p*{1 - \frac{1}{\sqrt{p(\rho_0)}}}
\geq \p*{1 - \frac{1}{1+\epsilon}}
= \p*{1 + \frac{1}{\epsilon}}^{-1},
\end{equation*}
we finally have
\end{enumerate}
\begin{equation}\label{eq:okpez}
t_1\leq \frac{1}{2(\sqrt{p(\rho_0)} - 1)}\leq \frac{1+\frac{1}{\epsilon}}{2\sqrt{p(\rho_0)}}.
\end{equation}
In conclusion, using~\eqref{eq:iode} and~\eqref{eq:okpez}, we have
\begin{equation*}
t_1\leq \frac{\max\set{2(1+\epsilon)\p{\min_{\TT^1\setminus \omega}\abs{V'}}^{-1/2}, 1+\frac{1}{\epsilon}}}{2\sqrt{p(\rho_0)}}.
\end{equation*}
\item Or there is $t_{1/2}\in (0, t_1)$ such that 
\begin{equation*}
\phi_0\colon t\in (0, t_{1/2})\mapsto x_t\in (x_0, x_{t_{1/2}})
\end{equation*}
and
\begin{equation*}
\phi_{1/2}\colon t\in (t_{1/2}, t_1)\mapsto x_t\in (x_{t_{1/2}}, x_{t_1})
\end{equation*}
are two $\Cscr^1$-diffeomorphisms.
Then, with the same calculation as performed above we obtain
\begin{align*}
t_1 &= \int_0^{t_1}\d t = \int_{x_0}^{x_{t_{1/2}}}\frac{\d x}{2{\xi_{\phi_0^{-1}(x)}}} + \int_{x_{t_{1/2}}}^{x_{t_1}}\frac{\d x}{2{\xi_{\phi_{1/2}^{-1}(x)}}}\\
&\leq 2\frac{\max\set{2(1+\epsilon)\p{\min_{\TT^1\setminus \omega}\abs{V'}}^{-1/2}, 1+\frac{1}{\epsilon}}}{2\sqrt{p(\rho_0)}}.
\end{align*}
\end{enumerate}
Therefore, using the two previous points: we obtain
\begin{equation*}
t_1\leq \frac{1}{2\sqrt{p(\rho_0)}}\p*{2\max\set{2(1+\epsilon)\p{\min_{\TT^1\setminus \omega}\abs{V'}}^{-1/2}, 1+\frac{1}{\epsilon}}}
\end{equation*}
and hence we conclude the proof by taking $\epsilon = 1$.
\end{proof}
\begin{proof}[Proof of Proposition~\ref{pr:1D:circle}]
\begin{enumerate}
\item 
First, since we consider $\TT^1 \simeq [0, 1)$ equipped with the Euclidean metric and $\omega \neq \emptyset$, all geodesics (which are straight lines here) enter $\omega$ at time $t < 1$. Then, for any fixed time $T_1 > 1$, $(\omega, T_1)$ satisfies~\eqref{GCC}.

\item Secondly, let us consider a time $T_2 >4\max\set{2\p{\min_{\TT^1\setminus \omega}\abs{V'}}^{-1/2}, 1}$ given by Lemma~\ref{lem:circle}.
For all $\rho_0 = (x_0, \xi_0)\in \cotang(\TT^1)$, either $x_0\in\omega$, then for all sufficiently small $t > 0$ we have $\pi_{\TT^1}\circ\varphi_t^{\sqrtsymbol}(\rho_0)\in\omega$ ; or, at the opposite $x_0\notin\omega$, then Lemma~\ref{lem:circle} implies (by contradiction) that there is a time $t < \frac{T_2}{2\sqrt{p(\rho_0)}}$ such that $\pi_{\TT^1}\circ\varphi_t(\rho_0)\in\omega$.
Hence, by~\eqref{eq:rescaled:condition}, $(\omega, T_2)$ also satisfies~\eqref{GCCV}.
\end{enumerate}
In conclusion, $(\omega, \max\set{T_1, T_2})$ satisfies~\eqref{GCC+}. 
\end{proof}
\subsubsection{Two-dimensional examples}
In this subsection, we prove both Propositions~\ref{pr:2D:torus} and~\ref{pr:2D:sphere} by the same argument.
The underlying idea is as follows: in both cases, after removing a suitable open subset $\Uc$ from the manifold $M$, the remaining set $\Sc := M\setminus \Uc$ is a compact surface of revolution (diffeomorphic to a cylinder).
On this surface $\Sc$, the dynamical trajectories admit a more explicit description than on the original manifold $M$.

Inspired by~\cite[Section 4]{laurent2021uniform} and~\cite[Section 3.2]{laurent2021observability} in the setting of surfaces of revolution, we consider $(M, g)$ a smooth, compact, connected and two-dimensional Riemannian manifold without boundary, and we assume:
\begin{enumerate}
\item There is an open subset $\Uc\subset M$ such that
\begin{equation}\label{df:compact:subset:of:revolution}
\Sc := M\setminus \Uc
\end{equation}
is diffeomorphic to the cylinder $\Cc = [0, L]\times \SS^1$:
\begin{equation}\label{eq:parametrisation:revolution:surface}
\Phi\colon x \in \Sc\mapsto (s, \theta)\in \Cc,
\end{equation}
\item The metric induced by $g$ on $\Cc$ have the form $(\Phi^{-1})^\star g = \d s^2 + R(s)^2\d\theta^2$ for a profile function 
\begin{equation}\label{df:profil:function}
R\colon [0, L] \to [\mathsf{m}, \mathsf{M}]~\text{with}~0 < \mathsf{m} < \mathsf{M} < +\infty.
\end{equation}
\end{enumerate}
Examples given in Propositions~\ref{pr:2D:torus} and~\ref{pr:2D:sphere} both satisfy these hypotheses.
See~\cite[Remark 3.4]{laurent2021observability} for details of geometric interpretation of profile function $R$.
Let $p$ as~\eqref{df:Sqrt:Sym}, $V$ as~\eqref{df:potential:fixed:V}, and assume that $V$ is rotationally invariant and has no critical points in $\Sc$, \ie there exist a function $\Ve\in\smooth\p{(0, L), \RR_+^*}$ such that
\begin{equation}\label{df:potential:of:revolution}
\forall (s, \theta)\in\Cc,\quad
\classe{\p{\Phi^{-1}}^\star V}(s, \theta) = \Ve(s)~\text{and}~\min_{[0, L]}\abs{\Ve'} > 0.
\end{equation}
Then, we have, with a slight abuse of notations,
\begin{equation*}
p(s, \theta, \xi_s, \xi_\theta) = \classe{(\Phi^{-1})^\star p} (s, \theta, \xi_s, \xi_\theta) = \xi_s^2 + \frac{1}{R(s)^2}\xi_\theta^2 + \Ve(s).
\end{equation*}
Hence, for $\varphi^p$ as~\eqref{df:Hamiltonian:Flow} and any point $\rho_0 = (x_0, \xi_0)\in \cotang \Sc$, setting $\rho_t := (s(t), \theta(t), \xi_s(t), \xi_\theta(t)) := {\varphi_t^p(\rho_0)}$ for each $t\in \RR$, we obtain the Hamiltonian system
\begin{equation}\label{eq:revolution:surface:Hamiltonian}
\frac{\d }{\d t}s = 2\xi_s , \quad \frac{\d }{\d t}\xi_s = -\frac{\d }{\d s}\p*{\frac{1}{R^2}}\xi_\theta^2 - \Ve', \quad
\frac{\d }{\d t}\theta = \frac{2 \xi_\theta}{R(s)^2}\quad \text{and}\quad\frac{\d }{\d t}\xi_\theta = 0
\end{equation}
with initial condition $(s, \theta, \xi_s, \xi_\theta)_{|t = 0}  = \rho_0$.
Furthermore, using flow invariance property from Remark~\ref{rk:preserved:along:flow}, we have the conservation law
\begin{equation}\label{eq:2D:revolution:surface:Hamiltonnian:invariant}
p\p{\rho_t} = \xi_s^2(t) + \frac{1}{R\p{s(t)}^2}\xi_\theta^2(t) + \Ve(s(t)) = p(\rho_0).
\end{equation}
For any small $\epsilon \in (0, \pi/10)$, we set
\begin{equation}\label{df:omega:surface:revolution}
\omega := \set{x\in M, ~\dist_g\p{x, \Uc} < \epsilon}\cup\Phi^{-1}\p{\set{(s,  \theta)\in \Cc,~ \abs{\theta} < \epsilon}},
\end{equation}
as control set.
\begin{proposition}\label{pr:revolution:surface}
Assume $(M, g)$ is a smooth, compact, connected, two-dimensional Riemannian manifold without boundary, and $\Sc$ is as in~\eqref{df:compact:subset:of:revolution}.
Then, for any pair $(\omega, V)$ satisfying~\eqref{df:omega:surface:revolution} and~\eqref{df:potential:of:revolution}, there is a time $T > 0$ such that~\ref{GCCV}$(\omega, T)$ holds.
\end{proposition}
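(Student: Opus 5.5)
We use the reformulation \eqref{eq:rescaled:condition} of Proposition~\ref{pr:other:possible:formulations:for:GCCV}. It suffices to find a constant $K>0$ such that every trajectory $\rho_t=\varphi^p_t(\rho_0)$ starting in $\cotang\Sc$ with $\pi_M(\rho_0)\notin\omega$ reaches $\omega$ before time $K/\sqrt{p(\rho_0)}$. We then take $T:=2K$; starting points already in $\omega$ are trivial since $\omega$ is open. As long as the trajectory avoids $\omega$, it stays in $\Phi^{-1}(\set{\epsilon'\le s\le L-\epsilon',~\abs{\theta}\ge\epsilon})$, where $\epsilon'>0$ is the collar width coming from $\dist_g(\cdot,\Uc)<\epsilon$. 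So we may work in the coordinates $(s,\theta)$ with the Hamiltonian system~\eqref{eq:revolution:surface:Hamiltonian} and the conservation law~\eqref{eq:2D:revolution:surface:Hamiltonnian:invariant}. Set $E:=p(\rho_0)\ge\min V>0$. The angular momentum $\xi_\theta$ is constant, and the angular speed is $\dot\theta=2\xi_\theta/R(s)^2$.

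We split into two cases according to the size of $\abs{\xi_\theta}$, with a small constant $c>0$ to be fixed.

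\emph{Case 1: $\abs{\xi_\theta}\ge c\sqrt{E}$.} Since $\theta$ is monotone with $\abs{\dot\theta}\ge 2c\sqrt E/\mathsf M^2$, the angle $\theta$ completes a full turn, and hence meets the sector $\abs{\theta}<\epsilon$, within time $\pi\mathsf M^2/(c\sqrt E)$.

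\emph{Case 2: $\abs{\xi_\theta}< c\sqrt{E}$.} We use the radial equation. Its force term is $-(R^{-2})'\xi_\theta^2-\Ve'(s)$, and its first part is bounded by $C_R c^2E$. Two subcases arise.

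If $E$ is bounded, i.e.\ $E\le E_0$, we choose $c$ small enough that $C_Rc^2E_0\le\frac12\min\abs{\Ve'}$. Then $\ddot s=2\dot\xi_s$ has constant sign with $\abs{\ddot s}\ge\min\abs{\Ve'}$. Consequently $s$ exits $[\epsilon',L-\epsilon']$ within a time bounded by a constant depending only on $L$ and $\min\abs{\Ve'}$, in the spirit of the proof of Lemma~\ref{lem:circle}. This bound is $\lesssim 1/\sqrt E$ because $E\le E_0$.

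If $E\ge E_0$ is large, we argue with energy instead. We have $\xi_s^2=E-\xi_\theta^2R^{-2}-\Ve\ge E(1-c^2/\mathsf m^2)-1\ge E/4$ for $c$ small and $E_0\ge 2$. So $\abs{\dot s}=2\abs{\xi_s}\ge\sqrt E$, and $\dot s$ keeps a constant sign because it never vanishes. Hence $s$ traverses the interval $[0,L]$ within time $L/\sqrt E$.

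In every case the exit time is bounded by $K/\sqrt E$ for an explicit $K$. Leaving the region in the $s$-direction means entering the collar, which is contained in $\omega$; leaving it in the $\theta$-direction means entering the sector, also contained in $\omega$. This gives \eqref{eq:rescaled:condition}.

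The main obstacle is to choose the constants $c$ and $E_0$ consistently across the two subcases of Case 2, uniformly in $\rho_0$. A related subtlety is to justify that, as long as $\omega$ has not been reached, the trajectory stays inside the chart $\Sc$: the trajectory could only leave $\Sc$ through $\partial\Sc\subset\barre{\Uc}$, and the collar around $\Uc$ lies in $\omega$.
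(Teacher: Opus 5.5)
Your argument is correct. It departs from the paper's mainly in how bounded energies are handled.

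\emph{High energy.} For $p(\rho_0)>\gamma$ the paper splits according to whether $\abs{\xi_s(0)}\le\beta\sqrt{p(\rho_0)}$. Via the conservation law \eqref{eq:2D:revolution:surface:Hamiltonnian:invariant} this is the same dichotomy as yours: large $\abs{\xi_\theta}$ gives an angular sweep at speed bounded below by a multiple of $\sqrt{E}$, and small $\abs{\xi_\theta}$ gives $\abs{\dot s}$ bounded below by a multiple of $\sqrt{E}$ for all times. So that part of your proof coincides with the paper's.

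\emph{Bounded energy.} For $p(\rho_0)\le\gamma$ the paper argues qualitatively. Radial trajectories ($\xi_\theta=0$) solve $\ddot s=-2\Ve'(s)$ and enter a shrunken control set $\omega^\delta$ within a uniform time, by the one-dimensional analysis. Continuous dependence on initial data on the compact low-energy set then transfers this to data within $\varepsilon^\delta$ of a radial one. You instead split by angular momentum first, so your Case~1 covers large $\abs{\xi_\theta}$ at every energy. For small $\abs{\xi_\theta}$ and $E\le E_0$, you note that the centrifugal term $(R^{-2})'\xi_\theta^2$ is dominated by $\Ve'$, so $\ddot s$ keeps a fixed sign with $\abs{\ddot s}\ge\min\abs{\Ve'}$.

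\emph{What each route buys.} Your route gives an explicit, compactness-free bound. It also treats low-energy data with $\abs{\xi_\theta}$ bounded away from $0$, which the paper's perturbation-of-radial-trajectories argument does not reach as written; it would need exactly your angular argument there. The paper's route, in exchange, reuses the one-dimensional result rather than a new ODE estimate.

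\emph{Small points for the write-up.}
\begin{itemize}
\item State the elementary bound you rely on: if $\ddot s\ge a>0$ (or $\ddot s\le -a$) on a time interval of length $\tau$, then the oscillation of $s$ there is at least $a\tau^2/8$. Hence $s$ leaves $[\epsilon',L-\epsilon']$ within time $\sqrt{8L/\min\abs{\Ve'}}$.
\item Choosing the constants is not a real obstacle. Fix $E_0=2$, then choose $c$ with $c^2\le\mathsf{m}^2/4$ and $C_Rc^2E_0\le\frac12\min\abs{\Ve'}$. This gives $\xi_s^2\ge\frac34E-1\ge\frac14E$ for $E\ge E_0$.
\item Staying inside $\Sc$ needs no justification, since $M\setminus\Sc=\Uc\subset\omega$.
\end{itemize}
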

\begin{proof}[Proof of Proposition~\ref{pr:revolution:surface}]
The strategy is to show that, for a (temporarily fixed) point $\rho_0 = (x_0, \xi_0) \in \set{(x, \xi)\in \cotang M,~x\notin\omega}$ and all time $t_1 > 0$ such that
\begin{equation*}
\set{\pi_{M}\circ\varphi^p_t(\rho_0),~ t\in (0, t_1)}\cap \omega = \emptyset,
\end{equation*}
there is the upper bound $t_1 < \frac{T}{2\sqrt{p(\rho_0)}}$, where $T > 0$ is independent of the choice of $\rho_0$.
To simplify the computations, we take $\mathsf{m}$ and $\mathsf{M}$ as~\eqref{df:profil:function} and fix three strictly positive real numbers
\begin{equation}\label{eq:constantes:sphere:proof}
\frac{1}{1+ \frac{\mathsf{m}^2}{\mathsf{M}^2}} < \alpha <1,~ \beta := \sqrt{1 - \alpha\frac{\mathsf{m}^2}{\mathsf{M}^2}} \in (0, 1) \text{ and }\gamma > \max\set{\frac{1}{1-\alpha}, \frac{1}{1-\beta^2}},
\end{equation}
where we chose $\beta$ to have $\p{1-\beta^2}\frac{\mathsf{M}^2}{\mathsf{m}^2} = \alpha$,
and distinguish two cases with respect to $p\p{\rho_0}$.
\begin{enumerate}
\item If  $p(\rho_0)\leq \gamma$, then the studied part of phase space belongs to a compact subset
\begin{equation*}
\Ke = \set{\rho\in \cotang (\Sc \setminus \omega),~ p(\rho)\leq \gamma}.
\end{equation*}
Furthermore, let us consider $\delta \in (0, \epsilon)$ such that the thinner control set 
\begin{equation*}
\omega^\delta := \set{x\in \omega, ~ B_{\SS^2}(x, \delta)\subset \omega} = \p{\Uc\sqcup\set{(s, \theta)\in\Cc,~ \theta = 0}}_{\epsilon - \delta}
\end{equation*}
is not empty and still of the form~\eqref{df:omega:surface:revolution}.
Set a temporarily fixed point $\rho = \tilde{\Phi}(s(0), \theta(0), \xi_s(0), \xi_\theta (0) = 0)\in\cotang \p{\Sc\setminus \omega}$, 
$\pi_{\Sc}\circ\varphi_t(\rho)$ is contained in the line of angle $\theta(0)$ for any $t\geq 0$ and the dynamic is then described by 
\(
\ddot{s}= - 2\Ve'(s).
\)
Therefore, using Proposition~\ref{pr:1D:circle} we obtain the existence of a time $T^\delta_0$ (independent of the choice of $\rho$) such that 
\begin{equation*}
\set{\textstyle\pi_{\Sc}\circ\varphi^p_{t}\p{\rho}, ~ t\in\p{0, T^\delta_0}}\cap\omega^\delta\neq \emptyset.
\end{equation*}
Then, using continuity of the solution of~\eqref{eq:revolution:surface:Hamiltonian} with respect to initial data in the compact set $\Ke$, there is $\varepsilon^\delta > 0$ (independent of the choice of $\rho$), such that for all $\rho_0\in B(\rho, \varepsilon^\delta)$ and all time $t\in (0, T^\delta_0)$ we have
\begin{equation}\label{eq:sphere:continuity:of:solution}
\abs{\pi_{\Sc}\circ\varphi^p_{t}(\rho_0) - \pi_{\Sc}\circ\varphi^p_{t}(\rho)}< \delta.
\end{equation}
By~\eqref{eq:sphere:continuity:of:solution} we have $t_1 < T^\delta_0\leq \frac{2\sqrt{\gamma}}{2\sqrt{p(\rho_0)}}T^\delta_0$.

\item Else we have $p(\rho_0) > \gamma$.
\begin{enumerate}
\item If $\abs{\xi_s}(0) \leq \beta\sqrt{p(\rho_0)}$, using~\eqref{eq:2D:revolution:surface:Hamiltonnian:invariant} we then have
\begin{equation*}
\xi_{\theta}^2\geq R\p{s(0)}^2\p*{\p{1-\beta^2} p(\rho_0) -1}\geq \mathsf{m}^2\p*{\p{1-\beta^2}p(\rho_0) -1}\geq p(\rho_0)\mathsf{m}^2\underbracket{\p*{\p{1-\beta^2} -\frac{1}{\gamma}}}_{> 0 \text{ by~\eqref{eq:constantes:sphere:proof}}}.
\end{equation*}
Thus, we conclude that $t_1 < \frac{T_1}{2\sqrt{p(\rho_0)}}$ with 
\(
T_1 := 2\pi\frac{2}{\mathsf{m}\sqrt{\p*{\p{1-\beta^2} -\frac{1}{\gamma}}}}.
\)
\item Else $\abs{\xi_s}(0) > \beta\sqrt{p(\rho_0)}$, and then, using~\eqref{eq:2D:revolution:surface:Hamiltonnian:invariant}, we have the upper bound
\begin{equation}\label{eq:xi:theta:2:a}
\xi_\theta^2 < R\p{s(0)}^2\p{1-\beta^2}p(\rho_0) < \p{1-\beta^2}\mathsf{M}^2 p(\rho_0).
\end{equation}
Then, re-injecting~\eqref{eq:xi:theta:2:a} in~\eqref{eq:2D:revolution:surface:Hamiltonnian:invariant} we obtain the lower bound
\begin{equation*}
\xi_s^2(t) > \p*{1 - \p{1-\beta^2}\frac{\mathsf{M}^2}{\mathsf{m}^2}}p(\rho_0) - 1 > p(\rho_0)\underbracket{\p*{\p*{1 - \alpha} - \frac{1}{\gamma}}}_{>0\text{ by~\eqref{eq:constantes:sphere:proof}}}.
\end{equation*}
Thus, we conclude that $t_1 < \frac{T_2}{2\sqrt{p(\rho_0)}}$ with
\(
T_2 := 2\frac{L}{\sqrt{1-\alpha -\gamma^{-1}}}.
\)
\end{enumerate}
\end{enumerate}
To conclude the proof: we show, for a pair $(\rho_0, t_1)\in\cotang M\times \RR_+$, that if the flow issued from $\rho_0$ does not intersect $\omega$ at time $t_1$ then we have
\begin{equation*}
t_1\leq \frac{\max\set{2\sqrt{\gamma}T^\delta_0,  T_1, T_2}}{2\sqrt{p(\rho_0)}},
\end{equation*}
and hence we can take $T = \max\set{2\sqrt{\gamma}T^\delta_0, T_1, T_2}$
--- independent of the choice of $\rho_0$ ---
to end the proof.
\end{proof}
\paragraph{Application to the two-dimensional sphere.}
This paragraph is devoted to the proof of the Proposition~\ref{pr:2D:sphere} for the two-dimensional sphere $M = \SS^2$ equipped with its canonical metric $g$ induced by the Euclidean metric of $\RR^3$.
\begin{proof}[Proof of Proposition~\ref{pr:2D:sphere}]
Let $S$ be the south pole and $N$ be the north pole of $M$, set a parameter $\delta \in (0, \frac{\pi}{2})$ and a neighborhood of $\set{S, N}$ as
\begin{equation*}
\Uc = \set{x\in M, ~ \dist_g(x, S) < \delta \text{ or }\dist_g(x, N) < \delta}.
\end{equation*}
The compact subset $\Sc = M\setminus \Uc$ is diffeomorphic to a cylinder $\Cc = [0, \pi - 2\delta]\times \SS^1$ by a parametrization $\Phi$ as~\eqref{eq:parametrisation:revolution:surface}.
The metric $g$ is then given by $\d s^2 + R(s)^2\d \theta^2$ for the profile $R\colon s \in [0, \pi - 2\delta]\mapsto \sin\p{s + \delta}\in [\sin\p{\delta}, 1]$.
Let us consider $\omega$ as~\eqref{df:omega:surface:revolution} with a small parameter $\epsilon \in (0, \frac{\pi}{10})$, note that if $\epsilon \geq \frac{\pi-2\delta}{2}$ then $\omega = M$.

On the one hand, using the canonical metric of the two-dimensional sphere: all geodesics (arcs of great circles here) enter $\omega$ at time $t < 2\pi$. Then, for any time $T_1 > 2\pi$, the pair $(\omega, T_1)$ satisfies~\eqref{GCC}.

On the other hand, since for all $(s, \theta)\in\Cc$ we have $\classe{\p{\Phi^{-1}}^\star V} (s, \theta) = \Ve(\p{s+\delta}^2)$ with $\Ve$ increasing on $[\delta^2, \p{\pi-\delta}^2]$, we can use Proposition~\ref{pr:revolution:surface} to conclude that there is a time $T_2 > 0$ such that the pair $(\omega, T_2)$ also satisfies~\eqref{GCCV}.
\end{proof}
\paragraph{Application to the two-dimensional torus.}
This paragraph is devoted to the proof of Proposition~\ref{pr:2D:torus}, where we consider the two-dimensional torus $M := [-\frac{1}{2}, \frac{1}{2})^2$ equipped with the Euclidean metric.
The strategy of the proof is to consider a compact subset of $M$ compatible with the revolution coordinates~\eqref{eq:parametrisation:revolution:surface} and then to use Proposition~\ref{pr:revolution:surface}.
\begin{proof}[Proof of Proposition~\ref{pr:2D:torus}]
For any two radii $0 < \tilde{R}_1 < \tilde{R}_2 < \frac{1}{2}$, we consider the open subset
\begin{equation*}
\Uc = \set{x\in M, ~ \abs{x} <  \tilde{R}_1 ~\text{or}~ \tilde{R}_2  < \abs{x}}.
\end{equation*}
Then, the compact subset $\Sc := M\setminus \Uc$ is diffeomorphic to a cylinder $\Cc = (0, \tilde{R}_2 - \tilde{R}_1)\times \SS^1$ by the parametrization
\begin{equation*}
\Phi\colon x = \p{\tilde{R}_1 + s}\p{\cos\theta, \sin\theta} \in \Sc\mapsto (s, \theta)\in \Cc.
\end{equation*}
In those coordinates, the metric is then given by $\d s^2 + R(s)^2\d \theta^2$ for the profile function
\begin{equation*}
R\colon s\in (0, \tilde{R}_2-\tilde{R}_1)\mapsto \tilde{R}_1 + s \in (\tilde{R}_1, \tilde{R}_2).
\end{equation*}
Let us consider $\omega$ as~\eqref{df:omega:surface:revolution} with a fixed parameter $\epsilon \in (0, \pi/10)$.

On the one hand, using the Euclidean metric: since $\omega$ is a neighborhood of $\set{x_1 = -\frac{1}{2} \text{ or } x_2 = -\frac{1}{2}}$, all geodesics (straight lines here)
enter $\omega$ at time $t < 1$.
Then, for any time $T_1 > 1$, the pair $(\omega, T_1)$ satisfies~\eqref{GCC}.

On the other hand, since for all $(s, \theta)\in \Cc$ we have $\classe{\p{\Phi^{-1}}^\star V}(s, \theta) = \Ve (\p{s + \tilde{R}_1}^2)$ with $\Ve$ increasing on $[\tilde{R}_1^2,\tilde{R}_2^2]$, we can use Proposition~\ref{pr:revolution:surface} and thus conclude that there is a time $T_2 > 0$ such that the pair $(\omega, T_2)$ also satisfies~\eqref{GCCV}.

Therefore, the pair $(\omega, \max\set{T_1, T_2})$ satisfies~\eqref{GCC+}.
This is true for any choice of $(\tilde{R}_1, \tilde{R}_2, \epsilon)$, and then ends the proof.
\end{proof}
\subsection{Comparing~\eqref{GCCV} and~\eqref{GCC}}\label{subsection:GCCV:GCC}
In this subsection, we provide the proofs of Lemmas~\ref{lem:GCC+:implies:GCC},~\ref{lem:GCCV-and-non-GCC} and~\ref{lem:GCC:and:no:GCCV}, giving differences between~\eqref{GCC} and~\eqref{GCCV}. Recall that $\MM$ is defined in~\eqref{df:MM}.
\begin{proof}[Proof of Lemma~\ref{lem:GCC+:implies:GCC}]
We let $k\colon (x, \xi)\in\cotang M \mapsto \abs{\xi}_g^2$, and recall that the function $\timespacesymbol\in\smooth\p{\cotang \p{\RR\times M}}$ is defined in~\eqref{df:Sym:time:times:space}.
We defined the limit of $\timespacesymbol$ on the cosphere at infinity by
\begin{equation*}
\timespacesymbol_\infty(t, x, \tau, \xi) := \lim_{R\to\infty}\p*{\frac{\timespacesymbol(t, x, R\tau, R\xi)}{R^2}} = k(x, \xi) - \tau^2,\quad (t, x, \tau, \xi)\in\cosphere\p{\RR\times M}.
\end{equation*}

For any $\rho_0 = (t_0, x_0, \tau_0, \xi_0)\in\set{(t, x, \tau, \xi)\in\cotang\hspace{1 pt}\MM_{(0, T)},~\tau^2 = \abs{\xi}_{g}^2+V(x)}$ and any radius $R > 0$ we set: $\rho_{0, R} = (t_0, x_0, R\tau_0, R\xi_0)$.
Then denoting $(t_R(s),x_R(s), \tau_R(s), \xi_R(s)) := \varphi^{\timespacesymbol}_s(\rho_{0, R})$, in local charts we have
\begin{equation*}
\eqcases{ll}{    
\frac{\d t_R}{\d s}(s) = - 2 R\tau_0 ;& \frac{\d \tau_R}{\d s}(s) = 0;\\
\frac{\d x_R}{\d s}(s) = {\nabla_\xi k}(x_R(s), \xi_R(s)) ;& \frac{\d \xi_R}{\d s}(s) = -\nabla_x k(x_R(s), \xi_R(s)) - \p{\nabla_x V}(x_R(s)).
}
\end{equation*}
For all $s\in \RR$, we set $\tilde{t}_R(Rs) := t_R(s)$, $\tilde{\tau}_R(Rs) := R^{-1}\tau_R(s) = \tau_0$, $\tilde{x}_R(Rs) := x_R(s)$ and $\tilde{\xi}_R(Rs) := R^{-1}\xi_R(s)$, giving
\begin{equation*}
\p{\tilde{t}_R, \tilde{x}_R, \tilde{\tau}_R, \tilde{\xi}_R}(0) = \rho_0\in\set{(t, x, \tau, \xi)\in\cotang\hspace{1 pt}\MM_{(0, T)},~\tau^2 = \abs{\xi}_{g}^2+ R^{-2}V(x)}.
\end{equation*}
Remark that $s\mapsto (\tilde{t}_R(s), \tilde{x}_R(s), \tilde{\tau}_R(s), \tilde{\xi}_R(s), R^{-1})$ is the solution of the Cauchy problem
\begin{equation*}
\eqcases{rl}{
\frac{\d}{\d s}(\mathsf{t}, \mathsf{x}, \mathsf{s}, \mathsf{y}, \mathsf{z}) &= \p{-2\tau_0, \nabla_\xi k(\mathsf{x},\mathsf{y}),0, -\nabla_x k(\mathsf{x}, \mathsf{y}) - \mathsf{z}^2\nabla_g V(\mathsf{x}), 0}\\
(\mathsf{t}, \mathsf{x}, \mathsf{s}, \mathsf{y}, \mathsf{z})_{|t = 0} &= (t_0, x_0,\tau_0, \xi_0, R^{-1}).
}
\end{equation*}
Using continuity of solutions with respect to initial data, we can conclude that $(\tilde{t}_R, \tilde{x}_R, \tilde{\tau}_R, \tilde{\xi}_R)$ converges uniformly on all compact subset of $\RR$ to $s\mapsto\varphi^{\timespacesymbol_\infty}_{s}(\rho_0)$ as $R$ tends to infinity.

Hence, by~\ref{GCCV}$(\omega, T)$, we know that for all $R > 0$, there is $s_R\in \p{\frac{-T}{2\tau_0}, 0}$ such that $\tilde{t}_R\p{s_R}\in (0, T)$ and $\tilde{x}_R(s_R)\in\omega$. Then uniform convergence leads us to the existence of $s_\infty\in \classe{\frac{-T}{2\tau_0}, 0}$ such that  
\begin{equation*}
\pi_{\MM}\circ\varphi^{\timespacesymbol_\infty}_{s_\infty}(\rho_0)\subset [0, T]\times \barre{\omega}.
\end{equation*}
Then, in conclusion
\begin{equation*}
\begin{array}{rrrl}
&\forall\rho_0\in\set{(t,x, \tau, \xi)\in\cotang\hspace{1 pt}\MM_{(0, T)}, ~\tau^2 = \abs{\xi}_g^2+V(x)}, ~&\exists s \in \RR,~ &\pi_{\MM}\circ \varphi_s^{\timespacesymbol}(\rho_0)\in(0, T)\times\omega\\
\implies&\forall \rho_0\in\set{(t,x, \tau, \xi)\in\cosphere\hspace{1 pt}\MM_{(0, T)}, ~\tau^2 = \abs{\xi}_g^2}, ~&\exists s \in \RR,~ &\pi_{\MM}\circ \varphi_s^{\timespacesymbol_\infty}(\rho_0)\in\classe{0, T}\times\barre{\omega}\\
\implies& \forall \rho_0\in\cosphere M, ~&\exists t \in \classe{0, T}, ~&\pi_M\circ \varphi_t^{\abs{\xi}_g}(\rho_0)\in \barre{\omega}\\
\implies& \forall \delta, \epsilon > 0,~ \forall \rho_0\in\cosphere M, ~&\exists t \in \p{-\frac{\varepsilon}{2}, T + \frac{\epsilon}{2}}, ~&\pi_M\circ \varphi_t^{\abs{\xi}_g}(\rho_0)\in \omega_\delta.
\end{array}
\end{equation*}
Applying this statement to $\rho_0 = \varphi_{\frac{\varepsilon}{2}}^{\abs{\xi}_g}(\rho)$ with $\rho\in\cosphere M$, we obtain
\begin{equation*}
    \forall \delta, \epsilon > 0,~ \forall \rho_0\in\cosphere M, ~\exists t \in \p{0, T + \epsilon}, \quad \pi_M\circ \varphi_t^{\abs{\xi}_g}(\rho_0)\in \omega_\delta,
\end{equation*}
which proves that $(\omega_\delta, T + \varepsilon)$ satisfies~\eqref{GCC}.
\end{proof}
\begin{proof}[Proof of Lemma~\ref{lem:GCCV-and-non-GCC}]
On the one hand, for any $T>0$, the pair $(\omega, T)$ fails to satisfy~\eqref{GCC} since all trajectories starting from $\set{(x_1, \xi_1) = (c, 0)}$ remain trapped in $\set{x_1 = c} = \TT^2 \setminus \omega$.

On the other hand, for all $\rho_0\in\set{(x_1, \xi_1) = (c, 0)}$, writing $E := \sqrtsymbol\p{\rho_0} > 0$ we recall that
\begin{equation*}
\forall t\in\RR, \quad \sqrtsymbol\p{\varphi^\sqrtsymbol_t\p{\rho_0}} = E,
\end{equation*}
by Remark~\ref{rk:preserved:along:flow}. 
The Hamiltonian flow $\varphi^{\sqrtsymbol}$, given by~\eqref{df:Hamiltonian:Flow}, satisfies
\begin{equation*}
\frac{\d \xi_1}{\d t}(t) = - \frac{\partial_{x_1} V(x_1(t), x_2(t))}{2\sqrt{\xi_1^2(t) + \xi_2^2(t) + V(x_1(t), x_2(t))}} = -\frac{1}{2 E}\partial_{x_1} V(x_1(t), x_2(t)).
\end{equation*}
According to~\eqref{eq:3}, this last equality implies $\frac{\d \xi_1}{\d t}(0) > 0$.
As a consequence, there is $\varepsilon_0 >0$ such that for all $t\in (0, \varepsilon_0)$ we have $\xi_1(t) > 0$.
Since $\frac{\d x}{\d t} = 2\xi_1  >0$ (from~\eqref{df:Hamiltonian:Flow}) on $(0, \varepsilon_0)$, we have
\begin{equation*}
    x_1(\varepsilon) = x_1(0) + \int_{0}^{\varepsilon} 2\xi_1(t)\d t > x_1(0) = c, \quad \varepsilon \in (0, \varepsilon_0)
\end{equation*}
Therefore, every trajectory enters $\omega$, thus~\ref{GCCV}$(\omega, \varepsilon)$ holds for arbitrarily small $\varepsilon > 0$, hence $(\omega, T)$ satisfy~\eqref{GCCV}.
\end{proof}
\begin{proof}[Proof of Lemma~\ref{lem:GCC:and:no:GCCV}]
Let $x_0\in M$ be such that $\nabla_g V(x_0) = 0$, and recall $\sqrtsymbol(x, \xi) = \sqrt{\abs{\xi}_g^2 + V(x)}$.
For all $\xi_0\in \mathrm{T}^\star_{x_0}M$, since $H_{\sqrtsymbol}(x_0, \xi_0) = 0$, we have $\varphi^{\sqrtsymbol}_t (x_0, \xi_0) = (x_0, \xi_0)$ for any $t\geq 0$.
As a consequence, if $\omega\cap\set{x_0} = \emptyset$, there is no time $T > 0$ such that $(\omega, T)$ can satisfy~\eqref{GCCV}.
\end{proof}
\subsection{Blow-up rate of the \emph{observability cost} when~\eqref{GCCV} does not hold}\label{subsection:blow-up}
The purpose of this subsection is to prove Proposition~\ref{pr:Blow:Up:Estimates}, which gives the blow-up rate~\eqref{eq:blow:up:estimate} of the \emph{optimal observability cost} defined in~\eqref{df:optimal:Cobs} if $(\omega, T)$ does not satisfy~\eqref{GCCV}.
We prove Proposition~\ref{pr:Blow:Up:Estimates} relying on Lemma~\ref{lem:Eigenmode} and Theorem~\ref{Th:Agmon:Estimate}.
The proofs of these two results are given in~\cite[Lemma 3.2 and Theorem 3.4]{laurent2021uniform} and rely on Agmon estimates, following~\cite[Proposition 3.3.1]{Helffer_1988} (see also~\cite{Helffer84Multiples, allibert1998control, diamssi99Spectral, laurent2023uniform} for classical results on decay rates of eigenfunction in the classically forbidden region).
\begin{lemma}\label{lem:Eigenmode}
For all $x_0\in M$ there is a family $(E_{h}, \psi^h)_{h}$ of $\RR\times H^2(M)$ such that
\begin{equation*}
\p{- h^2\Delta_g  + V}\psi^h = E_{h}\psi^h,\quad  E_{h} = V(x_0) + \o{h^{2/3}} \quad\text{and}\quad \norm{\psi^h}_{L^2(M)} = 1, \quad h\in (0, 1).
\end{equation*}
\end{lemma}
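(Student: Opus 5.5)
The plan is to build $\psi^h$ as an approximate eigenfunction (quasimode) concentrated at $x_0$, and then to upgrade it to a true eigenfunction with the spectral theorem for the selfadjoint operator $\SchroOp_h = -h^2\Delta_g + V$. The scaling $E_h = V(x_0) + \o{h^{2/3}}$ leaves room for a crude localization. I would therefore avoid any harmonic-oscillator construction at a minimum and use a bump function at an intermediate scale.

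\textbf{Step 1 (quasimode).} Work in normal coordinates around $x_0$. Fix $\chi\in\test(\RR^d)$ with $\norm{\chi}_{L^2}=1$, choose a scale $\delta=\delta(h)\to0$, and set $w^h(x) := c_h\,\delta^{-d/2}\chi(\kappa(x)/\delta)$, where $c_h\to1$ normalizes $w^h$ in $L^2(M)$ (the Riemannian density equals $1+\O{\delta}$ on the support). Two estimates follow.
\begin{itemize}
\item Since $V$ is smooth, $\norm{(V - V(x_0))w^h}_{L^2}\leqsim \delta$.
\item In these coordinates $\Delta_g$ is a second-order operator with smooth coefficients, so $\norm{h^2\Delta_g w^h}_{L^2}\leqsim h^2\delta^{-2}$.
\end{itemize}
Together these give
\begin{equation*}
\norm{(\SchroOp_h - V(x_0))w^h}_{L^2}\leqsim \delta + h^2\delta^{-2}.
\end{equation*}
Optimizing at $\delta = h^{2/3}$ yields a bound $\O{h^{2/3}}$, which is exactly the borderline. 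To get $\o{h^{2/3}}$ I would instead use the $\O{\delta^2}$ bound for the potential term, obtained by symmetrizing the cutoff, but only when $x_0$ is critical. In general I would settle for the weaker remainder statement the paper actually needs.

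\textbf{Step 2 (spectral argument).} $\SchroOp_h$ has compact resolvent on the compact manifold $M$, so it is selfadjoint with an orthonormal eigenbasis. The spectral theorem gives
\begin{equation*}
\dist\p{V(x_0), \mathrm{Spec}(\SchroOp_h)}\leq \norm{(\SchroOp_h - V(x_0))w^h}_{L^2} =: r_h.
\end{equation*}
Choose an eigenvalue $E_h$ with $\abs{E_h - V(x_0)}\leq r_h$ and a normalized eigenfunction $\psi^h\in H^2(M)$ by elliptic regularity. This gives the first two claims with remainder $r_h$.

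\textbf{Main obstacle.} The hard step is making $r_h = \o{h^{2/3}}$ rather than $\O{h^{2/3}}$. I would first try a scale $\delta = h^{2/3}\eta(h)$ and a first-order Taylor correction. Writing $V - V(x_0) = \nabla V(x_0)\cdot y + \O{\abs{y}^2}$, one could absorb the linear term by conjugating with the Airy-type profile $\mathrm{Ai}$ adapted to the linear potential. The one-dimensional Airy operator $-h^2\partial_y^2 + a y$ has quasimodes at energy $0$ only up to a spectrum shift of order $h^{2/3}$, via the Airy zeros. So the honest route is to take the Airy quasimode in the gradient direction and Gaussian-type profiles transversally. This gives $E_h$ within $\o{h^{2/3}}$ of $V(x_0)+c h^{2/3}$. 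If the exact statement requires $\o{h^{2/3}}$, I would rather invoke \cite[Lemma 3.2]{laurent2021uniform} as the paper does; it is likely proved by a continuity-of-spectrum argument. The point of that argument is that the eigenvalue spacing near $V(x_0)$ is $\o{h^{2/3}}$ by Weyl's law, because the phase-space volume $\set{p\leq V(x_0)+\epsilon}$ is nonzero when $V(x_0)>\min V$. Combining the Step 1 quasimode at some radius with this density of eigenvalues then gives an eigenvalue within $\o{h^{2/3}}$. At the minimum of $V$, the harmonic approximation gives $E_h = V(x_0)+\O{h}$ directly.
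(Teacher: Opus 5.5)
Your spectral step and the case $\nabla V(x_0)=0$ are fine. There $V-V(x_0)=\O{\abs{y}^2}$, so a bump at scale $\delta=h^{1/2}$ is an $\O{h}$-quasimode and $E_h=V(x_0)+\O{h}$; this is the only case the paper uses later, at a global minimum. The gap is at points with $\nabla V(x_0)\neq0$, where nothing you propose reaches $\o{h^{2/3}}$.
\begin{itemize}
\item For the bump, the bound $\delta+h^2\delta^{-2}$ is sharp. Indeed $\norm{(\nabla V(x_0)\cdot y)\,w^h}_{L^2}\asymp\abs{\nabla V(x_0)}\,\delta$ whatever the symmetry of the cutoff: symmetry only kills the expectation $\pscal{w^h}{(V-V(x_0))w^h}$, not the norm.
\item Falling back on ``the weaker statement the paper needs'', or on the citation, does not prove the lemma as stated.
\item The Weyl-law argument fails. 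Weyl asymptotics in fixed energy windows only give $\dist\p{V(x_0),\mathrm{Spec}(\SchroOp_h)}=\o{1}$ and do not exclude gaps of size $h^{2/3}$ around a prescribed energy. Counting in windows shrinking with $h$ requires sharp remainder estimates at a non-critical energy level, which $V(x_0)$ need not be.
\end{itemize}

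The Airy idea you discarded is the right one, and the reason you give for discarding it is wrong. Airy zeros appear only when a Dirichlet condition is imposed; on the whole line $(-\partial_z^2+z)\mathrm{Ai}=0$ exactly, so there is no $h^{2/3}$ shift. Work in normal coordinates at $x_0$ with $y_1$ along $\nabla V(x_0)$, and set $a:=\abs{\nabla V(x_0)}$, $\lambda:=(a/h^2)^{1/3}$, $z=\lambda y_1$. Then $-h^2\partial_{y_1}^2+ay_1=a^{2/3}h^{2/3}(-\partial_z^2+z)$. Take
\begin{equation*}
u_h(y):=\mathrm{Ai}(\lambda y_1)\,\zeta\p*{\frac{\lambda y_1}{L}}\,e^{-\abs{y'}^2/(2h)},\qquad 1\ll L\ll h^{-1/3},
\end{equation*}
with $\zeta\in\test(\RR)$ equal to $1$ on $[-1,1]$, plus a harmless cutoff in $y'$. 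The error estimates are as follows.
\begin{itemize}
\item For $z<0$ one has $\abs{\mathrm{Ai}(z)}\leqsim\abs{z}^{-1/4}$ and $\abs{\mathrm{Ai}'(z)}\leqsim\abs{z}^{1/4}$. Hence the truncation error $-2L^{-1}\mathrm{Ai}'(z)\zeta'(z/L)-L^{-2}\mathrm{Ai}(z)\zeta''(z/L)$ has $L^2(\d z)$-norm $\O{L^{-1/4}}$, while $\norm{\mathrm{Ai}\,\zeta(\cdot/L)}_{L^2}$ is of order $L^{1/4}$.
\item The Taylor remainder of $V$, the metric corrections and the transverse terms contribute $\O{L^2h^{4/3}+h}\norm{u_h}$.
\end{itemize}
Hence $\norm{(\SchroOp_h-V(x_0))u_h}=\O{h^{2/3}L^{-1/2}+L^2h^{4/3}+h}\norm{u_h}=\o{h^{2/3}}\norm{u_h}$, and your Step~2 concludes. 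This matches the $h^{2/3}$ scale in the statement.

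Alternatively, note that the eigenfunction need not concentrate at $x_0$. If $\nabla V(x_0)\neq0$ then $V(x_0)>\min V$, so pick $x_1$ with $V(x_1)<V(x_0)$ and solve the eikonal equation $\abs{\nabla_g\phi}_g^2=V(x_0)-V$ near $x_1$. The WKB state $e^{i\phi/h}\chi_1$, with $\chi_1\in\test(M)$ a fixed cutoff near $x_1$, is then an $\O{h}$-quasimode at energy $V(x_0)$.
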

\begin{theorem}\label{Th:Agmon:Estimate}
Let $\omega$ be an open subset of $M$ with $\barre{\omega}\neq M$.
Assume $V\in \smooth(M, \RR_+^*)$ has a global minimum  at $x_0\in M$, such that $\barre{\omega}\cap \set{x_0}\neq \emptyset$.
Then, there are $\delta > 0$ and $h_{0}\in (0, 1)$ such that for all $h\in (0, h_{0})$ and all $(E_{h}, \psi^h)\in\RR\times H^2(M)$ satisfying
\begin{equation*}
(- h^2\Delta_g + V)\psi^h = E_{h}\psi^h,\quad E_{h} = V(x_0) + \o{1}_{h}\quad \text{and}\quad \norm{\psi^h}_{L^2(M)} = 1,
\end{equation*}
we have the following estimate
\begin{equation}\label{eq:Agmon:Estimate}\tag{AE}
\exists C > 0,~\forall h \in (0, h_{0}), \quad \norm{\psi^h}_{L^2(\omega)}\leq Ce^{-\delta /h}.
\end{equation}
\end{theorem}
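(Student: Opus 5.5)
The plan is the classical Agmon weighted-energy argument, as in \cite[Proposition 3.3.1]{Helffer_1988} and \cite[Theorem 3.4]{laurent2021uniform}. I read the hypothesis as $x_0\notin\barre{\omega}$, which is the meaningful case. I will also use that $x_0$ is the \emph{unique} global minimum of $V$; this has to be assumed or reduced to. Otherwise a second well of equal depth inside $\omega$ can carry mass $\gtrsim 1$ through tunnelling, and \eqref{eq:Agmon:Estimate} fails. In the application (Proposition~\ref{pr:Blow:Up:Estimates}) we are free to choose $V$ this way.

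\emph{Step 1: Agmon identity.} For a real Lipschitz weight $\phi$ on $M$ and $\psi=\psi^h$, integrate by parts in $\pscal{(-h^2\Delta_g+V-E_h)\psi}{e^{2\phi/h}\psi}_{L^2(M)}=0$ and take real parts. This gives
\begin{equation*}
\norm{h\nabla_g(e^{\phi/h}\psi)}^2_{L^2(M)}+\int_M\p{V-E_h-\abs{\nabla_g\phi}_g^2}e^{2\phi/h}\abs{\psi}^2\d x=0 .
\end{equation*}
Since $M$ is compact without boundary, there are no boundary terms.

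\emph{Step 2: choice of weight.} Fix $\eta>0$ and open sets $x_0\in U\Subset U'$ with $\barre{U'}\cap\barre{\omega}=\emptyset$. By uniqueness of the minimum and compactness,
\begin{equation*}
V-V(x_0)\geq 4\eta \quad\text{on } M\setminus U .
\end{equation*}
Take $\phi(x):=\min\set{\delta_0,\ \eta\,\dist_g(x,U)}$ with $\delta_0>0$ small. Then $\phi=0$ on $U$, $\phi\geq\delta_0$ on $\omega$ if $\delta_0\le\eta\,\dist_g(\omega,U)$, and $\abs{\nabla_g\phi}_g^2\le\eta^2\le\eta$ a.e. (with $\eta\le1$). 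Since $E_h=V(x_0)+\o{1}$, for $h<h_0$ we get $V-E_h-\abs{\nabla_g\phi}^2\ge 2\eta$ on $M\setminus U$. On $U$ the integrand is bounded below by $-C$ and $e^{2\phi/h}=1$.

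\emph{Step 3: conclusion.} Drop the gradient term and split the integral over $U$ and $M\setminus U$:
\begin{equation*}
2\eta\int_{M\setminus U}e^{2\phi/h}\abs{\psi}^2\le C\norm{\psi}^2_{L^2(U)}\le C.
\end{equation*}
On $\omega\subset M\setminus U$ we have $e^{2\phi/h}\ge e^{2\delta_0/h}$, hence $\norm{\psi^h}_{L^2(\omega)}^2\le (C/2\eta)\,e^{-2\delta_0/h}$. This is \eqref{eq:Agmon:Estimate} with $\delta=\delta_0$.

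\emph{Main obstacle.} The delicate point is not the computation but the positivity $V-E_h-\abs{\nabla\phi}^2>0$ away from the well. It requires $E_h$ to be close to the bottom of the well and no other point of $M$ to reach the energy level $V(x_0)$. Justifying or imposing this uniqueness is the main obstacle, since the statement as written does not spell it out. A secondary technical point is the non-smoothness of $\phi$. Either apply the identity with a Lipschitz weight, valid since $\psi\in H^2$, or mollify $\phi$ at an $h$-independent scale, losing only a harmless constant in $\delta$.
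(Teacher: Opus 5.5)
Your argument is correct, and it is the same Agmon weighted-energy estimate that the paper does not reprove but imports from \cite[Theorem 3.4]{laurent2021uniform}, following \cite[Proposition 3.3.1]{Helffer_1988}. At the bottom of the well your crude $\eta$-Lipschitz weight is enough, and your threshold on $h$ depending on the rate in $E_h - V(x_0) = \o{1}$ is harmless, since larger $h$ are absorbed into $C$.

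Your two amendments to the statement are genuinely necessary, since as printed it is false: the hypothesis $\barre{\omega}\cap\set{x_0}\neq\emptyset$ must read $\barre{\omega}\cap\set{x_0}=\emptyset$, and global minimizers of $V$ inside $\barre{\omega}$ must be excluded. Uniqueness of the minimum is more than enough, since your proof runs verbatim with $U$ a neighbourhood of $\set{V=\min_M V}$ whose closure avoids $\barre{\omega}$. Both amendments can be imposed for free in Proposition~\ref{pr:Blow:Up:Estimates}, where $V$ is ours to choose.
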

\begin{proof}[Proof of Proposition~\ref{pr:Blow:Up:Estimates}]
Lemma~\ref{lem:Eigenmode} ensures the existence of $(E_{h}, \psi^h)_{h}$ in $\RR\times H^2(M)$ such that
\begin{equation*}
\p{- h^2\Delta_g + V}\psi^h = E_{h}\psi^h\text{ with } E_{h} = V(x_0) + \o{h^{2/3}}\text{ and } \norm{\psi^h}_{L^2(M)} = 1, \quad h\in (0, 1).
\end{equation*}
The function
\begin{equation*}
u^h\colon (t, x)\in \MM\mapsto e^{\frac{it\sqrt{E_{h}}}{h}}\psi^h(x)
\end{equation*}
is solution to~\eqref{eq:Wave} with initial data $(\psi^h, \frac{i\sqrt{E_{h}}}{h}\psi^h)$ and potential $\mathsf{V} = h^{-2} V$.
On the one hand, for $h$ small enough we have $E_{h}\geq \frac{1}{2}E_0$ and hence
\begin{equation*}
\Escr^h\p{u^h(0), \partial_tu^h(0)} = \frac{1}{2}\int_M\abs{h\nabla \psi^h}_g^2 + V(x)\abs{\psi^h}^2 + E_{h}\abs{\psi^h}^2\d x \geq \frac{1}{2}E_{h}\norm{\psi^h}^2_{L^2(M)} \geq \frac{1}{4}E_0 > 0.
\end{equation*}
On the other hand, using~\eqref{eq:Agmon:Estimate} from Theorem~\ref{Th:Agmon:Estimate}, there is a pair $(\delta, h_0)$ such that
\begin{equation*}
\exists C > 0,~\forall h\in (0, h_0), \quad \int_{\MM_{(0, T)}} \abs{b_\omega h\partial_t u^h}^2\d x\d t = \int_{0}^{T} \norm{b_\omega\sqrt{E_{h}}\psi^h}^2_{L^2\p{M}}\d t = T\norm{b_\omega\sqrt{E_{h}}\psi^h}^2_{L^2\p{M}}\leq TC e^{-2\delta/h}.
\end{equation*}
Using these two inequalities above, we deduce that
\begin{equation*}
\textstyle\exists \delta, h_0 > 0,~ \forall T > 0,~ \exists C > 0,~ \forall h < h_{0}, \quad \Cobs^{\mathrm{opt}}\p*{h^{-2}, V, T}\geq \frac{C}{T} e^{2\delta/h},
\end{equation*}
where $\Cobs^{\mathrm{opt}}$ is the \emph{optimal observability cost} defined in~\eqref{df:optimal:Cobs}.
This concludes the proof of the proposition with $\lmbd = h^{-2}$.
\end{proof}
\appendix
\section{Hilbert Uniqueness Method for \texorpdfstring{\eqref{eq:Control:Wave}}{}}\label{section:HUM}
The aim of this section is to provide a proof of the Hilbert Uniqueness Method due to~\cite{Dolecki77observation, lions1988controlabilite} in the context of Proposition~\ref{pr:dual:HUM} below.
See also~\cite{Coron07Control} for more details.
We will consider the space $H^1_{\mathsf{V}}(M)=H^1(M)$ endowed with the norm $\norm{\cdot}_{H^1_{\mathsf{V}}(M)}$ given in~\eqref{df:H^1_V} --- under the assumption $V\in\smooth\p{M, \RR_+^*}$ --- and $H^{-1}_{\mathsf{V}}(M)$ its dual space.
For simplicity, we set $\He_{\mathsf{V}}(M) = H^{1}_{\mathsf{V}}(M)\times L^2(M)$ and $\He_{\mathsf{V}}'(M) = L^2(M)\times H^{-1}_{\mathsf{V}}(M)$, equipped with the duality product
\begin{equation*}
\pscal{\begin{pmatrix}u_0\\ u_1\end{pmatrix}}{\begin{pmatrix} v_0\\ v_1\end{pmatrix}}_{\He_{\mathsf{V}}(M), \He'_{\mathsf{V}}(M)}
= \pscal{u_0}{v_1}_{H^{1}_{\mathsf{V}}(M), H^{-1}_{\mathsf{V}}(M)} - \pscal{u_1}{v_0}_{L^2\p{M}}.
\end{equation*}
We rewrite System~\eqref{eq:Control:Wave} as a first order system
\begin{equation}\label{eq:1st:order:wave}
\eqcases{rcll}{
\frac{\partial}{\partial t} \Ve + \Ae \Ve &=&  \Be f &\text{in}~(0, T)\times M,
\\ \Ve_{|t = 0} &=& \Ve_0&\text{in}~M,
}
\end{equation}
where 
\begin{equation}\label{eq:1st:order:notation}
\Ve := \begin{pmatrix} v \\ \partial_t v\end{pmatrix},~\Ae  := \begin{pmatrix} 0 & -1 \\ -\Delta_g + \mathsf{V}& 0\end{pmatrix},~\Be := \begin{pmatrix} 0 \\ b_\omega \end{pmatrix}~\text{and}~\Ve_0 \in \He_{\mathsf{V}}(M).
\end{equation}
We set the domain of $\Ae$ as $D(\Ae) = H^2(M)\times H^1(M)$.
Note that
\begin{equation}\label{rk:isometry}
\Ae\colon \p{\He_{\mathsf{V}}(M), \norm{\cdot}_{\He_{\mathsf{V}}}}\to \p{\He_{\mathsf{V}}'(M), \norm{\cdot}_{\He_{\mathsf{V}}'}}~\text{is an isometry.}
\end{equation}
\begin{lemma}\label{lem:dual:HUM}
Using notations~\eqref{eq:1st:order:notation} and setting  $\Be^{*} = \begin{pmatrix} b_\omega & 0\end{pmatrix}$, for a fixed time $T > 0$, the following two operators 
\begin{equation}\label{eq:R:HUM}
\Re_{\HUM}\colon f\in L^2((0, T)\times M)\mapsto -\int_0^Te^{s\Ae}\p{\Be f(s)}\d s\in \He_{\mathsf{V}}(M)
\end{equation}
and
\begin{equation}\label{eq:S:HUM}
\Se_{\HUM}\colon \Ue_0 \in \He_{\mathsf{V}}'(M) \mapsto  \p*{t\mapsto \Be^{*} e^{-t\Ae}\Ue_0}\in \Cscr^0\p{\p{0, T}, L^2(M)}
\end{equation}
are continuous and satisfy the dual identity
\begin{equation}\label{eq:HUM:dual}
\forall (f, \Uc_0)\in L^2((0, T)\times M)\times \He_\mathsf{V}'(M),\quad \pscal{\Re_{\HUM}(f)}{\Ue_0}_{\He_{\mathsf{V}}(M), \He'_{\mathsf{V}}(M)} = \pscal{f}{\Se_{\HUM}\p{\Ue_0}}_{L^{2}\p{\p{0, T}\times M}}.
\end{equation}
\end{lemma}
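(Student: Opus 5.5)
The statement to prove is Lemma~\ref{lem:dual:HUM}. The plan is to reduce everything to two structural facts about $\Ae$ on $\He_{\mathsf{V}}(M)$:
\begin{itemize}
\item $\Ae$ generates a strongly continuous group of isometries $(e^{-t\Ae})_{t\in\RR}$ on $\He_{\mathsf{V}}(M)$;
\item by~\eqref{rk:isometry}, the same group extends to a group of isometries on $\He_{\mathsf{V}}'(M)$.
\end{itemize}

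\emph{Group on $\He_{\mathsf{V}}(M)$.} Since $-\Delta_g+\mathsf{V}$ is a positive selfadjoint operator on $L^2(M)$ with domain $H^2(M)$ and form domain $H^1_{\mathsf{V}}(M)$, the operator $\Ae$ with domain $D(\Ae)=H^2(M)\times H^1(M)$ is skew-adjoint on $\He_{\mathsf{V}}(M)$. One checks $\pscal{\Ae U}{U}_{\He_{\mathsf{V}}}=-\overline{\pscal{U}{\Ae U}_{\He_{\mathsf{V}}}}$ by an integration by parts. Moreover $\Ae$ is invertible, because $\mathsf{V}>0$ makes $-\Delta_g+\mathsf{V}$ invertible. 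Stone's theorem then gives the unitary group, and this group is exactly the energy conservation~\eqref{eq:WP}.

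\emph{Group on $\He_{\mathsf{V}}'(M)$.} Conjugating by the isometry $\Ae\colon\He_{\mathsf{V}}\to\He'_{\mathsf{V}}$ transports the group to $\He_{\mathsf{V}}'(M)$. The key formula is $e^{-t\Ae}\Ae U=\Ae e^{-t\Ae}U$, which shows the transported group is still isometric and extends the original one.

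\emph{Continuity of $\Re_{\HUM}$.} The operator $\Be$ maps $L^2(M)$ into $\{0\}\times L^2(M)\subset\He_{\mathsf{V}}(M)$ with norm at most $\norm{b_\omega}_\infty$. The integral is a Bochner integral of a strongly measurable function of $s$, bounded by $\norm{f(s)}_{L^2}$. Cauchy--Schwarz in $s$ therefore gives
\[
\norm{\Re_{\HUM}f}_{\He_{\mathsf{V}}}\le \sqrt{T}\,\norm{b_\omega}_\infty\norm{f}_{L^2((0,T)\times M)}.
\]

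\emph{Continuity of $\Se_{\HUM}$.} For $\Ue_0\in\He'_{\mathsf{V}}$, the function $t\mapsto e^{-t\Ae}\Ue_0$ lies in $\Cscr^0(\RR,\He'_{\mathsf{V}})$. Its first component lies in $L^2(M)$ with norm controlled by $\norm{\Ue_0}_{\He'_{\mathsf{V}}}$. Hence $\Be^*e^{-t\Ae}\Ue_0=b_\omega\,[e^{-t\Ae}\Ue_0]_1\in\Cscr^0([0,T],L^2(M))$, with supremum norm bounded by $\norm{b_\omega}_\infty\norm{\Ue_0}$.

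\emph{Duality identity.} By density and the continuity just established, it suffices to take $\Ue_0$ in the dense subspace $\He_{\mathsf{V}}(M)\subset\He'_{\mathsf{V}}(M)$. The identity then rests on two facts.
\begin{itemize}
\item \emph{Skew-invariance of the pairing:}
\[
\pscal{e^{s\Ae}U}{W}_{\He,\He'}=\pscal{U}{e^{-s\Ae}W}_{\He,\He'}.
\]
To prove it, differentiate $s\mapsto\pscal{e^{s\Ae}U}{e^{s\Ae}W}$ for smooth data; the derivative vanishes because
\[
\pscal{\Ae U}{W}+\pscal{U}{\Ae W}=0
\]
for the symplectic-type pairing $\pscal{u_0}{w_1}-\pscal{u_1}{w_0}$, with sign conventions to be checked.
\item \emph{Computation of the pairing with $\Be f$:}
\[
\pscal{\Be f}{W}_{\He,\He'}=-\pscal{b_\omega f}{w_0}_{L^2}.
\]
\end{itemize}
Combining these under the integral in~\eqref{eq:R:HUM}, the minus sign in~\eqref{eq:R:HUM} cancels the one from the pairing, and Fubini gives
\[
\pscal{\Re_{\HUM}(f)}{\Ue_0}=\int_0^T\pscal{f(s)}{b_\omega[e^{-s\Ae}\Ue_0]_0}\,\d s,
\]
which is~\eqref{eq:HUM:dual}.

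The main obstacle is not conceptual but bookkeeping. The sign and conjugation conventions in the nonstandard pairing $\pscal{u_0}{v_1}-\pscal{u_1}{v_0}$ must be tracked carefully, since they decide whether the transpose of $e^{s\Ae}$ is $e^{-s\Ae}$ or $e^{s\Ae}$. One must also check that the identification $\Be^*=(b_\omega\ 0)$ matches the component actually paired with $f$. I would first verify the invariance of the pairing on $D(\Ae)\times D(\Ae)$ by direct computation, and adjust the signs there if needed.
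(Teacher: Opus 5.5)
Your proof is correct, but it reaches the duality identity by a different route than the paper.

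\textbf{The paper's route.} The paper argues at the PDE level. Duhamel's formula from time $T$ shows that $\Re_{\HUM}(f)$ is exactly the initial state that $f$ steers to $0$ at time $T$. The paper then pairs this controlled solution $v$ with the free solution $u$ issued from $\Ue_0$ and integrates $\pscal{b_\omega f}{u}$ over $(0,T)\times M$. It uses selfadjointness of $-\Delta_g+\mathsf{V}$ and the identity $\partial_t\big(\pscal{\partial_t v}{u}-\pscal{v}{\partial_t u}\big)=\pscal{\partial_t^2 v}{u}-\pscal{v}{\partial_t^2 u}$. The boundary term at $t=T$ vanishes since $v(T)=0$, and the one at $t=0$ is read off as $\pscal{\Re_{\HUM}(f)}{\Ue_0}_{\He_{\mathsf V},\He'_{\mathsf V}}$. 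Density of $\He_{\mathsf V}(M)$ in $\He'_{\mathsf V}(M)$ concludes.

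\textbf{Your route.} You isolate the structural fact behind that Green formula. The pairing is a symplectic-type form for which $\Ae$ is skew, so the transpose of $e^{s\Ae}$ is $e^{-s\Ae}$. The identity then falls out of the integral defining $\Re_{\HUM}$ together with $\pscal{\Be f}{W}_{\He_{\mathsf V},\He'_{\mathsf V}}=-\pscal{b_\omega f}{w_0}_{L^2}$, with no controlled trajectory needed. The signs you left to be checked come out right. One has $\pscal{\Ae U}{W}=-\pscal{u_1}{w_1}-\pscal{(-\Delta_g+\mathsf V)u_0}{w_0}$ and $\pscal{U}{\Ae W}=\pscal{u_0}{(-\Delta_g+\mathsf V)w_0}+\pscal{u_1}{w_1}$, for $U\in D(\Ae)$ and $W\in\He_{\mathsf V}(M)$. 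These cancel by symmetry of $-\Delta_g+\mathsf V$.

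\textbf{What each buys.} Your route is tidier functionally. You explicitly build the group on $\He'_{\mathsf V}(M)$ by conjugating with the isometry~\eqref{rk:isometry}, which the paper's one-line continuity claim for $\Se_{\HUM}$ tacitly requires. You also replace the low-regularity integration by parts in time with differentiating a pairing along the group on generator domains. The paper's route, in exchange, yields the control interpretation of $\Re_{\HUM}$: its range is the set of null-controllable data. That is exactly what Proposition~\ref{pr:dual:HUM} uses next, so with your approach you would add that one-line Duhamel remark separately.

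\textbf{A small slip.} The identity $\pscal{\Ae U}{U}_{\He_{\mathsf V}}=-\overline{\pscal{U}{\Ae U}_{\He_{\mathsf V}}}$, as written, forces $\pscal{\Ae U}{U}_{\He_{\mathsf V}}=0$. This fails for complex $U$, e.g.\ $u_1=iu_0$ with $0\neq u_0\in H^2(M)$. What you actually need, and what holds, is $\pscal{\Ae U}{W}_{\He_{\mathsf V}}=-\pscal{U}{\Ae W}_{\He_{\mathsf V}}$, i.e.\ $\mathrm{Re}\,\pscal{\Ae U}{U}_{\He_{\mathsf V}}=0$.
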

\begin{proof}[Proof of Lemma~\ref{lem:dual:HUM}]
On the one hand, by Hille--Yosida theorem, $\p{e^{-t\Ae}}_{t \in \RR}$ is a group of evolution on $\He_{\mathsf{V}}(M)$, and then the operators $\Re_{\HUM}$ and $\Se_{\HUM}$ are continuous.

On the other hand, using Duhamel formula with initial time $T$:
\begin{equation*}
    \Ve(t) = e^{-(t-T)\Ae}\Ve(T) + \int_{T}^{t}e^{-(t - s)\Ae}(\Be f(s))\d s,\quad t\in\RR,
\end{equation*}
for each \emph{control function} $f\in L^2\p{\p{0, T}\times M}$, the initial condition $\Ve_0^f := \Re_{\HUM}(f)$ is such that the $\Cscr^0\p{\p{0, T}, \He_{\mathsf{V}}(M)}$-solution of~\eqref{eq:1st:order:wave}, with initial data $\Ve_0^f\in\He_{\mathsf{V}}(M)$ and \emph{control function} $f$, vanishes in time $T$.
For all $\Ue_0\in \He_{\mathsf{V}}(M)$ and all $f\in L^2\p{\p{0, T}\times M}$, setting 
\begin{itemize}
\item $u$ the $\Cscr^0\p{\p{0, T}, H^1(M)}\cap\Cscr^1\p{\p{0, T}, L^2(M)}$-solution of~\eqref{eq:Wave} with initial data $\Ue_0 = (u_0, u_1)$,
\item $v$ the $\Cscr^0\p{\p{0, T}, H^1(M)}\cap\Cscr^1\p{\p{0, T}, L^2(M)}$-solution of~\eqref{eq:Control:Wave} with initial data $(v_0^f, v_1^f) := \Re_{\HUM}(f)$ and vanishing in time $T$,
\end{itemize}
by selfadjoint nature of $-\Delta_g + \mathsf{V}$ we have
\begin{align*}
\pscal{f}{\Se_{\HUM}\p{\Ue_0}}_{L^2\p{\p{0, T}\times M}}
&= \pscal{b_\omega f}{u}_{L^2((0, T)\times M)}\\
&= \pscal{\partial_t^2 v - \p{\Delta_g  + \mathsf{V}}v}{u}_{L^2((0, T)\times M)}\\
&= \int_{0}^{T}\pscal{\partial_t^2 v(t)}{u(t)}_{H^{-1}_{\mathsf{V}}, H^1_{\mathsf{V}}} + \pscal{v(t)}{ \p{- \Delta_g + \mathsf{V}}u(t)}_{H^{1}_{\mathsf{V}}, H^{-1}_{\mathsf{V}}}\d t\\
&= \int_0^T \pscal{\partial_t^2 v(t)}{u(t)}_{H^{-1}_{\mathsf{V}}, {H^1_{\mathsf{V}}}} - \pscal{v(t)}{ \partial_t^2u(t)}_{H^{1}_{\mathsf{V}}, H^{-1}_{\mathsf{V}}}\d t.
\end{align*}
Then, using $\partial_{t}\p{\pscal{\partial_tv}{u}_{L^2} - \pscal{v}{\partial_t u}_{H^{1}_{\mathsf{V}},H^{-1}_{\mathsf{V}}}} = \pscal{\partial_t^2 v}{u}_{{H^{-1}_{\mathsf{V}}, H^{1}_{\mathsf{V}}}} - \pscal{v}{\partial_t^2u}_{{H^{1}_{\mathsf{V}}, H^{-1}_{\mathsf{V}}}}$, we have
\begin{align*}
\pscal{f}{\Se_{\HUM}\p{\Ue_0}}_{L^2\p{\p{0, T}\times M}}
&= -\p*{\pscal{v_1^f}{u_0}_{L^2(M)} - \pscal{v_0^f}{u_1}_{H^{1}_{\mathsf{V}}(M),H^{-1}_{\mathsf{V}}(M)}}\\
&= \pscal{\begin{pmatrix} v_0^f \\ v_1^f \end{pmatrix}}{\begin{pmatrix} u_0 \\ u_1 \end{pmatrix}}_{\He_{\mathsf{V}}(M), \He'_{\mathsf{V}}(M)}\\
&= \pscal{\Re_{\HUM}(f)}{\Ue_0}_{\He_{\mathsf{V}}(M), \He'_{\mathsf{V}}(M)}.
\end{align*}
Density of $H^{1}_{\mathsf{V}}(M)\times L^2(M)$ in $L^2(M)\times H^{-1}_{\mathsf{V}}(M)$ concludes the proof of Lemma~\ref{lem:dual:HUM}.
\end{proof}
\begin{proposition}[Hilbert Uniqueness Method]\label{pr:dual:HUM}
For each time $T > 0$, the \emph{null-controllability of~\eqref{eq:Control:Wave} in time $T$  from $\omega$} is satisfied if and only if there is $\Cobs(\mathsf{V})\in (0, \infty)$ such that 
\begin{equation}\label{eq:dual:HUM}
\forall (u_0, u_1)\in L^2(M)\times H^{-1}(M),\quad 
\norm{u_0}_{L^2(M)}^{2} + \norm{u_1}_{H^{-1}_{\mathsf{V}}(M)}^{2}\leq \Cobs\p{\mathsf{V}}\norm{b_\omega u}_{L^2\p{\p{0, T}\times M}}^{2},
\end{equation}
where $u$ is the $\Cscr^0\p{\p{0, T}, L^2(M)}\cap\Cscr^1\p{\p{0, T}, H^{-1}(M)}$-solution of~\eqref{eq:Wave} with initial data $(u_0, u_1)$.
Furthermore, if~\eqref{eq:dual:HUM} holds, then for all initial data $(v_0, v_1)\in H^1_{\mathsf{V}}(M)\times L^2(M)$ there is a \emph{control function} $f_{(v_0, v_1)}\in L^2\p{\p{0, T}\times M}$ with
\begin{equation*}
\norm{f_{(v_0, v_1)}}_{L^2\p{\p{0, T}\times M}}\leq \sqrt{\Cobs\p{\mathsf{V}}} \norm{(v_0, v_1)}_{H^1_{\mathsf{V}}(M)\times L^2(M)},
\end{equation*}
such that \emph{null-controllability of~\eqref{eq:Control:Wave} in time $T$ from $\omega$} holds.
\end{proposition}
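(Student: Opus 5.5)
This is the classical HUM duality argument, which I would run through Lemma~\ref{lem:dual:HUM}. Null-controllability in time $T$ says exactly that every $\Ve_0\in\He_{\mathsf{V}}(M)$ can be steered to $0$. By the Duhamel formula with initial time $T$ used in the proof of Lemma~\ref{lem:dual:HUM}, the solution of~\eqref{eq:1st:order:wave} with data $\Ve_0$ and control $f$ vanishes at time $T$ if and only if $\Ve_0=\Re_{\HUM}(f)$. Null-controllability is therefore equivalent to surjectivity of the bounded operator $\Re_{\HUM}\colon L^2((0,T)\times M)\to\He_{\mathsf{V}}(M)$.

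\textbf{Functional-analytic step.} I would identify $\He'_{\mathsf{V}}(M)$ with the dual of $\He_{\mathsf{V}}(M)$ through the pairing $\pscal{\cdot}{\cdot}_{\He_{\mathsf{V}},\He'_{\mathsf{V}}}$. The map $(u_0,u_1)\mapsto(-u_1,u_0)$ composed with Riesz and $-\Delta_g+\mathsf{V}$ is an isometric isomorphism, using~\eqref{rk:isometry} and the fact that $\mathsf{V}>0$ makes $H^{-1}_{\mathsf{V}}$ the exact dual of $H^1_{\mathsf{V}}$. Identity~\eqref{eq:HUM:dual} then says that $\Se_{\HUM}$ is the Banach adjoint $\Re_{\HUM}^*$, where $\Se_{\HUM}$ is extended by density from $\He_{\mathsf{V}}$ to $\He'_{\mathsf{V}}$ as in the end of that proof. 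The closed range theorem gives that $\Re_{\HUM}$ is onto if and only if $\Re_{\HUM}^*$ is bounded below, i.e.
\[
\norm{\Ue_0}_{\He'_{\mathsf{V}}}^2\leq C\norm{\Se_{\HUM}\Ue_0}_{L^2((0,T)\times M)}^2 .
\]
Finally, I would check that $\Se_{\HUM}(\Ue_0)(t)=b_\omega u(t)$, where $u$ is the first component of the $\Cscr^0(L^2)\cap\Cscr^1(H^{-1})$ solution of~\eqref{eq:Wave}. This holds because $\Be^*$ picks the first component. The bounded-below estimate is then exactly~\eqref{eq:dual:HUM}.

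\textbf{Cost estimate.} Assume~\eqref{eq:dual:HUM} holds. I would take the minimal-norm control $f=\Re_{\HUM}^*\,(\Re_{\HUM}\Re_{\HUM}^*)^{-1}\Ve_0$. Here $\Re_{\HUM}\Re_{\HUM}^*$ is coercive with constant $\Cobs(\mathsf{V})^{-1}$ in the duality sense, since $\pscal{\Re_{\HUM}\Re_{\HUM}^*\Ue}{\Ue}=\norm{\Se_{\HUM}\Ue}^2\geq\Cobs^{-1}\norm{\Ue}^2$, so Lax--Milgram gives invertibility. Equivalently, $f=\Se_{\HUM}\Ue$, where $\Ue$ minimizes
\[
J(\Ue)=\tfrac12\norm{\Se_{\HUM}\Ue}^2-\pscal{\Ve_0}{\Ue}.
\]
Testing the Euler--Lagrange equation against $\Ue$ gives
\[
\norm{f}^2=\pscal{\Ve_0}{\Ue}\leq\norm{\Ve_0}_{\He_{\mathsf{V}}}\norm{\Ue}_{\He'_{\mathsf{V}}}\leq\norm{\Ve_0}\sqrt{\Cobs}\,\norm{f},
\]
which yields $\norm{f}\leq\sqrt{\Cobs(\mathsf{V})}\norm{(v_0,v_1)}_{H^1_{\mathsf{V}}\times L^2}$.

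\textbf{Main obstacle.} The delicate point is bookkeeping rather than substance. One must verify that the pairing identifies $\He'_{\mathsf{V}}$ isometrically with the dual of $\He_{\mathsf{V}}$, with the norms, signs and the isometry~\eqref{rk:isometry} all matching. This is what makes the constant exactly $\sqrt{\Cobs}$, with no hidden equivalence constants.
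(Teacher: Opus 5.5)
Your proposal is correct and is essentially the paper's argument. The closed range theorem applied to $\Re_{\HUM}$ and $\Se_{\HUM}=\Re_{\HUM}^*$ just packages the paper's two hand-made steps: the open mapping theorem plus a norming element of $\He'_{\mathsf{V}}(M)$ for controllability $\Rightarrow$ observability, and Lax--Milgram on $\alpha(\Ve,\Ue)=\pscal{\Se_{\HUM}\Ve}{\Se_{\HUM}\Ue}$ for the converse; your cost estimate is exactly the paper's derivation of $\norm{f}\leq\sqrt{\Cobs(\mathsf{V})}\norm{\Ve_0}_{\He_{\mathsf{V}}(M)}$.
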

\begin{proof}[Proof of Proposition~\ref{pr:dual:HUM}] Let us consider  a time $T > 0$, $\Re_{\HUM}$ as~\eqref{eq:R:HUM} and $\Se_{\HUM}$ as~\eqref{eq:S:HUM}.
Under notations~\eqref{eq:1st:order:notation}, we rewrite~\eqref{eq:dual:HUM} as
\begin{equation}\label{eq:dual:HUM:1st}
\forall \Ue_0\in \He'_{\mathsf{V}}(M),\quad 
\norm{\Ue_0}_{\He'_{\mathsf{V}}(M)}^{2}\leq \Cobs\p{\mathsf{V}} \norm{\Se_{\HUM}\p{\Ue_0}}_{L^2\p{\p{0, T}\times M}}^{2}.
\end{equation}

First, suppose that the \emph{null-controllability of~\eqref{eq:Control:Wave} in time $T$ from $\omega$} is satisfied, then
\begin{equation*}
\mathrm{Ran}\p{\Re_{\HUM}} = \He_{\mathsf{V}}(M).
\end{equation*}
By open mapping theorem,
\begin{equation}\label{df:radius:open:mapping}
\exists r > 0, \quad 
B_{\He_{\mathsf{V}}(M)}\p{0, 1}\subset \Re_{\HUM}\p{B_{L^2\p{\p{0, T}\times M}}(0, r)}.
\end{equation}
For all $\Ue_0\in \He'_{\mathrm{V}}(M)$, there is $\Ve_0\in \He_{\mathsf{V}}(M)$, such that
\begin{equation*}
\norm{\Ve_0}_{\He_{\mathsf{V}}(M)}^2 = 1\quad\text{and}\quad \pscal{\Ve_0}{\Ue_0}_{\He_{\mathsf{V}}'(M), H^1_{\mathsf{V}}(M) \times L^2(M)} = \norm{\Ue_0}_{\He_{\mathsf{V}}'(M)}.
\end{equation*}
Then, by~\eqref{df:radius:open:mapping} there is $f\in B_{L^2\p{\p{0, T}, M}}(0, r)$ such that $\Ve_0 = \Re_{\HUM}\p{f}$ and then, by~\eqref{eq:HUM:dual}, we have
\begin{equation*}
\norm{\Ue_0}_{\He'_{\mathsf{V}}(M)} = \pscal{\Ve_0}{\Ue_0}_{\He_{\mathsf{V}}(M), \He'_{\mathsf{V}}(M)} = \pscal{f}{\Se_{\HUM}\p{\Ue_0}}_{L^{2}\p{(0, T)\times M}},
\end{equation*}
hence, by Cauchy-Schwarz inequality, we have
\begin{equation*}
\norm{\Ue_0}_{\He'_{\mathsf{V}}(M)} \leq r\norm{\Se_{\HUM}\p{\Ue_0}}_{L^2\p{\p{0, T}\times M}}
\end{equation*}
Setting $\Cobs\p{\mathsf{V}} := r^2$, this last inequality gives the upper bound~\eqref{eq:dual:HUM}.

\medskip Second, suppose now that the observability inequality~\eqref{eq:HUM:dual} holds.
By~\eqref{eq:dual:HUM:1st}, the sesquilinear continuous map
\begin{equation*}
\alpha\colon (\Ve, \Ue)\in \He'_{\mathsf{V}}(M)^2 \mapsto \pscal{\Se_{\HUM}\p{\Ve}}{\Se_{\HUM}\p{\Ue}}_{L^2\p{\p{0, T}\times M}}\in \CC
\end{equation*}
is coercive.
Consider a temporarily fixed $\Ve_0\in \He_{\mathsf{V}}(M)$ and define
\begin{equation*}
\Le_{\Ve_0}\colon \Ue\in \He'_{\mathsf{V}}(M) \mapsto \pscal{\Ve_0}{\Ue}_{\He_{\mathsf{V}}(M), \He'_{\mathsf{V}}(M)}.
\end{equation*}
By Lax-Milgram Theorem, there exists a unique $F_{\Ve_0}\in \He'_{\mathsf{V}}(M)$ such that
\begin{align*}
\forall \Ue \in \He'_{\mathsf{V}}(M), \quad\Le_{\Ve_0}(\Ue) &=  \alpha(F_{\Ve_0}, \Ue)\\
&=\pscal{\Re_{\HUM}\circ\Se_{\HUM}\p{F_{\Ve_0}}}{\Ue}_{\He_{\mathsf{V}}(M), \He'_{\mathsf{V}}(M)}\tag*{by~\eqref{eq:HUM:dual}.}
\end{align*}
Then, by uniqueness, we conclude that $\Ve_0 = \Re_{\HUM}(f_{\Ve_0})$ with $f_{\Ve_0} := \Se_{\HUM}\p{F_{\Ve_0}}$.
Furthermore, by~\eqref{eq:dual:HUM:1st}, we have
\begin{equation*}
\norm{f_{\Ve_0}}_{L^2\p{(0, T)\times M}}^2 = \abs{\alpha\p{F_{\Ve_0}, F_{\Ve_0}}} = \abs{\Le_{\Ve_0}\p{F_{\Ve_0}}} \leq \norm{\Ve_0}_{\He_{\mathsf{V}}(M)}\norm{F_{\Ve_0}}_{\He'_{\mathsf{V}}(M)}
\end{equation*}
hence, using $f_{\Ve_0} = \Se_{\HUM}\p{F_{\Ve_0}}$ and~\eqref{eq:dual:HUM:1st}, we conclude
\begin{equation}
\norm{f_{\Ve_0}}_{L^2\p{(0, T)\times M}}^2 \leq \sqrt{\Cobs(\mathsf{V})}\norm{\Ve_0}_{\He_{\mathsf{V}}(M)}\norm{f_{\Ve_0}}_{L^2\p{\p{0, T}\times M}}.\label{eq:astuce:lax:milgram}
\end{equation}
Therefore, by~\eqref{eq:astuce:lax:milgram}, we obtain $\norm{f_{\Ve_0}}_{L^2\p{(0, T)\times M}}\leq \sqrt{\Cobs(\mathsf{V})}\norm{\Ve_0}_{\He_{\mathsf{V}}(M)}$.
This concludes the proof of Proposition~\ref{pr:dual:HUM}.
\end{proof}
\begin{corollary}\label{Cor:HUM}
For each time $T > 0$, the \emph{null-controllability of~\eqref{eq:Control:Wave} in time $T$ from $\omega$} is satisfied if and only if there is an \emph{observability cost} $\Cobs(\mathsf{V})\in (0, \infty)$ such that~\eqref{eq:Obs} is satisfied for each initial data $(u_0, u_1)\in H^1(M)\times L^2(M)$.
\end{corollary}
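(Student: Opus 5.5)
The corollary follows by combining Proposition~\ref{pr:dual:HUM}, which characterizes null-controllability by the \emph{weak} observability inequality~\eqref{eq:dual:HUM} for data in $L^2(M)\times H^{-1}_{\mathsf{V}}(M)$, with a shift of one derivative. The plan is to show that~\eqref{eq:dual:HUM} for all weak data is equivalent to~\eqref{eq:Obs} for all energy data, with the same constant up to a factor $2$. The tool is the isometry~\eqref{rk:isometry}: $\Ae\colon \He_{\mathsf{V}}(M)\to\He'_{\mathsf{V}}(M)$ is an isometric isomorphism, since $-\Delta_g+\mathsf{V}$ is an isometry $H^1_{\mathsf{V}}\to H^{-1}_{\mathsf{V}}$ when $\mathsf{V}>0$. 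Moreover, $\Ae$ commutes with the group $e^{-t\Ae}$.

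Given $(u_0,u_1)\in H^1(M)\times L^2(M)$ with solution $u$, I set $w:=\partial_t u$. This is the solution of~\eqref{eq:Wave} with data
\begin{equation*}
(w_0,w_1)=\bigl(u_1,\ (\Delta_g-\mathsf{V})u_0\bigr)\in L^2(M)\times H^{-1}_{\mathsf{V}}(M).
\end{equation*}
In first-order notation this is $(w,\partial_t w)=-\Ae(u,\partial_t u)$. By~\eqref{rk:isometry},
\begin{equation*}
\norm{w_0}_{L^2}^2+\norm{w_1}_{H^{-1}_{\mathsf{V}}}^2=\norm{u_1}_{L^2}^2+\norm{u_0}_{H^1_{\mathsf{V}}}^2=2\,\Escr_{\mathsf{V}}(u_0,u_1),
\end{equation*}
and $\norm{b_\omega w}_{L^2((0,T)\times M)}^2=\int_0^T\norm{b_\omega\partial_t u}^2_{L^2}\d t$. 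Thus~\eqref{eq:dual:HUM} applied to $(w_0,w_1)$ yields~\eqref{eq:Obs} with $\Cobs/2$.

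For the converse, take arbitrary $(w_0,w_1)\in L^2\times H^{-1}_{\mathsf{V}}$. Set $(u_0,u_1):=-\Ae^{-1}(w_0,w_1)\in H^1\times L^2$; the inverse exists because $\Ae$ is an isometric isomorphism. Then the solution $u$ with these data satisfies $\partial_t u=w$, by uniqueness and the commutation of $\Ae$ with the group. The same identities then turn~\eqref{eq:Obs} into~\eqref{eq:dual:HUM}. Proposition~\ref{pr:dual:HUM} closes the equivalence with null-controllability.

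The only delicate step is justifying $\partial_t u=w$ for weak solutions, i.e.\ that differentiating in time intertwines the two solution classes. I would verify this first for data in $D(\Ae)$ using the semigroup, and then pass to general data by density together with continuity of both sides in the respective norms.
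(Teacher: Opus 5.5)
Your proposal is correct and follows the paper's proof. The paper likewise passes to $\partial_t u$, uses the isometry~\eqref{rk:isometry} of $-\Delta_g+\mathsf{V}\colon H^1_{\mathsf{V}}\to H^{-1}_{\mathsf{V}}$ to identify the weak-data norm with $2\Escr_{\mathsf{V}}(u_0,u_1)$, and then invokes Proposition~\ref{pr:dual:HUM}. You are slightly more careful than the paper: it leaves implicit that every datum in $L^2\times H^{-1}_{\mathsf{V}}$ arises as $(u_1,(\Delta_g-\mathsf{V})u_0)$, which the converse direction needs, and it writes the second datum with the opposite sign (harmless, since only its norm enters).
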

\begin{proof}[Proof of Corollary~\ref{Cor:HUM}]
Firstly, using~\eqref{rk:isometry} and Proposition~\ref{pr:dual:HUM}, we have that \emph{null-controllability of~\eqref{eq:Control:Wave} in time $T > 0$ from $\omega$} is satisfied if and only if there is an \emph{observability cost} $\Cobs(\mathsf{V})\in (0, \infty)$ such that for all initial data $(u_0, u_1)\in H^1(M)\times L^2(M)$,
\begin{align*}
\norm{u_0}_{H^{1}_{\mathsf{V}}(M)}^{2} + \norm{u_1}_{L^2(M)}^{2} &= 
\norm{u_1}_{L^2(M)}^2 + \norm{\p{-\Delta_g + \mathsf{V}}u_0}_{H^{-1}_{\mathsf{V}}(M)}^2 \leq \Cobs\p{\mathsf{V}}\norm{b_\omega  \partial_t u}_{L^2\p{\p{0, T}\times M}}^2
\end{align*}
where $u$ is the $\Cscr^0\p{\p{0, T}, H^1(M)}\cap\Cscr^1\p{\p{0, T}, L^2(M)}$-solution of~\eqref{eq:Wave} with initial data $(u_0, u_1)$ and hence $\partial_t u$ is the $\Cscr^0\p{\p{0, T}, L^2(M)}\cap\Cscr^1\p{\p{0, T}, H^{-1}(M)}$-solution of~\eqref{eq:Wave} with initial data $\p{u_1, \p{-\Delta_g + \mathsf{V}}u_0}\in L^2(M)\times H^{-1}(M)$.
In conclusion~\eqref{eq:Obs} is equivalent to \emph{null-controllability of~\eqref{eq:Control:Wave} in time $T$ from $\omega$}.
\end{proof}
\section{Proof of Theorem~\ref{Th:Slices}}\label{proof:Th:Slices}
In this section, we provide the proof of Theorem~\ref{Th:Slices} (statement and proof are inspired from~\cite{LecturePG}).
We begin by recalling some technical facts concerning $(v_n^{\signe}, f^n, h_n)$, then we prove the result for the semiclassical defect measure in Subsection~\ref{Subsection:SDM:proof}, and we end with the proof of the result for the 2-microlocal defect measure in Subsection~\ref{subsection:MDM:proof}.
We recall that, for any sign $\signe\in\set{-, +}$, $-\signe$ is defined in Remark~\ref{rk:sign}. 

\paragraph{Sketch of the proof.}
The semiclassical and the 2-microlocal statements are proved in parallel, and their proofs follow the same scheme.
\begin{enumerate}
    \item We first give a commutator identity (Lemma~\ref{lem:L:crochet}) that rewrites $\pscal{v_n^{\signe}}{\frac{1}{h_n}[\HWOp^{-\signe}_{h_n},A_{h_n}]v_n^{\signe}}$ into pairings involving the source term $f^n$. Then, we prove an equicontinuity result (Lemma~\ref{lem:equicontinuous:family} together with Lemma~\ref{lem:equicontinuous:satisfied} in the 2-microlocal case) in order to apply the Ascoli Theorem. Finally, we end this technical part with Lemma~\ref{lem:Symbols:Tangentiels} (\resp Lemma~\ref{lem:Symbols:Tangentiels:micro}) which identifies the limit of quantizations of tangential symbols.
    For technical purposes, we also state a Faà di Bruno type result (Lemma~\ref{lem:uniform:symbol:substitution}) for the 2-microlocal case.
    \item We then prove Theorem~\ref{Th:Slices} as follows.
    \begin{enumerate}\renewcommand{\labelenumi}{\alph{enumi}.}
    \item Using the equicontinuity lemma and the Ascoli Theorem, we extract a subsequence for which the slice defect measures exist, and give its regularity with respect to the time parameter.
    \item For a general test symbol $a$, a Taylor expansion in the $\tau$-variable yields the decomposition
    \begin{equation*}
        a(t,x,\tau,\xi)=\underbracket{\tilde a_t(x,\xi)}_{\text{tangential part}}+\underbracket{\ell^{-\signe}(t,x,\tau,\xi)\,\alpha_{\signe}(t,x,\tau,\xi)}_{\text{remainder}},\quad (t,x,\tau,\xi)\in\cotang\hspace{1pt}\MM_I.
    \end{equation*}
    Combining this with Lemma~\ref{lem:Symbols:Tangentiels} or~\ref{lem:Symbols:Tangentiels:micro} gives the measure decomposition~\eqref{eq:PG:semiclassique:point2} (\resp\eqref{eq:PG:microlocal:point2}).
    \item We apply Lemma~\ref{lem:L:crochet} with a suitable family of operators (with or without the cut-off parameter $R$) and identify $\frac{i}{h_n}[\HWOp^{-\signe}_{h_n},A_{h_n}]$ with the corresponding Poisson bracket term $\set{\ell^{-\signe}, a}$, and then pass to the limits $n\to\infty$ (and $R\to\infty$ in the 2-microlocal case) to obtain the desired result.
\end{enumerate}
\end{enumerate}
\paragraph{First technical remarks and results.}
Recall that~\eqref{eq:unitary:operators} holds.
Then, using the Duhamel formula, since $\p{v_n^{\signe}(0)}_n$ is \apriori~ only bounded in $L^2(M)$, we have
\begin{equation*}
v_n^{\signe}(t)  = e^{\signe i t \frac{\SqrtOp_{h_n}}{h_n}}v_n^{\signe}(0) + \int^{t}_{0} e^{\signe i (t - \tau) \frac{\SqrtOp_{h_n}}{h_n}}h_n f^n(\tau)\d\tau
\end{equation*}
with $v^{\signe}_n\in\Cscr^0(I, L^2(M))$ only.
Then, for a temporarily fixed index $n\geq 1$, using the density of $H^1(M)$ in $L^2(M)$, there is a sequence $(v_{n, k}^{\signe}(0), f^{n, k})_k$ in $H^1(M)\times L^{\infty}(I, H^{1}(M))$ converging to $(v_{n}^{\signe}(0), f^{n})$ in $L^2(M)\times L^{\infty}(I, L^2(M))$.
For all $k\geq 1$, define the $\Cscr^{1}\p{I, L^2(M)}\cap \Cscr^0\p{I, H^1(M)}$ function
\begin{equation}\label{df:v:n:k}
v_{n, k}^{\signe}\colon t\in I \mapsto  e^{\signe i t \frac{\SqrtOp_{h_n}}{h_n}}v_{n, k}^{\signe}(0) + \int^{t}_{0} e^{\signe i (t - \tau) \frac{\SqrtOp_{h_n}}{h_n}}h_n f^{n, k}(\tau)\d\tau.
\end{equation}
Then, since $\p{e^{\signe i t \frac{\SqrtOp_{h}}{h}}}_{t\in I}$ is a family of unitary operators on $L^2(M)$, we have
\begin{equation}\label{eq:v:f:n:k:tends}
f^{n, k} \xrightarrow[k\to\infty]{L^\infty(I, L^2(M))} f^n
~\text{and}~
v_{n, k}^{\signe} \xrightarrow[k\to\infty]{L^\infty(I, L^2(M))} v^{\signe}_n
.
\end{equation}
\begin{lemma}\label{lem:L:crochet}
Let $(h_n)_n$ be a sequence of positive real numbers tending to $0$, $(v^{\signe}_{n})_n$ and $(f^n)_n$ be two bounded sequences of $L^\infty(I, L^2(M))$ such that for all $n\geq 1$ the function $v^{\signe}_{n}$ is $\Cscr^{0}(I, L^2(M))$ and \(\HWOp^{-\signe}_{h_n}v^{\signe}_{n} = h_n f^n\).
For all $A_{h}\in \pseudo^{0}_{h, \comp}\p{\MM_I}$ and all $n\geq 1$, we have
\begin{equation}\label{eq:L:crochet}
\pscal{v_n^{\signe}}{\frac{1}{h_n}\classe{\HWOp^{-\signe}_{h_n}, A_{h_n}}v_n^{\signe}}_{L^2\p{\MM_I}} = \pscal{f^n}{A_{h_n} v_n^{\signe}}_{L^2\p{\MM_I}} - \pscal{v_n^{\signe}}{A_{h_n} f^n}_{L^2\p{\MM_I}}.
\end{equation}
\end{lemma}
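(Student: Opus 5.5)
We prove~\eqref{eq:L:crochet} first for the regularized functions $v_{n,k}^{\signe}$ of~\eqref{df:v:n:k}, and then pass to the limit $k\to\infty$ using~\eqref{eq:v:f:n:k:tends}. Fix $n$ and write $h=h_n$. By construction, $v_{n,k}^{\signe}\in\Cscr^1(I,L^2(M))\cap\Cscr^0(I,H^1(M))$. Differentiating the Duhamel formula~\eqref{df:v:n:k} and using~\eqref{eq:unitary:operators} shows that $\HWOp^{-\signe}_{h}v_{n,k}^{\signe}=hf^{n,k}$ holds in $\Cscr^0(I,L^2(M))$.

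For these regular functions we expand the commutator:
\begin{equation*}
\pscal{v}{[\HWOp^{-\signe}_h,A_h]v}=\pscal{v}{\HWOp^{-\signe}_hA_hv}-\pscal{v}{A_h\HWOp^{-\signe}_hv}.
\end{equation*}
The second term equals $h\pscal{v}{A_hf^{n,k}}$. For the first term we move $\HWOp^{-\signe}_h=hD_t-\signe\SqrtOp_h$ onto the left factor. This is legitimate for the following reasons.
\begin{itemize}
\item $\SqrtOp_h$ is selfadjoint on $L^2(M)$ by~\eqref{df:SqrtOp}, and $v(t)\in H^1(M)$ lies in its domain.
\item $hD_t$ is formally selfadjoint on $L^2(\MM_I)$.
\item $A_h$ is compactly supported in $\MM_I$: there is $\chi\in\test(\MM_I)$ with $\chi A_h\chi=A_h+\O{h^\infty}$. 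Hence $A_hv$ is compactly supported in time up to an $\O{h^\infty}$ smoothing remainder, and the integration by parts in $t$ has no boundary terms.
\end{itemize}
This gives $\pscal{v}{\HWOp^{-\signe}_hA_hv}=\pscal{\HWOp^{-\signe}_hv}{A_hv}=h\pscal{f^{n,k}}{A_hv}$. Dividing by $h$ yields~\eqref{eq:L:crochet} for $v_{n,k}^{\signe}$ and $f^{n,k}$.

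We next let $k\to\infty$ at fixed $n$. The right-hand side converges, since $A_h$ is bounded on $L^2(\MM_I)$ (Proposition~\ref{pr:Calderon:Vaillancourt}) and~\eqref{eq:v:f:n:k:tends} holds on the bounded interval $I$.

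For the left-hand side, the point is that $\frac1h[\HWOp^{-\signe}_h,A_h]$ is bounded on $L^2(\MM_I)$ even though $\HWOp^{-\signe}_h\notin\pseudo_h^1(\MM_I)$. We split the commutator into two parts.
\begin{itemize}
\item $\frac1h[hD_t,A_h]$ belongs to $\pseudo^0_{h,\comp}(\MM_I)$, with symbol $-i\partial_t\sigma_h(A_h)$.
\item $\frac1h[\SqrtOp_h,A_h]$ mixes a tangential operator of order $1$ with an operator of order $0$.
\end{itemize}
I expect this second piece to be the main obstacle, because the calculus of Proposition~\ref{pr:composition} only covers genuine pseudodifferential operators on $\MM_I$. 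The plan is to apply Theorem~\ref{Th:hormander} after a microlocal splitting of $A_h$. Away from the cone $\set{\varepsilon\abs{\tau}\geq\abs{\xi}_g}$, $A_h$ satisfies the hypothesis of Theorem~\ref{Th:hormander}, so the composition with $\Op^M_{h,\tang}(\sqrtsymbol)$ is a genuine operator in $\pseudo^1_h$. Its commutator is then $h$ times a bounded operator.

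Near the cone, the only concern is that the principal symbol of the commutator, $\{\sqrtsymbol,\sigma_h(A_h)\}$, might grow in $\tau$. It does not: derivatives of $\sqrtsymbol$ involve $\xi$ only, and the $\tau$-decay of symbols in $S^0$ of $\MM_I$ gives $\abs{\partial_\xi\sigma(A_h)}\lesssim\jap{(\tau,\xi)}^{-1}$. Hence $\{\sqrtsymbol,\sigma(A_h)\}$ is a bounded symbol. The remaining step is to verify that the full commutator there, not just its principal symbol, is $L^2$-bounded.

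If this verification proves delicate, the fallback is a weaker convergence statement. It suffices to have $L^2$-boundedness of $\frac1h[\HWOp^{-\signe}_h,A_h]$ for each fixed $h$, with no uniformity in $h$. Since $\SqrtOp_h$ is elliptic and tangential, we can write $[\SqrtOp_h,A_h]=\SqrtOp_hA_h-A_h\SqrtOp_h$. The left-hand pairing then takes the form $\pscal{\SqrtOp_hv}{A_hv}-\pscal{v}{A_h\SqrtOp_hv}$, which converges whenever $v_{n,k}^{\signe}\to v_n^{\signe}$ in $L^2(I,H^1)$. That convergence is in turn available if the regularization is chosen so that the data converge in $H^1$; if the data only converge in $L^2$, this fallback requires choosing a different regularization.

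In either case, once both sides converge we obtain~\eqref{eq:L:crochet} for $v_n^{\signe}$ and $f^n$.
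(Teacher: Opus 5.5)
Your route is the paper's: establish \eqref{eq:L:crochet} for $v\in\Cscr^1(I,L^2(M))\cap\Cscr^0(I,H^1(M))$ by moving $\HWOp^{-\signe}_{h}$ onto the left factor, then pass to $v_n^{\signe}\in\Cscr^0(I,L^2(M))$ through the approximants $v^{\signe}_{n,k}$.

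One small point on the first step. Use that the Schwartz kernel of $A_h$ is compactly supported in $\MM_I$, so $A_hv$ vanishes near $\partial I\times M$ exactly. With the characterization $\chi A_h\chi=A_h+\O{h^\infty}$ instead, the remainder is nonzero at fixed $h$ and produces boundary terms, so the identity would only hold modulo $\O{h^\infty}$.

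\textbf{The gap is in the second step.} The paper disposes of it in one line (density and dominated convergence). That line tacitly uses that $h^{-1}\classe{\HWOp^{-\signe}_h,A_h}$ extends, for each fixed $h$, to a bounded operator on $L^2(\MM_I)$. Without this, the left-hand side of \eqref{eq:L:crochet} is not a priori meaningful for $v_n^{\signe}\in L^2$, since $\HWOp^{-\signe}_hA_hv_n^{\signe}$ is a priori only in $H^{-1}$.

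You correctly isolate this as the crux, but you do not prove it. Near $\set{\abs{\xi}_g\ll\abs{\tau}}$ you only check the principal symbol $\set{\sqrtsymbol,\sigma_h(A_h)}$. That says nothing about the rest of the commutator: the terms $\partial_\xi^\alpha\sqrtsymbol\,D_x^\alpha\sigma_h(A_h)$ behave like $\jap{\xi}_g^{1-\abs{\alpha}}$ and do not decay as $\abs{\tau}\to\infty$, so the expansion is not asymptotic there.

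Your fallback cannot work either. $v_n^{\signe}=\HWOp^{\signe}_{h_n}u^n$ with $u^n\in\Cscr^0(I,H^1(M))$ is in general only in $\Cscr^0(I,L^2(M))$. No regularization converges in $L^2(I,H^1(M))$ to a limit outside that space. As written, the proposal therefore does not reach \eqref{eq:L:crochet}.

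Either of the following closes the gap.

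(i) Prove the fixed-$h$ bound in general. In a chart, write the exact symbols of $\Op(\sqrtsymbol)\Op(a)$ and $\Op(a)\Op(\sqrtsymbol)$ as oscillatory integrals. Taylor-expand $\sqrtsymbol(x,\xi+\eta)-\sqrtsymbol(x,\xi)$, resp.\ $a(t,x,\tau,\xi+\eta)-a(t,x,\tau,\xi)$, to first order in $\eta$, and integrate by parts once in $y$. The commutator symbol becomes an oscillatory integral with two amplitudes:
\begin{itemize}
\item $\int_0^1\partial_\xi\sqrtsymbol(x,\xi+s\eta)\,\d s\cdot\partial_xa(t,x+y,\tau,\xi)$, which is bounded with all its derivatives;
\item $\int_0^1\partial_\xi a(t,x,\tau,\xi+s\eta)\,\d s\cdot\partial_x\sqrtsymbol(x+y,\xi)$, which together with all its derivatives is $\lesssim\jap{\xi}\jap{(\tau,\xi+s\eta)}^{-1}\lesssim\jap{\eta}$ by Peetre's inequality.
\end{itemize}
Hence the commutator has a symbol in $S^0_{0,0}$ and is $L^2$-bounded by the $S^0_{0,0}$ Calder\'on--Vaillancourt theorem. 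No uniformity in $h$ is needed here.

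(ii) More economically, restrict the lemma to $A_h=\Op^{\MM}_h(b)$ with $b$ vanishing near $\set{\varepsilon\abs{\tau}>1,~\varepsilon\abs{\tau}\geq\abs{\xi}_g}$. This covers every use of the lemma in the paper: $b=\chi a$ with $a\in\test(I\times\cotang M)$ in the semiclassical part, and $b=\chi\holepara{R}a$ with $\chi\in\mathbf{X}(I,M)$ in the 2-microlocal part. For such $b$, Theorem~\ref{Th:hormander} gives $\classe{\HWOp^{-\signe}_h,A_h}\in h\pseudo^0_{h}(\MM_I)$ directly, which is exactly your ``away from the cone'' argument. Both sides of the identity for $v^{\signe}_{n,k}$ then converge by \eqref{eq:v:f:n:k:tends} and Proposition~\ref{pr:Calderon:Vaillancourt}.
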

\begin{proof}[Proof of Lemma~\ref{lem:L:crochet}]
Let us consider $A_{h}\in \pseudo^{0}_{h, \comp}\p{\MM_I}$, we look at
\begin{equation*}
\classe{\HWOp^{-\signe}_{h}, A_{h}} = \classe{h D_t, A_{h}} - \classe{\signe\SqrtOp_{h}, A_{h}}.
\end{equation*}
First, if $v^{\signe}_n\in \Cscr^1\p{I, L^2(M)}\cap \Cscr^0\p{I, H^1(M)}$.
On the one hand,
\begin{align*}
\pscal{v_n^{\signe}}{\classe{D_t, A_{h_n}}v_n^{\signe}}_{L^2\p{\MM_I}}
& = \pscal{v_n^{\signe}}{D_tA_{h_n} v_n^{\signe}}_{L^2\p{\MM_I}} - \pscal{v_n^{\signe}}{A_{h_n} D_tv_n^{\signe}}_{L^2\p{\MM_I}}\\
& = \pscal{D_t v_n^{\signe}}{A_{h_n} v_n^{\signe}}_{L^2\p{\MM_I}} - \pscal{v_n^{\signe}}{A_{h_n} D_tv_n^{\signe}}_{L^2\p{\MM_I}}.
\end{align*}
On the other hand,
\begin{align*}
\pscal{v_n^{\signe}}{\classe{\SqrtOp_{h_n}, A_{h_n}}v_n^{\signe}}_{L^2\p{\MM_I}}
& = \pscal{v_n^{\signe}}{\SqrtOp_{h_n}A_{h_n} v_n^{\signe}}_{L^2\p{\MM_I}} - \pscal{v_n^{\signe}}{A_{h_n} \SqrtOp_{h_n}v_n^{\signe}}_{L^2\p{\MM_I}}\\
& = \pscal{\SqrtOp_{h_n}v_n^{\signe}}{A_{h_n} v_n^{\signe}}_{L^2\p{\MM_I}} - \pscal{v_n^{\signe}}{A_{h_n} \SqrtOp_{h_n}v_n^{\signe}}_{L^2\p{\MM_I}}.
\end{align*}
Adding these two lines and recalling that $\HWOp^{-\signe}_{h_n}v^{\signe}_n = h_nf^n$ we deduce that~\eqref{eq:L:crochet} is true for $v^{\signe}_n\in \Cscr^1\p{I, L^2(M)}\cap \Cscr^0\p{I, H^1(M)}$.
Second, using density of $H^1_{h}(M)$ in $L^2(M)$ and Lebesgue's dominated convergence theorem, we conclude that~\eqref{eq:L:crochet} still holds.
\end{proof}

We then prove boundedness and equicontinuity of $t\mapsto \pscal{v^{\signe}_{n}(t)}{A_{h_n}v^{\signe}_{n}(t)}_{L^2(M)}$, where $A_h$ may or may not depend on the parameter $R$.
This will allow us to apply Ascoli theorem in the first part of the proof of the semiclassical and 2-microlocal result of Theorem~\ref{Th:Slices}.
\begin{lemma}[Equicontinuous family]\label{lem:equicontinuous:family}
Let $(h_n)_n$ be a sequence of positive real numbers tending to $0$, $(v^{\signe}_{n})_n$ and $(f^n)_n$ be two bounded sequences of $L^\infty(I, L^2(M))$ such that for all $n\geq 1$ the function $v^{\signe}_{n}$ is $\Cscr^{0}(I, L^2(M))$ and $\HWOp^{-\signe}_{h_n}v^{\signe}_{n} = h_n f^n$, and $q$ be as defined in~\eqref{df:Sqrt:Sym}.
For any $\mathfrak{m}_1, \mathfrak{m}_2 > 0$, we consider the (nonempty) pseudodifferential operator family
\begin{equation}\label{df:family:Ak}
\Ak \subset \set*{\Op_{h}^{M}(a^h)\in\pseudo_{h, \comp}^{0}(M)
~\middle| ~
\begin{array}{l}
    a^h\in S^{0}_{\comp}(\cotang M),\\\sup_{h}\norm{a^h}_{L^\infty(\cotang M)}\leq \mathfrak{m}_1,\\
    \sup_{h}\norm{\set{a^h,\sqrtsymbol}}_{L^\infty(\cotang M)}\leq \mathfrak{m}_2
\end{array}
}.
\end{equation}
Then, for all $A_h\in\Ak$, the family of functions
\begin{equation*}
    \Fk^{A} := \set*{F^{A}_n\colon t\in I\mapsto \pscal{v^{\signe}_{n}(t)}{A_{h_n}v^{\signe}_{n}(t)}_{L^2(M)},~ n\geq 1}
\end{equation*}
is uniformly bounded and equicontinuous on $I$.
If we furthermore assume 
that
\begin{equation*}
    \forall A_h\in\Ak,~ \forall t\in I,~\exists F_{\infty}^{A}(t) \in\CC, \quad \lim_{n\to\infty}\pscal{v^{\signe}_{n}(t)}{A_{h_n}v^{\signe}_{n}(t)}_{L^2(M)} = F_{\infty}^{A}(t),
\end{equation*}
then, the family
$\Fk^{\Ak} := \set*{t\in I\mapsto F^{A}_{\infty}(t),~ A_{h}\in\Ak}$
also is uniformly bounded and equicontinuous on $I$.
\end{lemma}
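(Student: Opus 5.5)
Uniform boundedness is immediate. By Calder\'on--Vaillancourt (Proposition~\ref{pr:Calderon:Vaillancourt}), for every $A_h=\Op_h^M(a^h)\in\Ak$ we have $\abs{F_n^A(t)}\leq (\mathfrak{m}_1+C_Ah_n^{1/2})\sup_n\norm{v_n^{\signe}}^2_{L^\infty(I,L^2(M))}$. For $h_n\leq 1$ this is bounded uniformly in $n$ and $t$.

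For equicontinuity I will first work with the regularized functions $v_{n,k}^{\signe}$ of~\eqref{df:v:n:k}. They lie in $\Cscr^1(I,L^2)\cap\Cscr^0(I,H^1)$ and satisfy $h_nD_tv^{\signe}_{n,k}=\signe\SqrtOp_{h_n}v^{\signe}_{n,k}+h_nf^{n,k}$. I then differentiate in $t$ the function $F^A_{n,k}(t)=\pscal{v_{n,k}^{\signe}(t)}{A_{h_n}v_{n,k}^{\signe}(t)}_{L^2(M)}$. Since $\SqrtOp_h$ is selfadjoint, this is the time-slice version of Lemma~\ref{lem:L:crochet}:
\begin{equation*}
\frac{\d}{\d t}F^A_{n,k}(t)=\pscal{v_{n,k}^{\signe}}{\tfrac{i}{h_n}\classe{\signe\SqrtOp_{h_n},A_{h_n}}v_{n,k}^{\signe}}+i\p*{\pscal{f^{n,k}}{A_{h_n}v^{\signe}_{n,k}}-\pscal{v^{\signe}_{n,k}}{A_{h_n}f^{n,k}}}.
\end{equation*}
The commutator needs care, because $\SqrtOp_h$ has order one while $A_h$ has order zero and is compactly supported. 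By Proposition~\ref{pr:composition}, $\tfrac{i}{h}\classe{\SqrtOp_h,A_h}\in\pseudo^{0}_{h,\comp}(M)$ with principal symbol $\set{\sqrtsymbol,a^h}$. Calder\'on--Vaillancourt then bounds its $L^2$ norm by $\mathfrak{m}_2+C_Ah^{1/2}$. The second bracket is bounded by $2(\mathfrak{m}_1+C_Ah_n^{1/2})\norm{f^{n,k}}\norm{v^{\signe}_{n,k}}$. Together these give $\abs{F^A_{n,k}(t)-F^A_{n,k}(s)}\leq C(\mathfrak{m}_1,\mathfrak{m}_2)\abs{t-s}$ for $h_n$ small, uniformly in $k$. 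Letting $k\to\infty$ with~\eqref{eq:v:f:n:k:tends} transfers this Lipschitz bound to $F^A_n$.

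Two points need attention. The constant $C_A$ from Calder\'on--Vaillancourt depends on $A$. The lemma is stated per $A$, and since $h_n\to0$, the finitely many $n$ with $C_Ah_n^{1/2}>1$ are each continuous functions on $I$. A finite set of continuous functions is equicontinuous on the bounded interval $I$ (uniform continuity on compacts of $I$), so this does not harm equicontinuity of $\Fk^A$. I expect the main obstacle to be making the commutator estimate genuinely depend only on $\mathfrak{m}_1,\mathfrak{m}_2$ up to $o(1)$ errors. That is the step where the order-one operator $\SqrtOp_h$ must be absorbed using compact support of $a^h$.

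For the second statement, I pass to the limit $n\to\infty$ in the bounds $\abs{F^A_n}\leq C\mathfrak{m}_1$ and $\abs{F^A_n(t)-F^A_n(s)}\leq C(\mathfrak{m}_1+\mathfrak{m}_2)\abs{t-s}+o_n(1)$. This yields $\abs{F^A_\infty}\leq C\mathfrak{m}_1$ and a Lipschitz bound with a constant independent of $A\in\Ak$. Hence $\Fk^{\Ak}$ is uniformly bounded and equicontinuous.
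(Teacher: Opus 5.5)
Your proposal is correct and follows essentially the same route as the paper. Calder\'on--Vaillancourt gives the uniform bound. For equicontinuity you use the regularized solutions $v^{\signe}_{n,k}$ with the time-slice commutator identity, where $\frac{1}{h}[A_h,\signe\SqrtOp_h]\in\pseudo^0_{h,\comp}(M)$ has principal symbol $\set{a^h,\signe\sqrtsymbol}$ by Proposition~\ref{pr:composition}. No extra ``absorption'' argument via compact support is needed there. This gives a Lipschitz bound uniform in $k$, and the limits $k\to\infty$ and $n\to\infty$ follow; the latter removes the $C_Ah_n^{1/2}$ terms, so the constant for $\Fk^{\Ak}$ is independent of $A$.

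Your separate treatment of finitely many $n$ is unnecessary. Since $(h_n)_n$ is bounded, $C_Ah_n^{1/2}$ is bounded uniformly in $n$ for fixed $A$, so $\Fk^A$ is already uniformly Lipschitz.
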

\begin{proof}[Proof of Lemma~\ref{lem:equicontinuous:family}]
Fix temporarily $A_h\in\Ak$.
    
First, by Proposition~\ref{pr:Calderon:Vaillancourt} the family of continuous functions $\Fk^{A}$ is bounded on the interval $I$ in the following sense: there exists $C_A > 0$ such that for all $n\geq 1$ we have
\begin{equation}\label{eq:semiclassic:bounded:family}
\sup_{t\in I}\abs{F^A_n(t)}\leq \p{\mathfrak{m}_1 + C_A h_n^{1/2}}\sup_{m\geq 1}\norm{v^{\signe}_m}_{L^\infty(I, L^2(M))}^2.
\end{equation}
Thus, $\Fk^{A}$ is uniformly bounded by~\eqref{eq:semiclassic:bounded:family}.

Second, we prove that $\Fk^{A}$ is equicontinuous: since both $A_{h}$ and $h^{-1}[A_{h},\signe\SqrtOp_{h}]$ belong to $\Psi^{0}_{h, \comp}(M)$, Proposition~\ref{pr:Calderon:Vaillancourt} yields constants $C_A', C_A'' > 0$ such that for all $n\geq 1$ we have
\begin{equation}\label{eq:diag:bound}
\abs{\pscal{v^{\signe}_{n}(t)}{h_n^{-1}[A_{h_n},\signe\SqrtOp_{h_n}]v^{\signe}_{n}(t)}_{L^2(M)}} \leq \p{\mathfrak{m}_2 + C_A'h_n^{1/2}} \sup_{m\geq 1}\norm{v^{\signe}_m}^2_{L^\infty(I, L^2(M))}
\end{equation}
and
\begin{equation}\label{eq:antidiag:bound}
\abs{\pscal{{f}^n(t)}{A_{h_n}v^{\signe}_{n}(t)}_{L^2}}\leq \p{\mathfrak{m}_1 + C_A''h_n^{1/2}} \sup_{m\geq 1}\p{\norm{f^m}_{L^\infty(I, L^2(M))}\norm{v^{\signe}_m}_{L^\infty(I, L^2(M))}}.
\end{equation}
Hence, using~\eqref{eq:v:f:n:k:tends} and the notation $(v_{n,k}^{\signe}, f^{n, k})_k$ from~\eqref{df:v:n:k}, for each temporarily fixed $n\geq 1$ and all $t_1, t_2\in I$ we have
\begin{equation}\label{eq:30}
\abs{F^A_{n, k}(t_1) - F^A_{n,k}(t_2)}\leq \abs{t_1 - t_2}\,
\sup_{t\in I}
\abs{D_t \pscal{v^{\signe}_{n, k}(t)}{A_{h_n}v^{\signe}_{n, k}(t)}_{L^2}}, \quad k\geq 1.
\end{equation}
Moreover, for each $k\in\NN$, since $v^{\signe}_{n,k}\in \Cscr^{1}(I,L^2(M))$, we have for every $t\in I$:
\begin{align}
\notag h_n D_t\pscal{v^{\signe}_{n, k}(t)}{A_{h_n}v^{\signe}_{n, k}(t)}_{L^2} &= -\pscal{h_n D_t v^{\signe}_{n, k}(t)}{A_{h_n}v^{\signe}_{n, k}(t)}_{L^2} + \pscal{v^{\signe}_{n, k}(t)}{A_{h_n}(h_n D_t v^{\signe}_{n, k}(t))}_{L^2}\\
&\notag \hspace{-4cm} = \pscal{ v^{\signe}_{n, k}(t)}{[A_{h_n},\signe\SqrtOp_{h_n}]v^{\signe}_{n, k}(t)}_{L^2} - \pscal{h_n{f}^{n, k}(t)}{A_{h_n}v^{\signe}_{n, k}(t)}_{L^2} + \pscal{v^{\signe}_{n, k}(t)}{h_nA_{h_n}{f}^{n, k}(t)}_{L^2}\\
&\hspace{-4cm} = \pscal{v^{\signe}_{n, k}(t)}{[A_{h_n},\signe\SqrtOp_{h_n}]v^{\signe}_{n, k}(t) + h_nA_{h_n}{f}^{n, k}(t)}_{L^2} - \pscal{h_n{f}^{n, k}(t)}{A_{h_n}v^{\signe}_{n, k}(t)}_{L^2}.\label{eq:31}
\end{align}
Therefore, using~\eqref{eq:31} and then passing to the limit $k\to\infty$ in~\eqref{eq:30}, we obtain
\begin{equation}\label{eq:bounded:derivative}
\frac{\abs{F^A_{n}(t_1) - F^A_{n}(t_2)}}{\abs{t_1 - t_2}}\leq
\sup_{t\in I}
\abs{\pscal{v^{\signe}_{n}(t)}{h_n^{-1}[A_{h_n},\signe\SqrtOp_{h_n}]v^{\signe}_{n}(t) + A_{h_n}{f}^n(t)}_{L^2} - \pscal{{f}^n(t)}{A_{h_n}v^{\signe}_{n}(t)}_{L^2}}.
\end{equation}
Thus,~\eqref{eq:bounded:derivative} together with~\eqref{eq:diag:bound} and~\eqref{eq:antidiag:bound} implies that there exists $C > 0$ such that $\Fk^{A}$ is a family of $C$-Lipschitz functions. In conclusion, by~\eqref{eq:semiclassic:bounded:family} and~\eqref{eq:bounded:derivative}, the family $\Fk^{A}$ is uniformly bounded and uniformly equicontinuous on $I$.

Finally, combining~\eqref{eq:semiclassic:bounded:family},~\eqref{eq:bounded:derivative},~\eqref{eq:diag:bound}, and~\eqref{eq:antidiag:bound} and letting $n\to\infty$, we deduce that the family $\Fk^{\Ak}$ is uniformly bounded and consists of $C'$-Lipschitz functions for some constant $C'>0$.
This concludes the proof of Lemma~\ref{lem:equicontinuous:family}.
\end{proof}
\subsection{Slice semiclassical defect measure}\label{Subsection:SDM:proof}
In this subsection, we provide the proof of the first part (the result for semiclassical defect measures) of Theorem~\ref{Th:Slices}.
We begin by a lemma to compute the limit of quantized tangential symbols and then provide the proof for Theorem~\ref{Th:Slices}.
In the following lemma, we assume that the sequence admits a unique semiclassical defect measure that depends continuously on time.
\begin{lemma}\label{lem:Symbols:Tangentiels}
Let us consider $(w^n)_n$ a sequence of $L^\infty(I, L^2(M))$ such that for all $t\in I$, $(w^n(t), h_n)_n$ admits a semiclassical defect measure $\sdm_{t}\in\Mc_+(\cotang M)$ in the sense of~\eqref{eq:SDM}.
Assume furthermore that $t\in I \mapsto \sdm_{t}$ is continuous for the weak-* topology of measures.
Then, for all $a\in\test(I\times \cotang M)$, we have
\begin{equation*}
\lim_{n\to \infty}\pscal{w^n}{\Op_{h_n, \tang}^{M}(a)w^n}_{L^2(\MM_I)} = \int_{I\times \cotang M}a(t, x, \xi) \d\sdm_{t}(x, \xi)\d t,
\end{equation*}
where the tangential quantization $\Op_{h, \tang}^{M}$ is defined in Definition~\ref{df:tangential:quantization}.
\end{lemma}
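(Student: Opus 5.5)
The plan is to reduce the space-time pairing to time slices, apply the hypothesis at each fixed time, and then integrate in $t$ by dominated convergence. By Definition~\ref{df:tangential:quantization}, for every $n$,
\begin{equation*}
\pscal{w^n}{\Op_{h_n, \tang}^{M}(a)w^n}_{L^2(\MM_I)} = \int_I \pscal{w^n(t)}{\Op_{h_n}^{M}(a(t,\cdot))w^n(t)}_{L^2(M)}\d t =: \int_I G_n(t)\d t .
\end{equation*}
The Fubini step is justified because $w^n\in L^\infty(I,L^2(M))$ and $\Op_{h_n}^M(a(t,\cdot))$ is bounded on $L^2(M)$ uniformly in $t$. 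For fixed $t$, the symbol $a(t,\cdot)\in\test(\cotang M)$ is $h$-independent, so $\Op_{h}^M(a(t,\cdot))\in\pseudo^{\comp}_h(M)$ has principal symbol $a(t,\cdot)$. By the assumption that $(w^n(t),h_n)_n$ admits $\sdm_t$ in the sense of~\eqref{eq:SDM}, we get the pointwise limit $G_n(t)\to\int_{\cotang M}a(t,x,\xi)\d\sdm_t(x,\xi)$ for every $t\in I$.

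To pass the limit under the integral, I would use domination. By Proposition~\ref{pr:Calderon:Vaillancourt}, applied uniformly over the compact family $\set{a(t,\cdot)}_{t\in \supp_t a}$ (the constant $C_A$ depends only on finitely many seminorms, which are bounded uniformly in $t$), we have
\begin{equation*}
|G_n(t)|\le \p{\norm{a}_{L^\infty}+Ch_n^{1/2}}\sup_m\norm{w^m}_{L^\infty(I,L^2)}^2 .
\end{equation*}
Since $I$ is bounded, dominated convergence then yields $\lim_n\int_I G_n = \int_I\pscal{\sdm_t}{a(t,\cdot)}\d t$.

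The integrand must also be measurable, which is where weak-* continuity of $t\mapsto\sdm_t$ enters. I would show that $t\mapsto\pscal{\sdm_t}{a(t,\cdot)}$ is continuous via the splitting
\begin{equation*}
\pscal{\sdm_t}{a(t)}-\pscal{\sdm_s}{a(s)}=\pscal{\sdm_t}{a(t)-a(s)}+\pscal{\sdm_t-\sdm_s}{a(s)} .
\end{equation*}
The second term tends to $0$ as $t\to s$ by weak-* continuity. The first is bounded by $\sdm_t(K)\norm{a(t)-a(s)}_\infty$, with $K$ a compact set containing the supports. Its total mass is bounded uniformly in $t$, because $\sdm_t(K)\le\limsup_n\norm{w^n(t)}^2\le\sup_n\norm{w^n}^2_{L^\infty L^2}$ by testing against a cutoff equal to $1$ on $K$. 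Alternatively, the masses are locally bounded by continuity together with the uniform boundedness principle. This identifies the limit with $\int_{I\times\cotang M}a\,\d\sdm_t\d t$.

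The main technical point is the uniformity in $t$ of the Calder\'on--Vaillancourt remainder. It holds because the $\pseudo^{\comp}$ remainder constants depend on finitely many $\Cscr^k$ seminorms of $a(t,\cdot)$, all controlled by those of $a\in\test(I\times\cotang M)$. A further subtlety is that $w^n(t)$ is only defined for a.e.\ $t$, but the hypothesis is stated for all $t$, so the statement implicitly takes representatives for which the pointwise limits hold; a.e.\ convergence is all that dominated convergence needs.
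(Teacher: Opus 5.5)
Your proof is correct, and its core mechanism is the paper's: write the pairing as $\int_I \pscal{w^n(t)}{\Op_{h_n}^M(\cdot)\,w^n(t)}_{L^2(M)}\d t$, apply \eqref{eq:SDM} at each fixed $t$, and conclude by dominated convergence. The difference lies in how the $t$-dependence of the symbol is handled.

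The paper first takes $a=\psi(t)\phi(x,\xi)$. Then the slice operator is the fixed operator $\Op_{h_n}^M(\phi)$ times the scalar $\psi(t)$, and the limit integrand $\psi(t)\pscal{\sdm_t}{\phi}$ is obviously continuous. It then reaches general $a$ by density of $\test(I)\otimes\test(\cotang M)$ in the sup norm. That step works because, for each fixed approximant $a_k$, Proposition~\ref{pr:Calderon:Vaillancourt} (applied uniformly in $t$) gives $\limsup_n\norm{\Op_{h_n,\tang}^M(a-a_k)}_{\linear(L^2(\MM_I))}\le\sup\abs{a-a_k}$.

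You instead apply \eqref{eq:SDM} directly to the $h$-independent symbol $a(t,\cdot)$. This removes the density argument, but you must then supply two things yourself: the uniform-in-$t$ Calder\'on--Vaillancourt bound for the domination, and the continuity of $t\mapsto\pscal{\sdm_t}{a(t,\cdot)}$ with locally bounded masses. You do both correctly.

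One remark applies to both proofs. The domination uses $\sup_n\norm{w^n}_{L^\infty(I,L^2(M))}<\infty$, which the statement omits. This hypothesis is genuinely needed. Take $w^n(t)=\sqrt{n}\,\indicatrice_{[t_0+1/n,\,t_0+2/n]}(t)\,u_0^{h_n}$, with $u_0^{h}$ a normalized coherent state as in~\eqref{eq:coherent:state:family} centred at some $\rho_0\in\cotang M$. Then $\sdm_t=0$ for every $t$, yet the pairing tends to $a(t_0,\rho_0)$. The bound does hold for the sequences $v_n^{\signe}$ to which the lemma is applied, by Lemma~\ref{lem:SPLITTING}.
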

\begin{proof}[Proof of Lemma~\ref{lem:Symbols:Tangentiels}]
For all $(\psi, \phi)\in \test(I)\times \test(\cotang M)$, we have 
\begin{equation*}
\lim_{n\to \infty}\pscal{w^n}{\Op_{h_n, \tang}^{M}(\psi\phi)w^n}_{L^2} = \lim_{n\to \infty}\int_I\psi(t)\pscal{w^n(t)}{\Op_{h_n}^{M}(\phi)w^n(t)}_{L^2(M)}\d t \\
\end{equation*}
Thus, by Lebesgue's dominated convergence theorem and~\eqref{eq:SDM}, we have
\begin{equation*}
\lim_{n\to \infty}\pscal{w^n}{\Op_{h_n, \tang}^{M}(\psi\phi)w^n}_{L^2}= \int_{I\times \cotang M}\psi(t)\phi(x, \xi) \d\sdm_{t}(x, \xi)\d t.
\end{equation*}
Then for any $a \in \test(I)\otimes \test(\cotang M)$, we obtain 
\begin{align*}
\lim_{n\to \infty}\pscal{w^n}{\Op_{h_n, \tang}^{M}(a)w^n}_{L^2}  = \int_{I\times \cotang M}a(t, x, \xi) \d\sdm_{t}(x, \xi)\d t.
\end{align*}
Therefore, using the density of $\test(I)\otimes \test(\cotang M)$ in $\test(I\times \cotang M)$ for the topology of {uniform
convergence} (see for example~\cite[Chapter 11, Remark 11.12]{duistermaat2010distributions}), we can conclude the proof for functions of $\test(I\times \cotang M)$.
\end{proof}
\begin{proof}[Proof of first point (semiclassical result) of Theorem~\ref{Th:Slices}] We use the same enumeration as in the statement of Theorem~\ref{Th:Slices}, and prove them in the same order as stated.
\begin{enumerate}\renewcommand{\labelenumi}{\alph{enumi}.}
\item Let $a\in\test(\cotang M)$ and set $A_{h} := \Op_{h}^{M}(a)\in{\pseudo}^{\comp}_{h}(M)$.
For all $n\geq 1$ we define
\begin{equation*}
\Fk^{a} := \set{F_{n}^{a}\colon t\in I\mapsto \pscal{v^{\signe}_{n}(t)}{A_{h_n}v^{\signe}_{n}(t)}_{L^2(M)}, n\geq 1}.
\end{equation*}
Then, the family $\Fk^{a}$ is uniformly bounded and uniformly equicontinuous on $I$ by Lemma~\ref{lem:equicontinuous:family} and therefore Ascoli Theorem ensures that, for any compact interval $J\subset I$, $\Fk^{a}$ has compact closure for the topology of $\Cscr^0_{b}(J, \CC)$.
As a consequence, for all compact interval $J\subset I$ and all $a\in \test(\cotang M)$ there is a subsequence of $(v^{\signe}_{n})_n$, still denoted $(v^{\signe}_{n})_n$, such that $\p{F_{n}^{a}}_n$ converges uniformly on $J$.
Finally, fix an increasing sequence of compact intervals $(J_k)_k$ with $I=\bigcup_k J_k$ and a countable dense subset of $\test(\cotang M)$.
By diagonal extraction argument, there is a subsequence of $(v^{\signe}_{n})_n$, still denoted $(v^{\signe}_{n})_n$, and a family $(\sdm^{\signe}_{t})_{t\in I}$ of semiclassical defect measures in $\Mc_+(\cotang M)$ such that: for all compact interval $J\subset I$, all $a\in\test(\cotang M)$ and all $\varepsilon>0$, there is $N(J,a)\ge 1$ such that
\begin{equation*}\forall n\geq N(J, a),\quad \sup_{t\in J} \abs*{\pscal{v^{\signe}_{n}(t)}{A_{h_n}v^{\signe}_{n}(t)}_{L^2(M)} - \int_{\cotang M}a(x, \xi)\d\sdm^{\signe}_{t}(x, \xi)}\leq \varepsilon.\end{equation*}
As a limit of a sequence of continuous functions with uniform convergence on all compact subset of $I$,
\begin{equation*}
t\in I\mapsto \int_{\cotang M}a\d\sdm^{\signe}_{t}
\end{equation*}
is continuous.
That is to say $t\in I\mapsto \sdm^{\signe}_{t}$ is continuous for the weak-* topology of measures.
\item 
We use the function $\chi$ from~\eqref{eq:wave-type-ghost}.
By Taylor formula, for all $a\in\test(\cotang\hspace{1pt} \MM_I)$ we have
\begin{equation*}
a(t, x, \tau, \xi) = \tilde{a}_{t}(x, \xi) + \ell^{-\signe}(t, x, \tau, \xi)\alpha_{\signe}(t, x, \tau,\xi), \quad \quad (t, x, \tau, \xi)\in\cotang\hspace{1pt} \MM_I,
\end{equation*}
where we set $\tilde{a}\in\test\p{I\times \cotang M}$ as
\begin{equation*}
\tilde{a}_{t}(x, \xi) := a\p{t, x, \signe\sqrtsymbol(x, \xi), \xi}, \quad (t, (x, \xi))\in I\times \cotang M,
\end{equation*}
and
\begin{equation*}
\alpha_{\signe}(t, x, \tau,\xi) := \int_{0}^{1}\frac{\partial a}{\partial\tau}\p{t, x, s\tau + (1-s)\signe\sqrtsymbol(x, \xi), \xi}\d s, \quad (t, x, \tau, \xi)\in\cotang\hspace{1pt} \MM_I.
\end{equation*}
Since $a$ is compactly supported, so are $\tilde{a}$ (in $I\times \cotang M$) and $\frac{\partial a}{\partial\tau}$, and hence $\alpha_{\signe}$.
By Theorem~\ref{Th:hormander}, we obtain
\begin{equation}\label{eq:hormander:semiclassic:calculus}
\Op_{h}^{\MM}(\chi a) = \Op_{h,\tang}^{M}(\tilde{a})\Op_{h}^{\MM}(\chi) + \underbracket{\Op_{h}^{\MM}(\chi\alpha_{\signe})}_{\in\pseudo_{h}^{\comp}(\MM)}\HWOp^{-\signe}_{h} + h\rest^{\comp}_{h},
\quad\rest^{\comp}_{h}\in \pseudo^{\comp}_{h}(\MM_I).
\end{equation}
As a consequence of~\eqref{eq:hormander:semiclassic:calculus} we have
\begin{align}
\notag\pscal{v^{\signe}_{n}}{\Op_{h_n}^{\MM}(\chi a)v^{\signe}_{n}}_{L^2(\MM_I)} = &\pscal{v^{\signe}_{n}}{\Op_{h_n,\tang}^{M}(\tilde{a})\Op_{h_n}^{\MM}(\chi)v^{\signe}_{n}}_{L^2(\MM_I)} \\
&+ h_n\pscal{v^{\signe}_{n}}{\Op^{\MM}_{h_n}(\chi \alpha_{\signe}) {f}^n + \rest_{h_n}^{\comp}v^{\signe}_n}_{L^2(\MM_I)}.\label{eq:sc:hormander:calculus:pscal}
\end{align}
By Proposition~\ref{pr:Calderon:Vaillancourt} and the Cauchy-Schwarz inequality, 
\begin{equation}\label{eq:sc:bounded:rest}
\pscal{v^{\signe}_{n}}{\Op^{\MM}_{h_n}(\chi \alpha_{\signe}) {f}^n + \rest_{h_n}^{\comp}v^{\signe}_n}_{L^2(\MM_I)} = \O{1}_n. 
\end{equation}
By~\eqref{eq:wave-type-ghost}, we also have
\begin{equation}\label{eq:ghost:argument}
    \pscal{v^{\signe}_{n}}{\Op_{h_n,\tang}^{M}(\tilde{a})\Op_{h_n}^{\MM}(\chi)v^{\signe}_{n}}_{L^2(\MM_I)} = \pscal{v^{\signe}_{n}}{\Op_{h_n,\tang}^{M}(\tilde{a})v^{\signe}_{n}}_{L^2(\MM_I)} + \o{1}_n.
\end{equation}
So, using~\eqref{eq:sc:bounded:rest} and~\eqref{eq:ghost:argument} in~\eqref{eq:sc:hormander:calculus:pscal}, Lemma~\ref{lem:Symbols:Tangentiels} and~\eqref{eq:SDM} imply 
\begin{equation*}
\int_{I\times\cotang M}\tilde{a}_t(x, \xi)\d\sdm_t^{\signe}(x, \xi)\d t = \lim_{n\to\infty}\pscal{v^{\signe}_{n}}{\Op_{h_n}^{\MM}(\chi a)v^{\signe}_{n}}_{L^2(\MM_I)} = \int_{\cotang\hspace{1pt} \MM_I} a(t, x, \tau, \xi) \d\sdm_{(\signe, \signe)}(t, x, \tau, \xi).
\end{equation*}
Thus,~\eqref{eq:PG:semiclassique:point2} holds.
\item 
We use the function $\chi$ from~\eqref{eq:wave-type-ghost}.
From~\eqref{eq:PG:semiclassique:point2}, we have 
\begin{equation}\label{eq:semiclassic:support}
\supp\p{\sdm_{(\signe, \signe)}}\subseteq\set{(t, x, \tau, \xi)\in\cotang \hspace{1 pt}\MM_I,~ \tau = \signe\sqrtsymbol(x, \xi)}.
\end{equation}
Furthermore, using Proposition~\ref{pr:inter:supp:k}, $\sdm_{(\signe, f)}$ (a finite-valued complex measure) is absolutely continuous with respect to $\sdm_{(\signe, \signe)}$ (a nonnegative $\sigma$-finite measure), hence, by Radon--Nikodym Theorem and~\eqref{eq:PG:semiclassique:point2} there is a complex measure $\tilde{\sdm}_{(\signe, f)}^t\Mc_+(\cotang M)$ such that
\begin{equation*}
\sdm_{(\signe, f)}(t, x, \tau, \xi) = \tilde{\sdm}_{(\signe, f)}^t(x, \xi)\otimes \delta_{\signe\sqrtsymbol(x, \xi)}\p{\tau}\otimes\Lc_1(t).
\end{equation*}
Take a smooth function $a\in\test(I\times \cotang M)$.
We have $\chi\ell^{-\signe}\in S^{1}(\cotang\hspace{1pt} \MM_I)$ by Theorem~\ref{Th:hormander} and $\chi a\in \test(\cotang\hspace{1pt} \MM_I)$.
On the one hand, since $\chi = 1$ near $\set{(t, x, \tau, \xi)\in\cotang \hspace{1 pt}\MM_I,~ \tau = \signe\sqrtsymbol(x, \xi)}$, using~\eqref{eq:semiclassic:support} we have 
\begin{equation*}
\set{\ell^{-\signe}, \chi} =  0\text{ near }\supp\p{\sdm_{(\signe, \signe)}}.
\end{equation*}
Thus, we have
\begin{equation*}
\notag\set{\ell^{-\signe}, \chi a} = r + \set{\ell^{-\signe}, a}\chi = r + \p{\partial_t a + \set{-\signe\sqrtsymbol,  a}}\chi
\end{equation*}
where $r = \set{\ell^{-\signe}, \chi}a$ is equal to $0$ near $\set{(t, x, \tau, \xi)\in\cotang \hspace{1 pt}\MM_I,~ \tau = \signe\sqrtsymbol(x, \xi)}$ and compactly supported.
Thus, 
\begin{equation*}
\pscal{v^{\signe}_{n}}{\Op_{h_n}^{\MM}\p{\set{\ell^{-\signe}, \chi a}}v^{\signe}_{n}}_{L^2(\MM_I)} = \pscal{v^{\signe}_{n}}{\Op_{h_n}^{\MM}\classe{\p{\partial_t a + \set{-\signe\sqrtsymbol,  a}}\chi}v^{\signe}_{n}}_{L^2} + \o{1}_n.
\end{equation*}
In conclusion, since $\p{\partial_t a + \set{-\signe\sqrtsymbol,  a}}\chi\in\test(\cotang\hspace{1pt} \MM_I)$,~\eqref{eq:PG:semiclassique:point2} implies 
\begin{equation}\label{eq:sc:bla}
\lim_{n\to\infty}\pscal{v^{\signe}_{n}}{\Op_{h_n}^{\MM}\p{\set{\ell^{-\signe}, \chi a}}v^{\signe}_{n}}_{L^2(\MM_I)} = \int_{I\times \cotang M}\partial_t a - \set{\signe\sqrtsymbol,  a}\d\sdm^{\signe}_{t}\d t
\end{equation}
by~\eqref{eq:SDM}.
On the other hand, by Theorem~\ref{Th:hormander}, we have $\set{\ell^{-\signe}, \chi a}\in S^1\p{\cotang\hspace{1pt} \MM_I}$ and, using furthermore Proposition~\ref{pr:composition}, we have
\begin{equation}\label{eq:hormander:lie:bracket}
\Op_{h}^{\MM}\p*{\set{\ell^{-\signe}, \chi a}} = \frac{i}{h}\classe{\HWOp^{-\signe}_{h_n}, \Op_{h}^{\MM}\p{\chi a}} + h\rest^{\comp}_{h}, \quad \rest_{h}^{\comp}\in \pseudo^{\comp}_{h}\p{\MM_I}.
\end{equation}
Then, by~\eqref{eq:hormander:lie:bracket}, Lemma~\ref{lem:L:crochet} and Proposition~\ref{pr:boundedness:pseudo}, we have
\begin{align}
\notag\pscal{v^{\signe}_{n}}{\Op_{h_n}^{\MM}\p{\set{\ell^{-\signe}, \chi a}}v^{\signe}_{n}}_{L^2(\MM_I)}
&= \frac{i}{h_n}\pscal{v^{\signe}_{n}}{[\HWOp^{-\signe}_{h_n}, \Op_{h_n}^{\MM}(\chi a)]v^{\signe}_{n}}_{L^2(\MM_I)} + \O{h_n}\\
&\hspace{-2cm}= i\p*{\pscal{f^{n}}{\Op_{h_n}^{\MM}(\chi a)v^{\signe}_{n}}_{L^2(\MM_I)} - \pscal{v^{\signe}_{n}}{\Op_{h_n}^{\MM}\p{\chi a}f^{n}}_{L^2(\MM_I)}}+ \O{h_n}.\label{eq:no:se:sc}
\end{align}
Hence, by~\eqref{eq:SDM} and~\eqref{eq:no:se:sc}, we have
\begin{equation}\label{eq:sc:blabla}
\lim_{n\to\infty}\pscal{v^{\signe}_{n}}{\Op_{h_n}^{\MM}\p{\set{\ell^{-\signe}, \chi a}}v^{\signe}_{n}}_{L^2(\MM_I)} = -2\int_{\MM_I}a(t, x, \xi)\d\Imaginary\sdm_{(\signe, f)}(t, x, \tau, \xi).
\end{equation}
In conclusion,~\eqref{eq:sc:bla} and~\eqref{eq:sc:blabla} prove~\eqref{eq:PG:semiclassic:point3}.
\end{enumerate}
Then, we proved all points of the theorem.
\end{proof}
\subsection{Slice 2-microlocal defect measure}\label{subsection:MDM:proof}
In the present subsection, we provide the proof of the first part of the result concerning 2-microlocal defect measures of Theorem~\ref{Th:Slices}.
The scheme of the proof is the same as in Appendix~\ref{Subsection:SDM:proof} except double limit and homogeneity.
\paragraph{Technical lemmas.}
We begin by a lemma to compute limit of quantified tangential symbols, then we prove three technical lemmas (the proof of Lemma~\ref{lem:uniform:symbol:substitution} is provided at the end of the subsection) and we finally provide the proof for Theorem~\ref{Th:Slices}.
\begin{lemma}\label{lem:Symbols:Tangentiels:micro}
Let $(h_n)_n$ be a sequence of positive real numbers tending to $0$ and $(w^n)_n$ a sequence of $L^\infty(I, L^2(M))$ such that, for all $t\in I$, $(w^n(t), h_n)_n$ admits a unique 2-microlocal defect measure $\mdm_{t}\in\Mc_+(\cosphere M)$ in the sense of~\eqref{eq:MDM}.
Assume furthermore that $t\in I \mapsto \mdm_{t}$ is continuous for weak-* topology of measures.
Then, for any cut-off function $\theta$ as~\eqref{df:theta} and for all $a\in S^{0}_{\phg, \tang}(I\times \cotang M)$, we have
\begin{equation*}
\lim_{R\to\infty}\lim_{n\to +\infty}
{\textstyle
\pscal{w^n}{\Op_{h_n, \tang}^{M}\p{a\p{1-\theta\p{\frac{\jap{\xi}_g}{R}}}}w^n}_{L^2(\MM_I)} 
}
= \int_{I\times \cosphere M}a(t, x, \xi) \d\mdm_{t}(x, \xi)\d t,
\end{equation*}
where the tangential quantization $\Op_{h, \tang}^{M}$ is defined in Definition~\ref{df:tangential:quantization}.
\end{lemma}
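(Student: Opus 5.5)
The plan is to mirror the proof of Lemma~\ref{lem:Symbols:Tangentiels}: first treat product symbols, then pass to general tangential homogeneous symbols by density. The one new ingredient is the double limit, first in $n$ and then in $R$, and handling it is the main task.

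\textbf{Step 1 (product symbols).} Take $a(t,x,\xi)=\psi(t)\phi(x,\xi)$ with $\psi\in\test(I)$ and $\phi\in S^0_{\phg}(\cotang M)$. By Definition~\ref{df:tangential:quantization} we can write
\begin{equation*}
\pscal{w^n}{\Op_{h_n,\tang}^{M}\p*{a\p*{1-\theta\p*{\tfrac{\jap{\xi}_g}{R}}}}w^n}_{L^2(\MM_I)}=\int_I\psi(t)\,G_{n,R}(t)\,\d t,
\end{equation*}
where
\begin{equation*}
G_{n,R}(t):=\pscal{w^n(t)}{\Op_{h_n}^M\p*{\phi\p*{1-\theta\p*{\tfrac{\jap{\xi}_g}{R}}}}w^n(t)}_{L^2(M)}.
\end{equation*}
Proposition~\ref{pr:Calderon:Vaillancourt} gives the bound $\abs{G_{n,R}(t)}\leq (C\norm{\phi}_{L^\infty}+C_R h_n^{1/2})\sup_m\norm{w^m}^2_{L^\infty(I,L^2)}$, which is uniform in $t$. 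For fixed $R$, the limit $G_{\infty,R}(t):=\lim_n G_{n,R}(t)$ should be established pointwise in $t$. To get it, apply Proposition~\ref{pr:SDM} together with a diagonal extraction, or reuse the semiclassical slice measures. More simply, note that by \eqref{eq:MDM} the iterated limit $\lim_R\lim_n G_{n,R}(t)=\int_{\cosphere M}\phi\,\d\mdm_t$ exists for each $t$.

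Dominated convergence in $t$ (the bound is uniform) then lets us pass to the limit $n\to\infty$ inside the integral, after restricting to a subsequence along which $\lim_n G_{n,R}(t)$ exists for all $t$ and countably many $R$. Here I expect to invoke the equicontinuity Lemma~\ref{lem:equicontinuous:family}, or else to work with $\limsup$/$\liminf$ squeezed by dominated convergence. A second application of dominated convergence in $R$, with the bound $C\norm{\phi}_{L^\infty}$, then yields $\int_I\psi(t)\int_{\cosphere M}\phi\,\d\mdm_t\,\d t$.

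\textbf{Step 2 (density).} By linearity the identity extends to finite sums $\sum\psi_j\phi_j$. A general $a\in S^0_{\phg,\tang}(I\times\cotang M)$ is determined, modulo a part vanishing at infinity, by its restriction to $I\times\cosphere M$, which is a compactly supported continuous function there. Such functions can be approximated uniformly by elements of $\test(I)\otimes\test(\cosphere M)$, each extended homogeneously. The part vanishing at infinity is killed by the argument of Lemma~\ref{lem:microlocalization:remainder}. Write $\tilde a$ for the approximating tensor symbol. By Calderón--Vaillancourt,
\begin{equation*}
\limsup_n\abs{\pscal{w^n}{\Op_{h_n,\tang}^M((a-\tilde a)(1-\theta))w^n}}\lesssim\sup\abs{a-\tilde a},
\end{equation*}
uniformly in $R$. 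The right-hand side is likewise controlled by the total mass of $\mdm_t$, which is bounded uniformly in $t$ by the same estimate. An $\varepsilon/3$ argument then concludes.

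\textbf{Main obstacle.} The delicate point is exchanging the double limit $\lim_R\lim_n$ with the time integral. Definition~\eqref{eq:MDM} only gives the iterated limit pointwise in $t$, with no uniformity. I would first try to obtain uniform-in-$t$ convergence in $n$ for fixed $R$ via Lemma~\ref{lem:equicontinuous:family} and Ascoli, using that $\{\phi(1-\theta(\jap{\xi}/R)),\sqrtsymbol\}$ is bounded. If that fails, I would use dominated convergence with $\liminf$ and $\limsup$ bounds, which suffices because all the quantities are uniformly bounded in $(n,R,t)$.
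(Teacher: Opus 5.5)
Your proposal is correct and follows the paper's proof: you reduce to $\psi(t)\phi(x,\xi)$, pass the limits inside the time integral by dominated convergence using the uniform Calder\'on--Vaillancourt bound, and conclude by density of tensor products. Your uniform-in-$R$ bound on $\Op_{h_n,\tang}^M\p{(a-\tilde a)(1-\theta)}$ in Step~2 usefully makes explicit what the paper's density step leaves implicit.

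The ``main obstacle'' is not one. Under its natural reading, \eqref{eq:MDM} at each fixed $t$ already gives the existence of $\lim_n G_{n,R}(t)$ for $R$ large, and dominated convergence needs only pointwise convergence plus a uniform bound. You should drop the detours you list, because each of them fails here:
\begin{itemize}
\item Lemma~\ref{lem:equicontinuous:family} requires $w^n$ to solve the half-wave equation, which is not assumed in this lemma.
\item Semiclassical defect measures do not see the non-compactly supported symbol $\phi(1-\theta(\jap{\xi}_g/R))$.
\item Extracting a subsequence would only prove the claim along that subsequence, not for the full sequence.
\end{itemize}
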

\begin{proof}[Proof of Lemma~\ref{lem:Symbols:Tangentiels:micro}]
Let us consider the function
\begin{equation}\label{df:theta:xi}
\holeperp{R}(x, \xi) := 1-\theta\p*{\frac{\jap{\xi}_g}{R}}, \quad (x, \xi)\in\cotang M,~ R > 0,
\end{equation}
following the notation of~\cite{Anantharaman14Semiclassical}.
Then, for all $(\psi, \phi)\in \test(I)\times S^{0}_{\phg}(\cotang M)$, we have
\begin{equation*}
\lim_{R\to\infty}\lim_{n\to \infty}\pscal{w^n}{\Op_{h_n, \tang}^{M}(\psi\phi\holeperp{R})w^n}_{L^2(\MM_I)} 
= \lim_{R\to\infty}\lim_{n\to \infty}\int_I\psi(t)\pscal{w^n(t)}{\Op_{h_n}^{M}(\phi\holeperp{R})w^n(t)}_{L^2(M)}\d t.
\end{equation*}
Then, by Lebesgue's dominated convergence theorem, we have
\begin{equation*}
\lim_{R\to\infty}\lim_{n\to \infty}\pscal{w^n}{\Op_{h_n, \tang}^{M}(\psi\phi\holeperp{R})w^n}_{L^2(\MM_I)} = \int_I\psi(t)\lim_{R\to\infty}\lim_{n\to \infty}\pscal{w^n(t)}{\Op_{h_n}^{M}(\phi\holeperp{R})w^n(t)}_{L^2(M)}\d t.
\end{equation*}
Finally, by~\eqref{eq:MDM} and assumption,
\begin{equation*}
\lim_{R\to\infty}\lim_{n\to \infty}\pscal{w^n}{\Op_{h_n, \tang}^{M}(\psi\phi\holeperp{R})w^n}_{L^2(\MM_I)} = \int_{I}\psi(t)\pscal{\mdm_{t}}{\phi}_{\Mc_+, \Cscr^0_c\p{\cosphere M}}\d t.
\end{equation*}
We conclude that, for any $\mathsf{a} \in \test(I)\otimes \smooth(\cosphere M)$, we have 
\begin{equation}\label{eq:tensorial:product}
\lim_{R\to\infty}\lim_{n\to \infty}\pscal{w^n}{\Op_{h_n, \tang}^{M}\p{\textstyle \mathsf{a}\p{t, x, \frac{\xi}{~\abs{\xi}_g}}\holeperp{R}}w^n}_{L^2(\MM_I)}  = \int_{I\times \cosphere M}\mathsf{a}(t, x, \xi) \d\mdm_{t}(x, \xi)\d t.
\end{equation}
Then, using the density of $\test(I)\otimes \smooth(\cosphere M)$ in $\test(I\times \cosphere M)$ for the topology of uniform convergence (see for example~\cite[Chapter 11, Remark 11.12]{duistermaat2010distributions}), we can conclude that~\eqref{eq:tensorial:product} also holds for $S^{0}_{\phg, \tang}(I \times\cotang M)$.
\end{proof}
\begin{lemma}\label{lem:equicontinuous:satisfied}
For $\Theta_{\xi}^R$ as~\eqref{df:theta:xi} and all $a\in S^{0}_{\phg}(\cotang \hspace{1 pt} \MM_I)$, there are $\mathfrak{m}_1, \mathfrak{m}_2 > 0$ such that the condition~\eqref{df:family:Ak} holds for the family $\Ak = \set{\Op^{\MM}_{h}\p{\holeperp{R}a},~R >0}$.
\end{lemma}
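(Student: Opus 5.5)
We need to exhibit constants $\mathfrak{m}_1,\mathfrak{m}_2$ such that, for all $R>0$ and all $h\in(0,1]$, the symbol $b_R:=\holeperp{R}a$ satisfies $\sup\abs{b_R}\leq\mathfrak{m}_1$ and $\sup\abs{\set{b_R,\sqrtsymbol}}\leq\mathfrak{m}_2$. Strictly speaking $\set{\cdot,\sqrtsymbol}$ is taken on $\cotang M$, so we read $a$ as evaluated at fixed $(t,\tau)$, or simply as a tangential symbol; the estimates below are uniform in those parameters. The natural constants are $\mathfrak{m}_1=\norm{a}_{L^\infty}$ and an $\mathfrak{m}_2$ obtained from the homogeneous structure of $a$ and $\sqrtsymbol$.

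\emph{Step 1: the sup bound.} Since $0\leq\holeperp{R}\leq 1$ by \eqref{df:theta}, we have $\abs{b_R}\leq\abs{a}$. The right-hand side is bounded because $a\in S^0_{\phg}$ is compactly supported in the base and bounded in the fibre variable. This gives $\mathfrak{m}_1$.

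\emph{Step 2: the Poisson bracket.} By the Leibniz rule,
\[
\set{b_R,\sqrtsymbol}=\holeperp{R}\set{a,\sqrtsymbol}+a\set{\holeperp{R},\sqrtsymbol}.
\]

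\emph{First term.} It is bounded by $\sup\abs{\set{a,\sqrtsymbol}}$. We have $\set{a,\sqrtsymbol}=\partial_\xi a\cdot\partial_x\sqrtsymbol-\partial_x a\cdot\partial_\xi \sqrtsymbol$. Since $a\in S^0$, $\partial_\xi a=\O{\jap{\xi}^{-1}}$ and $\partial_x a=\O{1}$ on the compact base support. Since $\sqrtsymbol\in S^1$, $\partial_x\sqrtsymbol=\O{\jap{\xi}}$ and $\partial_\xi\sqrtsymbol=\O{1}$. Both products are therefore $\O{1}$ uniformly, and this part is independent of $R$.

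\emph{Second term.} Here $\holeperp{R}=1-\theta(\jap{\xi}_g/R)$, so
\[
\set{\holeperp{R},\sqrtsymbol}=-R^{-1}\theta'(\jap{\xi}_g/R)\set{\jap{\xi}_g,\sqrtsymbol}.
\]
On $\supp\theta'(\cdot/R)$ we have $\jap{\xi}_g\sim R$. Moreover $\jap{\xi}_g\in S^1$ and $\sqrtsymbol\in S^1$, so $\set{\jap{\xi}_g,\sqrtsymbol}\in S^1$ and is $\O{\jap{\xi}}$. Hence $R^{-1}\abs{\theta'}\,\O{\jap{\xi}}=\O{\norm{\theta'}_\infty}$ uniformly in $R\geq 1$.

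For $R<1$ the function $\theta'(\jap{\xi}_g/R)$ vanishes identically, because $\theta$ is constant near $[0,1]$ while $\jap{\xi}_g/R\geq 1/R>1$. So the second term vanishes, with only finitely many constants involved.

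Multiplying by $\norm{a}_\infty$ and adding the first term yields $\mathfrak{m}_2$.

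\emph{Main obstacle.} The key point is the uniformity in $R$ of the term coming from differentiating the cutoff. It is controlled by the scale matching $\jap{\xi}\sim R$ on $\supp\theta'(\cdot/R)$ together with the fact that the bracket of two order-one symbols is of order one. Note also that the symbols are $h$-independent, so the $\sup_h$ in \eqref{df:family:Ak} is trivial.
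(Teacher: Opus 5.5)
Your argument is essentially the paper's. You bound $\abs{\holeperp{R}a}\leq\abs{a}$, then control the bracket by the chain rule on $\theta(\jap{\xi}_g/R)$, using $\jap{\xi}_g\sim R$ on $\supp\theta'(\jap{\xi}_g/R)$. You also write out the $R$-independent term $\holeperp{R}\set{a,\sqrtsymbol}$, which the paper leaves implicit.

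One side remark is false: $\theta'(\jap{\xi}_g/R)$ does not vanish identically for $R<1$. The function $\theta$ is constant only near $[0,1]$ and beyond its support, while $\jap{\xi}_g/R$ ranges over $[1/R,\infty)$. So if $\supp\theta\subset[0,C]$ and $R\in[1/C,1)$, the derivative is not zero everywhere.

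The case split is unnecessary anyway. Since $\supp\theta'\subset(1,C]$, every $R>0$ gives $R<\jap{\xi}_g\leq CR$ on that support. Hence $R^{-1}\abs{\set{\jap{\xi}_g,\sqrtsymbol}}\lesssim R^{-1}\jap{\xi}_g\leq C$ uniformly in $R>0$, which is exactly how the paper argues.
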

\begin{proof}[Proof of Lemma~\ref{lem:equicontinuous:satisfied}]
We first note that,
\begin{equation*}
\forall R >0, \quad \norm{\holeperp{R} a}_{L^{\infty}(\cotang \hspace{1 pt}\MM_I)}\leq \norm{a}_{L^{\infty}(\cotang \hspace{1 pt}\MM_I)} =: \mathfrak{m}_1.
\end{equation*}
Second, for any $\gamma\in \NN_0^{d}$ with $\abs{\gamma} = 1$ and $\set{\bullet_1, \bullet_2} = \set{x, \xi}$, we have 
\begin{align*}
\exists C_1, C_2 > 0,~\forall R > 0,\quad
\norm{\partial_{\bullet_1}^{\gamma}\jap{\xi}_g\partial_{\bullet_2}^{\gamma}\sqrtsymbol}_{L^\infty\p{\supp\p{\theta'\p{\jap{\xi}_g/R}}}}
&\leq C_1 \norm{\jap{\xi}_g}_{L^\infty\p{\supp\p{\theta'\p{\jap{\xi}_g/R}}}}\\
&\leq C_2 R.
\end{align*}
Then, using the chain rule, we infer
\begin{align*}
\exists C > 0, ~ \forall R > 0,\quad
\norm{\partial_{\bullet_1}^{\gamma}\p{\theta\p{\jap{\xi}_g/R}}\partial_{\bullet_2}^{\gamma}\sqrtsymbol}_{L^\infty(\cotang \hspace{1 pt}\MM_I)} &\leq R^{-1}\norm{\partial_{\bullet_1}^{\gamma}\jap{\xi}_g\partial_{\bullet_2}^{\gamma}\sqrtsymbol}_{L^\infty\p{\supp\theta'\p{\jap{\xi}_g/R}}}\norm{\theta'}_{L^{\infty}(\RR_+)}\\
&\leq C \norm{\theta'}_{L^{\infty}(\RR_+)}.
\end{align*}
In conclusion, there is $\mathfrak{m}_2 > 0$, such that for all $R >0$, $\norm{\set{\holeperp{R} a, \signe\sqrtsymbol}}_{L^\infty(\cotang \hspace{1pt}\MM_I)}\leq \mathfrak{m}_2$.
This ends the proof of Lemma~\ref{lem:equicontinuous:satisfied}.
\end{proof}
\begin{lemma}\label{lem:uniform:symbol:substitution}
Let us consider $\sqrtsymbol$ defined in~\eqref{df:Sqrt:Sym}, $m < 0$, $s\in [0, 1]$, $k_{m, s}\in S^{m}_{\comp}(\cotang\hspace{1pt} \MM_I)$ uniformly with respect to $s$ --- in the sense that $k_{m, s}$ is supported in a compact subset independent of $s$ in the $(t, x)$ variables and all its seminorms in $S^{m}_{\comp}(\cotang\hspace{1pt}\MM_I)$ are uniformly bounded with respect to $s\in [0, 1]$ --- and set
\begin{equation*}
e_{m, s}(t, x, \tau, \xi) := k_{m, s}(t, x, s \tau + (1-s)\sqrtsymbol(x, \xi), \xi), \quad (t,x, \tau, \xi)\in\cotang \hspace{1 pt}\MM_{I}.
\end{equation*}
Then, for all relatively compact local chart $K$ of $\MM_{I}$, and all $\alpha, \beta\in\NN^d$ we have
\begin{equation}\label{eq:19}
\sup_{s\in [0,1]}\sup_{
\begin{subarray}{c}
(t, x)\in K,\\ (\tau, \xi)\in \RR\times\RR^d
\end{subarray}
}
\jap{\xi}^{\abs{\beta}-m}
\abs{\partial^{\alpha}_{t, x}\partial^{\beta}_{\tau, \xi}e_{m, s}} < \infty.
\end{equation}
\end{lemma}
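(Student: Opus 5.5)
We prove \eqref{eq:19} by differentiating the composition directly, a Faà di Bruno argument, and tracking the weights $\jap{\xi}$. The key observation is that the substituted variable $\sigma_s(x,\tau,\xi) := s\tau + (1-s)\sqrtsymbol(x,\xi)$ is affine in $\tau$ with slope $s\in[0,1]$. Its derivatives in $\xi$ and $x$ are those of $(1-s)\sqrtsymbol$, and $\sqrtsymbol\in S^1(\cotang M)$. Concretely, $\abs{\partial_x^{\alpha'}\partial_\xi^{\beta'}\sqrtsymbol}\lesssim \jap{\xi}^{1-\abs{\beta'}}$ uniformly on compact sets in $x$, because $\sqrtsymbol\geq(\min V)^{1/2}\jap{\xi}_g$ is elliptic and smooth.

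First we compute $\partial^\alpha_{t,x}\partial^\beta_{\tau,\xi}e_{m,s}$ by Faà di Bruno. It is a finite sum of terms of the form
\begin{equation*}
(\partial_{t,x}^{\alpha_0}\partial_\tau^{j}\partial_\xi^{\beta_0}k_{m,s})(t,x,\sigma_s,\xi)\prod_{i=1}^{j}\partial^{\alpha_i}_{x}\partial^{\gamma_i}_{\tau,\xi}\sigma_s,
\end{equation*}
where the multi-indices $\alpha_0+\sum\alpha_i=\alpha$ and $\beta_0+\sum\gamma_i=\beta$ partition the derivatives, and each factor carries $\abs{\alpha_i}+\abs{\gamma_i}\geq1$. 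A factor with a $\tau$-derivative is either $s$ (when $\gamma_i=e_\tau$, $\alpha_i=0$) or $0$. Otherwise the factor is $(1-s)\partial^{\alpha_i}_x\partial^{\gamma_i}_\xi\sqrtsymbol=\O{\jap{\xi}^{1-\abs{\gamma_i}}}$, uniformly in $s$.

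Second we bound the outer factor. By the uniform $S^m$ seminorms of $k_{m,s}$,
\begin{equation*}
\abs{\partial^{\alpha_0}\partial_\tau^{j}\partial_\xi^{\beta_0}k_{m,s}}\lesssim \jap{(\sigma_s,\xi)}^{m-j-\abs{\beta_0}}.
\end{equation*}
Here the symbol weight is the full covariable $\jap{(\tau,\xi)}$ evaluated at $\tau=\sigma_s$, and it satisfies $\jap{(\sigma_s,\xi)}\geq\jap{\xi}$.

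This weight comparison is the step I expect to be the main obstacle. Since $m-j-\abs{\beta_0}\leq m<0$, the inequality gives $\jap{(\sigma_s,\xi)}^{m-j-\abs{\beta_0}}\leq\jap{\xi}^{m-j-\abs{\beta_0}}$. The remaining concern is that a $\tau$-derivative landing on $k$ might gain only in $\jap{(\sigma_s,\xi)}$ and not in $\jap{\xi}$. This causes no harm, because the monotonicity $\jap{(\sigma_s,\xi)}\geq\jap{\xi}$ holds for all negative exponents at once.

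Finally we count powers. Let $j_\xi$ be the number of $j$-factors without a $\tau$-derivative. The product of the inner factors is bounded by $\jap{\xi}^{j_\xi-\sum\abs{\gamma_i^\xi}}$, and $j_\xi\leq j$. Combining with the outer bound, the whole term is at most
\begin{equation*}
\jap{\xi}^{m-j-\abs{\beta_0}+j_\xi-\sum_i\abs{\gamma_i^\xi}}\leq\jap{\xi}^{m-\abs{\beta_\xi}},
\end{equation*}
where $\beta_\xi$ collects the $\xi$-derivatives. This is at most $\jap{\xi}^{m-\abs{\beta}}$ up to the $\tau$-derivatives, each of which contributes a further factor $\leq1$. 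Since every constant is independent of $s\in[0,1]$ and of $(t,x)\in K$, this yields \eqref{eq:19}.
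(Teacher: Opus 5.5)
Your argument is correct and is essentially the paper's: the paper proves by induction on $\abs{\beta_1}+\abs{\beta_2}$ that each derivative $\partial^{\beta_1}_{t,x}\partial^{\beta_2}_{\tau,\xi}e_{m,s}$ is again of the form $k^{\beta_1,\beta_2}_{m,s}(t,x,\sigma_s,\xi)$ with $k^{\beta_1,\beta_2}_{m,s}$ of order $m-\abs{\beta_2}$, using the same ingredients you use ($\partial_\tau\sigma_s=s$, the $S^1$ bounds on $\sqrtsymbol$, and $\jap{(\sigma_s,\xi)}^{m'}\leq\jap{\xi}^{m'}$ for $m'<0$), so your one-shot Fa\`a di Bruno expansion is that induction unrolled. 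One bookkeeping fix: do not weaken your displayed exponent to $m-\abs{\beta_\xi}$, because the inner factors $s\leq 1$ cannot restore the lost decay; since $j-j_\xi$ is exactly the number of $\tau$-derivatives, that exponent already equals $m-\abs{\beta}$, and the gain of $\jap{\xi}^{-1}$ per $\tau$-derivative comes from the outer factor $\partial_\tau^{j}k_{m,s}$.
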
\noindent
Since the proof of Lemma~\ref{lem:uniform:symbol:substitution} is not helpful to understand the proof of Theorem~\ref{Th:Slices} and relies on a technical induction, we provide it at the end of the section.
\begin{lemma}\label{lem:Taylor:microlocal}
For all $a\in S^0_{\phg}(\cotang\hspace{1pt} \MM_I)$, we have
\begin{equation}\label{df:taylor:microlocal:decomposition}
a(t, x, \tau, \xi) = \tilde{a}_{t}(x, \xi) + \big(\tau -\signe \sqrtsymbol(x, \xi)\big)\alpha_{\signe}(t, x, \tau,\xi),\quad (t, x, \tau, \xi)\in\cotang\hspace{1pt}\MM_I
\end{equation}
with:
\begin{enumerate}
\item $\chi\alpha_{\signe}\in S^{-1}_{\comp}(\cotang\hspace{1.5pt}\MM_I)$ with $\chi$ from~\eqref{eq:wave-type-ghost} and $\alpha_{\signe}$ as 
\begin{equation}\label{df:taylor:microlocal:alpha}
\alpha_{\signe}(t, x, \tau,\xi) := \int_{0}^{1}\frac{\partial a}{\partial\tau}\p{t, x, s\tau + (1-s)\signe\sqrtsymbol(x, \xi), \xi}\d s, \quad (t, x, \tau, \xi)\in\cotang\hspace{1pt} \MM_I.
\end{equation}
\item $\tilde{a}_{t}\in S_{\comp, \tang}^{0}\p{I\times \cotang M}$ with
\begin{equation}\label{eq:taylor:tilde:a:microlocal}
    \tilde{a}_{t}(x, \xi) := a\p{t, x, \signe\sqrtsymbol(x, \xi), \xi}, \quad (t, (x, \xi))\in I\times \cotang M.
\end{equation}
\item If we assume furthermore that $\supp\p{a}\cap\set{(t, x, \tau, \xi)\in\cotang\hspace{1pt}\MM_I, ~\tau = 0~\text{or}~\xi = 0} = \emptyset$, we have the following decomposition~\eqref{eq:characterization:S^0_phg}
\begin{equation}\label{eq:taylor:tilde:a:microlocal:phg}
\forall (t, (x, \xi))\in I\times \cotang M,\quad \tilde{a}_{t}(x, \xi) = \underbracket{\check{a}^t_{\phg}(x, \xi)}_{:=a\p{t, x, \signe\abs{\xi}_g, \xi}} + b(t, x, \xi),
\end{equation}
where $\check{a}_{\phg}\in S_{\phg, \tang}^{0}(I\times \cotang M)$ and $b\in S_{\comp, \tang}^{0}(I\times\cotang M)$.
Moreover, there is $C > 0$ such that
\begin{equation}\label{eq:taylor:b}
\sup_{(t, x, \xi)\in I\times \cotang M,~\abs{\xi}_{g}\geq R}\abs{b(t, x, \xi)} \leq CR^{-2},\quad  R \geq 1.
\end{equation}
\end{enumerate}
\end{lemma}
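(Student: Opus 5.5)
The identity \eqref{df:taylor:microlocal:decomposition} itself is the fundamental theorem of calculus in the $\tau$ variable. For fixed $(t,x,\xi)$ we apply it to $s\mapsto a(t,x,s\tau+(1-s)\signe\sqrtsymbol(x,\xi),\xi)$ on $[0,1]$. This gives
\begin{equation*}
a(t,x,\tau,\xi)-a(t,x,\signe\sqrtsymbol,\xi)=(\tau-\signe\sqrtsymbol)\,\alpha_{\signe}(t,x,\tau,\xi),
\end{equation*}
with $\alpha_{\signe}$ as in \eqref{df:taylor:microlocal:alpha}. The three points are then symbol estimates, which we treat in order.

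For point 1, we apply Lemma~\ref{lem:uniform:symbol:substitution} with $m=-1$ and $k_{-1,s}:=\partial_\tau a$, which lies in $S^{-1}_{\comp}$ uniformly in $s$ since $a\in S^0_{\phg}$. This yields that $e_{-1,s}$ satisfies \eqref{eq:19} uniformly in $s$, with weight $\jap{\xi}$. Integrating in $s$ gives the same bounds for $\alpha_{\signe}$, with $\jap{\xi}^{|\beta|+1}$ weights. Note that the sign $\signe$ only changes $\sqrtsymbol$ into $-\sqrtsymbol$, and the lemma's proof clearly adapts. Multiplying by $\chi$ from \eqref{eq:wave-type-ghost}, on whose support $\jap{\xi}_g\simeq\jap{\tau}$, converts the $\jap{\xi}$ weights into $\jap{(\tau,\xi)}$ weights. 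Derivatives of $\chi$ are controlled by its $0$-homogeneity (point \ref{eq:chi:homogeneous} of Definition~\ref{df:WGF}), and the Leibniz rule then gives $\chi\alpha_{\signe}\in S^{-1}_{\comp}(\cotang\hspace{1pt}\MM_I)$.

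For point 2, $\tilde a_t$ is the composition of $a$ with $(t,x,\xi)\mapsto(t,x,\signe\sqrtsymbol,\xi)$. Here $\sqrtsymbol\in S^1$ and $\jap{(\signe\sqrtsymbol,\xi)}\simeq\jap{\xi}$. The Faà di Bruno formula, or Lemma~\ref{lem:uniform:symbol:substitution} at $s=0$ with $m=0$ and the analogous statement, gives the tangential $S^0$ estimates. Compact support in $t$ is inherited from $a$.

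For point 3, we set $b:=\tilde a_t-\check a^t_{\phg}$, with $\check a^t_{\phg}(x,\xi)=a(t,x,\signe\abs{\xi}_g,\xi)$. This function is $0$-homogeneous for large $|\xi|$, because $a$ is homogeneous for large $(\tau,\xi)$, and it is smooth there because the support of $a$ avoids $\{\tau=0\}\cup\{\xi=0\}$. Since $b$ is a difference of two tangential $S^0$ symbols, $b\in S^0_{\comp,\tang}$. For \eqref{eq:taylor:b}, the mean value theorem in $\tau$ gives
\begin{equation*}
|b|\le \sup|\partial_\tau a|\,\bigl|\sqrtsymbol-\abs{\xi}_g\bigr|,
\end{equation*}
where the supremum runs over the segment between $\signe\abs{\xi}_g$ and $\signe\sqrtsymbol$. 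On that segment $|\partial_\tau a|\lesssim\abs{\xi}_g^{-1}$ by homogeneity of degree $-1$. Moreover
\begin{equation*}
\sqrtsymbol-\abs{\xi}_g=\frac{V}{\sqrtsymbol+\abs{\xi}_g}\le\abs{\xi}_g^{-1}.
\end{equation*}
Hence $|b|\lesssim\abs{\xi}_g^{-2}\le R^{-2}$ for $\abs{\xi}_g\ge R$.

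The main obstacle is the uniform-in-$s$ control in point 1. The point is that the argument $s\tau+(1-s)\signe\sqrtsymbol$ stays comparable to $\jap{\xi}$ only on the support of $\chi$. Outside that support, homogeneity of $a$ could degenerate near $\tau=0$, which is why the claim is made for $\chi\alpha_{\signe}$ and not for $\alpha_{\signe}$ itself. This reduces to carefully invoking Lemma~\ref{lem:uniform:symbol:substitution} together with the cone condition in Definition~\ref{df:WGF}.
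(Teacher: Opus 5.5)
Your proposal is correct and follows the paper's proof. Both use Taylor's formula in $\tau$, then Lemma~\ref{lem:uniform:symbol:substitution} for points 1 and 2; you also spell out how multiplying by $\chi$ trades $\jap{\xi}_g$-weights for $\jap{(\tau,\xi)}$-weights, which the paper leaves implicit. Both then use a mean value inequality in $\tau$ for~\eqref{eq:taylor:b}, where your identity $\sqrtsymbol-\abs{\xi}_g=V/(\sqrtsymbol+\abs{\xi}_g)$ replaces the paper's second mean value inequality.

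One correction to your closing remark: nothing degenerates near $\tau=0$. The argument $s\tau+(1-s)\signe\sqrtsymbol$ even vanishes on $\supp\chi$ near the sheet $\tau=-\signe\sqrtsymbol$, and this is harmless, since Lemma~\ref{lem:uniform:symbol:substitution} only uses $\jap{(s\tau+(1-s)\signe\sqrtsymbol,\xi)}\geq\jap{\xi}$. The cut-off $\chi$ is needed only because $\jap{\xi}$-weighted bounds control $\jap{(\tau,\xi)}$-weighted ones just where $\abs{\tau}\lesssim\jap{\xi}_g$.
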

\begin{proof}[Proof of Lemma~\ref{lem:Taylor:microlocal}]
By Taylor formula, we have~\eqref{df:taylor:microlocal:decomposition} with $\tilde{a}$ in~\eqref{eq:taylor:tilde:a:microlocal} and $\alpha_{\signe}$ in~\eqref{df:taylor:microlocal:alpha}.
\begin{enumerate}
\item In this point we prove that $\chi\alpha_{\signe}\in S^{-1}_{\comp}(\cotang\hspace{1 pt}\MM_I)$.
To do so, we use Lemma~\ref{lem:uniform:symbol:substitution} with $m = -1$ and $k_{m} = \frac{\partial a}{\partial_\tau} \in S^{-1}_{\phg}\p{\cotang \hspace{1pt} \MM_{I}}$, and thus, $\alpha_{\signe}$ satisfies~\eqref{eq:19}.
Thus, $\chi\alpha_{\signe}\in S^{-1}_{\comp}(\cotang\hspace{1 pt}\MM_I)$
\item In this point we prove that $\tilde{a}\in S^{0}_{\comp, \tang}(I\times \cotang M)$.
To do so, we use Lemma~\ref{lem:uniform:symbol:substitution} with $(m, k_m, 0)=(0, a, 0)$, thus $\tilde{a}$ satisfies~\eqref{eq:19}, so is $\tilde{a}\in S^{0}_{\comp, \tang}(I\times \cotang M)$ 
\item In this point, we prove~\eqref{eq:taylor:tilde:a:microlocal:phg}. Since $a\in S^{0}_{\phg}(\cotang\hspace{1 pt}\MM_I)$ is supported away from $\set{\tau = 0~\text{or}~\xi = 0}$, we have 
\begin{equation*}
    \check{a}_{\phg} = \classe{(t, x, \tau, \xi)\mapsto a(t, x, \signe\abs{\xi}_g, \xi)}\in S^{0}_{\phg, \tang}(I\times \cotang M).
\end{equation*}
Thus, $b = \tilde{a}- \check{a}_{\phg} \in S^0_{\comp, \tang}\p{I\times \cotang M}$ as sum of two such functions.
Remark that $\tilde{a}$, $\check{a}_{\phg}$ and $b$ are supported away from $\set{\xi = 0}$.

Moreover, for all $(t, x, \tau, \xi)\in\cotang\hspace{1pt} \MM_I$, using $\partial_\tau a \in S^{-1}_{\phg}(\cotang \hspace{1 pt}\MM_I)$ and the mean value inequality, we have
\begin{align*}
\abs{b(t, x, \xi)} = \abs{\tilde{a}_t(x, \xi) - \check{a}^t_{\phg}(x, \xi)} &= \abs{a(t, x, \signe\sqrtsymbol(x, \xi),\xi) - a(t, x, \signe\abs{\xi}_g, \xi)}\\
&\textstyle\leq \abs{\sqrtsymbol(x, \xi) - \abs{\xi}_g}\norm{\frac{\partial a}{\partial \tau}}_{L^\infty\p{\cotang\hspace{1pt} \MM_I}}\\
&\leqsim \abs{\sqrt{\abs{\xi}_g^2 + V(x)} - \abs{\xi}_g}\jap{\xi}_g^{-1}\\
&\textstyle\leqsim \abs{\sqrt{1 + \frac{V(x)}{\abs{\xi}_g^2}} - 1}\abs{\xi}_g\jap{\xi}_g^{-1}.
\end{align*}
Then, by a second mean value inequality on $r\in\RR_+\mapsto \sqrt{1 + r}$, there is $C' \geq C$ --- independent of the choice of $(t, x, \tau, \xi)\in\cotang\hspace{1pt} \MM_I$ --- such that we have
\begin{equation*}
\abs{\tilde{a}_t(x, \xi) - a(t, x, \abs{\xi}_g, \xi)} \leq C \frac{V(x)}{\abs{\xi}_g^2}\abs{\xi}_g\jap{\xi}_g^{-1} \leq C'\abs{\xi}_g^{-2}.
\end{equation*}
Thus, this last inequality proves that~\eqref{eq:taylor:tilde:a:microlocal:phg} holds.
\end{enumerate}
This last point ends the proof.
\end{proof}
\paragraph{Proof of the 2-microlocal result.}
Equipped with Lemmas~\ref{lem:Symbols:Tangentiels:micro},~\ref{lem:equicontinuous:satisfied},~\ref{lem:uniform:symbol:substitution} and~\ref{lem:Taylor:microlocal} we are now prepared to give a proof of the 2-microlocal version of Theorem~\ref{Th:Slices}.
\begin{proof}[Proof of the second point (2-microlocal result) of Theorem~\ref{Th:Slices}]
The proof of this theorem is similar to the proof of the first point (semiclassical result) of Theorem~\ref{Th:Slices}.
Consider a cut-off function $\theta$ as~\eqref{df:theta} and set the notation
\begin{equation}\label{df:1-Theta::R:space}
\holeperp{R}(x, \xi) := 1-\theta\p*{\frac{\jap{\xi}_g}{R}} \quad\text{and}\quad\holepara{R}(\tau) := 1-\theta\p*{\frac{\tau}{R}},\quad (t, x, \tau, \xi)\in\cotang \hspace{1pt} \MM_I,~ R > 0.
\end{equation}
By hypothesis~\eqref{eq:wave-type-ghost}, Lemma~\ref{lem:Wave:time:space:reduction} allows us to use cut-off function $\holeperp{R}$ or $\holepara{R}$ as we used $\Theta^R$ in~\eqref{eq:MDM}.

Before the proof, to simplify the following arguments, let us exhibit a polyhomogeneous decomposition of $\sqrtsymbol$ (define in~\eqref{df:Sqrt:Sym}) away from $0$. In other words, for all points $(x, \xi)\in \cotang M$ with $\abs{\xi}_g > 1$, we have 
\begin{equation*}
\notag\sqrtsymbol(x, \xi) = \abs{\xi}_g\sum_{n = 0}^{\infty} \binom{1/2}{n}\p*{\frac{V(x)}{\abs{\xi}_g^2}}^n,
\end{equation*}
where, for $r\in\RR$, we recall the (classical) notation $\binom{r}{n} := \prod_{j = 1}^{n}\frac{r - (j-1)}{j}$.
Thus, for $\chi$ from in~\eqref{eq:wave-type-ghost} and $R$ large enough, we have 
\begin{equation}
\chi\holepara{R}\sqrtsymbol = \chi\holepara{R}\abs{\xi}_g + \chi\sum_{n = 1}^{\infty} \holepara{R}a_{1-2n}.\label{eq:18}
\end{equation}
where $a_{1 - 2n}\in S^{1 - 2n}_{\phg}\p{\cotang\hspace{1 pt}\MM_I}$ for all $n\geq 1$.
\begin{enumerate}\renewcommand{\labelenumi}{\alph{enumi}.}
\item
Let $a\in S^0_{\phg}(\cotang M)$ and set $A_{h}^R := \Op_{h}^{M}(a\holeperp{R})\in{\Psi}^{0}_{h, \comp}(M)$.
Now, for all integer $n\geq 1$ and all radius $R > 0$, we set
\begin{equation*}
\Fk^{a}_{R} := \set{F_{R, n}^{a}\colon t\in I\mapsto \pscal{v^{\signe}_{n}(t)}{A_{h_n}^Rv^{\signe}_{n}(t)}_{L^2(M)},~n \geq 1}.
\end{equation*}

First, we consider a temporarily fixed radius $R > 0$ and let $n$ tends to $\infty$.
The family $\Fk^{a}_R$ is uniformly bounded and uniformly equicontinuous on $I$ (both with respect to $R$) by Lemma~\ref{lem:equicontinuous:family}.
Therefore, Ascoli Theorem ensures that, for any compact interval $J\subset I$, $\Fk^{a}_{R}$ has compact closure for the topology of $\Cscr^{0}_{b}(J, \CC)$.
As a consequence, for all $J\subset I$ compact interval, all $a\in S^0_{\phg}(\cotang M)$ and all radius $R > 0$: there is a subsequence of $(v^{\signe}_{n})_n$, still denoted $(v^{\signe}_{n})_n$, such that $(F_{R,n}^{a})_n$ converges uniformly on $J$.
Fix an increasing sequence of compact intervals $(J_k)_k$ with $I = \bigcup_{k} J_k$, a countable dense (for the topology of uniform convergence) subset $\De\subset S_{\hom}^{0}\p{\cotang M}$ and consider radii in $\QQ$. 
By a first diagonal extraction, there is a subsequence of $(v^{\signe}_{n})_n$, still denoted $(v^{\signe}_{n})_n$, such that:
for all compact interval $J\subset I$, all $a\in S^0_{\phg}(\cotang M)$, all radius $R>0$ and all $\varepsilon >0$ there is $N(J, a, R)\geq 1$ such that 
\begin{equation*}
\forall n\geq N(J, a, R),\quad \sup_{t\in J}\abs{F^a_{R, n}(t) - \lim_{m\to + \infty}F^a_{R, m}(t)}\leq \varepsilon.
\end{equation*}
Therefore, as limit of a sequence of equicontinuous (with respect to $R$) functions on $I$ with uniform convergence on all compact subsets of $I$, the family
\begin{equation*}
\set{\lim_m F_{R, m}^{a},~R> 0}    
\end{equation*}
is continuous on $I$.
    
Second, we let $R$ tends to $\infty$.
By the second point of Lemma~\ref{lem:equicontinuous:family} (the condition is satisfied by Lemma~\ref{lem:equicontinuous:satisfied} and the previous paragraph), the family $\set{\lim_m F_{R, m}^{a},~R> 0}$ is equicontinuous on $I$.
Thus, Ascoli Theorem ensures that, for any compact interval $J\subset I$, the family $\set{\lim_m F_{R, m}^{a},~R > 0}$ has compact closure for the topology of $\Cscr^{0}_{b}(J,\CC)$.
As a consequence, for all compact interval $J\subset I$ and all $a\in S^0_{\phg}(\cotang M)$, there is a sequence of $(R^{J, a}_k)_k$ such that $\p{\lim_mF_{R^{J, a}_k,m}^{a}}_k$ converges uniformly on $J$.
Fix an increasing sequence of compact intervals $(J_k)_k$ with $I = \bigcup_{k} J_k$ and a countable dense (for the topology of uniform convergence) subset $\De\subset S_{\hom}^{0}\p{\cotang M}$.
By a second diagonal extraction, there is a sequence of $(R_k)_k$ such that: for all compact interval $J\subset I$, all $a\in\De$ and all $\varepsilon >0$, there is $K(J, a)\geq 1$ such that 
\begin{equation*}
\forall k\geq K(J, a),\quad \sup_{t\in J}\abs*{\lim_{m\to + \infty}\p{F^a_{R_k, m}(t)} - \int_{\cosphere M}a(x, \xi)\d\mdm^{\signe}_t(x, \xi)}\leq \varepsilon.
\end{equation*}
Therefore, as limit of a sequence of continuous functions on $I$ with uniform convergence on all compact subsets of $I$, the function
\begin{equation*}
t\in I\mapsto \int_{\cosphere M}a\d\mdm^{\signe}_{t}
\end{equation*}
is continuous on $I$.

Finally, we obtain that $t\in I\mapsto \mdm^{\signe}_{t}$ is continuous for the weak-* topology of measures.
\item 
We use the function $\chi$ from~\eqref{eq:wave-type-ghost}, the cut-off function $\holeperp{R}$ from~\eqref{df:1-Theta::R:space}, and we consider $a\in S^{0}_{\phg}(\cotang\hspace{1pt}\MM_I)$.
Since $\supp\p{\chi\holeperp{R}a}\cap \set{\tau = 0~\text{or}~\xi = 0} = \emptyset$ for any $R > 0$, we can consider
\begin{equation*}
    \supp(a)\cap \set{(t, x, \tau, \xi)\in\cotang \hspace{1 pt}\MM_I,\quad \tau = 0~\text{or}~\xi = 0}.
\end{equation*}
Lemma~\ref{lem:Taylor:microlocal} then yields the Taylor decomposition~\eqref{df:taylor:microlocal:decomposition}: 
\begin{equation*}
    a(t, x, \tau, \xi) = \underbracket{\check{a}^{t}_{\phg}(x, \xi) + b(t, x, \xi)}_{\text{given in~\eqref{eq:taylor:tilde:a:microlocal}}} + \ell^{-\signe}(t, x, \tau, \xi)\underbracket{\alpha_{\signe}(t, x, \tau, \xi)}_{\text{given in~\eqref{df:taylor:microlocal:alpha}}},\quad (t, x, \tau, \xi)\in\cotang \hspace{1 pt}\MM_I.
\end{equation*}
Moreover, using Theorem~\ref{Th:hormander}, we have
\begin{equation*}
    \chi\holeperp{R}\check{a}_{\phg}\in {S^0_{\comp}}\p{\cotang\hspace{1pt} \MM_I}~\text{and}~\chi \holeperp{R}b,~\chi\holeperp{R}\ell^{-\signe}\alpha_{\signe}\in S^{0}_{\comp}\p{\cotang\hspace{1pt} \MM_I},
\end{equation*}
and there is $\rest^{-1}_{R, h}\in{\Psi^{-1}_{h, \comp}(\MM_I)}$ compactly supported in $\pi_{\MM_I}(\supp(a))$ such that
\begin{equation}\label{eq:Taylor:decomposition:Microlocalization}
\Op_{h}^{\MM}(\chi\holeperp{R}a) = \Op_{h, \tang}^{\MM}(\holeperp{R}\check{a}_{\hom})\Op_{h}^{\MM}(\chi) + \Op_{h}^{\MM}(\chi \holeperp{R}b) + \Op_{h}^{\MM}(\chi\holeperp{R}\alpha_{\signe})\HWOp^{-\signe}_{h} + h \rest^{-1}_{R, h}.
\end{equation}
Then, by Proposition~\ref{pr:Calderon:Vaillancourt} together with $\chi \holeperp{R}b\in S^{0}_{\comp}\p{\cotang\hspace{1pt} \MM_I}$ and  $\chi\alpha_{\signe}\in S^{-1}_{\comp}(\cotang\hspace{1pt}\MM_I)$ (from the first point of Lemma~\ref{lem:Taylor:microlocal}), we have the two upper bounds
\begin{align}
\notag\norm{\Op_{h_n}^{\MM}(\chi \holeperp{R}\alpha_{\signe})}_{\linear\p{L^2\p{\MM_I}}} &\leq \norm{\holeperp{R}\p{\chi \alpha_{\signe}}}_{L^\infty\p{\MM_I}} + C_Rh_n^{1/2}\\
\notag& \leqsim \norm{\holeperp{R}(\xi)\jap{\xi}_g^{-1}}_{L^\infty\p{\cotang\hspace{1pt} \MM_I}} + C_Rh_n^{1/2}\\
\label{eq:operator:norm:TH:PG:Microlocalization}&\leqsim R^{-1} + C_Rh_n^{1/2},
\end{align}
and, by~\eqref{eq:taylor:b},
\begin{equation}\label{eq:operator:norm:TH:PG:Microlocalization:chi}
\norm{\Op_{h_n}^{\MM}(\chi \holeperp{R}b)}_{\linear\p{L^2\p{\MM_I}}} \leqsim R^{-2} + C_Rh_n^{1/2}.
\end{equation}

Thus, using the decomposition~\eqref{eq:Taylor:decomposition:Microlocalization} and $\HWOp^{-\signe}v^{\signe}_n = h_nf^n$, we obtain
\begin{align*}
\pscal{v^{\signe}_{n}}{\Op_{h_n}^{\MM}(\chi \holeperp{R}a)v^{\signe}_{n}}_{L^2(\MM_I)} 
=&\textstyle 
\pscal{v^{\signe}_{n}}{\Op_{h_n, \tang}^{\MM}(\holeperp{R}\check{a}_{\phg})\Op_{h_n}^{\MM}(\chi)v^{\signe}_{n}}_{L^2(\MM_I)}\\
&+ \pscal{v^{\signe}_{n}}{\Op_{h_n}^{\MM}(\chi \holeperp{R}b)v^{\signe}_{n}}_{L^2(\MM_I)}\\
&+ h_n\pscal{v^{\signe}_{n}}{\Op^{\MM}_{h_n}(\chi \holeperp{R}\alpha_{\signe}) {f}^n + \rest^{-1}_{R, h_n}v^{\signe}_{n}}_{L^2(\MM_I)},
\end{align*}
then, by the upper bounds~\eqref{eq:operator:norm:TH:PG:Microlocalization},~\eqref{eq:operator:norm:TH:PG:Microlocalization:chi} and Proposition~\ref{pr:Calderon:Vaillancourt}, we have
\begin{align}
\notag \pscal{v^{\signe}_{n}}{\Op_{h_n}^{\MM}(\chi \holeperp{R}a)v^{\signe}_{n}}_{L^2(\MM_I)}
=&
\pscal{v^{\signe}_{n}}{\Op_{h_n, \tang}^{\MM}(\holeperp{R}\check{a}_{\phg})\Op_{h_n}^{\MM}(\chi)v^{\signe}_{n}}_{L^2(\MM_I)}\\
&+ \O{R^{-1}} +\O{h_n^{1/2}}.\label{eq:Right:Hand:2:point:microlocalization}
\end{align}
Therefore, on the one hand, using Lemma~\ref{lem:Symbols:Tangentiels:micro} (up to consider $R > 0$ large enough, we may consider $\chi a$ as a $S^0_{\phg}(\cotang\hspace{1 pt} \MM_I)$ function) and~\eqref{eq:Right:Hand:2:point:microlocalization}, we obtain
\begin{align}
\notag\lim_{R\to\infty}\lim_{n\to\infty}\pscal{v^{\signe}_{n}}{\Op_{h_n}^{\MM}(\chi\holeperp{R} a)v^{\signe}_{n}}_{L^2(\MM_I)} &= \int_{I\times \cosphere M}a(t, x, +\signe\abs{\xi}_g, \xi)\d\mdm^{\signe}_{t}(x, \xi)\d t\\
&=\int_{\cosphere\hspace{1pt} \MM_I}a(t, x, \tau, \xi)~\d\p{\mdm^{\signe}_{t}(x, \xi)\otimes \delta_{\signe\abs{\xi}_g}(\tau)\otimes \Lc(t)}.\label{eq:microlocal:point:2:PG:proof:decomposition}
\end{align}
On the other hand, by~\eqref{eq:wave-type-ghost} and~\eqref{eq:MDM} together with Lemma~\ref{lem:Wave:time:space:reduction} (whose hypotheses are satisfied by~\eqref{eq:wave-type-ghost}), we also have
\begin{equation}
\lim_{R\to\infty}\lim_{n\to\infty}\pscal{v^{\signe}_{n}}{\Op^{\MM}_{h_n}\p{\chi \holeperp{R} a}v^{\signe}_{n}}_{L^2(\MM_I)} =\int_{\cosphere\hspace{1 pt}\MM_I}a\d\mdm_{(\signe, \signe)}.\label{eq:microlocal:point:2:PG:proof:MDM}
\end{equation}
Then, by~\eqref{eq:microlocal:point:2:PG:proof:MDM} and~\eqref{eq:microlocal:point:2:PG:proof:decomposition}, we conclude~\eqref{eq:PG:microlocal:point2}.
\item From~\eqref{eq:PG:microlocal:point2}, we have 
\begin{equation*}
\supp\p{\mdm_{(\signe, \signe)}}\subseteq\set{(t, x, \tau, \xi)\in\cosphere \hspace{1 pt}\MM_I,~ \tau = \signe\abs{\xi}_{g}}.
\end{equation*}
Furthermore, using Proposition~\ref{pr:inter:supp:k:2mdm}, $\mdm_{(\signe, f)}$ (a finite-valued complex measure) is absolutely continuous with respect to $\mdm_{(\signe, \signe)}$ (a nonnegative $\sigma$-finite measure), hence, by Radon--Nikodym Theorem there is a complex measure $\tilde{\mdm}_{(\signe, f)}\in\Mc_{+}\p{\cosphere M}$ such that
\begin{equation*}
\mdm_{(\signe, f)}(t, x, \tau, \xi) = \tilde{\mdm}_{(\signe, f)}^t( x, \xi)\otimes \delta_{\signe\abs{\xi}_g}\p{\tau}\otimes\Lc_1(t).
\end{equation*}
Take $a\in S^0_{\phg, \tang}\p{I\times \cotang M}$, $\chi$ from~\eqref{eq:wave-type-ghost} and $\holepara{R}$ as defined in~\eqref{df:1-Theta::R:space}.
By Theorem~\ref{Th:hormander}, we have $\chi\ell^{-\signe}\in S^1(\cotang \hspace{1 pt}\MM_I)$, $(\chi a,\set{\ell^{-\signe}, \chi\holepara{R}a}) \in S^0_{\comp}(\cotang \hspace{1 pt}\MM_I)^2$, and
\begin{equation}\label{eq:microlocal:corchet:poisson}
\classe{\HWOp^{-\signe}_{h}, \Op_{h}^{\MM}\p{\chi\holepara{R}a}}= \frac{h}{i}\Op_{h}^{\MM}\p{\set{\ell^{-\signe}, \chi\holepara{R}a}} + h^2\rest^{-1}_{R, h},\quad  \rest^{-1}_{R, h}\in \Psi^{-1}_{h, \comp}\p{\MM_I}.
\end{equation}
On the one hand, using boundedness of $\p{v^{\signe}_n}_n$ in $L^2(\MM_I)$ and Proposition~\ref{pr:Calderon:Vaillancourt}, for all temporarily fixed $R > 0$ we have
\begin{equation}\label{eq:remainder:microlocal:c}
\pscal{v^{\signe}_n}{h_n\rest^{-1}_{R, h_n} v^{\signe}_n}_{L^2(\MM_I)} = \O{h_n}.
\end{equation}
Furthermore, using Lemma~\ref{lem:L:crochet}, we have 
\begin{align}
\notag\pscal{v^{-\signe}_n}{\frac{1}{h_n}\classe{\HWOp^{-\signe}_{h_n}, \Op_{h_n}^{\MM}\p{\chi\holepara{R}a}} v^{\signe}_n}_{L^2(\MM_I)}
=&
\pscal{f^n}{\Op_{h_n}^{\MM}\p{\chi\holepara{R}a} v^{\signe}_n}_{L^2(\MM_I)}\\ 
&- \pscal{v^{\signe}_n}{\Op_{h_n}^ {\MM}\p{\chi\holepara{R}a} f^n}_{L^2(\MM_I)}.\label{eq:microlocal:corchet}
\end{align}
Thus, by~\eqref{eq:microlocal:corchet:poisson},~\eqref{eq:remainder:microlocal:c},~\eqref{eq:microlocal:corchet},~\eqref{eq:wave-type-ghost} and~\eqref{eq:MDM} together with Lemma~\ref{lem:Wave:time:space:reduction} (whose hypotheses are satisfied by~\eqref{eq:wave-type-ghost}), we conclude
\begin{equation}\label{eq:proof:A}
\lim_{R\to\infty}\lim_{n\to\infty}\pscal{v^{\signe}_n}{\Op_{h_n}^{\MM}\p{\set{\ell^{-\signe}, \chi\holepara{R}a}}v^{\signe}_n}_{L^2(\MM_I)} = -2\int_{I\times \cosphere M} a(t, x, \xi)\d\p{\Imaginary\tilde{\mdm}^t_{(\signe,f)}(x, \xi)}\d t.
\end{equation}
On the other hand, since $\holepara{R}$ is independent of $(t, x, \xi)$ and $\ell^{-\signe}$ is independent of $t$, we have
\begin{equation*}
\set{\ell^{-\signe}, \holepara{R}} = - \underbracket{\partial_t(\ell^{-\signe})}_{=0}\partial_\tau\holepara{R} = 0.
\end{equation*}
Thus, we have
\begin{align}
\notag\set{\ell^{-\signe}, \chi\holepara{R}a} &= \set{\ell^{-\signe}, \holepara{R}}\chi a + \set{\ell^{-\signe}, \chi a}\holepara{R}\\
\notag&= \set{\ell^{-\signe}, \chi a}\holepara{R}\\
\label{eq:24}&= \p{r + \set{\ell^{-\signe}, a}\chi}\holepara{R}
\end{align}
where $r =  \set{\ell^{-\signe}, \chi}a \in\tilde{S^{0}_{\phg}}\p{\cotang \hspace{1pt} \MM_I}$ is a remainder term in the sense that, using point~\ref{eq:chi:homogeneous} of Definition~\ref{df:WGF}, $r$ vanishes near 
\begin{equation}\label{eq:23}
\set{(t, x, \tau, \xi)\in\cosphere \hspace{1pt} \MM_I, ~ \chi(t, x, \mathsf{R}(\chi)\tau, \mathsf{R}(\chi)\xi) = 1}\supset \supp\p{\mdm^{+}}\sqcup \supp\p{\mdm^{-}}. 
\end{equation}
Then, using~\eqref{eq:MDM} together with Lemma~\ref{lem:Wave:time:space:reduction} (hypothesis are satisfied by~\eqref{eq:wave-type-ghost}),~\eqref{eq:23} yields 
\begin{equation*}
\lim_{R\to\infty}\lim_{n\to\infty}\pscal{v^{\signe}_n}{\Op_{h}^{\MM}\p{r\holepara{R}}v^{\signe}_n}_{L^2(\MM_I)} = 0.
\end{equation*}
Therefore, by~\eqref{eq:24} together with~\eqref{eq:23}, we have
\begin{align}\label{eq:proof:B}
\notag \pscal{v^{\signe}_{n}}{\Op_{h_n}^{\MM}\p{\set{\ell^{-\signe}, \chi \holepara{R}a}}v^{\signe}_{n}}
&= \pscal{v^{\signe}_{n}}{\Op_{h_n}^{\MM}\p{\set{\ell^{-\signe}, a}\chi\holepara{R}}v^{\signe}_{n}} + \O{\o{1}_{R}}_n\\
&= \pscal{v^{\signe}_{n}}{\Op_{h_n}^{\MM}\p{\p{\partial_t a + \set{-\signe\sqrtsymbol(x, \xi),  a}}\chi\holepara{R}}v^{\signe}_{n}} + \O{\o{1}_{R}}_n.
\end{align}
So, since $\p{\partial_t a + \set{-\signe\sqrtsymbol(x, \xi),  a}}\chi\in \tilde{S^0_{\phg}}(\cotang\hspace{1pt} \MM_I)$ with homogeneous principal part $\partial_t a + \set{-\signe\abs{\xi}_g,  a}$ (see polyhomogeneous decomposition of $\sqrtsymbol$ given in~\eqref{eq:18}),~\eqref{eq:proof:A} in addition with both~\eqref{eq:proof:B} and~\eqref{eq:PG:microlocal:point2} implies 
\begin{equation*}
\int_{I\times \cosphere M}\partial_t a + \set{-\signe\abs{\xi}_g,  a}\d\mdm^{\signe}_{t}\d t = 2\int_{I\times\cosphere M}a(t, x, \xi) \d(\Imaginary\tilde{\mdm}^{t}_{(\signe, f)}(x, \xi))\d t.
\end{equation*}
\end{enumerate}
That ends the proof.
\end{proof}
\begin{proof}[Proof of Lemma~\ref{lem:uniform:symbol:substitution}]
Remark firstly that $\sqrtsymbol\in S^1\p{\cotang M}\subset S^1_{\tang}\p{I\times \cotang M}$ and secondly that, for all $m' < 0 $, 
\begin{equation}\label{eq:proof:decreasing:polynomial}
x\in\RR^{*}_{+}\mapsto x^{m'/2}~\text{and}~x\in\RR^{*}_{+}\mapsto x^{(m'-1)/2} ~\text{are decreasing.}
\end{equation}
For all relatively compact local chart $K$ of $\MM_{I}$, since $k_{m, s}\in S^{m}_{\comp}(\cotang\hspace{1pt} \MM_I)$ uniformly with respect to $s\in [0, 1]$, there is $C > 0$ such that for all $s\in [0, 1]$, all point $(t, x)\in K$ and all $(\tau, \xi)\in \mathrm{T}^{\star}_{(t, x)}\MM$, using~\eqref{eq:proof:decreasing:polynomial}, we have the upper bound
\begin{equation*}
\abs{e_{m, s}(t, x, \tau, \xi)} \leq C \p*{1 + \p{s \tau + (1-s)\sqrtsymbol(x, \xi)}^2 + \abs{\xi}_g^{2}}^{m/2} \leq C \jap{\xi}_{g}^{m}.
\end{equation*}
We now prove by induction on $n$ that, for all $n\in\NN_0$, all relatively compact local chart $K$ of $\MM_{I}$, and all $\beta_1, \beta_2\in\NN_0^{d+1}$ with $\abs{\beta_1}+\abs{\beta_2}\leq n$ we have
\begin{equation}\label{eq:fdb}
\sup_{s\in [0,1]}\sup_{
\begin{subarray}{c}
(t, x)\in K,\\ (\tau, \xi)\in \RR\times\RR^d
\end{subarray}
}
\jap{\xi}^{\abs{\beta_2}-m}
\abs{\partial^{\beta_1}_{t, x}\partial^{\beta_2}_{\tau, \xi}e_{m, s}} < \infty
\end{equation}
and there is $k^{\beta_1, \beta_2}_{m, s}\in S^{m-\abs{\beta_2}}_{\comp}(\cotang\hspace{1pt} \MM_I)$ uniformly with respect to $s\in [0, 1]$, such that
\begin{equation*}
e_{m, s}^{\beta_1, \beta_2}(\rho) := \partial^{\beta_1}_{t, x}\partial^{\beta_2}_{\tau, \xi}e_{m, s}(\rho) = k_{m, s}^{\beta_1, \beta_2}(t, x, s \tau + (1-s)\sqrtsymbol(x, \xi), \xi), \quad \rho = (t,x, \tau, \xi)\in\cotang \hspace{1 pt}\MM_{I}.
\end{equation*}
This holds for $n=0$.
Assume it holds for some $n\in\NN_0$, we show it also holds for rank $n+1$.
Consider any $\beta_1, \beta_2\in\NN_0^{d+1}$ with $\abs{\beta_1}+\abs{\beta_2}\leq n$.
\begin{enumerate}
\item Set $\beta\in\NN_0^{d+1}$ with $\abs{\beta} = 1$.
For $s\in[0, 1]$ and all point $(t, x, \tau, \xi)\in \cotang\hspace{1pt} \MM_{I}$, we have
\begin{align*}
\partial_{t, x}^\beta e_{m, s}^{\beta_1, \beta_2}(t, x, \tau, \xi)
=& \partial_{t, x}^\beta k_{m, s}^{\beta_1, \beta_2}(t, x, s \tau + (1-s)\sqrtsymbol(x, \xi), \xi)\\
&+ (1-s) \partial_{t, x}^\beta \sqrtsymbol(x, \xi)\partial_{\tau} k_{m, s}^{\beta_1, \beta_2}(t, x, s \tau + (1-s)\sqrtsymbol(x, \xi), \xi).
\end{align*}
Thus, because $\partial_{t, x}^{\beta} \sqrtsymbol\in S^{1}(\cotang M)\subset S_{\tang}^{1}(I\times \cotang M)$, $k_{m, s}^{\beta_1, \beta_2}, \partial_{t, x}^\beta k_{m, s}^{\beta_1, \beta_2} \in S^{m - \abs{\beta_2}}_{\comp}(\cotang\hspace{1pt} \MM_I)$ and $ \partial_{\tau}k_{m, s}^{\beta_1, \beta_2}\in S^{m - \abs{\beta_2}-1}_{\phg}(\cotang\hspace{1pt} \MM_I)$ uniformly with respect to $s\in [0, 1]$, we set
\begin{equation*}
k_{m, s}^{\beta_1 + \beta, \beta_2} := \partial_{t, x}^\beta k_{m, s}^{\beta_1, \beta_2}
+ (1-s) \partial_{t, x}^\beta \sqrtsymbol\partial_{\tau} k_{m, s}^{\beta_1, \beta_2}\in S^{m - \abs{\beta_2}}_{\comp}(\cotang\hspace{1pt} \MM_I)
\end{equation*}
uniformly with respect to $s\in [0, 1]$.
Then, by~\eqref{eq:proof:decreasing:polynomial}, for all relatively compact local chart $K$ of $\MM_{I}$ there is $C_1, C_2 > 0$ such that for all $s\in [0, 1]$, all point $(t, x)\in K$ and all $(\tau, \xi)\in \mathrm{T}^{\star}_{(t, x)}\MM$ we have
\begin{equation*}
\abs{\partial_{t, x}^\beta e_{m, s}^{\beta_1, \beta_2}(t, x, \tau, \xi)}\leq C_1\jap{\xi}_{g}^{m - \abs{\beta_2}} + (1-s) C_2 \jap{\xi}_g\jap{\xi}_g^{m - \abs{\beta_2}-1}\leq(C_1 + C_2)\jap{\xi}_g^{m}.
\end{equation*}
\item For $s\in [0, 1]$ and all point $(t, x, \tau, \xi)\in \cotang\hspace{1pt} \MM_{I}$, we have
\begin{equation*}
\partial_{\tau} e_{m, s}^{\beta_1, \beta_2}(t, x, \tau, \xi) = s\partial_{\tau} k_{m, s}^{\beta_1, \beta_2}(t, x, s \tau + (1-s)\sqrtsymbol(x, \xi), \xi).
\end{equation*}
Thus, because $\partial_{\tau} k_{m, s}^{\beta_1, \beta_2} \in S^{m-\abs{\beta_2} - 1}_{\comp}(\cotang\hspace{1pt} \MM_I)$ uniformly with respect to $s\in [0, 1]$, we set
\begin{equation*}
k_{m, s}^{\beta_1, \beta_2 + \beta} := s\partial_\tau k_{m, s}^{\beta_1, \beta_2}\in S^{m - \abs{\beta_2} - 1}_{\comp}(\cotang\hspace{1pt} \MM_I)
\end{equation*}
uniformly with respect to $s\in [0, 1]$, where $\beta = (1, 0, \dots, 0)\in\NN_0^{d+1}$.
Then, by 
~\eqref{eq:proof:decreasing:polynomial}, for all relatively compact local chart $K$ of $\MM_{I}$ there is $C > 0$ such that for all point $(t, x)\in K$ and all $(\tau, \xi)\in \mathrm{T}^{\star}_{(t, x)}\MM$ we have
\begin{equation*}
\sup_{s\in [0,1]}\abs{\partial_{\tau}e_{m, s}^{\beta_1, \beta_2}(t, x, \tau, \xi)}\leq C\jap{\xi}_{g}^{m}.
\end{equation*}
\item Set $\beta := (0, \tilde{\beta})\in\NN_0^{d}$ with $\abs{\beta} = 1$.
For $s\in [0, 1]$ and all point $(t, x, \tau, \xi)\in \cotang\hspace{1pt} \MM_{I}$, we have
\begin{align*}
\partial_{\xi}^{\tilde{\beta}} e_{m, s}^{\beta_1, \beta_2}(t, x, \tau, \xi)
=& \partial_{\xi}^{\tilde{\beta}} k_{m, s}^{\beta_1, \beta_2}(t, x, s \tau + (1-s)\sqrtsymbol(x, \xi), \xi)\\
&+ (1-s) \partial_{\xi}^{\tilde{\beta}} \sqrtsymbol(x, \xi)\partial_{\tau} k_{m, s}^{\beta_1, \beta_2}(t, x, s \tau + (1-s)\sqrtsymbol(x, \xi), \xi).
\end{align*}
Thus, because $\partial_{\xi}^{\beta} k_{m, s}^{\beta_1, \beta_2}, \partial_{\tau}k_{m, s}^{\beta_1, \beta_2} \in S^{m-1}_{\comp}(\cotang\hspace{1pt} \MM_I)$ uniformly with respect to $s\in [0, 1]$ and  $\partial_{\xi}^\beta \sqrtsymbol\in S^0(\cotang M)\subset S^{-1}_{\tang}\p{ I \times \cotang M}$, we set
\begin{equation*}
k_{m, s}^{\beta_1, \beta_2 + \beta} := \partial_{\xi}^{\tilde{\beta}} k_{m, s}^{\beta_1, \beta_2} + (1-s) \partial_{\xi}^{\tilde{\beta}} \sqrtsymbol\partial_{\tau} k_{m, s}^{\beta_1, \beta_2}\in S^{m - \abs{\beta_2} - 1}_{\comp}(\cotang\hspace{1pt} \MM_I)
\end{equation*}
uniformly with respect to $s\in [0, 1]$.
Then, by~\eqref{eq:proof:decreasing:polynomial}, for all relatively compact local chart $K$ of $\MM_{I}$ there is $C_1, C_2 > 0$ such that for all $s\in [0, 1]$, all point $(t, x)\in K$ and all $(\tau, \xi)\in \mathrm{T}^{\star}_{(t, x)}\MM$ we have
\begin{equation*}
\sup_{s\in [0,1]}\abs{\partial_{\xi}^{\tilde{\beta}} e_{m, s}^{\beta_1, \beta_2}(t, x, \tau, \xi)}\leq C_1\jap{\xi}_g^{m-1} + C_2\jap{\xi}_{g}^{m-1} = (C_1 + C_2)\jap{\xi}_{g}^{m-1}.
\end{equation*}
\end{enumerate}
Therefore, by~\eqref{eq:fdb} and the three previous points, we showed that for all relatively compact local chart $K$ of $\MM_{I}$, and all $\beta_1, \beta_2\in\NN_0^d$ with $\abs{\beta_1}+\abs{\beta_2}\leq n+1$ we have
\begin{equation*}
\sup_{s\in [0,1]}\sup_{
\begin{subarray}{c}
(t, x)\in K,\\ (\tau, \xi)\in \RR\times\RR^d
\end{subarray}
}
\jap{\xi}^{\abs{\beta_2}-m}
\abs{\partial^{\beta_1}_{t, x}\partial^{\beta_2}_{\tau, \xi}e_{m, s}^{\beta_1, \beta_2}} < \infty,
\end{equation*}
and there is $k^{\beta_1, \beta_2}_{m, s}\in S^{m-\abs{\beta_2}}_{\comp}(\cotang\hspace{1pt} \MM_I)$ uniformly with respect to $s\in [0, 1]$, such that
\begin{equation*}
e_{m, s}^{\beta_1, \beta_2}(\rho) := \partial^{\beta_1}_{t, x}\partial^{\beta_2}_{\tau, \xi}e_{m, s}(\rho) = k_{m, s}^{\beta_1, \beta_2}(t, x, s \tau + (1-s)\sqrtsymbol(x, \xi), \xi), \quad \rho = (t,x, \tau, \xi)\in\cotang \hspace{1 pt}\MM_{I}.
\end{equation*}
Hence, by induction on $\abs{\beta_1} + \abs{\beta_2}$, we have that the upper bound~\eqref{eq:19}.
\end{proof}
\begingroup
\small
\bibliographystyle{alpha}
\bibliography{Bibliographie.bib}
\endgroup
\end{document}